\documentclass{article}

\usepackage{amsmath,amsthm,amssymb}
\usepackage{enumerate}
\usepackage{hyperref}
\usepackage{mathtools,pifont}
\usepackage{tikz-cd}
\usepackage{tikz,pgf}
\usepackage{bbding}




\numberwithin{equation}{section}

\newtheorem{theorem}{Theorem}[section]
\newtheorem{observation}[theorem]{Observation}
\newtheorem{lemma}[theorem]{Lemma}
\newtheorem{proposition}[theorem]{Proposition}
\newtheorem{corollary}[theorem]{Corollary}

\theoremstyle{definition}
\newtheorem{definition}[theorem]{Definition}

\newtheorem{example}[theorem]{Example}
\newtheorem{construction}[theorem]{Construction}

\theoremstyle{remark}
\newtheorem{remark}[theorem]{Remark}


\newcommand\R{\mathbb{R}}
\newcommand\C{\mathbb{C}}

\newcommand\N{\mathbb{N}}

\DeclareMathOperator{\id}{id}
\DeclareMathOperator{\rad}{rad}

\DeclareMathOperator{\Ran}{Ran}
\DeclareMathOperator{\Ad}{Ad}

\DeclareMathOperator{\im}{Im}

\DeclareMathOperator{\Alg}{Alg}

\DeclareMathOperator{\alg}{alg}
\DeclareMathOperator{\Hom}{Hom}

\DeclareMathOperator{\Var}{Var}
\DeclareMathOperator{\dil}{dil}

\DeclareMathOperator{\free}{free}
\DeclareMathOperator{\Bool}{Bool}
\DeclareMathOperator{\orth}{orth}
\DeclareMathOperator{\sub}{sub}
\DeclareMathOperator{\mono}{mono}

\DeclareMathOperator{\graph}{graph}
\DeclareMathOperator{\Walk}{Walk}
\DeclareMathOperator{\Tree}{Tree}
\DeclareMathOperator{\Digraph}{Digraph}
\DeclareMathOperator{\depth}{depth}
\DeclareMathOperator{\chain}{chain}
\DeclareMathOperator{\Perm}{Perm}
\DeclareMathOperator{\Mod}{Mod}
\DeclareMathOperator{\Mom}{Mom}
\DeclareMathOperator{\Lip}{Lip}
\DeclareMathOperator{\Func}{Func}
\DeclareMathOperator{\red}{red}
\DeclareMathOperator{\Ord}{Ord}

\DeclarePairedDelimiter{\norm}{\lVert}{\rVert}
\DeclarePairedDelimiter{\ip}{\langle}{\rangle}

\newcommand{\assemb}{\bigstar}


\DeclareFontFamily{U}{mathb}{\hyphenchar\font45}
\DeclareFontShape{U}{mathb}{m}{n}{
      <5> <6> <7> <8> <9> <10> gen * mathb
      <10.95> mathb10 <12> <14.4> <17.28> <20.74> <24.88> mathb12
      }{}
\DeclareSymbolFont{mathb}{U}{mathb}{m}{n}
\DeclareMathSymbol{\boxright}{3}{mathb}{'151}

\begin{document}

\title{An Operad of Non-commutative Independences Defined by Trees}
\author{David Jekel and Weihua Liu}

\maketitle

\begin{abstract}
We study notions of $N$-ary non-commutative independence, which generalize free, Boolean, and monotone independence.  For every rooted subtree $\mathcal{T}$ of the $N$-regular tree, we define the $\mathcal{T}$-free product of $N$ non-commutative probability spaces and the $\mathcal{T}$-free additive convolution of $N$ non-commutative laws.

These $N$-ary additive convolution operations form a topological symmetric operad which includes the free, Boolean, monotone, and anti-monotone convolutions, as well as the orthogonal and subordination convolutions.  Using the operadic framework, the proof of convolution identities such as $\mu \boxplus \nu = \mu \rhd (\nu \boxright \mu)$ can be reduced to combinatorial manipulations of trees.  In particular, we obtain a decomposition of the $\mathcal{T}$-free convolution into iterated Boolean and orthogonal convolutions, which generalizes work of Lenczewski.

We also develop a theory of $\mathcal{T}$-free independence that closely parallels the free, Boolean, and monotone cases, provided that the root vertex has more than one neighbor.  This includes combinatorial moment formulas, cumulants, a central limit theorem, and classification of infinitely divisible distributions (in the case of bounded support).  In particular, we study the case where the root vertex of $\mathcal{T}$ has $n$ children and each other vertex has $d$ children, and we relate the $\mathcal{T}$-free convolution powers to free and Boolean convolution powers and the Belinschi-Nica semigroup.
\end{abstract}

\newpage

\tableofcontents

\newpage

\section{Introduction}

\subsection{Non-commutative Independences}

Recall that a $\mathrm{C}^*$-non-commutative probability space is a pair $(\mathcal{A},\phi)$ where $\mathcal{A}$ is a unital $\mathrm{C}^*$-algebra and $\phi: \mathcal{A} \to \C$ is a state (that is, a positive linear functional with $\phi(1) = 1$).  The elements of $\mathcal{A}$ are viewed as random variables and $\phi$ is viewed as the expectation.

Beginning with the seminal papers \cite{Voiculescu1985} \cite{Voiculescu1986} of Voiculescu, free probability theory has developed a systematic analogy between classical independence and free independence.  This analogy was later extended to include two other types of independence, Boolean independence \cite{SW1997} and monotone independence \cite{Muraki2000} \cite{Muraki2001}.  For each of these independences, the following notions and results were developed (though the list is certainly not exhaustive):
\begin{itemize}
	\item \emph{moment conditions} which characterize the independence of algebras $\mathcal{A}_1$, \dots, $\mathcal{A}_N$;
	\item a \emph{combinatorial theory} which describes the mixed moments of independent random variables in terms of certain partitions of $\{1,\dots,n\}$ and certain functionals called \emph{cumulants};
	\item a \emph{product operation} on non-commutative probability spaces which provide a way to independently join probability spaces $(\mathcal{A}_1,\phi_1)$, \dots, $(\mathcal{A}_n,\phi_n)$;
	\item a \emph{convolution operation} on probability measures which describes the law of a sum of independent random variables.
	\item \emph{analytic transforms} which aid in the computation of convolutions (e.g.\ the Fourier transform in the classical case and the $R$-transform in the free case);
	\item a \emph{central limit theorem} which describes asymptotic behavior of $(X_1 + \dots + X_N) / \sqrt{N}$ where $X_j$ are i.i.d.\ random variables with zero mean and finite variance;
	\item a \emph{Levy-Khintchine formula} that characterizes analytically the distributions which are infinitely divisible with respect to each type of convolution;
	\item a \emph{Fock space} defined using tensor powers of a given Hilbert space $\mathcal{K}$, and certain operators on the Fock space that can be used to realize the central limit distribution, and more generally any infinitely divisible distribution.
\end{itemize}

Several important papers have undertaken to unify or connect these different independences.  First, Speicher \cite{Speicher1997} showed that classical, free, and Boolean independence give rise to the only universal product operations on non-commutative probability spaces (that is, the only binary product operations that are functorial, commutative, and associative).  Extending this result, Muraki \cite{Muraki2003,Muraki2013} showed that the only natural (that is, functorial and associative) product operations are those that arise from classical, free, Boolean, monotone, and anti-monotone independence.

Second, Bercovici and Pata \cite{BP1999} studied the bijection between classical, free, and boolean infinitely divisible laws (which was already implicit in the L{\'e}vy-Hin{\v c}in formulas) and even showed that this arises from a bijection between the domains of attraction (e.g.\ the probability distributions which satisfy the classical CLT correspond under their bijection to those which satisfy the free CLT).  This result was extended to the monotone case in \cite{AW2014}.

Third, Hasebe and Saigo \cite{HS2011b} gave an axiomatic characterization of cumulants and a proof of their existence and uniqueness that only depended on certain axioms about independence, and hence applied equally well to each type of independence without using casework.  And fourth, the papers \cite{Lehner2002}, \cite{BN2008b}, \cite{AHLV2015} gave combinatorial formulas to convert between the classical, free, Boolean, and monotone cumulants.

Here we will study a family of non-commutative independences, introduced by the second author in \cite{Liu2018}, which include free, Boolean, and monotone independence, and for which most of the notions itemized above makes sense.  More precisely, for each $N$ and for every rooted subtree $\mathcal{T}$ of the rooted $N$-regular tree, we will define a $\mathcal{T}$-free product operation on non-commutative probability spaces, and hence define a $\mathcal{T}$-free additive convolution operation $\boxplus_{\mathcal{T}}(\mu_1,\dots,\mu_N)$ on probability distributions.

For such independences, we will discuss product operations and convolutions (\S \ref{sec:constructconvolution}), as well as combinatorics of moments (\S \ref{sec:combinatorics}).  Assuming that the root vertex has more than one neighbor, we will discuss cumulants (\S \ref{sec:cumulants}), the central limit theorem (\S \ref{sec:CLT}), characterization of infinitely divisible laws and Fock spaces (\S \ref{sec:infinitelydivisible}), and Bercovici-Pata-type bijections (\S \ref{subsec:BPbijection}).

For each $N$, specific choices of the tree $\mathcal{T}$ will yield the $N$-ary free, Boolean, and monotone products.  Thus, in particular, our paper constitutes a unified treament of the free, Boolean, and monotone cases.  But our framework also includes mixtures of free, Boolean, and monotone independence as in \cite{Wysoczanski2010} and \cite{KW2013} (see \S \ref{subsec:digraphoperad}), and many other new types of independence.

For instance, in the case where $\mathcal{T}$ is an \emph{$(n,d)$-regular tree}, meaning that root vertex has $n$ children and each other vertex has $d$ children, we find the coefficients in the moment-cumulant formula explicitly (see \S \ref{subsec:treecoefficients}).  These coefficients interpolate between the free and Boolean cases.  Convolution powers for $\mathcal{T}$ can be expressed in terms of free and Boolean convolution powers, and the Bercovici-Pata bijection from Boolean independence to $\mathcal{T}$-free independence is precisely the Belinschi-Nica semigroup $\mathbb{BN}_t(\mu) = (\mu^{\boxplus (1 + t)})^{\uplus 1/(1 + t)}$ at time $t = d / (n - 1)$ (see \S \ref{subsec:BPbijection}).

We include sharp operator-norm estimates throughout, and in particular, we give some of the best known central limit estimates for the operator-valued free, Boolean, and monotone settings with a new ``coupling'' proof (\S \ref{subsec:CLTestimate}).

\subsection{Convolution Identities}

Besides studying the properties of $\mathcal{T}$-free convolution for a fixed $\mathcal{T}$, we will also prove certain identities relating these convolution operations using the language of operads (see \S \ref{sec:operad} - \S \ref{sec:convolutionidentities}).  Let $\Tree(N)$ be the collection of rooted subtrees of the rooted $N$-regular tree.  For $\mathcal{T} \in \Tree(k)$ and $\mathcal{T}_1 \in \Tree(n_1)$, \dots, $\mathcal{T}_k \in \Tree(n_k)$, we will define in \S \ref{sec:operad} the composition
\[
\mathcal{T}(\mathcal{T}_1,\dots,\mathcal{T}_k) \in \Tree(n_1 + \dots + n_k),
\]
and prove that
\begin{multline*}
\boxplus_{\mathcal{T}(\mathcal{T}_1,\dots,\mathcal{T}_k)}(\mu_{1,1}, \dots, \mu_{1,n_1}, \dots \dots , \mu_{k,1},\dots,\mu_{k,n_k}) \\
= \boxplus_{\mathcal{T}}(\boxplus_{\mathcal{T}_1}(\mu_{1,1},\dots,\mu_{1,n_1}), \dots, \boxplus_{\mathcal{T}_k}(\mu_{k,1},\dots,\mu_{k,n_k}));
\end{multline*}
see Corollary \ref{cor:operadmorphism}.  In other words, the map $\mathcal{T} \mapsto \boxplus_{\mathcal{T}}$ respects operad composition.  We will also show that this map intertwines the natural actions of the symmetric group $\Perm(N)$ on $\Tree(N)$ and on $N$-ary functions on the space of laws (Corollary \ref{cor:convolutionidentity}) and that it is continuous with respect to certain natural topologies on the domain and target space (\S \ref{subsec:continuity}).  Thus, it defines a morphism of topological symmetric operads.

These results provide a unified framework for proving various convolution identities.  For example, consider the identity
\[
(\mu_1 \boxplus \mu_2) \boxplus \mu_3 = \mu_1 \boxplus (\mu_2 \boxplus \mu_3)
\]
which expresses associativity of the binary free convolution operation.  If $\mathcal{T}_{2,\free}$ is the tree representing free convolution, then this identity says that
\[
\boxplus_{\mathcal{T}_{2,\free}}(\boxplus_{\mathcal{T}_{2,\free}}(\mu_1,\mu_2),\mu_3) = \boxplus_{\mathcal{T}_{2,\free}}(\mu_1,\boxplus_{\mathcal{T}_{2,\free}}(\mu_2,\mu_3)),
\]
and in light of the operad morphism described above, this follows from the identity
\[
\mathcal{T}_{2,\free}(\mathcal{T}_{2,\free},\id) = \mathcal{T}_{2,\free}(\id,\mathcal{T}_{2,\free}),
\]
which is simply a combinatorial manipulation.  The same applies to Boolean and monotone independence.  Similarly, the identity $\mu_1 \boxplus \mu_2 = \mu_2 \boxplus \mu_1$ follows from the permutation invariance of the tree $\mathcal{T}_{2,\free}$, and the same holds for Boolean independence.

The operad morphism provides a systematic way to prove convolution identities by combinatorially manipulating trees, since our general results already do the work of converting manipulations of trees into manipulations of Hilbert spaces and random variables.  To give a few other examples, there are trees $\mathcal{T}_{\sub} \in \Tree(2)$ and $\mathcal{T}_{\orth} \in \Tree(2)$ representing respectively the subordination convolution $\boxright$ and the orthogonal convolution $\vdash$.  Using Corollary \ref{cor:convolutionidentity} and manipulation of trees, one can show the identity
\[
\mu \boxplus \nu = (\mu \boxright \nu) \lhd \nu
\]
where $\lhd$ is anti-monotone convolution (see Example \ref{ex:subordination}); this identity was studied in \cite{Lenczewski2007,Nica2009,Liu2018} and relates to analytic subordination for the additive free convolution.  Similarly, two other identities studied in those papers, namely
\[
\mu \rhd \nu = (\mu \vdash \nu) \uplus \nu
\]
and
\[
(\mu_1 \boxplus \mu_2) \boxright \nu = (\mu_1 \boxright \nu) \boxplus (\mu_2 \boxright \nu),
\]
can also be deduced from our results.

A more general example is as follows (see \S \ref{subsec:BOdecomp}).  If $\mathcal{T}$ is a finite tree, then $\boxplus_{\mathcal{T}}(\mu_1,\dots,\mu_N)$ can be expressed using iterated Boolean and orthogonal convolutions of the laws $\mu_1$, \dots, $\mu_N$.  This provides an algorithm for computing the Cauchy-Stieltjes transform of the $\mathcal{T}$-free convolution.  Moreover, finite trees are dense in $\Tree(N)$, and thus this algorithm also gives an approximation for the Cauchy-Stieltjes transform even when $\mathcal{T}$ is infinite.  This result generalizes the decomposition for free convolution given by \cite{Lenczewski2007,ALS2007}.

\subsection{Scope and Approach}

There are three prominent viewpoints on non-commutative independences and convolution operations.  The \emph{operatorial viewpoint} models distributions using operators explicitly constructed on Hilbert spaces (e.g.\ free product Hilbert spaces, Fock spaces).  The \emph{combinatorial viewpoint} studies moment and cumulant formulas using non-crossing partitions.  The \emph{complex-analytic viewpoint} studies probability distributions through their Cauchy-Stieltjes transforms, differential equations, and functional equations.  This paper will focus primarily on the operatorial and combinatorial aspects of $\mathcal{T}$-free independence, leaving the full development of the complex-analytic viewpoint for future work.

Throughout the paper, we will work in the setting of $\mathrm{C}^*$-algebraic operator-valued non-commutative probability, where the scalars are replaced by a $\mathrm{C}^*$-algebra $\mathcal{B}$, introduced in \cite[\S 5]{Voiculescu1985}, \cite{Voiculescu1995}.  Most of the results mentioned in the first two sections of the introduction have been adapted to the operator-valued setting (individual references given throughout).

Moreover, we study here only operator-valued laws of a single operator $X$ rather than laws of a tuple $X_1$, \dots, $X_N$.  There is no loss of generality because the $\mathcal{B}$-valued law of a tuple $X_1$, \dots, $X_N$ can be packaged into a single $M_N(\mathcal{B})$-valued law; this is one of several tricks using matrix amplification that are now standard in operator-valued non-commutative probability (see \cite{HMS2018}).  We refer to \cite{Liu2018} for a detailed explanation of how to reduce the study of tuples to the study of single operators in the context of convolution operations.

We restrict ourselves here to the study of bounded operators (hence probability distributions with bounded support), because the basic operatorial setup for the unbounded theory is not well-understood in the operator-valued setting, and because in the scalar-valued setting convolutions of unbounded laws would be better handled from the complex-analytic viewpoint.

\subsection{Overview}

The broad structure of the paper is as follows:  \S \ref{sec:preliminaries} reviews preliminaries, \S \ref{sec:constructconvolution} - \ref{sec:combinatorics} give the definition and basic properties of $\mathcal{T}$-free products, \S \ref{sec:operad} - \ref{sec:convolutionidentities} study how the $\mathcal{T}$-free convolution operations relate to each other in the framework of operads, \S \ref{sec:cumulants} - \S \ref{sec:infinitelydivisible} develop the theory of $\mathcal{T}$-free convolution for a fixed $\mathcal{T}$ in parallel with the free, Boolean, and monotone cases, and \S \ref{sec:conclusion} gives concluding remarks and future research directions.

In more detail, the role of each section is as follows.  In \S \ref{sec:preliminaries}, because our paper will handle the operator-valued setting, we review background on $\mathcal{B}$-valued probability spaces and Hilbert modules, and we establish some notation.

In \S \ref{sec:constructconvolution}, we define our main objects of study, $\mathcal{T}$-free products of Hilbert modules and the resulting $\mathcal{T}$-free products of $\mathcal{B}$-valued probability spaces and $\mathcal{T}$-free convolution of $\mathcal{B}$-valued laws.  In \S \ref{sec:combinatorics}, we give a combinatorial formula for joint moments in the $\mathcal{T}$-free product space, expressed in terms of the Boolean cumulants and language of non-crossing partitions.

In \S \ref{sec:operad}, we define a topological symmetric operad $\Tree$ where the objects of arity $N$ are rooted subtrees of the $N$-regular tree.  We also define a topological symmetric operad $\Func(\mathcal{B})$ where the objects of arity $N$ are $N$-ary functions on the space of non-commutative laws.  We show that mapping $\mathcal{T}$ to the $\mathcal{T}$-free convolution operation defines a morphism of topological symmetric operads.

In \S \ref{sec:convolutionidentities}, we apply our framework to reprove several convolution identities from the literature using combinatorical manipulations of trees.  Moreover, generalizing work of Lenczewski \cite{Lenczewski2007}, we discuss a decomposition of $\mathcal{T}$-free convolutions using iterated Boolean and orthogonal convolution.

Next, assuming that the root vertex has more than one neighbor, we develop a theory of $\mathcal{T}$-free convolution that closely parallels the free, Boolean, and monotone cases.  In \S \ref{sec:cumulants}, we define the $\mathcal{T}$-free cumulants.  We show that they satisfy the same axioms that characterize the free, Boolean, and monotone cumulants \cite[Theorem 3.1]{HS2011b}.

In \S \ref{sec:CLT}, we prove a central limit theorem for $\mathcal{T}$-free independence.  We first present a proof based on cumulants, and then more refined estimates obtained from coupling different random variables on the same Hilbert module.

In \S \ref{sec:infinitelydivisible}, we study the laws which are infinitely divisible (with bounded support) with respect to $\mathcal{T}$-free convolution.  In particular, we give a model for such laws on a $\mathcal{T}$-free Fock space.  We thus obtain in \S \ref{subsec:BPbijection} generalized Bercovici-Pata bijections between laws that are infinitely divisible with bounded support for every $\mathcal{T}$ where the root vertex has more than one neighbor.

Finally, in \S \ref{sec:conclusion}, we suggest some directions for future research.

\subsection{Acknowledgements}

We thank Hari Bercovici and Dima Shlyakhtenko for discussion and advice.  We thank Octavio Arizmendi, Serban Belinschi, Ian Charlesworth, Takahiro Hasebe, Franz Lehner, and Roland Speicher for helpful conversations and/or pointing out typos.  The first author thanks the Banff International Research Station and the Mathematische Forschungsinstitut Oberwolfach for their hospitality.  He also acknowledges the support of the NSF grant DMS-1500035.

\section{Preliminaries} \label{sec:preliminaries}

Here we summarize some background material for $\mathrm{C}^*$-algebra operator-valued non-commutative probability for the reader to refer to as necessary.  Most importantly, since the scalars are replaced by a unital $\mathrm{C}^*$-algebra $\mathcal{B}$, we will use \emph{$\mathcal{A}$-$\mathcal{B}$-correspondences}, which are roughly speaking ``representations of $\mathcal{A}$ on a Hilbert space with $\mathcal{B}$-valued inner product'' (see Definition \ref{def:bimodule}).  We also use the less standard notation $\rad(\mu)$ for the ``support radius'' of a $\mathcal{B}$-valued law $\mu$ (Definition \ref{def:expbound}).

\subsection{$\mathrm{C}^*$-correspondences}

We assume familiarity with the basic theory of unital $\mathrm{C}^*$-algebras, matrices over a $\mathrm{C}^*$-algebra, and completely positive maps.  We refer to \cite[Chapter II]{Blackadar2006} for a summary of results and references.

Non-commutative probability often uses explicit representations of $\mathrm{C}^*$-algebras on Hilbert spaces.  In $\mathcal{B}$-valued non-commutative probability, we use an analogue of Hilbert spaces where the inner product is $\mathcal{B}$-valued, which is a called a right Hilbert $\mathcal{B}$-module.  For background, see \cite{Paschke1973}, \cite{Lance1995}, \cite[\S II.7.1 - II.7.2]{Blackadar2006}, and the references therein.

\begin{definition}
Let $\mathcal{B}$ be a unital $\mathrm{C}^*$-algebra.  If $\mathcal{H}$ is a right $\mathcal{B}$-module, then a \emph{$\mathcal{B}$-valued semi-inner product} is a map $\ip{\cdot,\cdot}: \mathcal{H} \times \mathcal{H} \to \mathcal{B}$ such that for $h$, $h_1$, $h_2 \in \mathcal{H}$.
\begin{enumerate}[(1)]
	\item $h_2 \mapsto \ip{h_1,h_2}$ is a right $\mathcal{B}$-module map.
	\item $\ip{h_2,h_1} = \ip{h_1,h_2}^*$.
	\item $\ip{h,h} \geq 0$.
\end{enumerate}
\end{definition}

One can show that the semi-inner product must satisfy an analogue of the Cauchy-Schwarz inequality and hence $\norm{h} := \norm{\ip{h,h}}_{\mathcal{B}}^{1/2}$ defines a semi-norm on $\mathcal{H}$.  We also have $\norm{hb} \leq \norm{h} \norm{b}$ for $h \in \mathcal{H}$ and $b \in \mathcal{B}$.

\begin{definition}
If $\mathcal{H}$ is a Banach space with respect to this norm, then we say that $\mathcal{H}$ is a \emph{right Hilbert $\mathcal{B}$-module}.  In general, if $\mathcal{H}$ has a $\mathcal{B}$-valued semi-inner product, then the completion of $\mathcal{H} / \{h: \norm{h} = 0\}$ is a right Hilbert $\mathcal{B}$-module with the right $\mathcal{B}$-action and the $\mathcal{B}$-valued inner product induced in the natural way from those of $\mathcal{H}$.  We refer to this module as the \emph{separation-completion} of $\mathcal{H}$ with respect to $\ip{\cdot,\cdot}$.
\end{definition}

\begin{definition}
Let $\mathcal{H}_1$ and $\mathcal{H}_2$ be Hilbert $\mathcal{B}$-modules, we say that a linear map $T: \mathcal{H}_1 \to \mathcal{H}_2$ is \emph{right $\mathcal{B}$-modular} if $T(hb) = (Th)b$ for $h \in \mathcal{H}_1$ and $b \in \mathcal{B}$.  We say that $T$ is \emph{adjointable} if there exists a map $T^*: \mathcal{H}_2 \to \mathcal{H}_1$ such that
\[
\ip{Th_1,h_2} = \ip{h_1,T^*h_2} \text{ for all } h_1 \in \mathcal{H}_1 \text{ and } h_2 \in \mathcal{H}_2.
\]
We denote by $\mathcal{L}(\mathcal{H})$ the space of bounded, right $\mathcal{B}$-modular, adjointable operators on a right Hilbert $\mathcal{B}$-module $\mathcal{H}$.  One can check that $\mathcal{L}(\mathcal{H})$ is a $\mathrm{C}^*$-algebra (see for instance \cite[p.\ 8]{Lance1995}).
\end{definition}

A \emph{$\mathcal{B}$-valued representation} of a $\mathrm{C}^*$-algebra $\mathcal{A}$ is a $*$-homomorphism $\pi: \mathcal{A} \to \mathcal{L}(\mathcal{H})$ for some right Hilbert $\mathcal{B}$-module $\mathcal{H}$.  Such a representation endows $\mathcal{H}$ with the structure of an $\mathcal{A}$-$\mathcal{B}$-bimodule.  Since the bimodule viewpoint will be notationally convenient, we make the following definition.

\begin{definition} \label{def:bimodule}
A \emph{$\mathcal{A}$-$\mathcal{B}$-correspondence} is an $\mathcal{A}$-$\mathcal{B}$ bimodule $\mathcal{H}$ with a $\mathcal{B}$-valued inner product, such that $\mathcal{H}$ right Hilbert $\mathcal{B}$-module with respect to the right action of $\mathcal{B}$, and the left action of $\mathcal{A}$ defines a $*$-homomorphism $\mathcal{A} \to \mathcal{L}(\mathcal{H})$.  We refer to $\mathcal{H}$ generically as a \emph{$\mathrm{C}^*$-correspondence} if $\mathcal{A}$ and $\mathcal{B}$ are unspecified or clear from context.
\end{definition}

\begin{remark}
The term ``$\mathcal{A}$-$\mathcal{B}$-correspondence'' is standard in $\mathrm{C}^*$-algebra theory and has been used before in non-commutative probability, for instance by \cite[\S 5]{Voiculescu1985} \cite{PV2013}.  We caution that this definition is asymmetrical since the left and right actions are of different natures, and that this is not the only notion of ``Hilbert bimodule'' one might study in the context of operator algebras.
\end{remark}

We will also need the following notion of \emph{tensor products of $\mathrm{C}^*$-correspondences}.

\begin{construction} \label{const:bimoduletensorproduct}
Let $\mathcal{B}_0$, \dots, $\mathcal{B}_n$ be unital $\mathrm{C}^*$-algebras, and suppose that $\mathcal{H}_j$ is a Hilbert $\mathcal{B}_{j-1}$-$\mathcal{B}_j$-correspondence for each $j$.  We can form the algebraic tensor product
\[
\mathcal{H}_1 \otimes_{\alg, \mathcal{B}_1} \dots \otimes_{\alg, \mathcal{B}_{n-1}} \mathcal{H}_n
\]
in the sense of algebraic bimodules.  We define a semi-inner product by
\[
\ip{h_1 \otimes \dots \otimes h_n, h_1' \otimes \dots \otimes h_n'} = \ip{h_n, \ip{h_{n-1}, \dots \ip{h_1, h_1'} \dots h_{n-1}'} h_n'}.
\]
In other words, we first evaluate $\ip{h_1,h_1'} \in \mathcal{B}_1$, then evaluate $\ip{h_1,h_1'} h_2$ using the left $\mathcal{B}_1$-module structure on $\mathcal{H}_2$, then compute $\ip{h_2, \ip{h_1,h_1'} h_2'} \in \mathcal{B}_2$ and so forth.  Positivity of the inner product is checked by using complete positivity in the standard way.

We denote the separation-completion with respect to this inner product by
\[
\mathcal{H}_1 \otimes_{\mathcal{B}_1} \dots \otimes_{\mathcal{B}_{n-1}} \mathcal{H}_n
\]
and one can show that this is a Hilbert $\mathcal{B}_0$-$\mathcal{B}_n$-bimodule in the obvious way.  As one would expect, these tensor products satisfy the associativity up to a canonical isomorphism, and they distribute over direct sums of correspondences in each argument.
\end{construction}

\subsection{$\mathcal{B}$-valued Probability Spaces}

\begin{definition}
Let $\mathcal{B} \subseteq \mathcal{A}$ be a unital inclusion of unital $\mathrm{C}^*$-algebras.  Then a \emph{conditional expectation} $\mathcal{A} \to \mathcal{B}$, or simply a \emph{$\mathcal{B}$-valued expectation}, is a linear map $E: \mathcal{A} \to \mathcal{B}$ that is unital, completely positive, and $\mathcal{B}$-$\mathcal{B}$-bimodular (that is, $E[b_1ab_2] = b_1 E[a] b_2$ for $a \in \mathcal{A}$ and $b_1, b_2 \in \mathcal{B}$).
\end{definition}

\begin{definition} \label{def:NCprobspace}
Suppose $\mathcal{B}$ is a unital $\mathrm{C}^*$-algebra.  A \emph{$\mathcal{B}$-valued (non-commutative) probability space} is a pair $(\mathcal{A},E)$, where $\mathcal{A}$ is a unital $\mathrm{C}^*$-algebra with a specified unital inclusion $\mathcal{B} \subseteq \mathcal{A}$ and where $E: \mathcal{A} \to \mathcal{B}$ is a conditional expectation, such that for each $a \in \mathcal{A}$,
\begin{equation} \label{eq:nondegeneracy}
E[a_1aa_2] = 0 \text{ for all } a_1, a_2 \in \mathcal{A} \implies a = 0.
\end{equation}
We refer to the elements of $\mathcal{A}$ as \emph{(bounded) $\mathcal{B}$-valued random variables}.
\end{definition}

If $(\mathcal{A},E)$ is a $\mathcal{B}$-valued probability space, then we have the following canonical representation of $\mathcal{A}$ on a right Hilbert $\mathcal{B}$-module.

\begin{construction} \label{const:canonicalrep}
Note that $\mathcal{A}$ is a right $\mathcal{B}$-module and we can define a $\mathcal{B}$-valued semi-inner product on $\mathcal{A}$ by $\ip{a_1,a_2} = E[a_1^*a_2]$.  We denote the separation-completion with respect to this inner product by $L^2(\mathcal{A},E)$.  One can check that $L^2(\mathcal{A},E)$ is a $\mathcal{A}$-$\mathcal{B}$-correspondence.  If we denote by $\xi$ the equivalence class of the vector $1 \in \mathcal{A}$, then we have
\[
E[a] = \ip{\xi, a \xi}.
\]
If we denote by $\pi: \mathcal{A} \to \mathcal{L}(L^2(\mathcal{A},E))$ the corresponding representation, then the non-degeneracy condition \eqref{eq:nondegeneracy} means that $\pi$ is injective.
\end{construction}

In particular, this shows that given a $\mathcal{B}$-valued non-commutative probability space $(\mathcal{A},E)$, there is an $\mathcal{A}$-$\mathcal{B}$-correspondence $\mathcal{H}$ and a vector $\xi$ such that $E[a] = \ip{\xi, a\xi}$.  Conversely, given an $\mathcal{A}$-$\mathcal{B}$-correspondence $\mathcal{H}$ and $\xi \in \mathcal{H}$, we can define $E: \mathcal{A} \to \mathcal{B}$ by $E[a] = \ip{\xi, a\xi}$, and the next two lemmas describe sufficient conditions for $(\mathcal{A},E)$ to be a $\mathcal{B}$-valued probability space.

\begin{lemma}[{\cite[Lemma 2.10]{Liu2018}}] \label{lem:unitvector}
Let $\mathcal{B} \subseteq \mathcal{A}$ be a unital inclusion of unital $\mathrm{C}^*$-algebras.  Let $\mathcal{H}$ be a $\mathcal{A}$-$\mathcal{B}$-correspondence and $\xi \in \mathcal{H}$ and define $E: \mathcal{A} \to \mathcal{B}$ by $\Phi(a) = \ip{\xi, a \xi}$.  Then the following are equivalent:
\begin{enumerate}[(1)]
	\item $E$ is a $\mathcal{B}$-valued expectation.
	\item $\ip{\xi, b\xi} = b$ for every $b \in \mathcal{B}$.
	\item $\ip{\xi,\xi} = 1$ and $b \xi = \xi b$ for every $b \in \mathcal{B}$. 
\end{enumerate}
In this case, we say that $\xi$ is a \emph{$\mathcal{B}$-central unit vector}.
\end{lemma}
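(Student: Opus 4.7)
The plan is to prove the equivalences cyclically via $(1) \Rightarrow (2) \Rightarrow (3) \Rightarrow (1)$. The direction $(1) \Rightarrow (2)$ is immediate from unitality and $\mathcal{B}$-bimodularity of $E$: for every $b \in \mathcal{B}$,
\[
\langle \xi, b\xi \rangle = E[b] = E[b \cdot 1_\mathcal{A}] = b \cdot E[1_\mathcal{A}] = b.
\]

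For $(2) \Rightarrow (3)$, the identity $\langle \xi, \xi \rangle = 1$ is just (2) applied with $b = 1$. To prove centrality, I would compute the norm-squared of $\xi b - b\xi$:
\[
\langle \xi b - b\xi, \xi b - b\xi \rangle = \langle \xi b, \xi b \rangle - \langle \xi b, b\xi \rangle - \langle b\xi, \xi b \rangle + \langle b\xi, b\xi \rangle,
\]
and verify that each of the four terms equals $b^*b$ using (2); the key observation is that applying (2) to the element $b^*b \in \mathcal{B}$ gives $\langle b\xi, b\xi \rangle = \langle \xi, b^*b\xi \rangle = b^*b$, and the three other terms are evaluated similarly via sesquilinearity together with the defining identities of the $\mathcal{B}$-valued inner product. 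The four terms then cancel, so $\xi b - b\xi$ has zero norm, and since $\mathcal{H}$ is a genuine Hilbert $\mathcal{B}$-module we conclude $\xi b = b\xi$. This is the only step requiring an actual calculation and is what I expect to be the main (though still minor) obstacle.

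For $(3) \Rightarrow (1)$, I would verify the three defining properties of a conditional expectation in turn. Unitality is just $E[1] = \langle \xi, \xi \rangle = 1$. Bimodularity uses centrality to move $\mathcal{B}$-elements outside the inner product:
\[
E[b_1 a b_2] = \langle \xi, b_1 a b_2 \xi \rangle = \langle \xi b_1^*, a \xi b_2 \rangle = b_1 \langle \xi, a\xi \rangle b_2 = b_1 E[a] b_2,
\]
where the first equality after $E$ uses adjointness together with $b_1^*\xi = \xi b_1^*$, and the second uses right $\mathcal{B}$-linearity in the second slot. Complete positivity is the standard observation that $a \mapsto \langle \xi, a\xi \rangle$ is completely positive for any vector $\xi$ in a $\mathrm{C}^*$-correspondence: writing a positive $[a_{ij}] \in M_n(\mathcal{A})$ as $a_{ij} = \sum_k c_{ki}^* c_{kj}$, the matrix $[E[a_{ij}]]_{ij} = \sum_k [\langle c_{ki}\xi, c_{kj}\xi \rangle]_{ij}$ is a sum of Gram matrices of tuples of vectors in $\mathcal{H}$, and such Gram matrices are positive in $M_n(\mathcal{B})$.
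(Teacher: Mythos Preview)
Your proof is correct and follows essentially the same cyclic route $(1)\Rightarrow(2)\Rightarrow(3)\Rightarrow(1)$ as the paper, with the same norm-squared computation for $(2)\Rightarrow(3)$ and the same bimodularity calculation for $(3)\Rightarrow(1)$. The only difference is that you explicitly verify complete positivity of $a\mapsto\langle\xi,a\xi\rangle$, which the paper leaves implicit.
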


\begin{proof}
If (1) holds, then $\ip{\xi, b\xi} = \Phi(b) = b$, so that (2) holds.

Suppose (2) holds.  Then $\ip{\xi,\xi} = \ip{\xi, 1 \xi} = 1$.  Also, for $b \in \mathcal{B}$, we have
\begin{align*}
\ip{b \xi - \xi b, b \xi - \xi b} &= \ip{b \xi, b \xi} - \ip{b \xi, \xi b} - \ip{\xi b, b \xi} + \ip{\xi b, \xi b} \\
&= \ip{\xi, b^*b \xi} - \ip{\xi, b^* \xi} b - b^* \ip{\xi, b \xi} + b^* \ip{\xi,\xi} b \\
&= b^*b - b^*b - b^*b + b^*b = 0.
\end{align*}
Therefore, $b \xi = \xi b$, so that (3) holds.

Suppose that (3) holds.  Then $E$ is unital since $\ip{\xi, 1 \xi} = 1$.  Moreover, $E$ is $\mathcal{B}$-$\mathcal{B}$-bimodular because
\[
\ip{\xi, b_1 a b_2 \xi} = \ip{b_1^* \xi, ab_2 \xi} = \ip{\xi b_1^*, a \xi b_2} = b_1 \ip{\xi, a\xi} b_2.
\]
Thus, (1) holds.
\end{proof}

\begin{lemma} \label{lem:faithfulness}
Suppose that $\mathcal{B} \subseteq \mathcal{A}$ is a unital inclusion.  Suppose $\mathcal{H}$ is a $\mathcal{A}$-$\mathcal{B}$-correspondence and $\xi \in \mathcal{H}$ is a $\mathcal{B}$-central unit vector.  If the representation $\pi: \mathcal{A} \to \mathcal{L}(\mathcal{H})$ is injective and if $\mathcal{A} \xi$ is dense in $\mathcal{H}$, then the non-degeneracy condition \eqref{eq:nondegeneracy} holds and hence $(\mathcal{A},E)$ is a $\mathcal{B}$-valued probability space.
\end{lemma}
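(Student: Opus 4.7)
The plan is to unfold $E$ as the inner product against $\xi$, use the $*$-structure and adjointability of the representation $\pi$ to convert the hypothesis into a statement about ``matrix coefficients'' $\ip{\pi(a_1)\xi, \pi(a)\pi(a_2)\xi}$, and then invoke density of $\mathcal{A}\xi$ together with injectivity of $\pi$ to conclude $a=0$. This is essentially the standard GNS-style reasoning.

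Concretely, I would start with an arbitrary $a \in \mathcal{A}$ satisfying $E[a_1 a a_2] = 0$ for all $a_1, a_2 \in \mathcal{A}$ and rewrite this, using $E[\,\cdot\,] = \ip{\xi,\,\cdot\,\xi}$ and the fact that $\pi$ is a $*$-homomorphism, as
\[
\ip{\xi, \pi(a_1)\pi(a)\pi(a_2)\xi} = 0 \quad \text{for all } a_1, a_2 \in \mathcal{A}.
\]
Since $\pi(a_1) \in \mathcal{L}(\mathcal{H})$ is adjointable with $\pi(a_1)^* = \pi(a_1^*)$, I can move $\pi(a_1)$ across the inner product to get $\ip{\pi(a_1^*)\xi, \pi(a)\pi(a_2)\xi} = 0$. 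Because $\mathcal{A}$ is closed under $*$, replacing $a_1$ by $a_1^*$ gives the equivalent statement
\[
\ip{\pi(a_1)\xi, \pi(a)\pi(a_2)\xi} = 0 \quad \text{for all } a_1, a_2 \in \mathcal{A}.
\]

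Next, I would promote this equality from vectors in $\mathcal{A}\xi$ to arbitrary vectors in $\mathcal{H}$. The boundedness of $\pi(a)$, combined with the fact that the $\mathcal{B}$-valued inner product is continuous in each argument (via the Cauchy--Schwarz-type bound noted immediately after the definition of the semi-inner product), together with the density of $\mathcal{A}\xi$ in $\mathcal{H}$, yields $\ip{h, \pi(a) k} = 0$ for all $h, k \in \mathcal{H}$. Setting $h = \pi(a)k$ then gives $\ip{\pi(a)k, \pi(a)k} = 0$, so $\pi(a)k = 0$ for every $k \in \mathcal{H}$, i.e.\ $\pi(a) = 0$ in $\mathcal{L}(\mathcal{H})$. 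Injectivity of $\pi$ now forces $a = 0$, which is precisely condition \eqref{eq:nondegeneracy}; combined with Lemma \ref{lem:unitvector}, this shows $(\mathcal{A},E)$ is a $\mathcal{B}$-valued probability space.

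There is no real obstacle here: the argument is routine and essentially a repetition of the classical GNS manipulation, with the mild caveat that one should verify the density/continuity step for the $\mathcal{B}$-valued inner product rather than a scalar one. This verification is immediate from the semi-norm bound $\norm{\ip{h,k}}_{\mathcal{B}} \le \norm{h}\,\norm{k}$ recorded just before the definition of right Hilbert $\mathcal{B}$-module.
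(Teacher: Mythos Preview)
Your proof is correct and follows essentially the same approach as the paper's: rewrite the vanishing of $E[a_1 a a_2]$ as $\ip{a_1\xi, a a_2\xi} = 0$, use density of $\mathcal{A}\xi$ to conclude $\pi(a) = 0$, and then apply injectivity of $\pi$. The paper's version is more terse, but the logical content is identical.
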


\begin{proof}
Let $a \in \mathcal{A}$.  If $\ip{\xi, a_1aa_2 \xi} = 0$ for all $a_1$, $a_2$, then we have $\ip{a_1 \xi, a a_2\xi} = 0$ for all $a_1$ and $a_2$.  Since $\mathcal{A} \xi$, is dense, it follows that $\pi(a) = 0$.  Thus, $a = 0$ by assumption.
\end{proof}

We need one more fact for our construction of product spaces.  If $\mathcal{H}$ is a $\mathcal{A}$-$\mathcal{B}$-correspondence and $\mathcal{K} \subseteq \mathcal{H}$ is an $\mathcal{A}$-$\mathcal{B}$-submodule, then the orthogonal complement $\mathcal{K}^\perp = \{h: \ip{h,k} = 0 \text{ for all } k \in \mathcal{K}\}$ is also a Hilbert $\mathcal{A}$-$\mathcal{B}$-submodule, but $\mathcal{K} + \mathcal{K}^\perp$ might not span all of $\mathcal{H}$.  However, we do have a such a decomposition in the special case where $\mathcal{K}$ is the span of a $\mathcal{B}$-central unit vector in a $\mathcal{B}$-$\mathcal{B}$-correspondence.  The following lemma is proved as in \cite[Proof of Remark 3.3]{PV2013}.

\begin{lemma} \label{lem:orthocomplement}
Let $\mathcal{H}$ be a $\mathcal{B}$-$\mathcal{B}$-correspondence and $\xi$ a $\mathcal{B}$-central unit vector.  Let $\mathcal{H}^\circ = \{h \in \mathcal{H}: \ip{h,\xi} = 0\}$.  Then we have $\mathcal{H} = \mathcal{B} \xi \oplus \mathcal{H}^\circ$ as $\mathcal{B}$-$\mathcal{B}$-correspondences.
\end{lemma}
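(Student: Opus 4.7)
The plan is to construct the orthogonal projection onto $\mathcal{B}\xi$ by the explicit formula $Ph := \ip{\xi,h}\xi$, set $h^\circ := h - Ph$, and then verify in turn that $h^\circ \in \mathcal{H}^\circ$, that the decomposition $h = Ph + h^\circ$ is $\mathcal{B}$-orthogonal, and that both summands are closed $\mathcal{B}$-$\mathcal{B}$-subcorrespondences.

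The main computation is $\ip{h^\circ,\xi} = 0$. Properties (1) and (2) of the semi-inner product together give the identity $\ip{h_1 b, h_2} = b^* \ip{h_1, h_2}$ for $b \in \mathcal{B}$, and combining this with the centrality relation $\ip{\xi,h}\xi = \xi\ip{\xi,h}$ yields
\[
\ip{\ip{\xi,h}\xi,\xi} = \ip{\xi\,\ip{\xi,h},\xi} = \ip{\xi,h}^* \ip{\xi,\xi} = \ip{\xi,h}^* = \ip{h,\xi}.
\]
Hence $\ip{h^\circ,\xi} = \ip{h,\xi} - \ip{h,\xi} = 0$, and the same identity gives $\ip{b\xi,h^\circ} = b^*\ip{\xi,h^\circ} = 0$ for every $b \in \mathcal{B}$, establishing $\mathcal{B}$-orthogonality of the decomposition.

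Next I would check that both summands are closed sub-correspondences. Because $\xi$ is central, $\mathcal{B}\xi = \xi\mathcal{B}$ is invariant under both $\mathcal{B}$-actions; the map $b \mapsto b\xi$ is isometric since $\ip{b_1\xi,b_2\xi} = b_1^* b_2$, so $\mathcal{B}\xi$ is complete and hence closed. The subspace $\mathcal{H}^\circ$ is closed as the kernel of the continuous functional $h \mapsto \ip{h,\xi}$; it is right-invariant because $\ip{hb,\xi} = b^*\ip{h,\xi} = 0$; and it is left-invariant because the left action of $\mathcal{B}$ comes from a $*$-homomorphism into $\mathcal{L}(\mathcal{H})$, so $\ip{bh,\xi} = \ip{h,b^*\xi} = \ip{h,\xi b^*} = \ip{h,\xi}\, b^* = 0$.

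Finally, uniqueness follows from $\mathcal{B}\xi \cap \mathcal{H}^\circ = 0$: if $b\xi \in \mathcal{H}^\circ$, then $0 = \ip{b\xi,\xi} = b^* \ip{\xi,\xi} = b^*$, so $b = 0$. Combined with the existence of the decomposition $h = Ph + h^\circ$, this gives $\mathcal{H} = \mathcal{B}\xi \oplus \mathcal{H}^\circ$ as claimed. There is no real obstacle here; the argument mirrors the familiar scalar Hilbert-space projection onto a unit vector, and the only care needed is to track the conjugate-right-linearity of the inner product in the first slot and to invoke the centrality relation $b\xi = \xi b$ at the right moments.
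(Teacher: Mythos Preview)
Your proof is correct. The paper does not give its own argument for this lemma; it simply cites \cite[Proof of Remark 3.3]{PV2013}, and the projection $h \mapsto \ip{\xi,h}\xi$ together with the verifications you carry out is exactly the standard route one finds there, so your approach matches the intended one.
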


\subsection{$\mathcal{B}$-valued Laws}

The law of a self-adjoint random variable $X$ in a $\mathcal{B}$-valued probability space is defined as follows, as in \cite{Voiculescu1995}, \cite{PV2013}, \cite{AW2016}.  As motivation, recall that classically the law of a bounded real random variable $X$ is completely captured by its moments, or in other words by the map $\C[x] \to \C$ given by $p \mapsto E[p(X)]$.

\begin{definition}
Let $\mathcal{B}$ be a unital $\mathrm{C}^*$-algebra.  We define the \emph{non-commutative polynomial algebra} $\mathcal{B}\ip{X}$ to be the universal unital $*$-algebra generated by $\mathcal{B}$ and a self-adjoint indeterminate $X$.  As a vector space, $\mathcal{B}\ip{X}$ is spanned by the non-commutative monomials $b_0 X b_1 \dots X b_k$ for $k \geq 0$ and $b_j \in \mathcal{B}$.  Note that $\mathcal{B} \subseteq \mathcal{B}\ip{X}$ as unital $*$-algebras and in particular $\mathcal{B}\ip{X}$ is a $\mathcal{B}$-$\mathcal{B}$-bimodule.
\end{definition}

\begin{definition}
Let $Y$ be a self-adjoint random variable in the $\mathcal{B}$-valued probability space $(\mathcal{A},E)$.  Then the \emph{law of $Y$} is the map $\mathcal{B}\ip{X} \to \mathcal{B}$ given by $p \mapsto E[p(Y)]$.  More generally, suppose that $\mathcal{B} \subseteq \mathcal{A}$ unitally, $\Phi: \mathcal{A} \to \mathcal{B}$ is completely positive, and $Y \in \mathcal{A}$ is self-adjoint.  Then the \emph{law of $Y$} is the map $p \mapsto E[p(Y)]$.
\end{definition}

There is an abstract description of the maps $\mathcal{B}\ip{X} \to \mathcal{B}$ which can be realized as the law of some self-adjoint element $Y$ as above.

\begin{definition}
We say that $\sigma: \mathcal{B}\ip{X} \to \mathcal{B}$ is \emph{completely positive} if for every $n \geq 1$, for every $P(X) \in M_n(\mathcal{B}\ip{X})$, we have $\sigma^{(n)}(P(X)^*P(X)) \geq 0$ in $M_n(\mathcal{B})$.
\end{definition}

\begin{definition} \label{def:expbound}
We say that $\sigma: \mathcal{B}\ip{X} \to \mathcal{B}$ is \emph{exponentially bounded} if there exists $M$ and $R > 0$ such that
\[
\norm{\sigma(b_0 X b_1 \dots X b_\ell)} \leq M R^\ell \norm{b_0} \dots \norm{b_\ell} \text{ for all } \ell \geq 0 \text{ and } b_j \in \mathcal{B}.
\]
We denote by $\rad(\sigma)$ the infimum of all values of $R$ such that this inequality holds for some $M$.
\end{definition}

If $\sigma$ is the law of a self-adjoint random variable $Y$ in $(\mathcal{A},E)$, then $\sigma$ is completely positive, exponentially bounded, unital, and $\mathcal{B}$-$\mathcal{B}$-bimodular.  More generally, if $\sigma$ is the distribution of a self-adjoint element $Y$ with respect to completely positive map $\Phi: \mathcal{A} \to \mathcal{B}$, then $\sigma$ is completely positive and exponentially bounded.  The exponential bound is given explicitly by
\begin{align*}
\norm{\sigma(b_0 X b_1 \dots X b_\ell)} &= \norm{\Phi(b_0 Y b_1 \dots Y b_\ell)} \\
&\leq \norm{\Phi(1)} \norm{Y}^\ell \norm{b_0} \dots \norm{b_\ell} \\
&= \norm{\sigma(1)} \norm{Y}^\ell \norm{b_0} \dots \norm{b_\ell},
\end{align*}
so that $\rad(\sigma) \leq \norm{Y}$.

Conversely, the next result shows that every completely positive and exponentially bounded $\sigma: \mathcal{B}\ip{X} \to \mathcal{B}$ can be realized as the distribution of some self-adjoint element.  This is an adaptation of \cite[Proposition 1.2]{PV2013} and Williams \cite[Proposition 2.9]{Williams2017}.  However, we do not assume that $\sigma|_{\mathcal{B}} = \id$ and we give a sharper bound on the operator norm.  Thus, for completeness, we include some details of the proof.

\begin{theorem} \label{thm:realizationoflaw}
Let $\sigma: \mathcal{B}\ip{X} \to \mathcal{B}$ be completely positive and exponentially bounded.  Then there exists a unital $\mathrm{C}^*$-algebra $\mathcal{A}$ which contains $\mathcal{B}$ unitally, a completely positive map $\Phi: \mathcal{A} \to \mathcal{B}$, and a self-adjoint $Y \in \mathcal{A}$ such that $\norm{Y} = \rad(\sigma)$ and $\sigma$ is the distribution of $Y$ with respect to $\Phi$.  Furthermore, if $\sigma|_{\mathcal{B}} = \id$, then $(\mathcal{A},\Phi)$ can be chosen to be a $\mathcal{B}$-valued probability space (and in particular, $\sigma$ must be a $\mathcal{B}$-$\mathcal{B}$-bimodule map).
\end{theorem}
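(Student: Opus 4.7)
The proof is a $\mathcal{B}$-valued GNS construction applied to the polynomial bimodule $\mathcal{B}\langle X\rangle$, in which the exponential bound on $\sigma$ is converted into an operator-norm bound on the image of $X$ via an iterated Cauchy-Schwarz bootstrap.

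First, endow $\mathcal{B}\langle X\rangle$ with the $\mathcal{B}$-valued semi-inner product $\langle p, q\rangle := \sigma(p^* q)$, which is positive semi-definite by complete positivity of $\sigma$. Let $\mathcal{H}$ denote the separation-completion and $\xi \in \mathcal{H}$ the class of $1$. Left multiplication by $b \in \mathcal{B}$ descends to a bounded adjointable operator $\pi_0(b) \in \mathcal{L}(\mathcal{H})$: the map $a \mapsto \sigma(p^* a p)$ is positive on $\mathcal{B}$, so $\sigma(p^* b^* b p) \leq \|b\|^2\, \sigma(p^* p)$, while the identity $\langle bp, q\rangle = \langle p, b^* q\rangle$ shows the adjoint is $\pi_0(b^*)$. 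Thus $\pi_0 : \mathcal{B} \to \mathcal{L}(\mathcal{H})$ is a $*$-representation.

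The heart of the argument is to show that left multiplication by $X$ descends to a self-adjoint operator $Y \in \mathcal{L}(\mathcal{H})$ with $\|Y\| \leq \rad(\sigma)$. Fix $R > \rad(\sigma)$. The $\mathcal{B}$-valued Cauchy-Schwarz inequality applied to $u = p$ and $v = X^{2k} p$ gives
\[
\bigl\|\sigma(p^* X^{2k} p)\bigr\|^2 \leq \bigl\|\sigma(p^* p)\bigr\|\, \cdot \bigl\|\sigma(p^* X^{4k} p)\bigr\|.
\]
Iterating this with $k = 1, 2, 4, \dots, 2^{n-1}$ and composing the resulting bounds yields the bootstrap estimate
\[
\bigl\|\sigma(p^* X^2 p)\bigr\| \leq \bigl\|\sigma(p^* p)\bigr\|^{1 - 1/2^n}\, \bigl\|\sigma(p^* X^{2^{n+1}} p)\bigr\|^{1/2^n}.
\]
Writing $p$ as a finite sum of monomials and applying the exponential bound termwise yields $\|\sigma(p^* X^{2^{n+1}} p)\| \leq C_p R^{2^{n+1}}$ for a constant $C_p$ depending on $p$ but not on $n$. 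Taking $2^n$-th roots and sending $n \to \infty$ produces $\|Yp\xi\|^2 = \|\sigma(p^* X^2 p)\| \leq R^2 \|\sigma(p^* p)\| = R^2 \|p\xi\|^2$ on the dense image of $\mathcal{B}\langle X\rangle$ in $\mathcal{H}$. Hence $Y$ extends to $\mathcal{L}(\mathcal{H})$ with $\|Y\| \leq R$, and since $R > \rad(\sigma)$ is arbitrary, $\|Y\| \leq \rad(\sigma)$.

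Finally, let $\mathcal{A} \subseteq \mathcal{L}(\mathcal{H})$ be the unital $\mathrm{C}^*$-algebra generated by $\pi_0(\mathcal{B})$ and $Y$, and set $\Phi(a) := \langle \xi, a \xi\rangle$, which is completely positive. By construction $\Phi(b_0 Y b_1 \cdots Y b_\ell) = \sigma(b_0 X b_1 \cdots X b_\ell)$, so $\sigma$ is the distribution of $Y$ with respect to $\Phi$; the reverse inequality $\rad(\sigma) \leq \|Y\|$ follows from the operator-norm computation displayed in the excerpt, giving $\|Y\| = \rad(\sigma)$. If additionally $\sigma|_{\mathcal{B}} = \id$, then $\langle \xi, b\xi\rangle = b$ for all $b \in \mathcal{B}$, so Lemma \ref{lem:unitvector} shows $\xi$ is a $\mathcal{B}$-central unit vector; since $\mathcal{A}\xi$ is dense in $\mathcal{H}$ by construction and the tautological representation of $\mathcal{A}$ on $\mathcal{H}$ is faithful, Lemma \ref{lem:faithfulness} promotes $(\mathcal{A}, \Phi)$ to a $\mathcal{B}$-valued probability space. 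The main obstacle is the sharp bound $\|Y\| \leq \rad(\sigma)$: exponential boundedness controls moments pointwise, but upgrading this to a uniform operator bound requires the bootstrap, which succeeds precisely because the $2^n$-th root in the limit absorbs both the exponent $2^{n+1}$ on $R$ and the $p$-dependent constant $C_p$.
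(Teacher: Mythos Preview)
Your overall strategy is sound and your bootstrap argument for the norm bound is correct, but there is a gap in the module construction. You define the semi-inner product $\langle p,q\rangle := \sigma(p^*q)$ directly on $\mathcal{B}\langle X\rangle$, but for this to be a $\mathcal{B}$-valued semi-inner product in the sense of the paper you need $\sigma(p^* q\, b) = \sigma(p^* q)\, b$ for all $b\in\mathcal{B}$, i.e.\ that $\sigma$ is right $\mathcal{B}$-modular. This is not part of the hypotheses; it is only guaranteed in the special case $\sigma|_{\mathcal{B}}=\id$ (and is in fact a consequence there, as the theorem notes). The paper avoids this by working on the interior tensor product $\mathcal{B}\langle X\rangle \otimes_\sigma \mathcal{B}$ with $\langle p\otimes b,\,p'\otimes b'\rangle = b^*\sigma(p^*p')\,b'$, where the right $\mathcal{B}$-action is on the second factor and right $\mathcal{B}$-linearity is automatic. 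Your Cauchy--Schwarz bootstrap transports without change to that module (apply it to finite sums $h=\sum_i p_i\otimes b_i$, using $\langle X^{2k}h, X^{2k}h\rangle = \langle h, X^{4k}h\rangle$ at the algebraic level), so the fix is cheap, but as written the first paragraph does not produce a right Hilbert $\mathcal{B}$-module in general.

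On the key estimate $\|Y\|\le\rad(\sigma)$ you take a genuinely different route from the paper. The paper builds an auxiliary Banach $*$-algebra completion $\mathcal{B}\langle\!\langle X\rangle\!\rangle_{R_0}$ in which the power series for $\sqrt{R^2-X^2}$ converges, writes $R^2-X^2 = g(X)^*g(X)$ there, and concludes $\langle h,(R^2-X^2)h\rangle\ge 0$. Your iterated Cauchy--Schwarz $\|\sigma(p^*X^2p)\| \le \|\sigma(p^*p)\|^{1-1/2^n}\,\|\sigma(p^*X^{2^{n+1}}p)\|^{1/2^n}$ together with the exponential bound $\|\sigma(p^*X^{2^{n+1}}p)\|\le C_p R^{2^{n+1}}$ is more elementary---it needs only the $2\times 2$ positivity coming from complete positivity of $\sigma$, no auxiliary completion---and the limit $n\to\infty$ cleanly kills the $p$-dependent constant. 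The paper's approach, on the other hand, is more structural: it exhibits $R^2-X^2$ as an honest square in a $*$-algebra containing $\mathcal{B}\langle X\rangle$, which makes positivity manifest and generalizes readily to other functional-calculus arguments. Both are valid and yield the same sharp bound.
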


\begin{proof}
We define $\mathcal{B}\ip{X} \otimes_\sigma \mathcal{B}$ to be the right Hilbert $\mathcal{B}$-module which is the separation-completion of $\mathcal{B}\ip{X} \otimes_{\alg} \mathcal{B}$ with respect to the semi-inner product
\[
\ip{p(X) \otimes b, p'(X) \otimes b'} = b^* \sigma(p(X)^* p'(X)) b'.
\]
The positivity of this semi-inner product follows from complete positivity of $\sigma$ as in \cite[Proposition 4.5]{Lance1995}.

We claim that for each $p(X) \in \mathcal{B}\ip{X}$, the left multiplication by $p(X)$ defines an operator in $\mathcal{L}(\mathcal{B}\ip{X} \otimes_\sigma \mathcal{B})$.  By taking sums and products, it suffices to prove the case where $p(X) = b \in \mathcal{B}$ or $p(X) = X$.  For the easier case $p(X) = b$, we refer to the references cited above or to \cite[Theorem 5.2]{Paschke1973}.

To show that multiplication by $X$ yields a well-defined bounded operator $Y$ on $\mathcal{B}\ip{X} \otimes_\sigma \mathcal{B}$ with $\norm{Y} \leq \rad(\sigma)$, it suffices to show that for every $R > \rad(\sigma)$ and $h \in \mathcal{B}\ip{X} \otimes_{\alg} \mathcal{B}$, we have
\begin{equation} \label{eq:multiplicationclaim}
\ip{h, (R^2 - X^2) h} \geq 0.
\end{equation}
We want to write $R^2 - X^2$ as $g(X)^* g(X)$ for some function $g(X)$.  We will define $g(X)$ using the power series of $\sqrt{R^2 - X^2}$; we will show that this makes sense in a certain analytic completion of $\mathcal{B}\ip{X}$, defined as follows.

Fix $R > \rad(\sigma)$ and choose $R_0$ such that $R > R_0 > \rad(\sigma)$.  For a monomial $b_0 X b_1 \dots X b_\ell$, define
\[
\mathfrak{p}_{R_0}(b_0 X b_1 \dots X b_\ell) = R_0^\ell \norm{b_0} \dots \norm{b_\ell}.
\]
For $f \in \mathcal{B}\ip{X}$, define
\[
\norm{f}_{R_0} = \inf \left \{ \sum_{j=1}^n \mathfrak{p}_{R_0}(f_j): f_j \text{ monomials and } f = \sum_{j=1}^n f_j \right\}
\]
One can check that $\norm{\cdot}_{R_0}$ is a norm, $\norm{f_1 f_2}_{R_0} \leq \norm{f_1}_{R_0} \norm{f_2}_{R_0}$, and $\norm{f^*}_{R_0} = \norm{f}_{R_0}$.  Hence, the completion of $\mathcal{B}\ip{X}$ with respect to this norm, which we denote by $\mathcal{B}\ip{\ip{X}}_{R_0}$, is a Banach $*$-algebra.

Because $\sigma$ is exponentially bounded with $\rad(\sigma) < R_0$, there exists an $M > 0$ such that $\norm{\sigma(b_0 X b_1 \dots X b_\ell)} \leq M R_0^\ell \norm{b_0} \dots \norm{b_\ell}$.  Hence, for $f \in \mathcal{B}\ip{X}$, we have $\norm{\sigma(f)} \leq M \norm{f}_{\mathfrak{p}}$.  Therefore, $\sigma$ extends uniquely to a bounded map $\mathcal{B}\ip{\ip{X}}_{R_0} \to \mathcal{B}$, and this extended map is completely positive.  Similarly, for every $h \in \mathcal{B}\ip{X} \otimes_{\alg} \mathcal{B}$, the map $f \mapsto \ip{h, f(X)h}$ extends to be bounded on $\mathcal{B}\ip{\ip{X}}_{R_0}$.

Fix $R > R_0$.  Let $\sum_{j=0}^\infty \alpha_j x^j$ be the power series expansion of the (scalar-valued) function $\sqrt{R^2 - x^2}$ about the point zero.  The radius of convergence of this series is $R$.  Since $R_0 < R$, it follows that  $g(X) = \sum_{j=0}^\infty \alpha_j X^j$ converges absolutely in $\mathcal{B}\ip{\ip{X}}_{R_0}$.  Because $\mathcal{B}\ip{\ip{X}}_{R_0}$ is a Banach $*$-algebra, we may compute the square of the absolutely convergent series $g(X)$ by multiplying it out term by term.  It follows that $g(X)^*g(X) = g(X)^2 = R^2 - X^2$ and hence for $h \in \mathcal{B}\ip{X} \otimes_{\alg} \mathcal{B}$,
\[
\ip{h, (R^2 - X^2) h} = \ip{h, g(X)^2 h} = \ip{g(X) h, g(X) h} \geq 0.
\]
Therefore, the operator $Y$ of multiplication by $X$ is well-defined and bounded on $\mathcal{B}\ip{X} \otimes_\sigma \mathcal{B}$ with $\norm{Y} \leq \rad(\sigma)$.  The opposite inequality $\rad(\sigma) \leq \norm{Y}$ is immediate.  The self-adjointness (hence adjointability) of $Y$ follows by direct computation.

Thus, we can take $\mathcal{A}$ to be the $\mathrm{C}^*$-subalgebra of $\mathcal{L}(\mathcal{B}\ip{X} \otimes_\sigma \mathcal{B})$ generated by $Y$ and $\mathcal{B}$, let $\xi = [1 \otimes 1] \in \mathcal{B}\ip{X} \otimes \mathcal{B}$, and take $\Phi: \mathcal{A} \to \mathcal{B}$ to be the map $\Phi(a) = \ip{\xi, a \xi}$.

If $\sigma|_{\mathcal{B}} = \id$, then we have $\ip{\xi, b\xi} = b$ for every $b$ and hence by Lemma \ref{lem:unitvector}, $\Phi$ is a $\mathcal{B}$-valued expectation.  To show that $(\mathcal{A},\Phi)$ is a $\mathcal{B}$-valued probability space is suffices by Lemma \ref{lem:faithfulness} to show that $\mathcal{A} \xi$ is dense in $\mathcal{B}\ip{X} \otimes_\sigma \mathcal{B}$.  By construction, vectors for the form $f(X) \otimes b$ are dense in $\mathcal{B}\ip{X} \otimes_\sigma \mathcal{B}$, but since $b \xi = \xi b$, we have $f(X) \otimes b = f(Y) \xi b = f(Y)b \xi \in \mathcal{A}\xi$.
\end{proof}

\begin{remark}
If we assume that $\sigma$ is unital and $\mathcal{B}$-$\mathcal{B}$-bimodular, then we can replace $\mathcal{B}\ip{X} \otimes \mathcal{B}$ with the module $L^2(\mathcal{B}\ip{X},\sigma)$ which is defined to be the completion of $\mathcal{B}\ip{X}$ with respect to $\ip{p(X), p'(X)} = \sigma(p(X)^* p'(X))$.  Moreover, we have $\mathcal{B}\ip{X} \otimes_\sigma \mathcal{B} \cong L^2(\mathcal{B}\ip{X},\sigma)$ in this case.
\end{remark}

The special case where $\mu$ is a unital $\mathcal{B}$-$\mathcal{B}$-bimodule map is essential for the rest of the paper, and we therefore introduce the following notation.

\begin{definition} \label{def:laws}
A \emph{(bounded) $\mathcal{B}$-valued law} is a unital, completely positive, exponentially bounded, $\mathcal{B}$-$\mathcal{B}$-bimodule map $\mu: \mathcal{B}\ip{X} \to \mathcal{B}$.  We denote the set of such laws by $\Sigma(\mathcal{B})$.  We also denote $\Sigma_R(\mathcal{B}) = \{\mu \in \Sigma(\mathcal{B}): \rad(\mu) \leq R\}$.
\end{definition}

We caution that some authors do not include the assumption of exponential boundedness in their definition of $\Sigma(\mathcal{B})$.  A topology on $\Sigma_R(\mathcal{B})$ of convergence of moments will be discussed in \S \ref{subsec:continuity}.

\section{$\mathcal{T}$-free Products and Convolutions} \label{sec:constructconvolution}

\subsection{Definitions}

\begin{definition}
For $N \in \N$, let $[N] = \{1,\dots,N\}$.  A \emph{string} on the alphabet $[N]$ is a finite sequence $j_1 \dots j_\ell$ with $j_i \in [N]$.  We denote by the $i$th letter of a string $s$ by $s(i)$.  Given two strings $s_1$ and $s_2$, we denote their concatenation by $s_1 s_2$.
\end{definition}

\begin{definition} \label{def:string}
A string is called \emph{alternating} if $j_i \neq j_{i+1}$ for every $i \in \{1,\dots,\ell-1\}$.  For a string $s$, we define the \emph{alternating reduction} $\red(s)$ to be the alternating string obtained by replacing consecutive occurrences of the same letter by a single occurrence of that letter; for instance,
\[
\red(112331) = 1231, \qquad \red(1221311) = 12131.
\]
\end{definition}

\begin{definition} \label{def:stringtree}
Let $\mathcal{T}_{N,\free}$ be the (simple) graph whose vertices are the alternating strings on the alphabet $[N]$ and where the edges are given by $s \sim j s$ for every letter $j$ and every string $s$ that does not begin with $j$.  Note that $\mathcal{T}_{N,\free}$ is an infinite $N$-regular tree.  We denote the empty string by $\emptyset$, and we view $\emptyset$ as the preferred root vertex of the graph $\mathcal{T}_{N,\free}$.
\end{definition}

\begin{definition}
We denote by $\Tree(N)$ the set of rooted subtrees of $\mathcal{T}_{N,\free}$ (that is, connected subgraphs containing the vertex $\emptyset$).  We denote by $\Tree'(N)$ the set of rooted subtrees that contain all of the singleton strings $1$, \dots, $N$.  Note that if $\mathcal{T} \in \Tree(N)$, then the edge set is uniquely determined by the vertex set and vice versa.  Thus, we may treat $\mathcal{T}$ merely as a set of vertices when it is notationally convenient.
\end{definition}

Let $\mathcal{T} \in \Tree(N)$, and let $(\mathcal{H}_1,\xi_1)$, \dots, $(\mathcal{H}_N,\xi_N)$ be $\mathcal{B}$-$\mathcal{B}$-correspondences with $\mathcal{B}$-central unit vectors (using the terminology established in Definition \ref{def:bimodule} and Lemma \ref{lem:unitvector}).  We will describe how to define a $\mathcal{B}$-$\mathcal{B}$-correspondence with $\mathcal{B}$-central unit vector
\[
(\mathcal{H},\xi) = \assemb_\mathcal{T}[(\mathcal{H}_1,\xi_1), \dots, (\mathcal{H}_N,\xi_N)]
\]
together with inclusion maps
\[
\lambda_{\mathcal{T},j}: \mathcal{L}(\mathcal{H}_j) \to \mathcal{L}(\mathcal{H}).
\]
Let $\mathcal{H}_j^\circ$ be the orthogonal complement of $\xi_j$ (see Lemma \ref{lem:orthocomplement}) and for a string $s = j_1 \dots j_\ell \in \mathcal{T}_{N,\free}$, define (by Construction \ref{const:bimoduletensorproduct})
\[
\mathcal{H}_s^\circ = \begin{cases} \mathcal{B}, & \ell = 0, \\ \mathcal{H}_{j_1}^\circ \otimes_{\mathcal{B}} \dots \otimes_{\mathcal{B}} \mathcal{H}_{j_\ell}^\circ, & \text{otherwise.} \end{cases}
\]
Note that $\mathcal{H}_{s_1 s_2}^\circ \cong \mathcal{H}_{s_1}^\circ \otimes_{\mathcal{B}} \mathcal{H}_{s_2}^\circ$.  Now we define
\[
\mathcal{H} = \bigoplus_{s \in \mathcal{T}} \mathcal{H}_s^\circ,
\]
where the sum is taken over all vertices $j_1 \dots j_\ell$ of $\mathcal{T}$.

In order to define $\lambda_{\mathcal{T},j}$, let us denote
\begin{align}
S_{\mathcal{T},j} &= \{s \in \mathcal{T}: s(1) \neq j, js \in \mathcal{T}\} \nonumber \\
S_{\mathcal{T},j}' &= \{s \in \mathcal{T}: s(1) \neq j, js \not \in \mathcal{T}\}. \label{eq:defineSGJ}
\end{align}
Every vertex of $\mathcal{T}$ is either in $S_{\mathcal{T},j}$, in $S_{\mathcal{T},j}'$, or it is the concatenation of $j$ with an element of $S_{\mathcal{T},j}$.  Therefore, we have
\[
\mathcal{H} \cong \bigoplus_{s \in S_{\mathcal{T},j}} (\mathcal{H}_s^\circ \oplus \mathcal{H}_{js}^\circ) \oplus \bigoplus_{s \in S_{\mathcal{T},j}'} \mathcal{H}_s^\circ.
\]
Noting that
\[
\mathcal{H}_s^\circ \oplus \mathcal{H}_{js}^\circ \cong (\mathcal{B} \oplus \mathcal{H}_j^\circ) \otimes_{\mathcal{B}} \mathcal{H}_s^\circ \cong \mathcal{H}_j \otimes_{\mathcal{B}} \mathcal{H}_s^\circ,
\]
we have a unitary isomorphism
\[
U_{\mathcal{T},j}: \mathcal{H} \to \left[ \mathcal{H}_j \otimes \left( \bigoplus_{s \in S_{\mathcal{T},j}} \mathcal{H}_s^\circ \right) \right] \oplus \left( \bigoplus_{s S_{\mathcal{T},j}} \mathcal{H}_s^\circ \right).
\]
We define
\[
\lambda_{\mathcal{T},j}(x) = U_{\mathcal{T},j}^*([x \otimes \id] \oplus 0)U_{\mathcal{T},j} \text{ for } x \in \mathcal{L}(\mathcal{H}_j).
\]
The map $x \otimes \id$ is well-defined because $\mathcal{H}_j$ is an $\mathcal{L}(\mathcal{H}_j)$-$\mathcal{B}$-correspondence, and hence the left action of $\mathcal{L}(\mathcal{H}_j)$ on $\mathcal{H}_j \otimes \bigoplus_{s \in S_{\mathcal{T},j}} \mathcal{H}_s^\circ$ by $x \otimes \id$ is well-defined and bounded.  Note that $\lambda_{\mathcal{T},j}$ is a $*$-homomorphism which is not necessarily unital.

\begin{definition}
We denote the Hilbert $\mathcal{B}$-$\mathcal{B}$-module $(\mathcal{H},\xi)$ constructed above by
\[
\assemb_{\mathcal{T}}[(\mathcal{H}_1,\xi_1),\dots,(\mathcal{H}_N,\xi_N)],
\]
and we call it the \emph{$\mathcal{T}$-free product} of $(\mathcal{H}_1,\xi_1)$, \dots, $(\mathcal{H}_N,\xi_N)$.
\end{definition}

\begin{definition}
Given algebras $(\mathcal{A}_1,E_1)$, \dots, $(\mathcal{A}_N,E_N)$ and $\mathcal{T} \subseteq \mathcal{T}_{N,\free}$, let $\mathcal{H}_j = L^2(\mathcal{A}_j,E_j)$, let $\xi_j = 1 \in \mathcal{H}_j$, and let $\pi_j: \mathcal{A}_j \to \mathcal{L}(\mathcal{H}_j)$ be the canonical representation as in Construction \ref{const:canonicalrep}.  Let $\mathcal{H}$ be the $\mathcal{T}$-free product of $(\mathcal{H}_1,\xi_1)$, \dots, $(\mathcal{H}_N,\xi_N)$.

Then we define the \emph{$\mathcal{T}$-free product} of $(\mathcal{A}_1,E_1)$, \dots, $(\mathcal{A}_N,E_N)$ as the unital $\mathrm{C}^*$-subalgebra $\mathcal{A}$ of $\mathcal{L}(\mathcal{H})$ generated by the images $\lambda_{\mathcal{T},j} \circ \pi_j(\mathcal{A}_j)$, equipped with the expectation $E$ given by the $\mathcal{B}$-central unit vector $\xi$.  It follows from Lemmas \ref{lem:unitvector} and \ref{lem:faithfulness} that $(\mathcal{A},E)$ is a $\mathcal{B}$-valued probability space.  We denote this $\mathcal{B}$-valued non-commutative probability space by $\assemb_{\mathcal{T}}[(\mathcal{A}_1,E_1), \dots, (\mathcal{A}_N,E_N)]$.
\end{definition}

\begin{definition}
Let $\mu_1$, \dots, $\mu_N \in \Sigma(\mathcal{B})$.  Let $\mathcal{H}_j = L^2(\mathcal{B}\ip{X_j},\mu_j)$ and let $X_j$ be the multiplication operator on $\mathcal{H}_j$.  We define the \emph{additive $\mathcal{T}$-free convolution} of $\mu_1$, \dots, $\mu_N$ as the law of $\lambda_{\mathcal{T},1}(X_1) + \dots + \lambda_{\mathcal{T},N}(X_N)$.
\end{definition}

\begin{remark}
The $\mathcal{T}$-free products $\mathcal{B}$-$\mathcal{B}$-correspondences and of $\mathcal{B}$-valued probability spaces make perfect sense when $[N]$ is replaced by a different alphabet, even an infinite alphabet.  However, as our focus will be on finitary convolution operations, it will be convenient for us always to use the index set $[N]$.
\end{remark}

\begin{remark}
The original presentation in \cite{Liu2018} did not define the space $(\mathcal{H},\xi) = \assemb_{\mathcal{T}}[(\mathcal{H}_1,\xi_1),\dots,(\mathcal{H}_N,\xi_N)]$, but rather defined representations of $\mathcal{L}(\mathcal{H}_j)$ on the free product $\mathrm{C}^*$-correspondence.  If we denote $\tilde{H}$ the free product $\mathrm{C}^*$-correspondence and by $\tilde{\lambda}_j$ the representation defined in \cite{Liu2018}, then $\mathcal{H}$ is a $\mathcal{B}$-$\mathcal{B}$-submodule of the free product $\mathrm{C}^*$-correspondence.  In fact, $\mathcal{H}$ is the cyclic subspace generated by $\xi$ under the actions of $\tilde{\lambda}_j(\mathcal{L}(\mathcal{H}_j))$, and $\lambda_j(x)$ is the restriction of $\tilde{\lambda}_j(x)$ to this submodule.  The definition in \cite{Liu2018} was also phrased in terms of the index sets $S_{\mathcal{T},j} \cup jS_{\mathcal{T},j}$ rather than the tree $\mathcal{T}$.
\end{remark}

\subsection{Examples} \label{subsec:productexamples}

\begin{example}
By taking $\mathcal{T} = \mathcal{T}_{N,\free}$, we obtain the free product with amalgamation and the free convolution of $N$ variables over $\mathcal{B}$.  The free convolution of $\mu_1$, \dots, $\mu_N$ is denoted by $\mu_1 \boxplus \dots \boxplus \mu_N$.  See \cite[\S 5]{Voiculescu1985}, \cite[p.\ 351 - 353]{AGZ2009}.
\end{example}

\begin{example}
By taking $\mathcal{T}_{N,\Bool} = \{\emptyset, 1, 2, \dots, N\}$, we obtain the Boolean product and the Boolean convolution of $N$ variables.  The Boolean convolution is denoted by $\mu_1 \uplus \dots \uplus \mu_N$.  See \cite{Bercovici2006}, \cite[Remark 3.3]{PV2013}.
\end{example}

\begin{example}
Let $\mathcal{T}_{N,\mono}$ be the subtree of $\mathcal{T}_{N,\free}$ whose vertex set consists of all strings which are strictly decreasing (that is, $s_1 > \dots > s_\ell$).  Then we obtain the monotone product and monotone convolution of $N$ variables.  The monotone convolution of $\mu_1$, \dots, $\mu_N$ is denoted by $\mu_1 \rhd \dots \rhd \mu_N$.  See \cite[\S 2]{Muraki2000}, \cite[\S 4.1]{Popa2008a}.
\end{example}

\begin{example}
Symmetrically, the anti-monotone convolution is obtained using the tree consisting of all strings which are strictly \emph{increasing}.  We denote this tree by $\mathcal{T}_{N, \mono \dagger}$ and the monotone convolution of laws by $\mu_1 \lhd \dots \lhd \mu_N$.
\end{example}

\begin{example}
Let $\mathcal{T}_{\orth} \subseteq \mathcal{T}_{2,\free}$ be the subtree with vertex set $\{\emptyset, 1, 21\}$.  Then we obtain the orthogonal convolution $\vdash$.  See \cite[Def.\ 4.2, Thm.\ 4.1]{Lenczewski2007}.
\end{example}

\begin{example}
Let $\mathcal{T}_{\sub} \subseteq \mathcal{T}_{2,\free}$ be the subtree consisting of all strings which do not end with $2$.  Then we obtain the subordination free convolution $\mu \boxright \nu$.  This convolution was introduced by Lenczewski \cite[\S 7]{Lenczewski2007}, who showed the identity $\mu \boxplus \nu = \mu \rhd (\nu \boxright \mu)$, which relates to the analytic subordination property of free convolution.  We will discuss this further in Example \ref{ex:subordination}.
\end{example}

\begin{example} \label{ex:cfree}
In \cite{ABT2019}, the authors define a product operation which takes as input two \emph{pairs} of pointed Hilbert spaces and outputs another pair of pointed Hilbert spaces; this is called the \emph{c-free product} because it relates to c-free independence.  This product operation generalizes without difficulty to the operator-valued setting with $\mathcal{B}$-$\mathcal{B}$-correspondences and to $N$ pairs rather than two pairs of Hilbert spaces.  We can fit the $c$-free product into our framework as follows.  Consider the index set $[2N] = \{1,\dots,2N\}$.  For each $j \in [N]$, let us write $j' = j + N$, and similarly, for a string $s = j_1, \dots j_\ell \in \mathcal{T}_{N,\free}$,  let us write $s' = j_1' \dots j_\ell'$.   Consider pairs
\[
[(\mathcal{H}_j, \xi_j), (\mathcal{H}_{j'}, \xi_{j'})] \text{ for } j = 1, \dots, N.
\]
Then the $c$-free product is the pair
\[
[\assemb_{\mathcal{T}_1}[(\mathcal{H}_j,\xi_j)_{j=1}^{2N}], \assemb_{\mathcal{T}_2}[(\mathcal{H}_j,\xi_j)_{j=1}^{2N}]],
\]
where
\[
\mathcal{T}_1 = \{\emptyset\} \cup \{ s'j: s \in \mathcal{T}_{N,\free} \text{ and } j \in [N] \text{ such that } sj \in \mathcal{T}_{N,\free}\}.
\]
and
\[
\mathcal{T}_2 = \{s': s \in \mathcal{T}_{N,\free} \}.
\]
Thus, for instance, when $N = 2$,
\begin{align*}
\mathcal{T}_1 &= \{\emptyset\} \cup \{1,2'1,1'2'1,2'1'2'1\dots\} \cup \{2, 1'2, 2'1'2,1'2'1'2 \dots\} \\
&= \{\emptyset\} \cup \{1,41,341,4341,\dots\} \cup \{2, 32, 432, 3432, \dots\}
\end{align*}
and $\mathcal{T}_2$ consists of all alternating strings on $\{3,4\}$.  For $S_j \in \mathcal{L}(\mathcal{H}_j)$ and $T_j \in \mathcal{L}(\mathcal{H}_{j'})$, the authors of \cite{ABT2019} define an operator $\Lambda_{(S_j,T_j)}$ on first space in the $c$-free product pair, and this operator in our notation is precisely
\[
\lambda_{\mathcal{T}_1,j}(S_j) + \lambda_{\mathcal{T}_1,j'}(T_j).
\]
\end{example}

We will discuss the free, Boolean, and monotone cases in detail throughout the paper as we develop each aspect of the general theory.  For instance, the moment conditions typically used as the definition of these independences will be discussed in \S \ref{subsec:FBMcombinatorics}.  The associative property of these convolution operations will be discussed in \S \ref{subsec:digraphoperad}.  The cumulants will be discussed in \S \ref{subsec:cumulantexamples}, and infinitely divisible laws and Fock spaces in \S \ref{subsec:infdivexamples}.  We also reference the free, Boolean, monotone, orthogonal, and subordination cases in the examples throughout.

However, we will leave any further discussion of c-free convolution for future work.  Because it is a convolution operation for $N$ pairs of laws rather for $N$ laws, it would be better handled in a modified version of our framework that uses pairs of trees on the alphabet $[2N]$ to convolve $N$ pairs of laws, which we will not develop here, both for the sake of time and to minimize distraction from the main ideas.

The free, Boolean, and monotone cases fit into a general class of examples where $\mathcal{T}$ arises as the set of walks in a simple directed graph with vertex set $[N]$.
 
\begin{definition}
A \emph{simple digraph $G$ on the vertex set $[N]$} is a given by a relation $\sim_G$ on $[N]$ which is irreflexive (that is, $j \not \sim_G j$, or equivalently the relation is a subset of $[N] \times [N]$ that does not intersect the diagonal).  The relation $\sim_G$ is called the \emph{adjacency relation of $G$}.  Each pair $i \sim_G j$ in the relation will be called a \emph{directed edge from $i$ to $j$}.  We denote the set of such digraphs by $\Digraph(N)$.
\end{definition}

\begin{definition}
For a simple digraph $G$, a \emph{walk of length $\ell$} is a sequence of vertices $j_0$, \dots, $j_\ell$ with $j_i \sim_G j_{i+1}$.  We denote by $\Walk(G)$ the subtree of $\mathcal{T}_{N,\free}$ consisting of $\emptyset$ and every string $j_1 \dots j_\ell$ such that $j_\ell$, $j_{\ell-1}$, \dots, $j_0$ is a walk on $G$ (note how the order of indices is reversed).
\end{definition}

For every simple digraph $G$ on $[N]$, we can define the $\Walk(G)$-free product.  For instance,
\begin{itemize}
	\item Let $K_N$ be the complete graph on $[N]$, or in other words the adjacency relation is $\neq$.  Then $\Walk(K_N) = \mathcal{T}_{N,\free}$, which yields the free convolution.
	\item Let $K_N^c$ be the totally disconnected graph on $[N]$, or in other words the adjacency relation is $\varnothing$.  Then $\Walk(K_N^c) = \mathcal{T}_{N,\Bool}$, which yields the Boolean convolution.
	\item Let $K_N^{<}$ be the digraph on $[N]$ where the adjacency relation is given by $<$.  Then $\Walk(G) = \mathcal{T}_{N,\mono}$, which yields the monotone convolution.
	\item The anti-monotone convolution arises from the graph $K_N^{>}$ defined in the symmetrical way.
\end{itemize}
We depict the digraphs for the binary free, Boolean, monotone, and anti-monotone convolution operations in Figure \ref{fig:FBMA}.  Further discussion can be found in \S \ref{subsec:digraphoperad}.

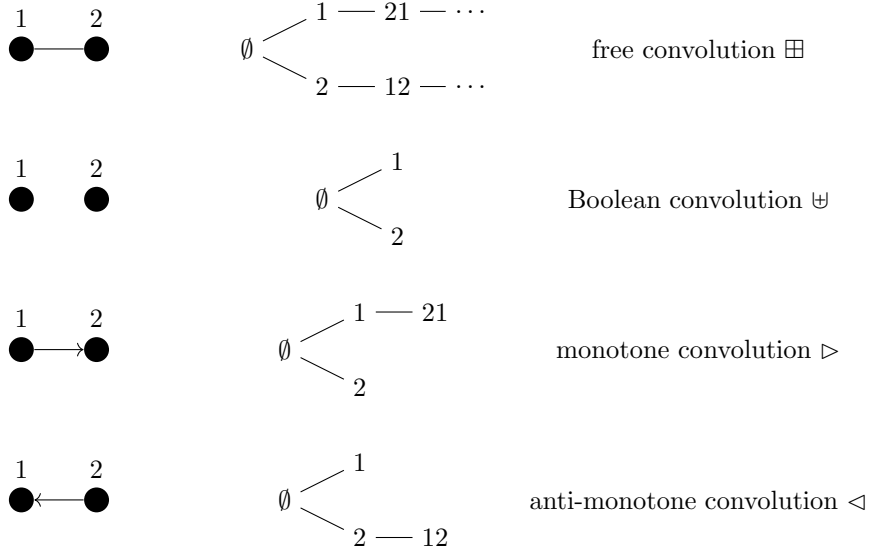
\begin{figure}

\begin{center}
\begin{tikzpicture}

\begin{scope}
	\node[circle,fill,label=above: $1$] (A) at (0,0) {};
	\node[circle,fill,label=above: $2$] (B) at (1,0) {};
	\draw (A) to (B);
	
	\node (0) at (3,0) {$\emptyset$};
	\node (1) at (4,0.5) {$1$};
	\node (2) at (4,-0.5) {$2$};
	\node (21) at (5,0.5) {$21$};
	\node (12) at (5,-0.5) {$12$};
	\node (121) at (6,0.5) {$\dots$};
	\node (212) at (6,-0.5) {$\dots$};

	\draw (0) to (1) to (21) to (121);
	\draw (0) to (2) to (12) to (212);
	
	\node at (9,0) {free convolution $\boxplus$};
\end{scope}

\begin{scope}[shift = {(0,-2)}]
	\node[circle,fill,label=above: $1$] (A) at (0,0) {};
	\node[circle,fill,label=above: $2$] (B) at (1,0) {};
	
	\node (0) at (4,0) {$\emptyset$};
	\node (1) at (5,0.5) {$1$};
	\node (2) at (5,-0.5) {$2$};

	\draw (0) to (1);
	\draw (0) to (2);
	
	\node at (9,0) {Boolean convolution $\uplus$};
\end{scope}

\begin{scope}[shift = {(0,-4)}]
	\node[circle,fill,label=above: $1$] (A) at (0,0) {};
	\node[circle,fill,label=above: $2$] (B) at (1,0) {};
	\draw[->] (A) to (B);
	
	\node (0) at (3.5,0) {$\emptyset$};
	\node (1) at (4.5,0.5) {$1$};
	\node (2) at (4.5,-0.5) {$2$};
	\node (21) at (5.5,0.5) {$21$};

	\draw (0) to (1);
	\draw (0) to (2);
	\draw (1) to (21);
	
	\node at (9,0) {monotone convolution $\rhd$};
\end{scope}

\begin{scope}[shift = {(0,-6)}]
	\node[circle,fill,label=above: $1$] (A) at (0,0) {};
	\node[circle,fill,label=above: $2$] (B) at (1,0) {};
	\draw[<-] (A) to (B);
	
	\node (0) at (3.5,0) {$\emptyset$};
	\node (1) at (4.5,0.5) {$1$};
	\node (2) at (4.5,-0.5) {$2$};
	\node (12) at (5.5,-0.5) {$12$};

	\draw (0) to (1);
	\draw (0) to (2);
	\draw (2) to (12);
	
	\node at (9,0) {anti-monotone convolution $\lhd$};
\end{scope}

\end{tikzpicture}
\end{center}

\caption{The four simple digraphs on $\{1,2\}$ (left), the corresponding trees $\Walk(G)$ (center), and the resulting binary convolution operations (right).} \label{fig:FBMA}

\end{figure}

\subsection{Bounds on the Operator Norm}

We have the following estimate for the norm of a sum of ``$\mathcal{T}$-free independent'' random variables with expectation zero.  This is a generalization of the estimate proved in the free case by \cite[Lemma 3.2]{Voiculescu1986}.

\begin{proposition} \label{prop:operatornormbound}
Let $\mathcal{T} \in \Tree(N)$, suppose that $(\mathcal{H},\xi) = \assemb_{\mathcal{T}}[(\mathcal{H}_1,\xi_1),\dots,(\mathcal{H}_N,\xi_N)]$, and let $\lambda_{\mathcal{T},j}: \mathcal{L}(\mathcal{H}_j) \to \mathcal{L}(\mathcal{H})$ be the corresponding $*$-homomorphism.  Suppose that $a_j \in \mathcal{L}(\mathcal{H}_j)$ with $\ip{\xi_j, a_j \xi_j} = 0$.  Then we have
\[
\norm*{\sum_{j=1}^N \lambda_{\mathcal{T},j}(a_j) } \leq \max_{s \in \mathcal{T}} \norm*{ \sum_{j \in [N]: j s \in \mathcal{T}} \ip{a_j \xi_j, a_j \xi_j} }^{1/2} + \max_{s \in \mathcal{T}} \norm*{ \sum_{j \in [N]: j s \in \mathcal{T}} \ip{a_j^* \xi_j, a_j^* \xi_j} }^{1/2} + \max_j \norm{a_j}.
\]
\end{proposition}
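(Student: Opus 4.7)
The plan is to adapt Voiculescu's creation-annihilation-preservation decomposition from the free case \cite[Lemma 3.2]{Voiculescu1986} to the tree-indexed direct sum $\mathcal{H} = \bigoplus_{s \in \mathcal{T}} \mathcal{H}_s^\circ$.  Let $P_j$ denote the orthogonal projection of $\mathcal{H}_j$ onto $\mathcal{B}\xi_j$ (which exists by Lemma \ref{lem:orthocomplement}), and split
\[
a_j = P_j a_j P_j + a_j^+ + a_j^- + a_j^0, \qquad a_j^+ := P_j^\perp a_j P_j,\ \ a_j^- := P_j a_j P_j^\perp,\ \ a_j^0 := P_j^\perp a_j P_j^\perp.
\]
The hypothesis $\ip{\xi_j, a_j \xi_j} = 0$ forces $P_j a_j P_j = 0$, so by linearity and the triangle inequality it suffices to bound each of $T^\bullet := \sum_j \lambda_{\mathcal{T},j}(a_j^\bullet)$ for $\bullet \in \{+, -, 0\}$ separately and show that the three estimates match the three terms on the right-hand side.

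The main technical step is the bound on the creation part $T^+$.  Unwinding Construction \ref{const:bimoduletensorproduct}, the operator $\lambda_{\mathcal{T},j}(a_j^+)$ sends $v_s \in \mathcal{H}_s^\circ$ to $(a_j\xi_j) \otimes v_s \in \mathcal{H}_j^\circ \otimes_{\mathcal{B}} \mathcal{H}_s^\circ = \mathcal{H}_{js}^\circ$ when $s \in S_{\mathcal{T},j}$, and kills every other summand of $\mathcal{H}$ (using $P_j a_j \xi_j = \xi_j \ip{\xi_j, a_j \xi_j} = 0$, so $P_j^\perp a_j \xi_j = a_j \xi_j$).  Since $j$ is recovered from the nonempty string $js$ as its first letter, distinct pairs $(j, s)$ produce contributions in mutually orthogonal summands of $\mathcal{H}$; hence for $v = \bigoplus_s v_s \in \mathcal{H}$,
\[
\ip{T^+ v, T^+ v}_{\mathcal{B}} = \sum_{s \in \mathcal{T}} \sum_{j\,:\,js \in \mathcal{T}} \ip{(a_j\xi_j) \otimes v_s,\ (a_j\xi_j) \otimes v_s}_{\mathcal{B}} = \sum_{s \in \mathcal{T}} \ip{v_s,\, B_s^+ v_s}_{\mathcal{B}},
\]
where $B_s^+ := \sum_{j\,:\,js \in \mathcal{T}} \ip{a_j\xi_j, a_j\xi_j}$ is a positive element of $\mathcal{B}$.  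Because $\|B_s^+\|\cdot 1 - B_s^+ \geq 0$ in $\mathcal{B}$ and the left $\mathcal{B}$-action on $\mathcal{H}_s^\circ$ is a $*$-homomorphism, one obtains the operator inequality $\ip{v_s, B_s^+ v_s}_{\mathcal{B}} \leq \|B_s^+\|\,\ip{v_s, v_s}_{\mathcal{B}}$; summing over $s$ and taking $\mathcal{B}$-norms yields $\|T^+\|^2 \leq \max_{s \in \mathcal{T}} \|B_s^+\|$, which is the first term of the claimed bound.

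For the annihilation part, use the identity $(a_j^+)^* = (a_j^*)^-$ to realize $T^-$ as the adjoint of the creation-type operator $\sum_j \lambda_{\mathcal{T},j}((a_j^*)^+)$ associated to the family $\{a_j^*\}$; the bound just proved applied to this family yields the second term.  For the preservation part, observe that $\lambda_{\mathcal{T},j}(a_j^0)$ is both supported on and maps into $\bigoplus_{s \in S_{\mathcal{T},j}} \mathcal{H}_{js}^\circ$, and these subspaces for distinct $j$ are mutually orthogonal (distinguished by the first letter of the string); hence $T^0$ is block-diagonal with respect to $j$ and
\[
\|T^0\| = \max_j \|\lambda_{\mathcal{T},j}(a_j^0)\| \leq \max_j \|a_j^0\| \leq \max_j \|a_j\|
\]
by contractivity of the $*$-homomorphism $\lambda_{\mathcal{T},j}$.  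The main subtlety throughout is the operator-valued bookkeeping: in contrast to the scalar case, scalars cannot be pulled out of $\mathcal{B}$-valued inner products, so the uniform operator bound $B_s^+ \leq \|B_s^+\| \cdot 1$ combined with the $*$-homomorphism property of the left $\mathcal{B}$-action is essential for converting the sum $\sum_s \ip{v_s, B_s^+ v_s}$ back into a multiple of $\ip{v,v}$, and for recovering exactly the first term in the stated bound rather than a weaker sum-over-$s$ quantity.
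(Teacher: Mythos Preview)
Your proof is correct and follows essentially the same approach as the paper: the same $P_j/P_j^\perp$ decomposition of each $a_j$ into creation, annihilation, and preservation parts, and the same use of orthogonality of the ranges of $\lambda_{\mathcal{T},j}(P_j^\perp)$ (indexed by the first letter of the string). The only cosmetic difference is that the paper computes $(T^+)^*T^+ = \sum_j \ip{a_j\xi_j,a_j\xi_j}\,\lambda_j(P_j)$ at the operator level and reads off its block-diagonal structure (obtaining an exact equality for $\|T^+\|$), whereas you compute $\ip{T^+v,T^+v}$ at the vector level and bound it via $B_s^+ \leq \|B_s^+\|\cdot 1$ (yielding only the inequality, which is all that is required).
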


\begin{remark}
Although $\mathcal{T}$ may be infinite, the two quantities on the right-hand side are really maxima rather than suprema.  The reason is that even though there could be infinitely many possible values of $s \in \mathcal{T}$, there are at most $2^N$ possible values of $\{j \in [N]: js \in \mathcal{T}\}$ since it is a subset of $[N]$.
\end{remark}

\begin{proof}
Let $P_j \in \mathcal{L}(\mathcal{H}_j)$ be the projection onto $\xi_j$ and let $Q_j = 1 - P_j$.  Because $\ip{\xi_j, a_j \xi_j} = 0$, we have $P_j a_j P_j = 0$, and hence
\[
a_j = Q_j a_j P_j + P_j a_j Q_j + Q_j a_j Q_j.
\]
First, let us estimate $\sum_j \lambda_j(Q_j a_j P_j)$.  Note $\lambda_j(Q_j)$ is the projection onto the direct sum of the spaces $\mathcal{H}_s^\circ$ with $s(1) = j$.  Thus, the ranges of $\lambda_j(Q_j)$ are orthogonal, and hence
\[
\left( \sum_{j=1}^N \lambda_j(Q_ja_jP_j) \right)^* \left( \sum_{j=1}^N \lambda_j(Q_ja_jP_j) \right) = \sum_{j=1}^N \lambda_j(P_j a_j^* Q_j a_j P_j).
\]
Now $P_j a_j P_j = 0$ implies that
\[
(Q_j a_j P_j)^*(Q_j a_j P_j) = P_j a_j^* Q_j a_j P_j = P_j a_j^* a_j P_j = \ip{a_j \xi_j, a_j \xi_j} P_j.
\]
Thus,
\[
\norm*{ \sum_{j=1}^N \lambda_j(Q_ja_jP_j) } = \norm*{\sum_{j=1}^N \ip{a_j \xi_j, a_j \xi_j} \lambda_j(P_j) }^{1/2}
\]
Now $\sum_{j=1}^N \ip{a_j \xi_j, a_j \xi_j} \lambda_j(P_j)$ maps each direct summand $\mathcal{H}_s^\circ$ into itself.  Also,
\[
\sum_{j=1}^N \ip{a_j \xi_j, a_j \xi_j} \lambda_j(P_j) \biggr|_{\mathcal{H}_s^\circ} = \sum_{j: js \in \mathcal{T}} \ip{a_j \xi_j, a_j \xi_j} \id.
\]
Therefore, we have
\[
\norm*{ \sum_{j=1}^N \lambda_j(Q_j a_j P_j) } = \sup_{s \in \mathcal{T}} \norm*{ \sum_{j \in [N]: j s \in \mathcal{T}} \ip{a_j \xi_j, a_j \xi_j} }^{1/2}.
\]
Similarly,
\[
\norm*{ \sum_{j=1}^N \lambda_j(P_j a_j Q_j) } = \norm*{ \sum_{j=1}^N \lambda_j(Q_j a_j^* P_j) } = \sup_{s \in \mathcal{T}} \norm*{ \sum_{j \in [N]: j s \in \mathcal{T}} \ip{a_j^* \xi_j, a_j^* \xi_j} }^{1/2}.
\]
Finally, because the $\lambda_j(Q_j)$'s have orthogonal ranges, we have
\[
\norm*{ \sum_{j=1}^N \lambda_j(Q_j a_j Q_j) } = \max_j \norm{\lambda_j(Q_ja_jQ_j)} \leq \max_j \norm{a_j}.
\]
Adding the estimates for the three terms together completes the proof.
\end{proof}

\begin{corollary}
If $\ip{\xi_j, a_j \xi_j} = 0$, then we have
\[
\norm*{ \sum_{j=1}^N \lambda_j(a_j) } \leq 2 \norm*{ \sum_{j=1}^N \norm{a_j}^2 }^{1/2} + \max_j \norm{a_j}.
\]
\end{corollary}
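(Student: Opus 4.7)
The plan is to deduce this directly from Proposition \ref{prop:operatornormbound} by bounding the two $\mathcal{B}$-valued sums on the right-hand side by scalar quantities. The key ingredient is the standard module Cauchy--Schwarz inequality: for any bounded adjointable right $\mathcal{B}$-modular operator $T$ on a Hilbert $\mathcal{B}$-module $\mathcal{H}$ and any $h \in \mathcal{H}$, we have $\ip{Th, Th} = \ip{h, T^*Th} \leq \norm{T}^2 \ip{h,h}$ in the positive cone of $\mathcal{B}$, because $T^*T \leq \norm{T}^2 \cdot 1$ in $\mathcal{L}(\mathcal{H})$.

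First, I would apply this with $T = a_j$ and $h = \xi_j$. Since $\xi_j$ is a $\mathcal{B}$-central unit vector, $\ip{\xi_j, \xi_j} = 1_{\mathcal{B}}$, and hence
\[
\ip{a_j \xi_j, a_j \xi_j} \leq \norm{a_j}^2 \cdot 1_{\mathcal{B}} \quad \text{and} \quad \ip{a_j^* \xi_j, a_j^* \xi_j} \leq \norm{a_j}^2 \cdot 1_{\mathcal{B}}
\]
in $\mathcal{B}$. Summing these positive-element inequalities over any subset of indices $\{j : js \in \mathcal{T}\} \subseteq [N]$ yields
\[
\sum_{j \in [N]: js \in \mathcal{T}} \ip{a_j \xi_j, a_j \xi_j} \leq \left( \sum_{j=1}^N \norm{a_j}^2 \right) \cdot 1_{\mathcal{B}},
\]
and similarly for the conjugate expression. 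Taking the $\mathcal{B}$-norm, which is monotone on the positive cone, gives the scalar upper bound $\sum_{j=1}^N \norm{a_j}^2$ independently of the choice of $s \in \mathcal{T}$.

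Second, I would substitute these estimates into the conclusion of Proposition \ref{prop:operatornormbound}. The two maxima over $s \in \mathcal{T}$ are then both bounded by $\bigl( \sum_{j=1}^N \norm{a_j}^2 \bigr)^{1/2}$, and combining with the third term $\max_j \norm{a_j}$ gives the desired inequality. There is no real obstacle here; the corollary is a routine passage from the sharper operator-valued bound of the proposition to a looser but simpler scalar bound, and the only thing to verify is that the module Cauchy--Schwarz inequality is applied in the positive cone of $\mathcal{B}$ (rather than merely in norm), which is what allows the termwise summation to go through.
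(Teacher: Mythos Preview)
Your proof is correct and is exactly the intended deduction: the paper states the corollary immediately after Proposition~\ref{prop:operatornormbound} without proof, and your argument---bounding each $\ip{a_j\xi_j,a_j\xi_j}$ by $\norm{a_j}^2\cdot 1_{\mathcal B}$ in the positive cone, summing over the index subset, and then taking norms---is the routine passage the authors have in mind.
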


\begin{corollary}
Let $d = \sup_{s \in \mathcal{T}} |\{j: j s \in \mathcal{T}\}|$, that is the maximum degree of $\mathcal{T}$ where only the edges that increase the length of the string are counted.  If $\ip{\xi_j, a_j \xi_j} = 0$, then
\[
\norm*{ \sum_{j=1}^n \lambda_j(a_j) } \leq (2 \sqrt{d} + 1) \max_j \norm{a_j}.
\]
\end{corollary}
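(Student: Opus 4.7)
The plan is to derive this corollary directly from Proposition \ref{prop:operatornormbound}. The only real work is converting the two operator-norm suprema on the right-hand side of that proposition into the bound $\sqrt{d}\max_j\norm{a_j}$ each, and the ingredients are all elementary $\mathrm{C}^*$-algebraic facts.

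First, I would note that for each $j$, the element $\ip{a_j\xi_j, a_j\xi_j} = \ip{\xi_j, a_j^*a_j\xi_j} \in \mathcal{B}$ is positive, with operator norm at most $\norm{a_j^*a_j}\norm{\ip{\xi_j,\xi_j}} = \norm{a_j}^2$ (using that $\xi_j$ is a unit vector by Lemma \ref{lem:unitvector}). Now fix $s \in \mathcal{T}$ and let $J_s = \{j \in [N] : js \in \mathcal{T}\}$; by definition $|J_s| \leq d$. Then
\[
\norm*{ \sum_{j \in J_s} \ip{a_j\xi_j, a_j\xi_j} } \leq \sum_{j \in J_s} \norm{\ip{a_j\xi_j, a_j\xi_j}} \leq |J_s| \cdot \max_j \norm{a_j}^2 \leq d \max_j \norm{a_j}^2,
\]
since the norm is subadditive (and for positive elements this is even sharper, but we do not need that). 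Taking the square root and then the supremum over $s \in \mathcal{T}$ gives
\[
\max_{s \in \mathcal{T}} \norm*{ \sum_{j \in J_s} \ip{a_j\xi_j, a_j\xi_j} }^{1/2} \leq \sqrt{d}\, \max_j \norm{a_j}.
\]

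The exact same computation applied to $a_j^*$ (using $\norm{a_j^*} = \norm{a_j}$) bounds the second max in Proposition \ref{prop:operatornormbound} by $\sqrt{d}\,\max_j\norm{a_j}$ as well. Plugging both bounds along with the trivial third term $\max_j\norm{a_j}$ into Proposition \ref{prop:operatornormbound} yields
\[
\norm*{\sum_{j=1}^N \lambda_j(a_j)} \leq \sqrt{d}\,\max_j\norm{a_j} + \sqrt{d}\,\max_j\norm{a_j} + \max_j\norm{a_j} = (2\sqrt{d}+1)\max_j\norm{a_j},
\]
as claimed.

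There is no significant obstacle here; the whole argument is a routine application of the preceding proposition. The only subtlety worth remarking is the remark already made in the paper: even though $\mathcal{T}$ may be infinite, only the subsets $J_s \subseteq [N]$ matter, and there are at most $2^N$ such subsets, so the suprema are attained and the bound $|J_s| \leq d$ is meaningful. No additional structural hypothesis on $\mathcal{T}$ beyond the finite-degree bound $d < \infty$ is needed.
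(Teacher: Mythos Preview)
Your proof is correct and is precisely the intended derivation: the paper states this corollary without proof as an immediate consequence of Proposition~\ref{prop:operatornormbound}, and bounding each sum of at most $d$ positive terms by $d\max_j\norm{a_j}^2$ is exactly the step one fills in.
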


\section{Combinatorial Computation of Moments} \label{sec:combinatorics}

In this section, we will show that there is a universal rule for computing the joint moments of variables in the $\mathcal{T}$-free product (Theorem \ref{thm:combinatorics}).  In order to state this rule, we first review the machinery of non-crossing partitions and the Boolean cumulants.

Non-crossing partitions were introduced into non-commutative probability by Speicher \cite{Speicher1994,Speicher1998}.  They have been used by many authors for many types of non-commutative independence; see for instance the references given in \S \ref{sec:cumulants} regarding cumulants.  We especially recommend \cite[\S 3]{ABFN2013} as a clear and efficient exposition of the background material on non-crossing partitions which we will cover mostly in \S \ref{subsec:partitions} and \S \ref{subsec:partitioncomposition} of this paper.

\subsection{Non-Crossing Partitions} \label{subsec:partitions}

Here we review basic terminology for non-crossing partitions of the $[\ell] = \{1,\dots,\ell\}$.  It will be convenient for the sake of notation to work more generally with partitions of a totally ordered finite set $S$, even though this makes no difference to the content of the results.

\begin{definition}
If $S$ is a totally ordered finite set, then a \emph{partition} of $S$ is collection of nonempty subsets $V_1$, \dots, $V_k$ such that $S = \bigsqcup_{j=1}^k V_j$.  We call the subsets $V_j$ \emph{blocks}.  We denote by $|\pi|$ the number of blocks.  We denote the collection of partitions by $\mathcal{P}(S)$, and we also write $\mathcal{P}(\ell) = \mathcal{P}([\ell])$.
\end{definition}

\begin{definition}
If $\pi \in \mathcal{P}(\ell)$, we say that $i \sim_\pi j$ if $i$ and $j$ are in the same block of $\pi$.
\end{definition}

\begin{definition}
Let $\pi$ be a partition of a totally ordered finite set $S$.  A \emph{crossing} is a set of indices $i_1 < j_1 < i_2 < j_2$ such that $i_1$ and $i_2$ are in the same block $V$ and $j_1$ and $j_2$ are in the same block $W \neq V$.  A partition is said to be \emph{non-crossing} if it has no crossings.  We denote the set of non-crossing partitions of $[\ell]$ by $\mathcal{NC}(\ell)$.
\end{definition}

\begin{definition}
Let $V$ and $W$ be blocks in a non-crossing partition $\pi$.  We say that \emph{$V$ is nested inside $W$}, or $V \succ W$, if there exist $j, k \in W$ with $V \subseteq \{j+1,\dots,k-1\}$.  As a consequence of $\pi$ being non-crossing, $\prec$ is a strict partial order on the blocks of $\pi$.
\end{definition}

\begin{remark}
We adopt the convention that $\mathcal{P}(\varnothing) = \mathcal{NC}(\varnothing)$ consists of the single partition $\varnothing$, which is a partition with zero blocks.  Here we mean that the number of blocks is zero, not that size of each block is zero, because blocks are required to be nonempty.
\end{remark}

\begin{definition}
We say that a partition $\pi \in \mathcal{NC}(S)$ is \emph{irreducible} if we have $\min S \sim_\pi \max S$, that is, the first and last elements of $S$ are in the same block of $\pi$.  We denote the set of irreducible partitions by $\mathcal{NC}^\circ(S)$.
\end{definition}

\begin{remark}
Note that $\pi \in \mathcal{NC}(S)$ is irreducible if and only if it has a unique minimal block with respect to $\prec$.  Moreover, a partition is reducible (i.e.\ not irreducible) if and only if there exists a decomposition of $S$ into $S_1 \sqcup S_2$ and $\pi = \pi_1 \sqcup \pi_2$ with $\pi_j \in \mathcal{NC}(S_j)$, where every element of $S_1$ is less than every element of $S_2$ (in other words, $\pi$ is expressed by ``concatenating'' $\pi_1$ and $\pi_2$ from left to right).  More generally, every non-crossing partition can be expressed uniquely as a concatenation of some number of irreducible partitions.
\end{remark}

\subsection{Partitions as Composition Diagrams} \label{subsec:partitioncomposition}

\begin{definition}
Let $\pi \in \mathcal{NC}(S)$ and let $V$ be a block of $\pi$.  Then we denote by $\pi \setminus V$ the partition of $S \setminus V$ given by deleting $V$ from $\pi$.  We say that a block $V$ is an \emph{interval} if it has the form $\{i: j < i \leq k\}$ for some $j < k$ in $S$.
\end{definition}

\begin{definition}
Let $\mathcal{A}$ and $\mathcal{A}'$ be $\mathcal{B}$-$\mathcal{B}$-correspondences.  A multlinear form $\Lambda: \mathcal{A}^k \to \mathcal{A}'$ will be called an \emph{$\mathcal{B}$-quasi-multlinear} if we have for $a_1$, \dots, $a_k \in \mathcal{A}$ and $b \in \mathcal{B}$ that
\begin{align*}
\Lambda[ba_1,a_2,\dots,a_k] &= b \Lambda[a_1,\dots,a_k] \\
\Lambda[a_1,\dots,a_{k-1},a_kb] &= \Lambda[a_1,\dots,a_{k-1},a_k] b \\
\Lambda[a_1,\dots,a_jb,a_{j+1},\dots,a_k] &= \Lambda[a_1,\dots,a_j, ba_{j+1},\dots, a_k].
\end{align*}
\end{definition}

\begin{definition} \label{def:picomposition}
Let $\mathcal{A}$ be a $\mathcal{B}$-$\mathcal{B}$-correspondence.  Let $\Lambda_\ell: \mathcal{A}^\ell \to \mathcal{B}$ be a sequence of $\mathcal{B}$-quasi-multilinear forms.  For $\pi \in \mathcal{NC}(S)$, we define $\Lambda_\pi: \mathcal{A}^{|S|} \to \mathcal{B}$ by the following recursive relation.  Suppose that $V$ is an interval block of $\pi$ and thus $V$ can be written as $\phi^{-1}(\{j+1,\dots,k\})$ where $\phi: S \to [|S|]$ is the unique order-preserving bijection and $j < k$ in $[|S|]$.  Then
\[
\Lambda_\pi[a_1,\dots,a_\ell] = \begin{cases} \Lambda_{\pi \setminus V}[a_1,\dots,a_i, \Lambda_{k-j}[a_{j+1},\dots,a_k]a_{k+1},\dots,a_\ell], & k < \ell \\
\Lambda_{\pi \setminus V}[a_1,\dots,a_i] \Lambda_{\ell-i}[a_{j+1},\dots,a_\ell], & k = \ell. \end{cases}
\]
\end{definition}

To show that this is well-defined, first note that every partition must have some interval block because a maximal block with respect to $\prec$ must be an interval.  Moreover, by the associativity properties of composition and the fact that $\Lambda_\ell$ is $\mathcal{B}$-quasi-multilinear, the resulting multilinear form $\Lambda_\pi$ is independent of the sequence of recursive steps taken to evaluate it.  Moreover, it is straighforward to check by induction that $\Lambda_\pi$ is $\mathcal{B}$-quasi-multilinear.

If $|\pi| = 1$, then $\Lambda_\pi = \Lambda_{|S|}$.  Moreover, if $S$ and $S'$ are isomorphic as totally ordered sets, and $\pi \in \mathcal{NC}(S)$ and $\pi' \in \mathcal{NC}(S')$ correspond under this isomorphism, then $\Lambda_\pi = \Lambda_{\pi'}$.  Thus, it would be sufficient to define $\Lambda_\pi$ only for $\pi \in \mathcal{NC}(\ell)$.

The following fact about M{\"o}bius inversion is well-known in this context.

\begin{lemma} \label{lem:partitionMobiusinversion}
Let $\mathcal{A}$ be an algebra containing $\mathcal{B}$.  Let $\Gamma_\ell: \mathcal{A}^n \to \mathcal{B}$ be a $\mathcal{A}$-quasi-multilinear form.  For each non-crossing partition $\pi$, let $\alpha_\pi \in \C$, and assume that $\alpha_\pi \neq 0$ when $\pi$ consists of a single block.  Then there exist unique $\mathcal{B}$-quasi-multilinear forms $\Lambda_\ell: \mathcal{A}^n \to \mathcal{B}$ such that
\[
\Gamma_\ell[a_1,\dots,a_\ell] = \sum_{\pi \in \mathcal{NC}(\ell)} \alpha_\pi \Lambda_\pi[a_1,\dots,a_\ell].
\]
\end{lemma}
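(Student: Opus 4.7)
The plan is to proceed by strong induction on $\ell$, solving for $\Lambda_\ell$ recursively. The key observation is that if $\pi \in \mathcal{NC}(\ell)$ has $|\pi| \geq 2$, then we may choose the first interval block $V$ to peel off in Definition \ref{def:picomposition} to be a proper subset of $[\ell]$ (since some maximal block with respect to $\prec$ is an interval, and when there are at least two blocks some interval block is proper). Iterating this observation, $\Lambda_\pi$ is assembled entirely from $\Lambda_k$'s with $k < \ell$. On the other hand, for the unique single-block partition $\pi = 1_\ell$ we have $\Lambda_\pi = \Lambda_\ell$ by construction. Hence, isolating this term, the identity to be established becomes
\[
\Gamma_\ell[a_1,\dots,a_\ell] = \alpha_{1_\ell}\, \Lambda_\ell[a_1,\dots,a_\ell] + \sum_{\pi \in \mathcal{NC}(\ell),\, |\pi| \geq 2} \alpha_\pi\, \Lambda_\pi[a_1,\dots,a_\ell],
\]
and since $\alpha_{1_\ell} \neq 0$ by hypothesis, this equation determines $\Lambda_\ell$ uniquely in terms of $\Gamma_\ell$ and the $\Lambda_k$ with $k < \ell$.

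For the base case $\ell = 1$, the only partition of $[1]$ is the single block, so the identity reads $\Gamma_1 = \alpha_{1_1} \Lambda_1$ and we are forced to set $\Lambda_1 := \alpha_{1_1}^{-1} \Gamma_1$, which is $\mathcal{B}$-quasi-multilinear because $\Gamma_1$ is. For the inductive step, supposing $\Lambda_1,\dots,\Lambda_{\ell-1}$ have been constructed, define
\[
\Lambda_\ell[a_1,\dots,a_\ell] := \alpha_{1_\ell}^{-1} \left( \Gamma_\ell[a_1,\dots,a_\ell] - \sum_{\pi \in \mathcal{NC}(\ell),\, |\pi| \geq 2} \alpha_\pi\, \Lambda_\pi[a_1,\dots,a_\ell] \right).
\]
The right-hand side is well-defined by the observation above. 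To see that $\Lambda_\ell$ is $\mathcal{B}$-quasi-multilinear, note that $\Gamma_\ell$ is $\mathcal{B}$-quasi-multilinear by hypothesis, and each $\Lambda_\pi$ with $|\pi| \geq 2$ is $\mathcal{B}$-quasi-multilinear because, as remarked after Definition \ref{def:picomposition}, this property is preserved under the recursive composition used to build $\Lambda_\pi$ from quasi-multilinear building blocks $\Lambda_{k}$. Hence $\Lambda_\ell$ is a $\mathbb{C}$-linear combination of $\mathcal{B}$-quasi-multilinear forms and so is itself $\mathcal{B}$-quasi-multilinear. Uniqueness is built into the same argument: any family satisfying the stated identity must coincide with $\Lambda_k$ for $k < \ell$ by the inductive hypothesis, and then the single-block isolation above forces the value of $\Lambda_\ell$.

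The only potentially subtle point is the assertion that $\Lambda_\pi$ for $|\pi| \geq 2$ depends only on $\Lambda_k$ with $k < \ell$. This relies on the well-definedness of $\Lambda_\pi$ (independence of the recursion order) already asserted in the paper, together with the remark that any partition with at least two blocks possesses a proper interval block. Given this, the entire argument is a routine Möbius-style inversion on $\mathcal{NC}(\ell)$ graded by block count, and no further obstacle arises.
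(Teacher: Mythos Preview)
Your argument is correct, and it is the standard triangular inversion one would expect here. Note, however, that the paper does not actually supply a proof of this lemma: it introduces the statement with the sentence ``The following fact about M\"obius inversion is well-known in this context'' and then moves on. So there is no proof in the paper to compare against; your write-up simply fills in the omitted details, and it does so correctly.
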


\subsection{The Boolean Cumulants}

\begin{definition}
A partition $\pi \in \mathcal{P}(S)$ is an \emph{interval partition} if every block $V$ has the form $V = \{i: j \leq i \leq k\}$ for some $j \leq k$.  We denote the set of interval partitions by $\mathcal{I}(S)$.  Note that every interval partition is non-crossing.
\end{definition}

\begin{definition} \label{def:Booleancumulants}
Let $(\mathcal{A},E)$ be a $\mathcal{B}$-valued probability space.  We define the Boolean cumulants $K_{\Bool,\ell}: \mathcal{A}^\ell \to \mathcal{B}$ implicitly by the relation
\[
E[a_1 \dots a_\ell] = \sum_{\pi \in \mathcal{I}(\ell)} K_{\Bool,\pi}[a_1,\dots,a_\ell],
\]
which makes sense by Lemma \ref{lem:partitionMobiusinversion}.
\end{definition}

The following lemma was proved in the scalar-valued case for one variable in \cite[Prop.\ 5.1]{Lenczewski2007}, and the general case is no harder.

\begin{lemma} \label{lem:booleancumulants}
Let $\mathcal{H}$ be a $\mathcal{B}$-$\mathcal{B}$-correspondence with a $\mathcal{B}$-central unit vector $\xi$, and let $E_\xi[a] = \ip{\xi,a\xi}$ for $a \in \mathcal{L}(\mathcal{H})$.  Let $P$ be the projection onto $\mathcal{B}\xi$ and let $Q$ be the projection onto its orthogonal complement.  For $a_1$, \dots, $a_\ell \in \mathcal{L}(\mathcal{H})$, we have
\[
P a_1 Q a_2 Q a_3 \dots Q a_\ell P = K_{\Bool,\ell}[a_1,\dots,a_\ell] P.
\]
\end{lemma}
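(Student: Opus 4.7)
The plan is to define a candidate cumulant directly from the projection formula, show it satisfies the Boolean moment-cumulant relation, and then invoke the uniqueness in Lemma~\ref{lem:partitionMobiusinversion}.

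I would first observe that, because $\xi$ is $\mathcal{B}$-central (so $b\xi = \xi b$ for all $b \in \mathcal{B}$), both $\mathcal{B}\xi$ and its orthogonal complement are invariant under left multiplication by any $b \in \mathcal{B}$; hence $P$ and $Q = 1 - P$ each commute with every $b$ viewed as an element of $\mathcal{L}(\mathcal{H})$. Given $a_1,\dots,a_\ell \in \mathcal{L}(\mathcal{H})$, the operator $Pa_1 Qa_2 \cdots Qa_\ell P$ vanishes on $(\mathcal{B}\xi)^\perp$, and right $\mathcal{B}$-modularity together with $c\xi = \xi c$ shows that its action on $c\xi$ is determined by right multiplication by $c$ of its value at $\xi$. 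Thus there is a unique element $\tilde K_\ell[a_1, \dots, a_\ell] \in \mathcal{B}$ for which
\[
Pa_1 Qa_2 \cdots Qa_\ell P = \tilde K_\ell[a_1, \dots, a_\ell]\, P,
\]
namely $\tilde K_\ell[a_1, \dots, a_\ell] = \ip{\xi,\, a_1 Q a_2 \cdots Q a_\ell\, \xi}$. The same centrality observation shows $\tilde K_\ell$ is $\mathcal{B}$-quasi-multilinear, since any $b \in \mathcal{B}$ can be freely commuted past the intervening $P$'s and $Q$'s.

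Next I would expand $E_\xi[a_1 \cdots a_\ell] = \ip{\xi,\, P a_1 a_2 \cdots a_\ell P\, \xi}$ (valid since $P\xi = \xi$) by inserting $I = P + Q$ between each consecutive pair $a_i a_{i+1}$. This yields $2^{\ell - 1}$ terms indexed by tuples $(R_2, \dots, R_\ell) \in \{P, Q\}^{\ell - 1}$, in bijection with interval partitions $\pi \in \mathcal{I}(\ell)$ via the rule ``$R_i = P$ iff $i$ starts a new block.'' For such a $\pi$ with ordered blocks $V_1, \dots, V_m$, using $P \cdot P = P$ to duplicate the projections at block boundaries, the corresponding term factors as
\[
\bigl(\tilde K_{|V_1|}[a_{V_1}]P\bigr) \cdots \bigl(\tilde K_{|V_m|}[a_{V_m}]P\bigr) = \tilde K_{|V_1|}[a_{V_1}] \cdots \tilde K_{|V_m|}[a_{V_m}]\, P,
\]
where the last equality uses that $P$ commutes with elements of $\mathcal{B}$. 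Applying $\ip{\xi,\, \cdot\, \xi}$ and using $\ip{\xi, b\xi} = b$ shows that this term equals $\tilde K_{|V_1|}[a_{V_1}] \cdots \tilde K_{|V_m|}[a_{V_m}]$, which by Definition~\ref{def:picomposition} is exactly $\tilde K_\pi[a_1, \dots, a_\ell]$ (the recursion unwinds trivially for interval partitions).

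Summing over $\pi$ gives
\[
E_\xi[a_1 \cdots a_\ell] = \sum_{\pi \in \mathcal{I}(\ell)} \tilde K_\pi[a_1, \dots, a_\ell],
\]
so by the uniqueness part of Lemma~\ref{lem:partitionMobiusinversion} (applied with $\alpha_\pi = 1$ for $\pi \in \mathcal{I}(\ell)$ and $\alpha_\pi = 0$ otherwise) we conclude $\tilde K_\ell = K_{\Bool, \ell}$, which is exactly the claimed operator identity. The only delicate point is the bookkeeping around $\mathcal{B}$-centrality: one must carefully commute elements of $\mathcal{B}$ past $P$ and $Q$ and collapse consecutive projections via $P^2 = P$. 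I do not expect any substantive obstacle beyond organizing this cleanly.
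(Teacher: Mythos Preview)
Your proof is correct and follows essentially the same approach as the paper: define the candidate cumulant via $\tilde K_\ell[a_1,\dots,a_\ell]=\ip{\xi,a_1Qa_2\cdots Qa_\ell\xi}$, expand $E_\xi[a_1\cdots a_\ell]$ by inserting $P+Q$ between consecutive letters, identify the resulting terms with interval partitions, and conclude by uniqueness of the Boolean cumulants. You supply slightly more detail than the paper about why $P$ and $Q$ commute with $\mathcal{B}$ and how the blocks factor via $P^2=P$, but the argument is the same.
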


\begin{proof}
Define
\[
\Lambda_n[a_1,\dots,a_\ell] = \ip{\xi, a_1 Q a_2 \dots Q a_\ell \xi},
\]
and note that
\[
P a_1 Q a_2 Q a_3 \dots Q a_\ell P = \Lambda_n[a_1,\dots,a_\ell] P.
\]
To show that $ \Lambda_\ell = K_{\Bool,\ell}$, it suffices to show that
\[
E[a_1 \dots a_\ell] = \sum_{\pi \in \mathcal{I}(\ell)} \Lambda_\pi[a_1,\dots,a_\ell],
\]
where $\Lambda_\pi$ is given by Definition \ref{def:picomposition}.  Observe that
\begin{align*}
E[a_1 \dots a_\ell] &= E[(P+Q)a_1(P+Q)a_2 \dots (P+Q)a_\ell(P+Q)] \\
&= E[Pa_1(P+Q)a_2 \dots (P+Q)a_\ell P].
\end{align*}
We expand this expression by multilinearity of multiplication.  Each of the resulting terms corresponds to a string of length $\ell - 1$ in the letters $P$ and $Q$.  We can define a correspondence between these strings and the interval partitions of $\ell$ given by placing the letter $P$ between $a_j$ and $a_{j+1}$ if they are in different blocks and the letter $Q$ between $a_j$ and $a_{j+1}$ if they are in the same block.  Then the expectation of the string corresponding to a partition $\pi$ is exactly $\Lambda_\pi[a_1,\dots,a_n]$.
\end{proof}

\subsection{Combinatorial Formula for the $\mathcal{T}$-free Product}

\begin{definition}
For a partition $\pi$, we define $\graph(\pi)$ to be the (simple undirected) graph with vertex set $\pi \sqcup \{\emptyset\}$ and with edges given by
\begin{itemize}
	\item $\emptyset \sim V$ for every block $V$ that is minimal with respect to $\prec$.
	\item $V \sim W$ whenever $V \prec W$ and there is no block $U$ strictly between $V$ and $W$.
\end{itemize}
We view $\graph(\pi)$ as a rooted graph with $\emptyset$ as the root vertex.
\end{definition}

\begin{remark}
As a consequence of the fact that $\pi$ is non-crossing, every block $W$ has a unique immediate predecessor with respect to $\prec$, and therefore $\graph(\pi)$ is a tree, also known as the \emph{nesting tree of $\pi$}.  An example of $\graph(\pi)$ is shown in Figure \ref{fig:partitiongraph}.
\end{remark}

\begin{definition}
For a block $V$ of $\pi$, let us denote
\[
\chain(V) = (V,V_2,\dots,V_d),
\]
where $V \succ V_2 \succ \dots \succ V_d$ are all the blocks surrounding $V$.  We also define the \emph{depth of $V$ in} $\pi$, denoted $\depth_\pi(V)$ as the number $d$.  Equivalently, $\chain(V)$ is the unique path from $V$ to the root vertex in $\graph(\pi)$, and $\depth_\pi(V)$ is the distance of $V$ from the root vertex.
\end{definition}

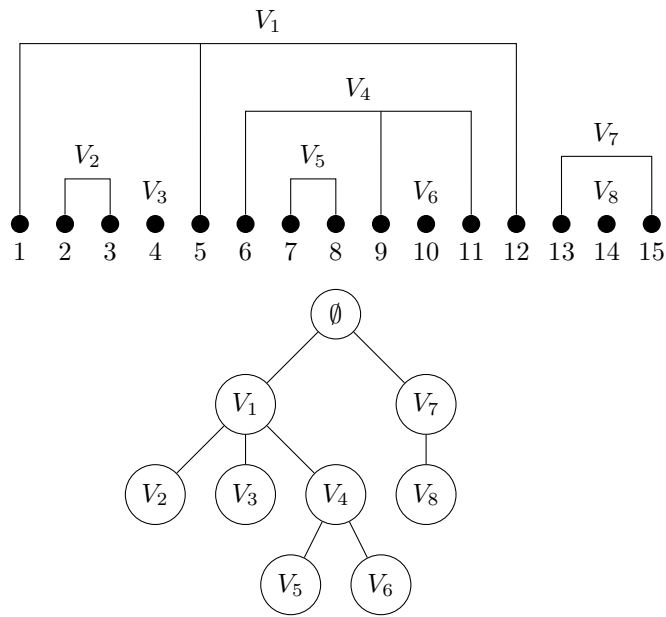
\begin{figure}

\begin{center}

\begin{tikzpicture}[scale = 0.6]

\begin{scope}

	\foreach \j in {1,...,15} {
		\node[label=below:{\j}] ({\j}) at (\j,0) {};
		\fill (\j,0) circle (0.2);
	}

	\draw (1) to (1,4) to (12,4) to (12);
	\draw (5) to (5,4);
	\node at (6.5,4.5) {$V_1$};

	\draw (2) to (2,1) to (3,1) to (3);
	\node at (2.5,1.5) {$V_2$};

	\node at (4,0.7) {$V_3$};

	\draw (6) to (6,2.5) to (11,2.5) to (11);
	\draw (9) to (9,2.5);
	\node at (8.5,3) {$V_4$};

	\draw (7) to (7,1) to (8,1) to (8);
	\node at (7.5,1.5) {$V_5$};

	\node at (10,0.7) {$V_6$};

	\draw (13) to (13,1.5) to (15,1.5) to (15);
	\node at (14,2) {$V_7$};

	\node at (14,0.7) {$V_8$};

\end{scope}

\begin{scope}[shift = {(8,-2)}]
	\node[circle,draw] (V0) at (0,0) {$\emptyset$};
	\node[circle,draw] (V1) at (-2,-2) {$V_1$};
	\node[circle,draw] (V2) at (-4,-4) {$V_2$};
	\node[circle,draw] (V3) at (-2,-4) {$V_3$};
	\node[circle,draw] (V4) at (0,-4) {$V_4$};
	\node[circle,draw] (V5) at (-1,-6) {$V_5$};
	\node[circle,draw] (V6) at (1,-6) {$V_6$};
	\node[circle,draw] (V7) at (2,-2) {$V_7$};
	\node[circle,draw] (V8) at (2,-4) {$V_8$};
	
	\draw (V0) to (V1) to (V2);
	\draw (V1) to (V3);
	\draw (V1) to (V4) to (V5);
	\draw (V4) to (V6);
	\draw (V0) to (V7) to (V8);
\end{scope}

\end{tikzpicture}

\caption{An example of a non-crossing partition $\pi$ of $[15]$ into 8 blocks (above) together with $\graph(\pi)$ (below).  In this example, we have $\chain(V_5) = (V_5,V_4,V_1)$ and $\chain(V_4) = (V_4,V_1)$ and $\chain(V_7) = V_7$.}  \label{fig:partitiongraph}

\end{center}

\end{figure}

\begin{definition}
Let $S$ be a totally ordered finite set.  An \emph{$N$-coloring of $S$} is a function $\chi: S \to [N]$.  We say that a partition $\pi$ is \emph{compatible with $\chi$} if $\chi$ is constant on each block of $\pi$.  We denote the set of partitions compatible with $\chi$ by $\mathcal{NC}(\chi)$.  If $\pi \in \mathcal{NC}(\chi)$, then for each block $V \in \pi$, we denote its color by $\chi(V) \in [N]$.
\end{definition}

\begin{definition}
Let $\chi$ be an $N$-coloring of $S$, and let $\mathcal{T}$ be a rooted subtree of $\mathcal{T}_{N,\free}$.  Suppose that $\pi \in \mathcal{NC}(\chi)$ and $V \in \pi$.  If $\chain(V) = (V,V_1,\dots,V_d)$, then we define $\chi(\chain(V))$ to be the string
\[
\chi(\chain(V)) = \chi(V) \chi(V_1) \dots \chi(V_d).
\]
We say that $\pi \in \mathcal{NC}(\chi)$ is \emph{compatible with $\mathcal{T}$} if $\chi(\chain(V)) \in \mathcal{T}$ for every $V \in \pi$.  We denote the set of such partitions by $\mathcal{NC}(\chi,\mathcal{T})$.
\end{definition}

\begin{remark} \label{rem:graphhomo}
If $\pi \in \mathcal{NC}([\ell])$ and $\chi: [\ell] \to [N]$ are compatible, then there is a unique rooted graph homomorphism $\phi_{\chi,\pi}: \graph(\pi) \to \mathcal{T}_{N,\free}$ such that $\phi(V)$ begins with $\chi(V)$ for every $V \in \pi$ and the length of $\phi(V)$ as a string is equal to $\depth_\pi(V)$.  Indeed, this homomorphism is given by $\phi_{\chi,\pi}(V) = \chi(V) \chi(V_1) \dots \chi(V_d)$, where $\chain(V) = (V,V_1,\dots,V_d)$.  The condition that $\pi \in \mathcal{NC}(\chi,\mathcal{T})$ is equivalent to saying that $\phi_{\chi,\pi}(\graph(\pi)) \subseteq \mathcal{T}$.
\end{remark}

\begin{theorem} \label{thm:combinatorics}
Let $\mathcal{T}$ be a rooted subtree of $\mathcal{T}_{N,\free}$.  Suppose that
\[
(\mathcal{H},\xi) = \assemb_{\mathcal{T}}[(\mathcal{H}_1,\xi_1), \dots, (\mathcal{H}_N,\xi_N)],
\]
and let $\lambda_j = \lambda_{\mathcal{T},j}: \mathcal{L}(\mathcal{H}_j) \to \mathcal{L}(\mathcal{H})$ be the corresponding $*$-homomorphism.  Let $\chi$ be an $N$-coloring of $[\ell]$.  Let $a_j \in \mathcal{L}(\mathcal{H}_{\chi(j)})$ for $j = 1$, \dots, $\ell$.  Then
\begin{equation} \label{eq:maincombinatorialformula}
\ip{\xi, \lambda_{\chi(1)}(a_1) \dots \lambda_{\chi(\ell)}(a_\ell) \xi} = \sum_{\pi \in \mathcal{NC}(\chi,\mathcal{T})} \Lambda_{\chi,\pi}(a_1,\dots,a_\ell),
\end{equation}
where the maps $\Lambda_{\chi,\pi}: \mathcal{L}(\mathcal{H}_{\chi(1)}) \times \dots \times \mathcal{L}(\mathcal{H}_{\chi(\ell)}) \to \mathcal{B}$ are defined recursively by the following conditions.

Suppose that $S$ is a totally ordered set, $\chi: S \to [N]$ is a coloring, $\phi: S \to [\ell]$ is an order-preserving isomorphism, and $V = \phi^{-1}(\{j+1,\dots,k\})$ is a block of $\pi$.  Then $\Lambda_{\chi,\pi}: \prod_{j \in S} \mathcal{L}(\mathcal{H}_{\chi(j)}) \to \mathcal{B}$ is given by
\[
\Lambda_{\chi,\pi}[a_1,\dots,a_\ell] = \Lambda_{\chi|_{[\ell] \setminus V}, \pi \setminus V}[a_1,\dots,a_j, K_{\Bool,k-j}[a_{j+1},\dots,a_k]a_{k+1},\dots,a_\ell],
\]
if $k < \ell$ and
\[
\Lambda_{\chi,\pi}[a_1,\dots,a_\ell] = \Lambda_{\chi|_{[\ell] \setminus V}, \pi \setminus V}[a_1,\dots,a_j] K_{\Bool,k-j}[a_{j+1},\dots,a_k],
\]
if $k = \ell$, where $K_{\Bool,k-j}: \mathcal{L}(\mathcal{H}_{\chi(V)})^{k-j} \to \mathcal{B}$ is the Boolean cumulant given by Definition \ref{def:Booleancumulants}.
\end{theorem}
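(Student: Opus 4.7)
The plan is to prove the formula by induction on $\ell$, tracking carefully how operators in the $\mathcal{T}$-free product move vectors between the direct summands $\mathcal{H}_s^\circ$ of $\mathcal{H} = \bigoplus_{s \in \mathcal{T}} \mathcal{H}_s^\circ$. The base case $\ell = 0$ is the tautology $\ip{\xi,\xi} = 1$, matching the single contribution from the empty partition $\pi = \varnothing$ (with $\Lambda_{\chi,\varnothing} = 1$ by convention).

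For the inductive step, I would decompose each $a_i$ inside $\mathcal{L}(\mathcal{H}_{\chi(i)})$ as $a_i = Pa_iP + Pa_iQ + Qa_iP + Qa_iQ$, where $P = P_{\chi(i)}$ denotes the projection onto $\xi_{\chi(i)}$ and $Q = Q_{\chi(i)} = 1 - P$. Applying $\lambda_{\chi(i)}$ and expanding the full product yields $4^\ell$ terms indexed by choices $(\epsilon_i, \delta_i) \in \{P,Q\}^2$. The key is to identify how each of the four types acts under the direct sum decomposition of $\mathcal{H}$: $\lambda_{\chi(i)}(PaP)$ equals $\ip{\xi_{\chi(i)}, a \xi_{\chi(i)}}\,\lambda_{\chi(i)}(P)$, acting as a scalar on each summand $\mathcal{H}_s^\circ$ with $s \in S_{\mathcal{T},\chi(i)}$; $\lambda_{\chi(i)}(QaP)$ sends $\mathcal{H}_s^\circ$ into $\mathcal{H}_{\chi(i)s}^\circ$ whenever $\chi(i)s \in \mathcal{T}$; its formal adjoint $\lambda_{\chi(i)}(PaQ)$ does the reverse; and $\lambda_{\chi(i)}(QaQ)$ preserves each $\mathcal{H}_{\chi(i)s}^\circ$. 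All other actions produce zero.

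Applied to $\xi \in \mathcal{H}_\emptyset^\circ$, for the full product to have nonzero inner product with $\xi$, the trajectory through $\mathcal{T}$ must start and end at the root $\emptyset$. This forces the pattern $(\epsilon_i, \delta_i)_{i=1}^\ell$ to be a properly nested matching of ``openings'' $(Q,P)$, ``closings'' $(P,Q)$, ``interiors'' $(Q,Q)$, and ``singletons'' $(P,P)$, which are in natural bijection with non-crossing partitions $\pi$ of $[\ell]$: singleton patterns form size-one blocks, while matched $(Q,P)/(P,Q)$ pairs with intermediate $(Q,Q)$'s form larger blocks, and the nesting structure of the matching translates to the nesting order $\prec$ on $\pi$. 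Consistency of $\chi$ within each block (forced by $R_i L_{i+1} = 0$ when $\chi(i) = \chi(i+1)$ and $R_i \neq L_{i+1}$) together with the requirement $\chi(i)s \in \mathcal{T}$ at each transition step is precisely equivalent to the condition $\chi(\chain(V)) \in \mathcal{T}$ for every $V \in \pi$, i.e., $\pi \in \mathcal{NC}(\chi, \mathcal{T})$.

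Finally, the contribution of each surviving configuration is evaluated by applying Lemma \ref{lem:booleancumulants} within the individual correspondence $\mathcal{H}_{\chi(V)}$ associated to each block $V$: a block $V = \{i_1 < \cdots < i_m\}$ of color $\chi(V) = j_0$ contributes the Boolean cumulant $K_{\Bool,m}[a_{i_1}, \dots, a_{i_m}]$ in $\mathcal{L}(\mathcal{H}_{j_0})$, with the Boolean cumulants coming from immediately nested children blocks being inserted between the factors as left/right multiplications at the slots where the nested excursions occur. This nested composition rule precisely mirrors the recursive ``peel off an interval block'' definition of $\Lambda_{\chi, \pi}$, since an interval block in $\pi$ is exactly an innermost leaf of $\graph(\pi)$. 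The main obstacle is the careful bookkeeping of the bijection between nonzero $(\epsilon_i, \delta_i)$-patterns and compatible partitions, and rigorously matching the composition of Boolean cumulants from nested blocks against $\Lambda_{\chi, \pi}$; this would be handled by a secondary induction on the depth of $\graph(\pi)$, using $\mathcal{B}$-quasi-multilinearity of the Boolean cumulants to transport scalars from $\mathcal{B}$ across positions as required by Definition \ref{def:picomposition}.
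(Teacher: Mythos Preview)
Your proposal is correct and follows essentially the same route as the paper: the four-fold $P/Q$ decomposition of each $a_i$, the expansion into $4^\ell$ terms, the identification of surviving terms with closed paths in $\mathcal{T}$ starting and ending at $\emptyset$, the bijection of such paths with partitions $\pi \in \mathcal{NC}(\chi,\mathcal{T})$, and the evaluation of each term via Lemma~\ref{lem:booleancumulants} all match the paper's Lemmas~\ref{lem:compatible}--\ref{lem:evaluation}. The only cosmetic difference is that the paper organizes the argument into three separate lemmas and phrases the final evaluation as an induction on $|\pi|$ rather than on the depth of $\graph(\pi)$, whereas you frame the whole thing as an induction on $\ell$ (which, as written, is not really used---your argument is in fact direct, like the paper's).
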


\begin{remark}
The definition of $\Lambda_\pi$ here requires a slight modification of Definition \ref{def:picomposition} because the maps $K_{\Bool,j-i}$ on $\mathcal{L}(\mathcal{H}_{\chi(V)})$ do not all have the same domain.  In the case where the singleton string $\chi(V)$ is not in $\mathcal{T}$, it is important to use the Boolean cumulant $K_{\Bool,j-i}$ of $\mathcal{L}(\mathcal{H}_{\chi(V)})$ rather than that of $\mathcal{L}(\mathcal{H})$ because the map $\lambda_{\mathcal{T},j}: \mathcal{L}(\mathcal{H}_j) \to \mathcal{L}(\mathcal{H})$ will not be expectation-preserving.  For example, this occurs for orthogonal independence for the index $j = 2$.

On the other hand, if we assume that $\mathcal{T} \in \Tree'(N)$, then the maps $\lambda_{\mathcal{T},j}$ are expectation-preserving, and hence it makes no difference whether we compute the Boolean cumulants in $\mathcal{L}(\mathcal{H}_j)$ or $\mathcal{L}(\mathcal{H})$.  Thus, we can express \eqref{eq:maincombinatorialformula} in the simpler form
\begin{equation} \label{eq:maincombinatorialformula2}
\ip{\xi, \lambda_{\mathcal{T},\chi(1)}(a_1) \dots \lambda_{\mathcal{T},\chi(\ell)}(a_\ell) \xi} = \sum_{\pi \in \mathcal{NC}(\chi,\mathcal{T})} K_{\Bool,\pi}[\lambda_{\mathcal{T},\chi(1)}(a_1), \dots , \lambda_{\mathcal{T},\chi(\ell)}(a_\ell)],
\end{equation}
where $K_\ell$ denotes the Boolean cumulant of $\mathcal{L}(\mathcal{H})$ with respect to $\xi$.
\end{remark}

To outline the proof of Theorem \ref{thm:combinatorics}, let $P_i$ and $Q_i$ in $\mathcal{L}(\mathcal{H}_i)$ be the projections onto $\mathcal{B}\xi_i$ and its orthogonal complement respectively.  Let us write
\[
a_j = a_j^{(0,0)} + a_j^{(0,1)} + a_j^{(1,0)} + a_j^{(1,1)},
\]
where
\begin{align*}
a_j^{(0,0)} &= P_{\chi(j)}a_jP_{\chi(j)} \\
a_j^{(0,1)} &= P_{\chi(j)}a_jQ_{\chi(j)} \\
a_j^{(1,0)} &= Q_{\chi(j)}a_jP_{\chi(j)} \\
a_j^{(1,1)} &= Q_{\chi(j)}a_jQ_{\chi(j)}.
\end{align*}
Then we have by multilinearity that
\begin{equation} \label{eq:multilinearexpansion}
\ip{\xi, \lambda_{\chi(1)}(a_1) \dots \lambda_{\chi(\ell)}(a_\ell) \xi} = \sum_{(\delta_1,\epsilon_1), \dots, (\delta_\ell,\epsilon_\ell)} \ip{\xi, \lambda_{\chi(1)}(a_1^{(\delta_1,\epsilon_1)}) \dots \lambda_{\chi(\ell)}(a_\ell^{(\delta_\ell,\epsilon_\ell)}) \xi}.
\end{equation}
Each operator $\lambda_{\chi(j)}(a_j^{(\delta_j,\epsilon_j)})$ maps each the direct summands $\mathcal{H}_s^\circ$ of the product space either to zero or to another one of the direct summands $\mathcal{H}_{s'}^\circ$ such that $s'$ is equal to $s$ or adjacent to $s$ in $\mathcal{T}$.  Each term on the right hand side of \eqref{eq:multilinearexpansion} will either vanish or correspond to a ``path'' in the tree $\mathcal{T}$, where the notion of path is expanded to allow consecutive repetitions of the same vertex (see Lemma \ref{lem:compatible} for precise statement).  We will show in Lemma \ref{lem:bijection} that such paths are in bijection with $\mathcal{NC}(\chi,\mathcal{T})$ using a generalization of the well-known bijection between non-crossing pair partitions and Dyck paths.  Finally, in Lemma \ref{lem:evaluation}, we will evaluate the term corresponding to each path as the term $\Lambda_{\chi,\pi}$ in the Theorem.

To make these ideas precise, we first introduce some notation.  Let $\mathcal{T}'$ be the graph obtained by adding a self-loop to each vertex of $\mathcal{T}$.  Let us define four sets of oriented edges by
\begin{align*}
\mathcal{E}_i^{(0,0)} &= \{(s,s): is \in \mathcal{T}\} \\
\mathcal{E}_i^{(1,1)} &= \{(s,s): s \in \mathcal{T}, s(1) = i\} \\
\mathcal{E}_i^{(0,1)} &= \{(s,is): is \in \mathcal{T} \} \\
\mathcal{E}_i^{(1,0)} &= \{(is,s): is \in \mathcal{T} \}
\end{align*}
Note that for $\delta, \epsilon \in \{0,1\}$, the operator $\lambda_{\chi(j)}(a_j^{(\delta,\epsilon)})$ maps $\mathcal{H}_t^\circ$ into $\mathcal{H}_s^\circ$ if $(s,t) \in \mathcal{E}_{\chi(j)}^{(\delta,\epsilon)}$ and it vanishes on $\mathcal{H}_s^\circ$ if it is not the source of some edge in $\mathcal{E}_{\chi(j)}^{(\delta,\epsilon)}$.

Let us say that a sequence $(\delta_1,\epsilon_1)$, \dots, $(\delta_\ell,\epsilon_\ell)$ and a path $\emptyset = s_0$, $s_1$, \dots, $s_\ell = \emptyset$ in $\mathcal{T}'$ are \emph{compatible} (with respect to $\chi$) if $(s_{j-1},s_j) \in \mathcal{E}_{\chi(j)}^{(\delta_j,\epsilon_j)}$.  Note that in this case, $(\delta_j,\epsilon_j)$ is uniquely determined by $(s_{j-1},s_j)$ (and $\chi$).  Conversely, if $(\delta_1,\epsilon_1)$, \dots, $(\delta_\ell, \epsilon_\ell)$ has a compatible path $s_0$, \dots, $s_\ell$, then $s_j$ can be determined inductively by $s_{j-1}$ and $(\delta_j,\epsilon_j)$ and $\chi(j)$.

\begin{lemma} \label{lem:compatible}
If $(\delta_1,\epsilon_1)$, \dots, $(\delta_\ell, \epsilon_\ell)$ does not have a compatible path in $\mathcal{T}'$ from $\emptyset$ to $\emptyset$, then
\[
\ip{\xi, \lambda_{\chi(1)}(a_1^{(\delta_1,\epsilon_1)}) \dots \lambda_{\chi(\ell)}(a_\ell^{(\delta_\ell,\epsilon_\ell)}) \xi} = 0.
\]
Therefore,
\[
\ip{\xi, \lambda_{\chi(1)}(a_1) \dots \lambda_{\chi(\ell)}(a_\ell) \xi} = \sum_{\substack{(\delta_1,\epsilon_1), \dots, (\delta_\ell,\epsilon_\ell) \\ \text{ with a compatible path}}} \ip{\xi, \lambda_{\chi(1)}(a_1^{(\delta_1,\epsilon_1)}) \dots \lambda_{\chi(\ell)}(a_\ell^{(\delta_\ell,\epsilon_\ell)}) \xi}.
\]
\end{lemma}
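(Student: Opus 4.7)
The plan is to trace, for each tuple $(\delta_1,\epsilon_1),\dots,(\delta_\ell,\epsilon_\ell)$, the direct summand of $\mathcal{H} = \bigoplus_{s \in \mathcal{T}} \mathcal{H}_s^\circ$ in which the running vector lives as we apply the operators $\lambda_{\chi(j)}(a_j^{(\delta_j,\epsilon_j)})$ to $\xi$ from right to left. Since $\xi \in \mathcal{B} = \mathcal{H}_\emptyset^\circ$, the argument begins in the summand indexed by $s_\ell = \emptyset$, and I want to show that it either annihilates at some stage or ends up in $\mathcal{H}_{s_0}^\circ$ for a unique $s_0$; pairing with $\xi$ then selects $s_0 = \emptyset$, which is precisely the compatible-path condition.

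The key deterministic step is the following: for each $j$, each $(\delta,\epsilon) \in \{0,1\}^2$, and each $s \in \mathcal{T}$, the operator $\lambda_{\chi(j)}(a_j^{(\delta,\epsilon)})$ either annihilates $\mathcal{H}_s^\circ$ or maps it into $\mathcal{H}_{s'}^\circ$ for the unique $s' \in \mathcal{T}$ with $(s',s) \in \mathcal{E}_{\chi(j)}^{(\delta,\epsilon)}$. I would verify this by unpacking the unitary $U_{\mathcal{T},\chi(j)}$ used in the construction of the $\mathcal{T}$-free product: under the identification $\mathcal{H}_s^\circ \oplus \mathcal{H}_{\chi(j)s}^\circ \cong \mathcal{B}\xi_{\chi(j)} \otimes \mathcal{H}_s^\circ \,\oplus\, \mathcal{H}_{\chi(j)}^\circ \otimes \mathcal{H}_s^\circ$, valid for $s \in S_{\mathcal{T},\chi(j)}$, the four corner operators $P_{\chi(j)}a_jP_{\chi(j)}$, $P_{\chi(j)}a_jQ_{\chi(j)}$, $Q_{\chi(j)}a_jP_{\chi(j)}$, $Q_{\chi(j)}a_jQ_{\chi(j)}$ act nontrivially between exactly the two pieces encoded by $\mathcal{E}_{\chi(j)}^{(0,0)}$, $\mathcal{E}_{\chi(j)}^{(0,1)}$, $\mathcal{E}_{\chi(j)}^{(1,0)}$, $\mathcal{E}_{\chi(j)}^{(1,1)}$ respectively, and kill the other three pieces. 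For $s \in S_{\mathcal{T},\chi(j)}'$, the summand $\mathcal{H}_s^\circ$ lies in the ``zero'' part of the decomposition used to define $\lambda_{\mathcal{T},\chi(j)}$, so every $\lambda_{\chi(j)}(x)$ vanishes on it. The case where $s$ begins with $\chi(j)$ is subsumed by the previous one via the substitution $s \mapsto \chi(j)s'$.

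Iterating, one of two things happens: either some operator in the product annihilates the running vector, in which case $\lambda_{\chi(1)}(a_1^{(\delta_1,\epsilon_1)}) \dots \lambda_{\chi(\ell)}(a_\ell^{(\delta_\ell,\epsilon_\ell)}) \xi = 0$; or a unique sequence $s_\ell = \emptyset, s_{\ell-1}, \dots, s_0$ is generated with $(s_{j-1},s_j) \in \mathcal{E}_{\chi(j)}^{(\delta_j,\epsilon_j)}$ for each $j$, and the product lands in $\mathcal{H}_{s_0}^\circ$. Taking the inner product with $\xi \in \mathcal{H}_\emptyset^\circ$ and using orthogonality of the direct sum, the result is zero unless $s_0 = \emptyset$, i.e.\ unless $(s_0, s_1, \dots, s_\ell)$ is a compatible path from $\emptyset$ to $\emptyset$ in $\mathcal{T}'$. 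This proves the first assertion, and the displayed equality follows immediately by applying \eqref{eq:multilinearexpansion} and discarding the vanishing terms. The only nontrivial piece is the corner-by-corner case analysis in the second paragraph, which is entirely mechanical; I do not anticipate any substantive obstacle.
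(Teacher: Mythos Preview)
Your proposal is correct and follows essentially the same route as the paper's proof: both arguments trace the running vector through the direct-sum decomposition $\mathcal{H} = \bigoplus_{s \in \mathcal{T}} \mathcal{H}_s^\circ$ by analyzing how each corner operator $\lambda_{\chi(j)}(a_j^{(\delta,\epsilon)})$ acts on a single summand (via the unitary $U_{\mathcal{T},\chi(j)}$), iterate backward from $s_\ell = \emptyset$, and conclude by pairing with $\xi$ to force $s_0 = \emptyset$. The paper's version is slightly terser but structurally identical.
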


\begin{proof}
The behavior of the operators $\lambda_{\chi(j)}(a_j^{(\delta_j,\epsilon_j)})$ can be described as follows:
\begin{itemize}
	\item $\lambda_{\chi(j)}(a_j^{(0,1)})$ maps $\mathcal{H}_s^\circ$ into $\mathcal{H}_{\chi(j) s}^\circ$ provided that $\chi(j) s \in \mathcal{T}$ and otherwise it vanishes on $\mathcal{H}_s^\circ$.
	\item $\lambda_{\chi(j)}(a_j^{(1,0)})$ maps $\mathcal{H}_{s'}^\circ$ into $\mathcal{H}_s^\circ$ provided that $s' = \chi(j) s \in \mathcal{T}$ and otherwise it vanishes on $\mathcal{H}_{s'}^\circ$.
	\item $\lambda_{\chi(j)}(a_j^{(0,0)})$ maps $\mathcal{H}_s^\circ$ into itself provided that $\chi(j) s \in \mathcal{T}$ and otherwise it vanishes on $\mathcal{H}_s$.
	\item $\lambda_{\chi(j)}(a_j^{(1,1)})$ maps $\mathcal{H}_s^\circ$ into itself provided that $s \in \mathcal{T}$ begins with $\chi(j)$ and otherwise it vanishes on $\mathcal{H}_s^\circ$.
\end{itemize}
One can argue by backward induction that for $j = \ell, \ell - 1, \dots, 1$, we have
\[
\lambda_{\chi(j)}(a_j^{(\delta_j,\epsilon_j)}) \dots \lambda(a_j^{(\delta_\ell,\epsilon_\ell)}) \xi = 0
\]
unless there is a compatible path $s_{j-1}$, $s_{j+1}$, \dots, $s_\ell = \emptyset$ in $\mathcal{T}'$.  If there is a compatible path, then this vector is in $\mathcal{H}_{s_{j-1}}^\circ$.  If there is a compatible path $s_0$, \dots, $s_\ell = \varnothing$ at the end of the induction, then we either have $s_0 = \emptyset$ or else
\[
\ip{\xi, \lambda_{\chi(j)}(a_j^{(\delta_j,\epsilon_j)}) \dots \lambda(a_j^{(\delta_\ell,\epsilon_\ell)}) \xi} = 0.
\]
\end{proof}

\begin{lemma} \label{lem:bijection}
Let us say that a path $\emptyset = s_0$, $s_1$, \dots, $s_\ell = \emptyset$ in $\mathcal{T}'$ is \emph{admissible} if there exists a compatible sequence of indices $(\delta_1,\epsilon_1)$, \dots, $(\delta_\ell,\epsilon_\ell)$.  Then there is a bijection between the set $P(\chi,\mathcal{T})$ of admissible paths and $\mathcal{NC}(\chi,\mathcal{T})$ (described explicitly in the proof).
\end{lemma}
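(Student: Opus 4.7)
The plan is to construct an explicit bijection between $P(\chi,\mathcal{T})$ and $\mathcal{NC}(\chi,\mathcal{T})$ via a stack-based scan of the path, generalizing the classical correspondence between Dyck paths and non-crossing pair partitions to accommodate blocks of arbitrary size and self-loops of both $(0,0)$ and $(1,1)$ type. In the forward direction, given an admissible path $\emptyset = s_0, \dots, s_\ell = \emptyset$ with a compatible sequence $(\delta_j,\epsilon_j)$, I scan $j = 1, \dots, \ell$ while maintaining a stack of open blocks: at step $j$, type $(0,1)$ opens a new block $\{j\}$ which is pushed onto the stack; type $(1,1)$ appends $j$ to the top block; type $(0,0)$ records a new singleton $\{j\}$ without altering the stack; and type $(1,0)$ appends $j$ to the top block and then pops it. Because the path starts and ends at $\emptyset$, the stack is empty at both endpoints, so the algorithm produces a partition of $[\ell]$. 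In the reverse direction, given $\pi \in \mathcal{NC}(\chi,\mathcal{T})$, I set $s_0 = s_\ell = \emptyset$ and, for $1 \leq j \leq \ell - 1$, define $s_j = \phi_{\chi,\pi}(W_j)$ where $W_j$ is the deepest block of $\pi$ straddling the gap between positions $j$ and $j+1$ (or $s_j = \emptyset$ if no such block exists); the non-crossing property ensures that the straddling blocks form a chain under $\prec$, so $W_j$ is well-defined.

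The verification rests on a single stack invariant, proved by induction on $j$: immediately after step $j$ of the forward scan, the concatenation of the $\chi$-values of the open blocks, read from top to bottom of the stack, equals $s_j$. Preservation under each update rule is a short calculation using the defining conditions of $\mathcal{E}_{\chi(j)}^{(\delta_j,\epsilon_j)}$. Once the invariant is in hand, it yields both compatibility conclusions simultaneously. First, at the moment a block $V$ is opened (or a singleton recorded), the stack-plus-$V$ spells out the string $\chi(\chain(V))$ of the final partition, which lies in $\mathcal{T}$ by admissibility of the corresponding step; hence the output lies in $\mathcal{NC}(\chi,\mathcal{T})$. Second, at each gap $(j,j+1)$ during the scan, the open blocks on the stack coincide with the blocks of the final partition that straddle the gap, ordered from top to bottom by decreasing nesting depth, so the backward map's $s_j = \phi_{\chi,\pi}(W_j)$ agrees with the forward map's walker vertex. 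Mutual inversion then follows because the step type $(\delta_j,\epsilon_j)$ is uniquely recoverable from $(s_{j-1},s_j)$ together with $\chi(j)$.

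The main technical obstacle I anticipate is establishing the stack invariant cleanly, since it simultaneously encodes the non-crossing nesting structure, the graph homomorphism $\phi_{\chi,\pi}$, and the admissibility conditions for each step type. Particular care is needed in the $(0,0)$ case, where a singleton $\{j\}$ is recorded without altering the stack but whose chain in the final partition nonetheless inherits the current stack contents as its surrounding blocks; the $\mathcal{T}$-compatibility of this block then follows precisely from the membership $\chi(j) s_{j-1} \in \mathcal{T}$ that characterizes admissibility of a $(0,0)$ step. Once the invariant is in place, the remaining checks (non-crossingness from the LIFO discipline, $\chi$-compatibility from the fact that only indices $j$ with $\chi(j) = \chi(V)$ can be added to $V$, and bijectivity) reduce to bookkeeping.
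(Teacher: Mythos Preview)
Your proposal is correct and constructs the same bijection as the paper, with the stack-based scan being an algorithmic rephrasing of the paper's explicit index-set description (the paper's $\mathcal{J}'$ of length-increasing steps are your pushes, the matching return times $j'$ are your pops, and the intermediate same-level $(1,1)$ indices are your appends). Your stack invariant---that the open blocks read top to bottom spell out $s_j$---is exactly the paper's inductive claim that $\chi(\chain(V_j))$ equals $s_j$ or $\chi(V_j)s_j$, and your backward map via the deepest block straddling the gap $(j,j+1)$ agrees with the paper's formula for $s_k$ in terms of the block containing $k$.
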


\begin{proof}
Define a map $f: P(\chi,\mathcal{T}) \to \mathcal{NC}(\chi,\mathcal{T})$ as follows.  Fix a path $s_0$, \dots, $s_\ell$.  Let $\mathcal{J}$ be the set of indices where $(\delta_j,\epsilon_j) = (0,0)$.  For each $j \in \mathcal{J}$, we define a singleton block $V_j = \{j\}$.  Let $\mathcal{J}'$ be the set of indices such that the length of $s_j$ is greater than the length of $s_{j-1}$ (and hence $s_j = \chi(j) s_{j-1}$).  For each $j \in \mathcal{J}$, let
\[
j' = \min \{i > j: s_{i-1} = s_j, s_i = s_{j-1}\}.
\]
Note that $\chi(j')$ must equal $\chi(j)$ and this must be the first letter of $s_j$.  For each $j \in \mathcal{J}$, we define the block
\[
V_j = \{j,j'\} \cup \{k: j < k < j', s_{k-1} = s_k = s_j, \chi(k) = \chi(j) \}.
\]
Then $f((s_0,\dots,s_\ell))$ is defined to be the partition $\pi = \{V_j\}_{j \in \mathcal{J} \cup \mathcal{J}'}$.

To show that the partition $\pi$ is non-crossing, suppose that $a, b \in V_j$ and $c, d \in V_k$ with $a < c < b < d$.  In between $a$ and $b$, the path cannot reach the vertex $s_{j-1}$, and in particular, $s_k$ must have the form $t s_j$ for some nonempty string $t$.  Now $s_c$ or $s_{c-1}$ (the longer of the two values) must equal $s_k$.  Yet $s_b$ or $s_{b-1}$ must be equal to $s_j$ and in particular the path must go from the vertex $s_k$ to the vertex $s_{k-1}$ before the time index $b$.  But this implies that $k' \leq b$, and hence $d \leq k' \leq b$, which contradicts the assumption of crossing.

Thus, $\pi$ is non-crossing.  Moreover, by construction $\chi$ is constant on each block of $\pi$, so that $\pi \in \mathcal{NC}(\chi)$.  To check that $\pi \in \mathcal{NC}(\chi,\mathcal{T})$, choose a block $V_j$.  Denoting $\chain(V_j) = (V_j, W_1,\dots,W_d)$, we have
\[
\chi(V_j) \chi(W_1) \dots \chi(W_d) = \begin{cases} s_j, & |V_j| > 1 \\ \chi(V_j) s_j, & |V_j| = 1, \end{cases}
\]
which follows by a straightforward induction argument on $V_j$ with respect to the ordering $\prec$.  In the first case $|V_j| > 1$, it is clear that $\chi(V_j) \chi(W_1) \dots \chi(W_d) \in \mathcal{T}$, and in the second case $|V_j| = 1$, we observe that because $(\delta_j,\epsilon_j) = (0,0)$ we must have $(s_j,s_j) \in \mathcal{E}_{\chi(j)}^{(0,0)}$ and hence $\chi(j) s_j \in \mathcal{T}$.  Thus, $f$ defines a map $P(\chi,\mathcal{T}) \to \mathcal{NC}(\chi,\mathcal{T})$ as desired.

Conversely, we define a map $g: \mathcal{NC}(\chi,\mathcal{T}) \to P(\chi,\mathcal{T})$ as follows. Given a partition $\pi$ and index $k$, let $V$ be the block of $\pi$ containing $k$ and let $\chain(V) = (V,V_1,\dots,V_m)$, and define
\[
s_k = \begin{cases} \chi(V) \chi(V_1) \dots \chi(V_m), & k < \max(V), \\ \chi(V_1) \dots \chi(V_m), & k = \max(V). \end{cases}
\]
The reader may check that $g$ maps into $P(\chi,\mathcal{T})$, where the corresponding indices $(\delta_k,\epsilon_k)$ are given by
\[
(\delta_j,\epsilon_j) =
\begin{cases}
(0,0), & V = \{j\}, \\
(0,1), & j = \min(V) < \max(V), \\
(1,0), & j = \max(V) > \min(V) \\
(1,1), & \text{otherwise.}
\end{cases}
\]
Moreover, $g$ is the inverse function of $f$.
\end{proof}

\begin{lemma} \label{lem:evaluation}
Suppose that $s_0$, \dots, $s_\ell$ is an admissible path with respect to $\chi$, and let $(\delta_1,\epsilon_1)$, \dots, $(\delta_\ell,\epsilon_\ell)$ be the corresponding compatible sequence of indices.  Let $\pi$ by the corresponding partition in $\mathcal{NC}(\chi,\mathcal{T})$ under the bijection in the previous lemma.  Then
\[
\ip{\xi, \lambda_{\chi(1)}(a_1^{(\delta_1,\epsilon_1)}) \dots \lambda_{\chi(\ell)}(a_\ell^{(\delta_\ell,\epsilon_\ell)}) \xi} = \Lambda_{\chi,\pi}[a_1,\dots,a_\ell].
\]
\end{lemma}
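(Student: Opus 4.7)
The plan is to proceed by induction on $\ell$, with the base case $\ell = 0$ (empty partition, both sides equal $1$) being trivial. For the inductive step, I would choose any interval block $V = \{j+1,\dots,k\}$ of $\pi$; any $\prec$-maximal block of a non-crossing partition is an interval, so one always exists, and the well-definedness of $\Lambda_{\chi,\pi}$ (Definition \ref{def:picomposition}) means the choice is irrelevant. Inspecting the construction of the bijection $g$ in Lemma \ref{lem:bijection}, every index $i \in V$ has color $\chi(i) = \chi(V) =: c$, and the indices $(\delta_i,\epsilon_i)$ take the values $(0,1),(1,1),\dots,(1,1),(1,0)$ when $|V| > 1$, or the single value $(0,0)$ when $|V| = 1$.

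A short non-crossing argument shows that $s_j = s_k$: if $V$ has an immediate parent block $V_1$ in $\pi$, one checks that $j \in V_1$ and both $s_j$ and $s_k$ unfold to $\chi(V_1)\chi(V_2)\dots\chi(V_m)$, where $\chain(V) = (V,V_1,\dots,V_m)$, while if $V$ has no parent then both reduce to $\emptyset$. Because $\lambda_c$ is a $*$-homomorphism, the operators indexed by $V$ multiply to $\lambda_c(P_c a_{j+1} Q_c a_{j+2} Q_c \dots Q_c a_k P_c)$, which by Lemma \ref{lem:booleancumulants} equals $\lambda_c(b_V P_c)$ with $b_V := K_{\Bool,|V|}[a_{j+1},\dots,a_k] \in \mathcal{B}$. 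The compatibility $\pi \in \mathcal{NC}(\chi,\mathcal{T})$ gives $c s_j = \chi(\chain(V)) \in \mathcal{T}$, and alternation in $\mathcal{T}$ forces $s_j(1) \neq c$; hence $s_j \in S_{\mathcal{T},c}$, so $\mathcal{H}_{s_j}^\circ$ embeds as $\mathcal{B}\xi_c \otimes_{\mathcal{B}} \mathcal{H}_{s_j}^\circ$ inside $\mathcal{H}_c \otimes_{\mathcal{B}} \mathcal{H}_{s_j}^\circ$. Unwinding the definition of $\lambda_c$ through $U_{\mathcal{T},c}$ together with the $\mathcal{B}$-centrality $b_V \xi_c = \xi_c b_V$ then shows that $\lambda_c(b_V P_c)$ acts on $\mathcal{H}_{s_j}^\circ$ as left multiplication by $b_V$.

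To close the induction I would split into the two cases of Definition \ref{def:picomposition}, abbreviating $T_i := \lambda_{\chi(i)}(a_i^{(\delta_i,\epsilon_i)})$. If $k = \ell$, then $s_\ell = \emptyset$ forces $V$ to have no parent and hence $s_j = \emptyset$, so applying the collapsed operator $T_{j+1}\dots T_\ell$ to $\xi$ produces $b_V \xi = \xi b_V$; right $\mathcal{B}$-linearity of each remaining $T_i$ pulls $b_V$ outside the inner product, and the induction hypothesis applied to $(\chi|_{[j]},\pi\setminus V,a_1,\dots,a_j)$ matches the recursive formula for $\Lambda_{\chi,\pi}$. If $k < \ell$, I would absorb $b_V$ into $a_{k+1}$: since $b_V$ commutes with $P_c$ and $Q_c$, we have $(b_V a_{k+1})^{(\delta,\epsilon)} = b_V a_{k+1}^{(\delta,\epsilon)}$, and a case check on $(\delta_{k+1},\epsilon_{k+1})$ shows that either $s_k \in S_{\mathcal{T},\chi(k+1)}$ or $s_k(1) = \chi(k+1)$, so $\lambda_{\chi(k+1)}(b_V)$ acts as left multiplication by $b_V$ on $\mathcal{H}_{s_k}^\circ$. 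Thus the original and modified operator products send $T_{k+1}\dots T_\ell \xi$ to the same vector, and the induction hypothesis applied to the reduced data $(\chi|_{[\ell]\setminus V},\pi\setminus V, a_1,\dots,a_j, b_V a_{k+1},\dots,a_\ell)$ matches the recursive formula. The main technical obstacle is the careful verification that $\lambda_c(b_V P_c)$ and $\lambda_{\chi(k+1)}(b_V)$ act as left multiplication on the correct subspaces via the explicit construction of $\lambda_{\mathcal{T},j}$; once that is in hand, everything else is bookkeeping aligning the operator recursion with the definition of $\Lambda_{\chi,\pi}$.
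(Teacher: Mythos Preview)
Your approach is essentially the paper's: both arguments induct by collapsing an interval block $V$ via Lemma~\ref{lem:booleancumulants}, observe that the collapsed operator $\lambda_c(b_V P_c)$ acts on $\mathcal{H}_{s_k}^\circ$ as left multiplication by $b_V$ (using $s_k \in S_{\mathcal{T},c}$), and then invoke the inductive hypothesis on $\pi \setminus V$. The paper inducts on $|\pi|$ rather than $\ell$ and is terser about the absorption of $b_V$ into $a_{k+1}$, but the content is the same.

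One small correction: your justification of $s_j = s_k$ via ``$j \in V_1$'' is not valid in general. For instance, take $\pi = \{\{1,5\},\{2\},\{3,4\}\}$ and $V = \{3,4\}$; then $j = 2$ lies in the sibling block $\{2\}$, not in the parent $V_1 = \{1,5\}$. The conclusion $s_j = s_k$ is nonetheless correct and follows immediately from the path structure: the sequence $(\delta_{j+1},\epsilon_{j+1}),\dots,(\delta_k,\epsilon_k)$ is $(0,1),(1,1),\dots,(1,1),(1,0)$ (or just $(0,0)$), so the path rises one level at step $j+1$, stays put through step $k-1$, and descends one level at step $k$, returning to $s_j$. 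Equivalently, the formula for $g$ in Lemma~\ref{lem:bijection} gives $s_k = \chi(V_1)\cdots\chi(V_m)$ directly, and compatibility $(s_j,s_{j+1}) \in \mathcal{E}_c^{(0,1)}$ forces $s_j$ to be the same string. With that fix the argument goes through.
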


\begin{proof}
We proceed by induction on $|\pi|$ (the number of blocks of $\pi$), where we allow $\ell$ to vary, and begin with the base case $\pi = \varnothing$.  Choose a partition $\pi$ with $|\pi| \geq 1$, and recall that $V$ must have an interval block $V = \{j+1,\dots,k\}$.  Let
\[
\zeta = \lambda_{\chi(k+1)}(a_{k+1}^{(\delta_{k+1},\epsilon_{k+1})}) \dots \lambda_{\chi(\ell)}(a_\ell^{(\delta_\ell,\epsilon_\ell)}) \xi \in \mathcal{H}_{s_k}^\circ.
\]
Observe that
\begin{align*}
\lambda_{\chi(j+1)}(a_{j+1}^{(\delta_{j+1},\epsilon_{j+1})}) \dots \lambda_{\chi(k)}(a_k^{(\delta_k,\epsilon_k)})
&= \lambda_{\chi(V)}(a_{j+1}^{(0,1)} a_{j+2}^{(1,1)} \dots a_{k-1}^{(1,1)} a_k^{(1,0)}) \\
&= \lambda_{\chi(V)}(K_{\Bool,k-j}[a_{j+1},\dots,a_k] P_{\chi(V)}),
\end{align*}
where the last equality follows from Lemma \ref{lem:booleancumulants}, and the inside expression is to be interpreted as $a_k^{(0,0)}$ in the case when $j+1=k$ and $|V| = 1$.  By our assumptions on $\pi$, we know that $s_k$ does not begin with $\chi(V)$ and that $\chi(V) s_k \in \mathcal{T}$, and hence
\[
\lambda_{\chi(V)}(K_{\Bool,k-j}[a_{j+1},\dots,a_k] P_{\chi(V)}) \zeta = K_{\Bool,k-j}[a_{j+1},\dots,a_k] \zeta.
\]
Therefore, the expectation we want to compute becomes
\[
\ip{\xi, \lambda_{\chi(1)}(a_1^{(\delta_1,\epsilon_1)}) \dots \lambda_{\chi(j)}(a_j^{(\delta_j,\epsilon_j)}) K_{k-j}[a_{j+1},\dots,a_k] \lambda_{\chi(k+1)}(a_{k+1}^{(\delta_{k+1},\epsilon_{k+1})}) \dots \lambda_{\chi(\ell)}(a_\ell^{(\delta_\ell,\epsilon_\ell)}) \xi}.
\]
Applying the inductive hypothesis to $\pi \setminus V$, we obtain obtain $\Lambda_{\chi,\pi}[a_1,\dots,a_\ell]$ as desired.
\end{proof}

Theorem \ref{thm:combinatorics} follows from Lemmas \ref{lem:compatible}, \ref{lem:bijection}, and \ref{lem:evaluation}.

\begin{corollary} \label{cor:combinatorics2}
With the setup of Theorem \ref{thm:combinatorics}, the Boolean cumulants in $\mathcal{L}(\mathcal{H})$ with respect to $\xi$ are given by
\[
K_{\Bool,\ell}[\lambda_{\chi(1)}(a_1), \dots, \lambda_{\chi(\ell)}(a_\ell)]= \sum_{\pi \in \mathcal{NC}^\circ(\chi,\mathcal{T})} \Lambda_{\chi,\pi}(a_1,\dots,a_\ell),
\]
where $\mathcal{NC}^\circ(\chi,\mathcal{T})$ denotes the set of irreducible partitions.
\end{corollary}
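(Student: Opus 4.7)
The plan is to deduce the corollary from Theorem \ref{thm:combinatorics} by reorganizing the sum over $\mathcal{NC}(\chi,\mathcal{T})$ according to the unique concatenation decomposition of each non-crossing partition into irreducible pieces, and then invoking the implicit characterization of Boolean cumulants from Definition \ref{def:Booleancumulants}.  Write $x_j := \lambda_{\mathcal{T},\chi(j)}(a_j)$ and set
\[
\widetilde{K}_\ell[x_1,\dots,x_\ell] := \sum_{\pi \in \mathcal{NC}^\circ(\chi,\mathcal{T})} \Lambda_{\chi,\pi}[a_1,\dots,a_\ell],
\]
so the goal is to show $\widetilde{K}_\ell = K_{\Bool,\ell}$ on tuples of this form.

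The first step is to verify a multiplicativity property of $\Lambda_{\chi,\pi}$: whenever $\pi$ splits as a left-to-right concatenation $\pi = \pi_1 \sqcup \pi_2$ along $[\ell] = S_1 \sqcup S_2$ with every element of $S_1$ less than every element of $S_2$,
\[
\Lambda_{\chi,\pi}[a_1,\dots,a_\ell] = \Lambda_{\chi|_{S_1},\pi_1}[(a_j)_{j \in S_1}] \cdot \Lambda_{\chi|_{S_2},\pi_2}[(a_j)_{j \in S_2}].
\]
This follows from the recursive rule in Theorem \ref{thm:combinatorics}: every interval block of $\pi$ lies entirely within one of $S_1$ or $S_2$, so one may eliminate the interval blocks in $S_2$ first until only $\pi_1$ remains; the Boolean cumulants produced while reducing $\pi_2$ accumulate into a $\mathcal{B}$-valued factor that separates from the left piece by $\mathcal{B}$-quasi-multilinearity.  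An induction then yields, for any interval partition $\eta \in \mathcal{I}(\ell)$ with blocks $V_1 < \dots < V_m$ such that each $\pi|_{V_i}$ is a compatible partition of $V_i$, the left-to-right factorization $\Lambda_{\chi,\pi} = \prod_{i=1}^m \Lambda_{\chi|_{V_i},\pi|_{V_i}}$.

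Next, I would invoke the remark that every non-crossing partition decomposes uniquely as a concatenation of irreducible pieces: every $\pi \in \mathcal{NC}(\chi,\mathcal{T})$ determines a unique finest interval partition $\eta(\pi) \in \mathcal{I}(\ell)$ with $\pi|_V \in \mathcal{NC}^\circ(\chi|_V,\mathcal{T})$ for each $V \in \eta(\pi)$.  Compatibility with $\mathcal{T}$ is preserved block-by-block because the chain of any block inside $\pi|_V$ coincides with its chain inside $\pi$.  This gives a bijection
\[
\mathcal{NC}(\chi,\mathcal{T}) \;\longleftrightarrow\; \bigsqcup_{\eta \in \mathcal{I}(\ell)} \prod_{V \in \eta} \mathcal{NC}^\circ(\chi|_V,\mathcal{T}).
\]
Combining this bijection with multiplicativity, the moment formula of Theorem \ref{thm:combinatorics} rewrites as
\[
E[x_1 \cdots x_\ell] = \sum_{\eta \in \mathcal{I}(\ell)} \widetilde{K}_\eta[x_1,\dots,x_\ell],
\]
where $\widetilde{K}_\eta$ is built from $\widetilde{K}_{|V|}$ via Definition \ref{def:picomposition}.

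The final step is uniqueness.  The displayed identity is exactly the moment-cumulant relation from Definition \ref{def:Booleancumulants}, with $K_{\Bool}$ replaced by $\widetilde{K}$, evaluated on tuples of the specified form.  Solving inductively for $\widetilde{K}_\ell[x_1,\dots,x_\ell]$ (the contribution of the coarsest interval partition $\{[\ell]\}$) in terms of moments and lower-arity cumulants on sub-tuples, and noting that $K_{\Bool,\ell}[x_1,\dots,x_\ell]$ satisfies the same recursion, yields $K_{\Bool,\ell}[x_1,\dots,x_\ell] = \widetilde{K}_\ell[x_1,\dots,x_\ell]$.  The main obstacle is the multiplicativity in the first step: although morally obvious, it requires careful bookkeeping, because in Theorem \ref{thm:combinatorics} the Boolean cumulants appearing in the composition live on the individual algebras $\mathcal{L}(\mathcal{H}_{\chi(V)})$ rather than on $\mathcal{L}(\mathcal{H})$; once this is pinned down, the reorganization of the sum and the uniqueness argument are formal.
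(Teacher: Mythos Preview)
Your argument is correct, but it takes a different route from the paper's own proof. The paper does not reorganize the sum from Theorem \ref{thm:combinatorics}; instead it recomputes the Boolean cumulant directly from the operator formula $K_{\Bool,\ell}[x_1,\dots,x_\ell] = \ip{\xi, x_1 Q x_2 Q \cdots Q x_\ell \xi}$ of Lemma \ref{lem:booleancumulants}, and then reruns the path-counting argument (Lemmas \ref{lem:compatible}--\ref{lem:evaluation}) with the additional constraint that the intermediate projections $Q$ kill any term whose path visits the root vertex $\emptyset$ at an interior time. Under the bijection of Lemma \ref{lem:bijection}, admissible paths that avoid $\emptyset$ strictly between times $0$ and $\ell$ correspond exactly to partitions with $1 \sim_\pi \ell$, i.e.\ irreducible partitions, and the result follows immediately.

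Your approach has the advantage of treating Theorem \ref{thm:combinatorics} as a black box and never returning to the Hilbert-module computation; it is essentially the same concatenation-and-uniqueness maneuver the paper itself uses later in Lemma \ref{lem:TBcumulantconversion}. The paper's approach is shorter here because the path machinery is already in hand, and the restriction to paths avoiding $\emptyset$ is a one-line observation. Your multiplicativity step and the closure of the tuple class under consecutive sub-tuples (needed for the inductive uniqueness) are both fine; the only thing to be slightly careful about is that the Möbius-inversion uniqueness argument for $K_{\Bool}$ is being applied only on tuples of the special form, but as you implicitly note, the interval-partition recursion only ever references consecutive sub-tuples, which remain of that form.
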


\begin{proof}
Let $P$ be the projection onto $\xi$ in $\mathcal{H}$, and let $Q = 1 - P$.  Then the Boolean cumulant we want to compute is
\[
\ip{\xi, \lambda_{\chi(1)}(a_1) Q \dots Q \lambda_{\chi(\ell)}(a_\ell) \xi}.
\]
We proceed simiarly as in the proof Theorem \ref{thm:combinatorics}, except that if some $s_j = \emptyset$ for $0 < j < \ell$, then the corresponding term is eliminated by the projection $Q$.  We thus obtain a sum over all paths in $\mathcal{T}'$ that are admissible with respect to $\chi$ and do not visit the root vertex between the start and end times.  These paths correspond, under the bijection $f$ constructed above, to partitions in which $1 \sim_\pi \ell$, or in other words irreducible partitions.
\end{proof}

\subsection{$\mathcal{T}$-free Independence and Convolution}

Theorem \ref{thm:combinatorics} implies several well-definedness properties of the moments of random variables in the $\mathcal{T}$-free product.  For instance, the joint moments of variables in $\mathcal{T}$-free product spaces are independent of the particular representations of the random variables.

\begin{corollary}
Let $\mathcal{T} \in \Tree(N)$.  Let $(\mathcal{A},E) = \assemb_\mathcal{T}[(\mathcal{A}_1,E_1), \dots, (\mathcal{A}_N,E_N)]$ and let $\lambda_{\mathcal{T},j}: \mathcal{A}_j \to \mathcal{A}$ be the $\mathcal{T}$-free product inclusion.  Then $E[\lambda_{\mathcal{T},i_1}(a_1) \dots \lambda_{\mathcal{T},i_\ell}(a_\ell)]$ is uniquely determined by $\mathcal{T}$ and the joint moments of $\{a_k: i_k = j\}$ in $(\mathcal{A}_j,E_j)$, independently of the specific choice of algebras $(\mathcal{A}_j,E_j)$.
\end{corollary}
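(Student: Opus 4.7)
The plan is to derive this immediately from Theorem~\ref{thm:combinatorics} by checking that every ingredient of the right-hand side depends only on the individual algebras' joint moments and on $\mathcal{T}$.

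First, I would set $\chi: [\ell] \to [N]$ by $\chi(k) = i_k$, and view the $a_k$ as elements of $\mathcal{L}(\mathcal{H}_{\chi(k)})$ via the canonical representation $\pi_{\chi(k)}: \mathcal{A}_{\chi(k)} \to \mathcal{L}(\mathcal{H}_{\chi(k)})$ from Construction~\ref{const:canonicalrep}. By the definition of the $\mathcal{T}$-free product of probability spaces, the map $\lambda_{\mathcal{T},j}: \mathcal{A}_j \to \mathcal{A}$ factors as $\lambda_{\mathcal{T},j} \circ \pi_j$ composed with the $*$-homomorphism of the same name on $\mathcal{L}(\mathcal{H}_j)$. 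Applying Theorem~\ref{thm:combinatorics} then gives
\[
E[\lambda_{\mathcal{T},i_1}(a_1) \cdots \lambda_{\mathcal{T},i_\ell}(a_\ell)] = \sum_{\pi \in \mathcal{NC}(\chi,\mathcal{T})} \Lambda_{\chi,\pi}[\pi_{i_1}(a_1),\dots,\pi_{i_\ell}(a_\ell)].
\]
The index set $\mathcal{NC}(\chi,\mathcal{T})$ is manifestly determined by $\chi$ and $\mathcal{T}$ alone.

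Next I would unwind the recursive definition of $\Lambda_{\chi,\pi}$: each step removes an interval block $V$ with color $\chi(V) = j$ and inserts a Boolean cumulant $K_{\Bool,|V|}$ evaluated on the operators $\pi_j(a_k)$ for $k \in V$, computed in $\mathcal{L}(\mathcal{H}_j)$ with respect to $\xi_j$. The central point is that, by Definition~\ref{def:Booleancumulants} and its Möbius-inversion characterization (Lemma~\ref{lem:partitionMobiusinversion}), the Boolean cumulants $K_{\Bool,m}[\pi_j(a_{k_1}),\dots,\pi_j(a_{k_m})]$ in $(\mathcal{L}(\mathcal{H}_j),\langle \xi_j, \cdot\,\xi_j\rangle)$ are polynomial expressions in the joint moments $\langle \xi_j, \pi_j(a_{k_{r_1}}) \cdots \pi_j(a_{k_{r_s}}) \xi_j \rangle = E_j[a_{k_{r_1}} \cdots a_{k_{r_s}}]$. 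Thus these cumulants agree with the Boolean cumulants of $a_{k_1},\dots,a_{k_m}$ computed directly in $(\mathcal{A}_j,E_j)$, and in particular are invariants of the $\mathcal{B}$-valued joint distribution of $\{a_k : i_k = j\}$ inside $(\mathcal{A}_j,E_j)$.

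Finally, since $\Lambda_{\chi,\pi}$ is assembled from these cumulants together with multiplications by elements of $\mathcal{B}$ (the quasi-multilinearity), the entire sum depends only on $\mathcal{T}$, the coloring $\chi$, and the joint $\mathcal{B}$-valued moments of $\{a_k : i_k = j\}$ in each $(\mathcal{A}_j,E_j)$. No step of the argument is delicate; the only thing that requires attention is confirming that the Boolean cumulants appearing in $\Lambda_{\chi,\pi}$ are truly intrinsic to each factor $(\mathcal{A}_j,E_j)$ and do not secretly depend on the ambient $\mathcal{T}$-free product, which is precisely what the formula in Theorem~\ref{thm:combinatorics} (with cumulants taken in $\mathcal{L}(\mathcal{H}_{\chi(V)})$ rather than in $\mathcal{L}(\mathcal{H})$) guarantees.
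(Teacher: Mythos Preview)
Your proposal is correct and matches the paper's approach: the paper states this corollary without proof, presenting it as an immediate consequence of Theorem~\ref{thm:combinatorics}, and your argument simply spells out why the right-hand side of that theorem depends only on $\mathcal{T}$ and the joint moments in each factor. Your explicit verification that the Boolean cumulants are computed in $\mathcal{L}(\mathcal{H}_{\chi(V)})$ (hence intrinsically in $(\mathcal{A}_j,E_j)$) is exactly the point the paper flags in the remark following Theorem~\ref{thm:combinatorics}.
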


\begin{corollary} \label{cor:convolvedlaw}
With the set up of the previous corollary, if $X_j \in \mathcal{A}_j$ is a bounded self-adjoint operator with law $\mu_j$ for $j = 1$, \dots, $N$, then the law of $\sum_{j=1}^N \lambda_{\mathcal{T},j}(X_j)$ is $\boxplus_{\mathcal{T}}(\mu_1,\dots,\mu_N)$.
\end{corollary}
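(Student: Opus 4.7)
The plan is to reduce this to the preceding corollary, which says that mixed moments in a $\mathcal{T}$-free product space depend only on the individual laws of the factors. By definition, $\boxplus_{\mathcal{T}}(\mu_1, \dots, \mu_N)$ is precisely the law of $\sum_j \lambda_{\mathcal{T},j}(\tilde X_j)$, where $\tilde X_j$ is the multiplication operator on $L^2(\mathcal{B}\ip{X_j},\mu_j)$ and itself has law $\mu_j$. Hence it suffices to show that the law of $Y := \sum_{j=1}^N \lambda_{\mathcal{T},j}(X_j)$ in the arbitrary product $(\mathcal{A},E) = \assemb_\mathcal{T}[(\mathcal{A}_1,E_1),\dots,(\mathcal{A}_N,E_N)]$ depends only on $\mu_1,\dots,\mu_N$.

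To evaluate the law of $Y$ on a monomial $p(X) = b_0 X b_1 \cdots X b_\ell$, first expand by multilinearity,
\[
E[b_0 Y b_1 \cdots Y b_\ell] = \sum_{i_1,\dots,i_\ell \in [N]} E\bigl[b_0 \lambda_{\mathcal{T},i_1}(X_{i_1}) b_1 \cdots \lambda_{\mathcal{T},i_\ell}(X_{i_\ell}) b_\ell\bigr].
\]
Then absorb each scalar $b_k$ into a neighbouring $\lambda_{\mathcal{T},i_k}$-argument using the $\mathcal{B}$-bimodularity identities
\[
b \cdot \lambda_{\mathcal{T},j}(a) = \lambda_{\mathcal{T},j}(ba), \qquad \lambda_{\mathcal{T},j}(a) \cdot b = \lambda_{\mathcal{T},j}(ab) \qquad (b \in \mathcal{B},\ a \in \mathcal{L}(\mathcal{H}_j)),
\]
where the outer $b$ acts on $\mathcal{H}$ through the natural $\mathcal{B}$-action of the correspondence structure. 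These identities are checked from the unitary decomposition
\[
\mathcal{H} \cong \Bigl[\mathcal{H}_j \otimes_{\mathcal{B}} \bigoplus_{s \in S_{\mathcal{T},j}} \mathcal{H}_s^\circ\Bigr] \oplus \bigoplus_{s \in S_{\mathcal{T},j}'} \mathcal{H}_s^\circ
\]
used to define $\lambda_{\mathcal{T},j}$: the natural $\mathcal{B}$-action corresponds to left multiplication on the $\mathcal{H}_j$-factor of the first summand, while $\lambda_{\mathcal{T},j}(a)$ acts as $a \otimes \id$ on the first summand and vanishes on the second, so both sides of each identity agree summand by summand. Applied iteratively, the expansion becomes
\[
E[b_0 Y b_1 \cdots Y b_\ell] = \sum_{i_1,\dots,i_\ell} E\bigl[\lambda_{\mathcal{T},i_1}(b_0 X_{i_1} b_1)\, \lambda_{\mathcal{T},i_2}(X_{i_2} b_2) \cdots \lambda_{\mathcal{T},i_\ell}(X_{i_\ell} b_\ell)\bigr].
\]

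By the preceding corollary, each summand on the right is determined by $\mathcal{T}$ together with the joint distribution, inside each $(\mathcal{A}_j, E_j)$, of the arguments indexed by $\{k : i_k = j\}$. Since every such argument is a $\mathcal{B}$-coefficient monomial in the single operator $X_j$, that joint distribution is encoded entirely by $\mu_j$. The same expansion performed in the canonical realization produces the same combinatorial expression in $\mu_1,\dots,\mu_N$, so $E[p(Y)] = \boxplus_{\mathcal{T}}(\mu_1,\dots,\mu_N)(p)$ for every monomial $p$, hence by $\mathcal{B}$-linearity for every $p \in \mathcal{B}\ip{X}$. The main obstacle is the bimodularity identity: because $\lambda_{\mathcal{T},j}$ is non-unital and the natural embedding $\mathcal{B} \hookrightarrow \mathcal{A}$ differs from $\lambda_{\mathcal{T},j} \circ \pi_j|_{\mathcal{B}}$ as operators on $\mathcal{H}$, this identity is not a formal consequence of $\lambda_{\mathcal{T},j}$ being a $*$-homomorphism and must be verified by unwinding the direct-sum decomposition of $\mathcal{H}$.
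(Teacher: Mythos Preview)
Your proof is correct and matches the paper's intended argument; the paper states this corollary without proof, as an immediate consequence of the preceding corollary (which in turn follows from Theorem~\ref{thm:combinatorics}). You have correctly identified and filled in the one genuine detail the paper glosses over: because $\lambda_{\mathcal{T},j}$ is in general non-unital, the identity $b\,\lambda_{\mathcal{T},j}(a) = \lambda_{\mathcal{T},j}(ba)$ (and its right-sided analogue) is not automatic and must be checked from the direct-sum decomposition, as you do. Once that is established, the reduction to the previous corollary is exactly as you describe.
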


If we assume that $\mathcal{T} \in \Tree'(N)$, then the maps $\lambda_{\mathcal{T},j}$ are expectation-preserving and hence injective (by the faithfulness assumption in our definition of non-commutative probability space).  In this case, it makes sense to define $\mathcal{T}$-free independence.

\begin{definition}
Let $\mathcal{A}_1$, \dots, $\mathcal{A}_N$ be $\mathcal{B}$-$*$-subalgebras of $(\mathcal{A},E)$, not necessarily closed.  We assume that each $\mathcal{A}_j$ has an internal unit, but the inclusion $\mathcal{A}_j \to \mathcal{A}$ is not necessarily unital.  We say that $\mathcal{A}_1$, \dots, $\mathcal{A}_N$ are \emph{$\mathcal{T}$-free independent over $\mathcal{B}$} if whenever $\chi: [\ell] \to [N]$ and $a_j \in \mathcal{A}_{\chi(j)}$ for $j = 1$, \dots, $N$, we have
\begin{equation} \label{eq:maincombinatorialformula3}
E[a_1 \dots a_\ell] = \sum_{\pi \in \mathcal{NC}(\chi,\mathcal{T})} K_{\Bool,\pi}[a_1,\dots,a_\ell],
\end{equation}
where $K_{\Bool,j}$ denotes the $j$th Boolean cumulant and $K_\pi$ is the $\pi$-composition of the $K_j$'s.
\end{definition}

It follows from the foregoing arguments that if $(\mathcal{A},E)$ is the $\mathcal{T}$-free product of $(\mathcal{A}_1,E_1)$, \dots, $(\mathcal{A},E_N)$, then the algebras $\lambda_{\mathcal{T},j}(\mathcal{A}_j)$ are $\mathcal{T}$-freely independent in $(\mathcal{A},E)$.

Furthermore, $\mathcal{A}_1$, \dots, $\mathcal{A}_N$ are $\mathcal{T}$-free independent in $\mathcal{A}$ and $X_j \in \mathcal{A}_j$ is self-adjoint, then the law of $X_1 + \dots + X_j$ only depends on $E|_{\mathcal{B}\ip{X_j}}$ and hence it is the $\mathcal{T}$-free convolution of the laws of $X_1$, \dots, $X_N$.

Another consequence of Theorem \ref{thm:combinatorics} concerns what happens when we restrict to a subset of the indices $[N]$.

\begin{corollary} \label{cor:indexsubset}
Let $\mathcal{T} \in \Tree(N)$.  Let $N' \leq N$.  Let $\mathcal{T}'$ be the set of alternating strings on the alphabet $[N'] \subseteq [N]$ that are contained in $\mathcal{T}$.  Let $(\mathcal{H}_1,\xi_1)$, \dots, $(\mathcal{H}_N,\xi_N)$ be $\mathcal{B}$-$\mathcal{B}$-correspondences with $\mathcal{B}$-central unit vectors.  Let
\begin{align*}
(\mathcal{H},\xi) &= \assemb_{\mathcal{T}}[(\mathcal{H}_1,\xi_1),\dots,(\mathcal{H}_N,\xi_N)]
(\mathcal{H}',\xi') &= \assemb_{\mathcal{T}'}[(\mathcal{H}_1,\xi_1),\dots,(\mathcal{H}_{N'},\xi_{N'})].
\end{align*}
Let $\chi: [\ell] \to [N']$ and let $a_j \in \mathcal{L}(\mathcal{H}_{\chi(j)})$ for $j = 1$, \dots, $\ell$. Then
\[
\ip{\xi', \lambda_{\mathcal{T}',\chi(1)}(a_1) \dots \lambda_{\mathcal{T}',\chi(\ell)}(a_\ell) \xi'} = \ip{\xi, \lambda_{\mathcal{T},\chi(1)}(a_1) \dots \lambda_{\mathcal{T},\chi(\ell)}(a_\ell) \xi}.
\]
\end{corollary}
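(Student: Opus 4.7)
The plan is to reduce both sides of the claimed equality to a combinatorial sum using Theorem \ref{thm:combinatorics} and then verify that the two sums are indexed by the same set and consist of the same terms.

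First, I would observe that $\mathcal{T}'$ is in fact a rooted subtree of $\mathcal{T}_{N',\free}$: it contains $\emptyset$ trivially, and if $s = j_1 \dots j_\ell \in \mathcal{T}'$ then the parent string $j_2 \dots j_\ell$ lies in $\mathcal{T}$ (because $\mathcal{T}$ is a rooted subtree of $\mathcal{T}_{N,\free}$) and still uses only letters from $[N']$, hence lies in $\mathcal{T}'$. So the $\mathcal{T}'$-free product makes sense.

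Next, I would apply Theorem \ref{thm:combinatorics} to each side. The right-hand side becomes $\sum_{\pi \in \mathcal{NC}(\chi,\mathcal{T})} \Lambda_{\chi,\pi}[a_1,\dots,a_\ell]$, and the left-hand side becomes $\sum_{\pi \in \mathcal{NC}(\chi,\mathcal{T}')} \Lambda_{\chi,\pi}[a_1,\dots,a_\ell]$. Here the forms $\Lambda_{\chi,\pi}$ are identical in both expressions, because they are defined recursively from the Boolean cumulants $K_{\Bool,k}$ of the individual algebras $\mathcal{L}(\mathcal{H}_{\chi(j)})$ with respect to $\xi_{\chi(j)}$, and these cumulants are intrinsic to the factor correspondences, independent of the ambient $\mathcal{T}$- or $\mathcal{T}'$-free product.

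It then remains to identify the index sets $\mathcal{NC}(\chi,\mathcal{T})$ and $\mathcal{NC}(\chi,\mathcal{T}')$. Since $\chi$ takes values in $[N']$, any partition $\pi \in \mathcal{NC}(\chi)$ has every block colored by an element of $[N']$; consequently, for each block $V \in \pi$, the string $\chi(\chain(V)) = \chi(V)\chi(V_1)\dots \chi(V_d)$ is an alternating string on the alphabet $[N']$. For such a string, membership in $\mathcal{T}$ is by the very definition of $\mathcal{T}'$ equivalent to membership in $\mathcal{T}'$. Therefore $\pi \in \mathcal{NC}(\chi,\mathcal{T})$ if and only if $\pi \in \mathcal{NC}(\chi,\mathcal{T}')$, and the two sums agree term by term.

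There is no real obstacle here; the content of the corollary is essentially a bookkeeping consequence of the combinatorial formula. The only mildly subtle point is noticing that the forms $\Lambda_{\chi,\pi}$ depend only on the individual factor spaces $\mathcal{L}(\mathcal{H}_{\chi(j)})$, so no comparison between the Boolean cumulants in $\mathcal{L}(\mathcal{H})$ versus $\mathcal{L}(\mathcal{H}')$ is required; this is precisely why the statement of Theorem \ref{thm:combinatorics} uses the intrinsic cumulants of the factors rather than those of the global product.
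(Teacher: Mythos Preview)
Your proposal is correct and follows essentially the same approach as the paper: apply Theorem~\ref{thm:combinatorics} to both sides and then observe that $\mathcal{NC}(\chi,\mathcal{T}) = \mathcal{NC}(\chi,\mathcal{T}')$ because every string $\chi(\chain(V))$ lies in the alphabet $[N']$. Your additional remarks (that $\mathcal{T}'$ is indeed a rooted subtree and that the $\Lambda_{\chi,\pi}$ depend only on the factor spaces) are correct and helpful elaborations, but the paper's proof is even more terse on these points.
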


\begin{proof}
In light of Theorem \ref{thm:combinatorics}, it suffices to show that if $\chi: [\ell] \to [N']$, then $\mathcal{NC}(\chi,\mathcal{T}') = \mathcal{NC}(\chi,\mathcal{T})$.  This is immediate, because if $\pi$ is a partition compatible with $\chi$ and if $V \in \pi$, then $\chi(\chain(V))$ is a string on the alphabet $[N']$, and hence $\chi(\chain(V))$ is in $\mathcal{T}$ if and only if it is in $\mathcal{T}'$.
\end{proof}

Corollary \ref{cor:indexsubset} easily implies the following statements.  Suppose that $\mathcal{T} \in \Tree'(N)$ and $n \leq N$ and $\mathcal{T}'$ is as in Corollary \ref{cor:indexsubset}.  If $\mathcal{A}_1$, \dots, $\mathcal{A}_N$ are $\mathcal{T}$-freely independent, then $\mathcal{A}_1$, \dots, $\mathcal{A}_{N'}$ are $\mathcal{T}'$-freely independent.  Moreover, if $\mathcal{T}$, $N'$, and $\mathcal{T}'$ are as in the Corollary \ref{cor:indexsubset}, the
\[
\boxplus_{\mathcal{T}'}(\mu_1,\dots,\mu_{N'}) = \boxplus_{\mathcal{T}}(\mu_1,\dots,\mu_{N'},\delta_0,\dots,\delta_0),
\]
where $\delta_0$ is the $\mathcal{B}$-valued law of the zero operator.

\subsection{The Free, Boolean, and Monotone Cases} \label{subsec:FBMcombinatorics}

Interrupting the general exposition, we now explain how Theorem \ref{thm:combinatorics} relates to the moment conditions used in previous literature to define free, Boolean, and monotone independence.

\begin{proposition}
Let $\mathcal{A}_1$, \dots, $\mathcal{A}_N$ be $\mathcal{B}$-$*$-subalgebras of $(\mathcal{A},E)$ with internal units.  The following are equivalent:
\begin{enumerate}[(1)]
	\item Let $(\tilde{\mathcal{A}},\tilde{E})$ be the free product of $(\mathcal{A}_1,E_1)$, \dots, $(A_N,E_N)$ and $\lambda_{\free,j}: \mathcal{A}_j \to \tilde{\mathcal{A}}$ the corresponding inclusions.  Then for every $\chi: [\ell] \to [N]$ and $a_j \in \mathcal{A}_{\chi(j)}$, we have
	\[
	E[a_1 \dots a_\ell] = \tilde{E}[\lambda_{\free,\chi(1)}(a_1) \dots \lambda_{\free,\chi(\ell)}(a_\ell)].
	\]
	\item Given $\chi: [\ell] \to [N]$ and given $a_j \in \mathcal{A}_{\chi(j)}$, we have
	\[
	E[a_1 \dots a_\ell] = \sum_{\pi \in \mathcal{NC}(\chi,\mathcal{T}_{N,\free})} K_{\Bool,\pi}[a_1, \dots, a_\ell].
	\]
	\item Given $\chi: [\ell] \to [N]$ such that $\chi$ is alternating (that is, $\chi(j+1) \neq \chi(j)$) and given $a_j \in \mathcal{A}_{\chi(j)}$ with $E[a_j] = 0$, we have
	\[
	E[a_1 \dots a_\ell] = 0.
	\]
	This is the formulation given by \cite[Definition 1.2]{Voiculescu1995} (see \cite{Voiculescu1985} for the scalar case).
\end{enumerate}
In (2) above, $\mathcal{NC}(\chi,\mathcal{T}_{N,\free})$ consists of partitions $\pi \in \mathcal{NC}(\chi)$ such that adjacent blocks in $\graph(\pi)$ have distinct colors.
\end{proposition}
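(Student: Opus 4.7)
The plan is to prove the equivalences via $(1) \Leftrightarrow (2) \Rightarrow (3) \Rightarrow (1)$, where $(1) \Leftrightarrow (2)$ is essentially a corollary of Theorem \ref{thm:combinatorics}, $(2) \Rightarrow (3)$ is a short combinatorial argument, and $(3) \Rightarrow (1)$ is a standard recursion on the length $\ell$. For $(1) \Leftrightarrow (2)$, apply Theorem \ref{thm:combinatorics} to $\mathcal{T} = \mathcal{T}_{N,\free}$. Because $\mathcal{T}_{N,\free} \in \Tree'(N)$, the simpler form \eqref{eq:maincombinatorialformula2} applies and expresses the free-product moment $\tilde{E}[\lambda_{\free,\chi(1)}(a_1) \cdots \lambda_{\free,\chi(\ell)}(a_\ell)]$ exactly as $\sum_{\pi \in \mathcal{NC}(\chi,\mathcal{T}_{N,\free})} K_{\Bool,\pi}[a_1,\dots,a_\ell]$. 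The characterization of $\mathcal{NC}(\chi,\mathcal{T}_{N,\free})$ stated at the end of the proposition is just the unwinding of the definition of $\mathcal{T}_{N,\free}$ as the tree of alternating strings: $\chi(\chain(V)) \in \mathcal{T}_{N,\free}$ is precisely the condition that consecutive blocks in the chain from $V$ to the root carry distinct colors.

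For $(2) \Rightarrow (3)$, assume that $\chi$ is alternating and that $E[a_j] = 0$ for every $j$. The key combinatorial observation is that when $\chi$ is alternating, every interval block of any $\pi \in \mathcal{NC}(\chi)$ must be a singleton. Indeed, every non-crossing partition has at least one interval block (a leaf of $\graph(\pi)$), and since $\chi$ is constant on each block of $\pi$, an interval block of size $\geq 2$ would force two consecutive indices to share a color, contradicting alternation. Choosing such a singleton $V = \{j\}$ to strip first in the recursive definition of $K_{\Bool,\pi}$ (Definition \ref{def:picomposition}), the factor $K_{\Bool,1}[a_j] = E[a_j] = 0$ appears and forces $K_{\Bool,\pi}[a_1,\dots,a_\ell] = 0$. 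Summing (2) over $\pi$ then yields $E[a_1 \cdots a_\ell] = 0$.

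For $(3) \Rightarrow (1)$, I will induct on $\ell$; the cases $\ell = 0, 1$ are trivial. For $\ell \geq 2$: if $\chi(j) = \chi(j+1)$ for some $j$, merge $a_j a_{j+1} \in \mathcal{A}_{\chi(j)}$ to obtain a product of length $\ell - 1$ of the same form, and observe that $\lambda_{\free,\chi(j)}$ being a $*$-homomorphism performs the identical merger inside $\tilde{\mathcal{A}}$. If $\chi$ is alternating, write $a_j = a_j^{\circ} + E[a_j]$ with $a_j^{\circ} = a_j - E[a_j] \in \mathcal{A}_{\chi(j)}$ and expand $E[a_1 \cdots a_\ell]$ by multilinearity. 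The fully centered summand vanishes in $\mathcal{A}$ by (3) and in $\tilde{\mathcal{A}}$ because the free product also satisfies (3) via the already-established $(1) \Rightarrow (2) \Rightarrow (3)$. Every other summand contains a scalar factor $E[a_j] \in \mathcal{B}$ which, by $\mathcal{B}$-bimodularity and the identity $\lambda_{\free,k}|_{\mathcal{B}} = \id$, can be absorbed into an adjacent entry to produce a strictly shorter product in both $\mathcal{A}$ and $\tilde{\mathcal{A}}$; the induction hypothesis then matches the two sides.

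The only genuinely combinatorial step is the leaf-block observation in $(2) \Rightarrow (3)$; the rest is bookkeeping with the definitions of the amalgamated free product and the Boolean cumulants. The mild subtlety to watch in $(3) \Rightarrow (1)$ is verifying that the two reductions (merging equal-color neighbors and absorbing scalar factors) lift verbatim to $\tilde{\mathcal{A}}$, which reduces to the facts that each $\lambda_{\free,k}$ is a $*$-homomorphism and restricts to the identity on the amalgamating subalgebra $\mathcal{B}$.
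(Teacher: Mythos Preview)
Your proof is correct and follows essentially the same route as the paper: both derive (1)$\Rightarrow$(2) from Theorem~\ref{thm:combinatorics}, use the identical ``every interval block is a singleton when $\chi$ is alternating'' argument for (2)$\Rightarrow$(3), and close the cycle via the standard centering-and-induction argument for (3)$\Rightarrow$(1). Your presentation is slightly more explicit than the paper's (you spell out the induction for (3)$\Rightarrow$(1) rather than citing \cite{Voiculescu1995}, and you note that (1)$\Leftrightarrow$(2) follows directly since Theorem~\ref{thm:combinatorics} computes the free-product moment as exactly the sum in (2)), but the substance is the same.
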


\begin{proof}
(1) $\implies$ (2) follows from Theorem \ref{thm:combinatorics}.

To show (2) $\implies$ (3), suppose that $\chi: [\ell] \to [N]$ is alternating and that $a_j \in \mathcal{A}_{\chi(j)}$ with $E[a_j] = 0$.  Every partition $\pi \in \mathcal{NC}(\chi,\mathcal{T}_{N,\free})$ must have some interval block.  The coloring $\chi$ must be constant on this block.  But because $\chi$ is alternating, this forces the interval block to have size one.  Since $K_{\Bool,1}(a_j) = E[a_j] = 0$, this means that $K_{\Bool,\pi}[a_1,\dots,a_\ell] = 0$.  Hence, all the terms on the right hand side of (2) vanish, and thus (3) holds.

Finally, to prove that (3) $\implies$ (1), observe that condition (3) uniquely determines $E|_{\Alg(\mathcal{A}_1,\dots,\mathcal{A}_N)}$ by an inductive argument which can be found in \cite{Voiculescu1995}.   The algebras $\mathcal{A}_j$ in $(\mathcal{A},E)$ satisfy (3), and the algebras $\lambda_{\free,j}(\mathcal{A}_j)$ in $(\tilde{\mathcal{A}},\tilde{E})$ also satisfy (3) because (1) $\implies$ (3).  Therefore, the joint moments in $(\mathcal{A},E)$ and $(\tilde{A},\tilde{E})$ must agree.  Thus, (1) holds.

Finally, the claim concerning $\mathcal{NC}(\chi,\mathcal{T}_{N,\free})$ follows immediately from the definition.
\end{proof}

\begin{proposition}
Let $\mathcal{A}_1$, \dots, $\mathcal{A}_N$ be $\mathcal{B}$-$*$-subalgebras of $(\mathcal{A},E)$ with internal units.  The following are equivalent:
\begin{enumerate}[(1)]
	\item Let $(\tilde{\mathcal{A}},\tilde{E})$ be the Boolean product of $(\mathcal{A}_1,E_1)$, \dots, $(A_N,E_N)$ and $\lambda_{\Bool,j}: \mathcal{A}_j \to \tilde{\mathcal{A}}$ the corresponding inclusions.  Then for every $\chi: [\ell] \to [N]$ and $a_j \in \mathcal{A}_{\chi(j)}$, we have
	\[
	E[a_1 \dots a_\ell] = \tilde{E}[\lambda_{\Bool,\chi(1)}(a_1) \dots \lambda_{\Bool,\chi(\ell)}(a_\ell)].
	\]
	\item Given $\chi: [\ell] \to [N]$ and given $a_j \in \mathcal{A}_{\chi(j)}$, we have
	\[
	E[a_1 \dots a_\ell] = \sum_{\pi \in \mathcal{NC}(\chi,\mathcal{T}_{N,\Bool})} K_{\Bool,\pi}[a_1, \dots, a_\ell].
	\]
	\item Given $\chi: [\ell] \to [N]$ such that $\chi$ is alternating (that is, $\chi(j+1) \neq \chi(j)$) and given $a_j \in \mathcal{A}_{\chi(j)}$, we have
	\[
	E[a_1 \dots a_\ell] = E[a_1] \dots E[a_\ell].
	\]
	This is the formulation given by \cite[\S 4.1]{Popa2009} (see \cite{SW1997} for the scalar case).
\end{enumerate}
Moreover, in (2) above, $\mathcal{NC}(\chi,\mathcal{T}_{N,\Bool})$ consists of the interval partitions in $\mathcal{NC}(\chi)$.
\end{proposition}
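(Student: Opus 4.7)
The plan is to mirror the proof of the preceding free-independence proposition almost verbatim, with the Boolean tree $\mathcal{T}_{N,\Bool} = \{\emptyset, 1, \dots, N\}$ replacing $\mathcal{T}_{N,\free}$. The key structural observation is that $\mathcal{T}_{N,\Bool}$ contains only strings of length at most one, which drastically restricts the compatible partitions, and this is what I would establish first.

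So first I would verify the final claim. For $\pi \in \mathcal{NC}(\chi, \mathcal{T}_{N,\Bool})$ and any block $V$ with $\chain(V) = (V, V_1, \dots, V_d)$, the string $\chi(\chain(V)) = \chi(V) \chi(V_1) \cdots \chi(V_d)$ must lie in $\{\emptyset, 1, \dots, N\}$, which forces $d = 0$. Thus every block is $\prec$-minimal, so no block is nested inside any other, and by the non-crossing property this forces every block to be an interval. Conversely, for any interval partition compatible with $\chi$, we have $\chain(V) = (V)$ and $\chi(\chain(V)) = \chi(V)$ is a singleton string in $\mathcal{T}_{N,\Bool}$. Hence $\mathcal{NC}(\chi,\mathcal{T}_{N,\Bool})$ is exactly the set of interval partitions compatible with $\chi$.

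With this identification, the three implications are short. For (1) $\Rightarrow$ (2), since $\mathcal{T}_{N,\Bool} \in \Tree'(N)$ I would apply \eqref{eq:maincombinatorialformula2} to the Boolean product, which directly yields the formula. For (2) $\Rightarrow$ (3), suppose $\chi$ is alternating and $a_j \in \mathcal{A}_{\chi(j)}$. A partition contributing to the right-hand side of (2) is an interval partition with constant color on each block. Because $\chi$ is alternating, any interval block of size $\geq 2$ would contain two adjacent indices of different colors, a contradiction. Hence the only contributing partition is $\{\{1\}, \ldots, \{\ell\}\}$, for which $K_{\Bool,\pi}[a_1,\dots,a_\ell] = K_{\Bool,1}(a_1) \cdots K_{\Bool,1}(a_\ell) = E[a_1] \cdots E[a_\ell]$.

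For (3) $\Rightarrow$ (1), I would argue that (3) uniquely determines every joint moment $E[a_{i_1} \cdots a_{i_\ell}]$ with $a_{i_k} \in \mathcal{A}_{j_k}$: grouping maximal runs of factors from the same algebra into a single element (using the internal algebra structure of each $\mathcal{A}_j$) reduces to an alternating product, and then (3) expresses the moment as a product of $E|_{\mathcal{A}_j}$-values. Both $(\mathcal{A},E)$ and the Boolean product satisfy (3) (the latter via the already-established chain (1) $\Rightarrow$ (2) $\Rightarrow$ (3)), so their joint moments agree. The main obstacle, and really the only nontrivial point, is the structural identification of $\mathcal{NC}(\chi,\mathcal{T}_{N,\Bool})$ as the set of interval partitions; everything else is a direct specialization of the free case or a routine manipulation.
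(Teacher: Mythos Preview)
The proposal is correct and takes essentially the same approach as the paper: identify $\mathcal{NC}(\chi,\mathcal{T}_{N,\Bool})$ with the interval partitions compatible with $\chi$ via the depth constraint, deduce (1) $\Rightarrow$ (2) from Theorem~\ref{thm:combinatorics}, reduce (2) to (3) for alternating $\chi$ by observing the only compatible interval partition is the all-singleton one, and close the loop by noting (3) determines all joint moments. The only cosmetic difference is that you front-load the interval-partition identification whereas the paper folds it into the (2) $\Rightarrow$ (3) step.
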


\begin{proof}
(1) $\implies$ (2) follows from Theorem \ref{thm:combinatorics}.

To show (2) $\implies$ (3), suppose that $\chi: [\ell] \to [N]$ is alternating and that $a_j \in \mathcal{A}_{\chi(j)}$.  Because $\mathcal{T}_{N,\Bool}$ contains only the root vertex and its neighbors, a partition can only be in $\mathcal{NC}(\chi,\mathcal{T}_{N,\Bool})$ if all the blocks all have depth $1$, and hence $\mathcal{NC}(\chi,\mathcal{T}_{N,\Bool})$ consists of the interval partitions compatible with $\chi$.  But if $\chi$ is alternating, then there is only one interval partition, namely the partition where every block is a singleton.  Thus, in this case (2) reduces to the formula (3).

Finally, to prove that (3) $\implies$ (1), observe that condition (3) uniquely determines $E|_{\Alg(\mathcal{A}_1,\dots,\mathcal{A}_N)}$.  Hence, the proof works the same as in the free case.
\end{proof}

\begin{proposition} \label{prop:monotonemoments}
Let $\mathcal{A}_1$, \dots, $\mathcal{A}_N$ be $\mathcal{B}$-$*$-subalgebras of $(\mathcal{A},E)$ with internal units.  The following are equivalent:
\begin{enumerate}[(1)]
	\item Let $(\tilde{\mathcal{A}},\tilde{E})$ be the monotone product of $(\mathcal{A}_1,E_1)$, \dots, $(A_N,E_N)$ and $\lambda_{\mono,j}: \mathcal{A}_j \to \tilde{\mathcal{A}}$ the corresponding inclusions.  Then for every $\chi: [\ell] \to [N]$ and $a_j \in \mathcal{A}_{\chi(j)}$, we have
	\[
	E[a_1 \dots a_\ell] = \tilde{E}[\lambda_{\mono,\chi(1)}(a_1) \dots \lambda_{\mono,\chi(\ell)}(a_\ell)].
	\]
	\item Given $\chi: [\ell] \to [N]$ and given $a_j \in \mathcal{A}_{\chi(j)}$, we have
	\[
	E[a_1 \dots a_\ell] = \sum_{\pi \in \mathcal{NC}(\chi,\mathcal{T}_{N,\mono})} K_{\Bool,\pi}[a_1, \dots, a_\ell].
	\]
	\item Suppose $\chi: [\ell] \to [N]$ and that $a_j \in \mathcal{A}_{\chi(j)}$.  Suppose that $k \in [\ell]$ such that $\chi(k) > \chi(k - 1)$ (if $k \neq 1$) and $\chi(k) > \chi(k+1)$ (if $k < \ell$).  Then
	\[
	E[a_1 \dots a_\ell] = E[a_1 \dots a_{k-1} E[a_k] a_{k+1} \dots a_\ell].
	\]
	This is the formulation given by \cite[Def.\ 2.2]{HS2014} (see \cite[Def.\ 2.5]{HS2011a} for the scalar case).  Equivalent formulations were given earlier by \cite[Def.\ 1.1]{Muraki2000} \cite[eq.\ (2.4)]{Skeide2004}.
\end{enumerate}
Moreover, in (2) above, $\mathcal{NC}(\chi,\mathcal{T}_{N,\mono})$ consists of the partitions in $\mathcal{NC}(\chi)$ such that $V \prec W$ implies $\chi(V) < \chi(W)$ for every $V, W \in \pi$.
\end{proposition}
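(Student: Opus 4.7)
The plan is to follow the same three-step scheme used above for the free and Boolean cases: (1) $\implies$ (2) is immediate from Theorem \ref{thm:combinatorics} applied to the monotone product, and (3) $\implies$ (1) will reduce to showing that condition (3) uniquely determines mixed moments inductively. The main work is in proving (2) $\implies$ (3) and in the final characterization of $\mathcal{NC}(\chi,\mathcal{T}_{N,\mono})$.

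For (2) $\implies$ (3), fix an index $k$ where $\chi$ has a strict local maximum. The key combinatorial lemma I would establish first is that in every $\pi \in \mathcal{NC}(\chi,\mathcal{T}_{N,\mono})$, the singleton $\{k\}$ must be a block of $\pi$. If the block $V$ containing $k$ had any other element, the block containing $k-1$ (or $k+1$), being either equal to $V$ or properly nested in $V$, would give an inequality along $\chain$ that contradicts $\chi(k \pm 1) < \chi(k)$. I then set up a bijection $\pi \leftrightarrow \pi \setminus \{\{k\}\}$ between $\mathcal{NC}(\chi,\mathcal{T}_{N,\mono})$ and $\mathcal{NC}(\chi|_{[\ell] \setminus \{k\}}, \mathcal{T}_{N,\mono})$. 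The forward direction is immediate because removing a singleton does not alter the chain of any other block. The inverse direction, i.e., that adding $\{k\}$ back to an arbitrary $\pi'$ is always permitted, requires showing $\chi(U) < \chi(k)$ where $U$ is the minimal block of $\pi'$ surrounding position $k$; I would argue this by locating the block $W_{k-1}$ of $\pi'$ containing $k-1$: either $W_{k-1} = U$, giving $\chi(U) = \chi(k-1) < \chi(k)$ directly, or $W_{k-1}$ is strictly nested in $U$, in which case chasing monotone inequalities along $\chain(W_{k-1})$ down to $U$ yields $\chi(U) < \chi(k-1) < \chi(k)$. Once the bijection is in place, the recursive identity
\[
K_{\Bool,\pi}[a_1,\dots,a_\ell] = K_{\Bool,\pi \setminus \{\{k\}\}}[a_1,\dots,a_{k-1},E[a_k]\,a_{k+1},\dots,a_\ell]
\]
(with the analogous formula when $k = \ell$) lets us sum formula (2) in terms of $\pi'$ and reassemble the right-hand side of (3) via formula (2) applied to the product of $\ell - 1$ factors.

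For (3) $\implies$ (1), I proceed by induction on $\ell$. Given $\chi: [\ell] \to [N]$, I first absorb consecutive factors from the same algebra into a single factor, reducing to the case that $\chi$ is alternating; then any position $k$ achieving the maximum of $\chi$ is a strict local maximum (using that each neighbor has a strictly smaller value of $\chi$), so (3) computes $E[a_1 \dots a_\ell]$ in terms of an expectation of $\ell - 1$ factors. Applying the same inductive procedure in the monotone product $(\tilde{\mathcal{A}},\tilde{E})$, which satisfies (3) via (1) $\implies$ (2) $\implies$ (3), shows the joint moments coincide. For the final claim, $\pi \in \mathcal{NC}(\chi,\mathcal{T}_{N,\mono})$ exactly means that $\chi(\chain(V))$ is a strictly decreasing string for each $V \in \pi$, which unwinds to $\chi(V) < \chi(W)$ whenever $W$ lies above $V$ on the path to the root of $\graph(\pi)$, i.e., whenever $V \prec W$.

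The main obstacle will be the inverse direction of the bijection in (2) $\implies$ (3): the forward direction is routine and the cumulant bookkeeping is standard, but ruling out the possibility that inserting the singleton $\{k\}$ into an arbitrary $\pi'$ violates the monotone condition requires tracing the monotone chain through a potentially long sequence of nested blocks between $k-1$ and the minimal surrounding block $U$, and using the local-maximum hypothesis at the very end to close the inequality.
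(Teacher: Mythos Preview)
Your proposal is correct and follows essentially the same route as the paper: (1)$\implies$(2) via Theorem~\ref{thm:combinatorics}, (2)$\implies$(3) by showing $\{k\}$ is forced to be a singleton block and then using the bijection $\pi \mapsto \pi\setminus\{\{k\}\}$ together with the recursive cumulant identity, and (3)$\implies$(1) by the induction-on-$\ell$ uniqueness argument. The only notable difference is that you spell out the inverse direction of the bijection (verifying that reinserting $\{k\}$ into an arbitrary $\pi'$ preserves the monotone condition by tracing from the block of $k-1$ to the surrounding block $U$), whereas the paper simply asserts the bijection without this check; your added detail is correct and in fact fills a small gap in the paper's presentation.
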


\begin{proof}
(1) $\implies$ (2) follows from Theorem \ref{thm:combinatorics}.

To show (2) $\implies$ (3), suppose that $\chi: [\ell] \to [N]$.  For $\pi$ to be in $\mathcal{NC}(\chi,\mathcal{T}_{N,\mono})$ means that if $V \in \pi$ and $\chain(V) = (V,V_1,\dots,V_d)$, then $\chi(V) \chi(V_1) \dots \chi(V_d) \in \mathcal{T}_{N,\mono}$.  By definition of $\mathcal{T}_{N,\mono}$, this means that $\chi(V) > \chi(V_1) > \dots > \chi(V_d)$.  It is straightforward to see that this is equivalent to the condition $V \prec W \implies \chi(V) < \chi(W)$.

Now suppose that $a_j \in \mathcal{A}_{\chi(j)}$ and that $k$ is as in (3).  Suppose that $\pi \in \mathcal{NC}(\chi,\mathcal{T}_{N,\mono})$ and $V$ is the block of $\pi$ containing $k$.  If $V$ contained any index $j < k$, then the block $W$ containing $k - 1$ would be satisfy $W \succ V$, and hence $\chi(k - 1) > \chi(k)$, which is a contradiction.  Thus, $V$ cannot contain any indices below $k$.  By a symmetrical argument, $V$ cannot contain any indices above $k$.  Therefore, $V = \{k\}$.  Hence, all of the partitions $\pi$ used on the right hand side of (2) isolate $k$ in a singleton block.  For such partitions,
\[
K_{\Bool,\pi}[a_1, \dots, a_\ell] = \begin{cases} K_{\Bool,\pi \setminus \{k\}}[a_1, \dots, a_{k-1}, E[a_k] a_{k+1}, \dots, a_\ell], & k < \ell \\ K_{\Bool,\pi \setminus \{k\}}[a_1,\dots,a_{\ell-1}] E[a_\ell], & k = \ell. \end{cases}
\]
Now $\pi \mapsto \pi \setminus \{k\}$ defines a bijection $\mathcal{NC}(\chi, \mathcal{T}_{N,\mono}) \to \mathcal{NC}(\chi|_{[\ell] \setminus \{k\}}, \mathcal{T}_{N,\mono})$.  Therefore, the right hand side in (2) becomes (in the case where $k < \ell$)
\[
\sum_{\pi \in \mathcal{NC}(\chi|_{[\ell] \setminus \{k\}}, \mathcal{T}_{N,\mono})} K_{\Bool,\pi}[a_1, \dots, a_{k-1}, E[a_k] a_{k+1}, \dots, a_\ell] = E[a_1 \dots a_{k-1} E[a_k] a_{k+1} \dots a_\ell],
\]
and the case where $k = \ell$ is handled similarly.  Thus, (3) holds.

As before, to prove that (3) $\implies$ (1), it suffices to show that $E|_{\Alg(\mathcal{A}_1,\dots,\mathcal{A}_N)}$ is uniquely determined by (3).  We prove this for strings $a_1 \dots a_\ell$ with $a_j \in \mathcal{A}_{\chi(j)}$ by induction on the length $\ell$.  The base case $\ell = 1$ is trivial.  Suppose $\ell > 1$, and let $k$ be an index such that $\chi(k)$ is maximal.  If $\chi(k) = \chi(k-1)$ or $\chi(k) = \chi(k+1)$, then we may group $a_{k-1} a_k$ or $a_k a_{k+1}$ into a single letter and thus view $a_1 \dots a_\ell$ as a string of length $\ell - 1$ and apply the inductive hypothesis.  Otherwise, $\chi(k)$ satisfies the assumptions of (3) and therefore using (3) we may replace $a_k$ by $E[a_k]$.  We may group $E[a_k]$ together with either $a_{k-1}$ or $a_{k+1}$ since $\mathcal{A}_{\chi(k-1)}$ and $\mathcal{A}_{\chi(k+1)}$ are $\mathcal{B}$-$\mathcal{B}$-bimodules.  We thus reduce to a string of length $\ell - 1$ to which the inductive hypothesis applies.
\end{proof}

\section{Operad Properties} \label{sec:operad}

\subsection{The Operad of Rooted Trees}

Recall that $\Tree(N)$ the set of rooted subtrees of $\mathcal{T}_{N,\free}$.  Our goal is to define a topological symmetric operad $\Tree$ where $\Tree(N)$ is the set of elements of arity $N$.  We will then show that $\mathcal{T} \mapsto \boxplus_{\mathcal{T}}$ defines a morphism of topological symmetric operads from $\Tree$ to a certain operad of functions on tuples of laws.  A topological symmetric operad is defined as follows (see e.g.\ \cite{Leinster2004} for general background on operads).

\begin{definition}
A \emph{(plain) operad} consists of a sequence $(P(n))_{n \in \N}$ of sets, an element $\id \in P(1)$, and composition maps
\[
\circ_{k,n_1,\dots,n_k}: P(k) \times P(n_1) \times \dots \times P(n_k) \to P(n_1 + \dots + n_k)
\]
denoted 
\[
(f, f_1,\dots, f_k) \mapsto f(f_1,\dots,f_k),
\]
such that the following axioms hold:
\begin{itemize}
	\item \emph{Identity:} For $f \in P(k)$, we have $f(\id,\dots,\id) = f$ and $\id(f) = f$.
	\item \emph{Associativity:} Given $f \in P(k)$ and $f_j \in P(n_j)$ for $j = 1$, \dots, $k$ and $f_{j,i} \in P(m_{j,i})$ for $i = 1$, \dots, $I_j$ and $j = 1$, \dots, $n$, we have
	\begin{multline*}
	f(f_1(f_{1,1},\dots,f_{1,I_1}), \dots, f_k(f_{k,1}, \dots, f_{k,I_k})) \\
	= [f(f_1,\dots,f_k)](f_{1,1},\dots, f_{1,I_1}, \dots \dots , f_{1,k}, \dots, f_{k,I_k}).
	\end{multline*}
\end{itemize}
The elements of $P(k)$ are said to be \emph{$k$-ary} or have \emph{arity $k$}.
\end{definition}

\begin{definition}
A \emph{symmetric operad} consists of an operad $(P(n))$ together with a right action of the symmetric (permutation) group $\Perm(k)$ on $P(k)$, denoted $(f,\sigma) \mapsto f_\sigma$, satisfying the following axioms:
\begin{itemize}
	\item Let $f \in P(k)$ and $f_j \in P(n_j)$ for $j = 1,\dots, k$.  Let $\sigma \in \Perm(k)$, and let $\tilde{\sigma} \in S_{n_1+\dots+n_k}$ denote the element that rearranges the order of the blocks $\{1,\dots,n_1\}$, $\{n_1+1,\dots,n_1 + n_2\}$, $\{n_1+n_2+1,\dots,n_1 + n_2 + n_3\}$ according to $\sigma$.  Then
	\[
	f_\sigma(f_{\sigma(1)},\dots,f_{\sigma(k)}) = [f(f_1,\dots,f_k)]_{\tilde{\sigma}}.
	\]
	\item Let $f$ and $f_j$ be as above.  Let $\sigma_j \in \Perm(n_j)$, and let $\sigma \in \Perm(n_1+\dots+n_k)$ be the element which permute the elements within each block $\{n_1+\dots+n_{j-1}+1,\dots, n_1 + \dots + n_j\}$ by the permutation $s_j$, without changing the order of the blocks.  Then
	\[
	f((f_1)_{\sigma_1},\dots,(f_k)_{\sigma_k}) = f(f_1,\dots,f_k)_\sigma.
	\]
\end{itemize}
\end{definition}

\begin{definition}
A \emph{topological symmetric operad} consists of a symmetric operad together with a specified topology on $P(k)$ for each $k$, such that the composition and permutation operations of the symmetric operad are continuous.  A \emph{morphism of topological symmetric operads} $P \to Q$ is a sequence of continuous maps $P(k) \to Q(k)$ which respect the composition operations and permutation actions.
\end{definition}

In order to define the operad $\Tree$ where $\Tree(k)$ is the set of elements of arity $k$, we will first describe the composition operation.  Let $\mathcal{T} \in \Tree(k)$ and $\mathcal{T}_1 \in \Tree(n_1)$, \dots, $\mathcal{T}_k \in \Tree(n_k)$.  Let $N_j = n_1 + \dots + n_j$ and $N = N_k$.  Define $\iota_j: [n_j] \to [N]$ by $\iota_j(i) = N_{j-1} + i$, so that $[N] = \bigsqcup_{j=1}^k \iota_j([n_j])$.  For a string $s \in \mathcal{T}_{n_j,\free}$, let $(\iota_j)_*(s)$ denote the string obtained by applying $\iota_j$ to each letter of $s$.  Then we define $\mathcal{T}(\mathcal{T}_1,\dots,\mathcal{T}_k) \in \mathcal{T}_{N,\free}$ to be the rooted subtree with vertex set
\[
\bigcup_{\ell \geq 0} \bigcup_{i_1 \dots i_\ell \in \mathcal{T}} \bigcup_{\substack{s_j \in \mathcal{T}_{i_j} \setminus \{\emptyset\} \\ \text{for } j \in [\ell]}} (\iota_{i_1})_*(s_1) \dots (\iota_{i_\ell})_*(s_\ell).
\]
In other words, the strings in $\mathcal{T}(\mathcal{T}_1, \dots, \mathcal{T}_k)$ are obtained by taking a string $t = i_1 \dots i_\ell$ in $\mathcal{T}$ and replacing each letter $i_j$ by a string $s_j$ from $\mathcal{T}_{i_j}$, with the indices appropriately shifted by $\iota_j: [n_j] \to [N]$.

One can check that the vertex set $\mathcal{T}(\mathcal{T}_1, \dots, \mathcal{T}_k)$ defines a connected subgraph of $\mathcal{T}_{N,\free}$.  Indeed, every final substring of a string in $\mathcal{T}(\mathcal{T}_1, \dots, \mathcal{T}_k)$ will also be in $\mathcal{T}(\mathcal{T}_1, \dots, \mathcal{T}_k)$.  Hence, if $s \in \mathcal{T}(\mathcal{T}_1,\dots,\mathcal{T}_k)$, then we may define a path from $s$ to $\emptyset$ by deleting the first letter of $s$, then the second letter, and so forth.

\begin{observation}
We may define an operad $\Tree$ by letting $\Tree(k)$ be the set of rooted subtrees of $\mathcal{T}_{k,\free}$ using the composition operation above.
\end{observation}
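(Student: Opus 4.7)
The plan is to verify the three requirements in the definition of a plain operad: (i) the composition lands in $\Tree(n_1+\dots+n_k)$; (ii) the identity axioms; (iii) the associativity axiom. The proposed identity element is $\id = \{\emptyset, 1\} \in \Tree(1)$, which is the only nonempty candidate since $\mathcal{T}_{1,\free}$ has only two alternating strings.

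For (i), the excerpt already observes that the vertex set defines a connected subgraph containing $\emptyset$, so rootedness and connectedness are handled. What remains is to check that each string in the vertex set is genuinely alternating, i.e.\ an element of $\mathcal{T}_{N,\free}$. Fix $i_1 \dots i_\ell \in \mathcal{T}$ and $s_r \in \mathcal{T}_{i_r} \setminus \{\emptyset\}$. Within each block $(\iota_{i_r})_*(s_r)$, alternation is inherited from the alternation of $s_r$ together with injectivity of $\iota_{i_r}$. Between consecutive blocks, the last letter of $(\iota_{i_r})_*(s_r)$ lies in $\iota_{i_r}([n_{i_r}])$ while the first letter of $(\iota_{i_{r+1}})_*(s_{r+1})$ lies in $\iota_{i_{r+1}}([n_{i_{r+1}}])$; these image sets are disjoint by definition of the $\iota_j$, and they are disjoint index blocks precisely because $i_r \neq i_{r+1}$ (as $i_1\dots i_\ell$ is alternating in $\mathcal{T}_{k,\free}$). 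Hence the concatenation is alternating.

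For (ii), consider $\mathcal{T}(\id,\dots,\id)$ with all $n_j = 1$. Each inner choice $s_r$ must lie in $\id \setminus \{\emptyset\} = \{1\}$, and $(\iota_{i_r})_*(1) = i_r$, so $(\iota_{i_1})_*(s_1)\dots(\iota_{i_\ell})_*(s_\ell) = i_1\dots i_\ell$; hence $\mathcal{T}(\id,\dots,\id) = \mathcal{T}$. Conversely, for $\id(\mathcal{T})$ with $k = 1$, the only nonempty string in $\id$ is the single letter $1$, so we simply substitute an arbitrary nonempty string of $\mathcal{T}$ (with $\iota_1 = \id_{[n]}$) and adjoin $\emptyset$, recovering $\mathcal{T}$.

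For (iii), both sides of the associativity equation describe subsets of $\mathcal{T}_{M,\free}$ where $M = \sum_{j \in [k]} \sum_{i \in [n_j]} m_{j,i}$. The plan is to show that both sides consist of the strings produced by: (a) choosing $i_1\dots i_\ell \in \mathcal{T}$, (b) for each $r \in [\ell]$ choosing a nonempty string $p_r = j_{r,1}\dots j_{r,L_r} \in \mathcal{T}_{i_r}$, (c) for each $(r, s)$ choosing a nonempty string $t_{r,s} \in \mathcal{T}_{i_r, j_{r,s}}$, (d) concatenating all the $t_{r,s}$ in lex order after applying the appropriate index shift into $[M]$. The only nontrivial content is that the two orders of shift composition produce the same injection of each $[m_{i_r,j_{r,s}}]$ into $[M]$. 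On the left, one first applies the shift placing $[m_{i_r, j_{r,s}}]$ inside $[m_{i_r,1} + \dots + m_{i_r, n_{i_r}}]$ for the inner composition $\mathcal{T}_{i_r}(\mathcal{T}_{i_r,1},\dots,\mathcal{T}_{i_r, n_{i_r}})$, and then the outer shift into $[M]$; on the right, one applies a single shift into $[M]$ given by the outer composition $[\mathcal{T}(\mathcal{T}_1,\dots,\mathcal{T}_k)](\mathcal{T}_{1,1},\dots)$. Both realize the lexicographic identification $[M] = \bigsqcup_{(j,i)} \iota'_{j,i}([m_{j,i}])$, where pairs $(j,i)$ are ordered lexicographically. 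The main obstacle is thus purely notational: writing the composite $\iota$-shifts explicitly and checking the two routes yield the same block-inclusions. Once this is verified, the set-theoretic descriptions of the two sides agree term by term, completing associativity.
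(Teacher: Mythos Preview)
Your proposal is correct and takes essentially the same approach as the paper. In fact, the paper does not give a proof at all: it simply states that ``checking the operad associativity property is a routine exercise in cumbersome notation, which we leave to the reader,'' and notes that $\{\emptyset,1\}\in\Tree(1)$ acts as the identity. Your write-up supplies exactly the details the paper omits --- the alternation check for well-definedness, the explicit verification of both identity laws, and the three-level parametrization showing that the two orders of composition yield the same lexicographic block-inclusions $[m_{j,i}]\hookrightarrow[M]$ --- and each of these verifications is sound.
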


Checking the operad associativity property is a routine exercise in cumbersome notation, which we leave to the reader.  Note that $\mathcal{T}(1) \in \Tree(1)$ has only the two vertices $\emptyset$ and $1$, and $\mathcal{T}(1)$ acts as the identity of the operad.  We next turn to the symmetric structure of the operad.  

We equip $\Tree(k)$ with a right action of $\Perm(k)$ as follows.  Note that there is a left action of $\Perm(k)$ by graph automorphisms on $\mathcal{T}_{k,\free}$, where $\sigma \in \Perm(k)$ acts by permuting the letters $\{1,\dots,k\}$, that is, if $s = j_1 \dots j_\ell$ is a vertex of $\mathcal{T}_{k,\free}$, then $\sigma(j_1,\dots,j_\ell) = (\sigma(j_1),\dots,\sigma(j_\ell))$.  Then we define $\mathcal{T}_\sigma$ to be the image of $\mathcal{T}$ under $\sigma^{-1}$.  It is straightforward to check that this makes $\Tree$ into a symmetric operad; indeed, this reduces to examining how $\Perm(k)$ acts on the labels $\{1,\dots,k\}$.

Furthermore, we claim $\Tree$ can be equipped with the structure of a \emph{topological} symmetric operad.  This comes from the following two observations.

\begin{observation}
For a rooted tree $\mathcal{T} \subseteq \mathcal{T}_{N,\free}$ and $\ell \geq 0$, let $B_\ell(\mathcal{T}) \subseteq \mathcal{T}_{N,\free}$ be set of strings in $\mathcal{T}$ of length $\leq \ell$ (or equivalently the closed ball of radius $\ell$ in the graph metric).  Define $\rho_N: \mathcal{T}_{N,\free} \times \mathcal{T}_{N,\free} \to \R$ by
\[
\rho_N(\mathcal{T}, \mathcal{T}') = \exp(-\sup \{\ell \geq 0: B_\ell(\mathcal{T}) = B_\ell(\mathcal{T}')\}).
\]
Then $\rho_N$ defines a metric on $\Tree(N)$ (and in fact an ultrametric), which makes $\Tree(N)$ into a compact metric space.
\end{observation}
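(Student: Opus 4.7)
The plan is to check the three metric axioms and the strong (ultrametric) triangle inequality directly from the definition, and then to establish compactness via a diagonal/inverse-limit argument that exploits the fact that each ball $B_\ell(\mathcal{T}_{N,\free})$ is finite.

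First I would verify that $\rho_N$ is an ultrametric. Symmetry and non-negativity are immediate from the symmetric definition of the supremum set and from $\exp \geq 0$. For the identity of indiscernibles, observe that $\rho_N(\mathcal{T},\mathcal{T}') = 0$ precisely when $B_\ell(\mathcal{T}) = B_\ell(\mathcal{T}')$ for every $\ell \geq 0$, and since $\mathcal{T} = \bigcup_\ell B_\ell(\mathcal{T})$ (the distance of every vertex from the root is finite), this forces $\mathcal{T} = \mathcal{T}'$. For the ultrametric inequality, given $\mathcal{T}, \mathcal{T}', \mathcal{T}''$, let $\ell_1$ and $\ell_2$ be the suprema appearing in $\rho_N(\mathcal{T},\mathcal{T}')$ and $\rho_N(\mathcal{T}',\mathcal{T}'')$ respectively; for every $\ell \leq \min(\ell_1,\ell_2)$ one has $B_\ell(\mathcal{T}) = B_\ell(\mathcal{T}') = B_\ell(\mathcal{T}'')$, so the supremum defining $\rho_N(\mathcal{T},\mathcal{T}'')$ is at least $\min(\ell_1,\ell_2)$, giving $\rho_N(\mathcal{T},\mathcal{T}'') \leq \max(\rho_N(\mathcal{T},\mathcal{T}'),\rho_N(\mathcal{T}',\mathcal{T}''))$.

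For compactness, the key observation is that the set of alternating strings of length $\leq \ell$ in $\mathcal{T}_{N,\free}$ is finite (there are $N(N-1)^{k-1}$ alternating strings of length $k \geq 1$), so $B_\ell(\mathcal{T}_{N,\free})$ is finite, and consequently there are only finitely many possible values of $B_\ell(\mathcal{T})$ as $\mathcal{T}$ ranges over $\Tree(N)$. Given a sequence $(\mathcal{T}_k)$ in $\Tree(N)$, I would extract successively refined subsequences: first a subsequence along which $B_1(\mathcal{T}_k)$ is constant, then a further subsequence along which $B_2(\mathcal{T}_k)$ is constant, and so on; a standard diagonalization yields a subsequence $(\mathcal{T}_{k_j})$ such that for each $\ell$ the ball $B_\ell(\mathcal{T}_{k_j})$ is eventually constant, say equal to $C_\ell$. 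The nested union $\mathcal{T}_\infty := \bigcup_\ell C_\ell$ is then a rooted subtree of $\mathcal{T}_{N,\free}$ (connectedness is inherited since each $C_\ell$ is a rooted subtree and $C_\ell \subseteq C_{\ell+1}$), and by construction $B_\ell(\mathcal{T}_{k_j}) \to B_\ell(\mathcal{T}_\infty)$ stabilizes for each $\ell$, hence $\rho_N(\mathcal{T}_{k_j}, \mathcal{T}_\infty) \to 0$.

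Alternatively, and perhaps more elegantly for later use, I would remark that $\Tree(N)$ embeds as a closed subset of $\{0,1\}^{V(\mathcal{T}_{N,\free})}$ under $\mathcal{T} \mapsto \mathbf{1}_\mathcal{T}$, since the conditions ``$\emptyset \in \mathcal{T}$'' and ``if a non-root string $s$ lies in $\mathcal{T}$ then the string obtained by removing its first letter also lies in $\mathcal{T}$'' are each specified on finitely many coordinates and hence cut out a closed set. The product topology on $\{0,1\}^{V(\mathcal{T}_{N,\free})}$ is compact by Tychonoff, and its restriction to $\Tree(N)$ coincides with the topology induced by $\rho_N$: a neighborhood base at $\mathcal{T}$ in the product topology consists of sets specifying the $\mathcal{T}$-membership of finitely many vertices, while a neighborhood base in the metric topology consists of the sets $\{\mathcal{T}' : B_\ell(\mathcal{T}') = B_\ell(\mathcal{T})\}$; since $B_\ell(\mathcal{T}_{N,\free})$ is finite, each metric neighborhood is a product-topology neighborhood and vice versa. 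The main subtlety is simply confirming that balls of the $N$-regular tree are finite and that the condition of being a rooted subtree is given by finitely-many-coordinate conditions; beyond this, both the metric and the compactness are almost formal.
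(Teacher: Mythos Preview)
Your proof is correct and complete. The paper actually states this result as an \emph{Observation} without any accompanying proof, treating it as routine; so there is no ``paper's own proof'' to compare against. Your verification of the ultrametric axioms and your compactness argument (either the diagonal extraction or the embedding into $\{0,1\}^{V(\mathcal{T}_{N,\free})}$ with Tychonoff) are exactly the standard arguments one would supply, and both are sound.
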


\begin{observation}
Let $\mathcal{T}$, $\mathcal{T}' \in \Tree(k)$ and let $\mathcal{T}_j$, $\mathcal{T}_j' \in \Tree(n_j)$ for $j = 1$, \dots, $k$.  Let $N = n_1 + \dots + n_k$.  Then we have
\begin{multline*}
\rho_N\bigl( \mathcal{T}(\mathcal{T}_1, \dots, \mathcal{T}_k), \mathcal{T}'(\mathcal{T}_1', \dots, \mathcal{T}_k') \bigr) \leq \\
\max\left( \rho_k(\mathcal{T}, \mathcal{T}'), \rho_{n_1}(\mathcal{T}_1,\mathcal{T}_1'), \dots, \rho_{n_k}(\mathcal{T}_k,\mathcal{T}_k') \right)
\end{multline*}
This follows because every string of length $\leq \ell$ in $\mathcal{T}(\mathcal{T}_1, \dots, \mathcal{T}_k)$ is formed by concatenating $\leq \ell$ strings from each of the subgraphs, each of which has length $\leq \ell$.
\end{observation}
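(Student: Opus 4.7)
My plan is to prove the inequality directly from the definition of $\rho_N$ and the recursive formula for composition, exploiting the ultrametric character of the problem. The metric $\rho_N$ takes values only in $\{0\} \cup \{e^{-m} : m \in \N_0\}$, and for any integer $m \geq 0$ the condition $\rho_N(\mathcal{T}, \mathcal{T}') \leq e^{-m}$ is equivalent to $B_m(\mathcal{T}) = B_m(\mathcal{T}')$. Thus the whole inequality reduces to a single implication: if $B_m(\mathcal{T}) = B_m(\mathcal{T}')$ and $B_m(\mathcal{T}_j) = B_m(\mathcal{T}_j')$ for every $j \in [k]$, then $B_m(\mathcal{T}(\mathcal{T}_1,\dots,\mathcal{T}_k)) = B_m(\mathcal{T}'(\mathcal{T}_1',\dots,\mathcal{T}_k'))$.

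The first step is to set $r$ equal to the right-hand side of the asserted inequality and dispose of the trivial case $r = 0$ (all ingredient trees coincide, so the compositions coincide); otherwise write $r = e^{-m}$ and note that, by definition of the component metrics, the hypothesis of the implication above holds for this $m$. The second step is the core set-inclusion. Take any $s \in \mathcal{T}(\mathcal{T}_1,\dots,\mathcal{T}_k)$ of length at most $m$. By the definition of composition, $s$ decomposes as $s = (\iota_{i_1})_*(s_1) \cdots (\iota_{i_p})_*(s_p)$ for some $t = i_1 \cdots i_p \in \mathcal{T}$ and $s_j \in \mathcal{T}_{i_j} \setminus \{\emptyset\}$. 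Because each $|s_j| \geq 1$, the length bound $\sum_j |s_j| = |s| \leq m$ forces both $p \leq m$ and $|s_j| \leq m$ for every $j$. Hence $t$ lies in $B_m(\mathcal{T}) = B_m(\mathcal{T}')$ and each $s_j$ lies in $B_m(\mathcal{T}_{i_j}) = B_m(\mathcal{T}_{i_j}')$, so the same decomposition exhibits $s$ as an element of $\mathcal{T}'(\mathcal{T}_1',\dots,\mathcal{T}_k')$. The reverse inclusion follows by swapping primed and unprimed symbols, and combining gives $\rho_N(\mathcal{T}(\mathcal{T}_1,\dots,\mathcal{T}_k), \mathcal{T}'(\mathcal{T}_1',\dots,\mathcal{T}_k')) \leq e^{-m} = r$.

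I do not anticipate any serious obstacle: the whole argument is essentially an unfolding of definitions. The one subtle point worth highlighting is that nonemptiness of each factor $s_j$ in the composition is what simultaneously bounds both the number of factors $p$ and their individual lengths by $m$. Without this observation one might worry that a short word in the composition could arise from an arbitrarily long word $t \in \mathcal{T}$ (say via many empty insertions), so that ``outer'' agreement on $B_m$ would be insufficient; the convention that the $s_j$ are nonempty in the definition of $\mathcal{T}(\mathcal{T}_1,\dots,\mathcal{T}_k)$ closes this loophole. As a sanity check, the resulting inequality exactly says that the composition map is $1$-Lipschitz for the max-metric on the product and $\rho_N$ on the target, which, together with the compactness asserted in the preceding observation, makes continuity of composition and the topological symmetric operad structure on $\Tree$ automatic.
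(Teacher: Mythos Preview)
Your proof is correct and is precisely the argument the paper has in mind: the paper's one-line justification (``every string of length $\leq \ell$ in $\mathcal{T}(\mathcal{T}_1, \dots, \mathcal{T}_k)$ is formed by concatenating $\leq \ell$ strings from each of the subgraphs, each of which has length $\leq \ell$'') is exactly your core step, and you have simply unpacked it carefully. In particular, your emphasis on the nonemptiness of the $s_j$ as the reason both $p \leq m$ and $|s_j| \leq m$ hold is the key observation underlying the paper's terse statement.
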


\begin{remark}
$\Tree'(N)$ is a closed subspace of $\Tree(N)$, and the sets $\Tree'(N)$ are closed under composition and permutation, so that $\Tree'$ also forms a topological symmetric operad.
\end{remark}

\subsection{Continuity of the Convolution Operations} \label{subsec:continuity}

Next, we define the topological symmetric operad which will serve as the target space of the map $\mathcal{T} \mapsto \boxplus_{\mathcal{T}}$, and we show continuity of this map.  The elements of arity $N$ in the target space will be certain functions $\Sigma(\mathcal{B})^N \to \Sigma(\mathcal{B})$ which are homogeneous with respect to dilation, and the topology will be given in terms of the moments of laws in $\Sigma_1(\mathcal{B})$.

\begin{definition}
We recall that $\Sigma_R(\mathcal{B})$ is the set of $\mathcal{B}$-valued laws with radius bounded by $R$.  We denote by $\Mom_\ell(\mu)$ the multilinear map $\mathcal{B}^{\ell+1} \to \mathcal{B}$ given by
\[
\Mom_\ell(\mu)[b_0,\dots,b_\ell] = \mu(b_0Xb_1 \dots X b_\ell).
\]
We define the norm of a multilinear map $\Lambda: \mathcal{B}^\ell \to \mathcal{B}$ by
\[
\norm{\Lambda} = \sup_{\norm{b_j} \leq 1} \norm{\Lambda[b_1,\dots,b_\ell]}.
\]
We define $d_R$ on $\Sigma_R(\mathcal{B})$ by
\[
d_R(\mu,\nu) = \sup_{\ell \geq 1} \frac{1}{(2R)^\ell} \norm{\Mom_\ell(\mu) - \Mom_\ell(\nu)}
\]
Note $(\Sigma_R(\mathcal{B}), d_R)$ is a complete metric space and that $\mu_n \to \mu$ in $d_R$ if and only if $d^{(\ell)}(\mu_n,\mu) \to 0$ for every $\ell$.
\end{definition}

\begin{definition} \label{def:dilation}
Let $\mu$ be a $\mathcal{B}$-valued distribution and $c \in \R$.  We define the dilation $\dil_c(\mu)$ by $\dil_c(\mu)(f(X)) = \mu(f(cX))$.
\end{definition}

\begin{definition} \label{def:functionoperad}
Let $\Func(\mathcal{B},N)$ denote the set of functions $F: \Sigma(\mathcal{B})^N \to \Sigma(\mathcal{B})$ satisfying
\begin{enumerate}[(1)]
	\item $\rad(F(\mu_1,\dots,\mu_N)) \leq \rad(\mu_1) + \dots + \rad(\mu_N)$.
	\item $F(\dil_c(\mu_1),\dots,\dil_c(\mu_N)) = \dil_c(F(\mu_1,\dots,\mu_N))$.
	\item $F$ restricts to a uniformly continuous function $\Sigma_1(\mathcal{B})^N \to \Sigma_N(\mathcal{B})$.
\end{enumerate}
We equip $\Func(\mathcal{B},N)$ with the metric
\[
d_{\Func(\mathcal{B},N)}(F,G) = \sup_{\mu_1, \dots, \mu_N \in \Sigma_1(\mathcal{B})} d_N(F(\mu_1,\dots,\mu_N), G(\mu_1,\dots,\mu_N)).
\]
\end{definition}

Since $F \in \Func(\mathcal{B},N)$ is homogeneous, continuity on $\Sigma_1(\mathcal{B})^N$ implies continuity of $F$ on $\Sigma_R(\mathcal{B})^N$ for every $R$.  Moreover, one can check directly that conditions (1), (2), and (3) are preserved under composition, so that $(\Func(\mathcal{B}),N)_{N \in \N}$ forms an operad.  Furthermore, it is a symmetric operad under the permutation action
\[
F_\sigma(\mu_1,\dots,\mu_N) = F(\mu_{\sigma^{-1}(1)}, \dots, \mu_{\sigma^{-1}(N)}).
\]
Finallly, the composition operations on $\Func(\mathcal{B},N)$ are continuous, so that $(\Func(\mathcal{B},N))_{N \in \N}$ is a topological symmetric operad.

One of the main goals in this section is to show that the map $\Tree(N) \to \Func(\mathcal{B},N)$ given by $\mathcal{T} \mapsto \boxplus_{\mathcal{T}}$ defines a morphism of topological symmetric operads.  The following observation is the first step.

\begin{lemma} \label{lem:operadcontinuity}
The map $\mathcal{T} \mapsto \boxplus_{\mathcal{T}}$ defines a continuous function $\Tree(N) \to \Func(\mathcal{B},N)$.
\end{lemma}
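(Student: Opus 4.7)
The plan is first to verify the three conditions of Definition~\ref{def:functionoperad} for $\boxplus_{\mathcal{T}}$, and then to quantify the dependence on $\mathcal{T}$ via the combinatorial moment formula of Theorem~\ref{thm:combinatorics}. For condition (1), I realize each $\mu_j$ on $L^2(\mathcal{B}\ip{X_j},\mu_j)$ with multiplication operator $X_j$ of norm $\rad(\mu_j)$ (by Theorem~\ref{thm:realizationoflaw}); since $\lambda_{\mathcal{T},j}$ is a $*$-homomorphism, the triangle inequality gives $\rad(\boxplus_{\mathcal{T}}(\mu)) \leq \sum_j \rad(\mu_j)$ via Corollary~\ref{cor:convolvedlaw}. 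Condition (2) is immediate from the construction, since dilating every $\mu_j$ by $c$ replaces each $X_j$ by $cX_j$. Condition (3) will fall out of the same moment analysis used for continuity in $\mathcal{T}$.

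The key step is the following observation: if $B_n(\mathcal{T}) = B_n(\mathcal{T}')$, then $\mathcal{NC}(\chi,\mathcal{T}) = \mathcal{NC}(\chi,\mathcal{T}')$ for every coloring $\chi\colon [\ell]\to[N]$ with $\ell \le n$. Indeed, for $\pi \in \mathcal{NC}([\ell])$ and any block $V \in \pi$, the string $\chi(\chain(V))$ has length equal to $\depth_\pi(V)+1 \leq |\pi| \leq \ell \leq n$, hence it lies in $\mathcal{T}$ iff it lies in $B_n(\mathcal{T})$, and similarly for $\mathcal{T}'$; so the defining condition of $\mathcal{NC}(\chi,\mathcal{T})$ depends only on $B_n(\mathcal{T})$. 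Since the summands $\Lambda_{\chi,\pi}[a_1,\dots,a_\ell]$ in Theorem~\ref{thm:combinatorics} are built from Boolean cumulants on the individual $\mathcal{L}(\mathcal{H}_j)$'s (and therefore do not depend on $\mathcal{T}$ at all), it follows that $\Mom_\ell(\boxplus_{\mathcal{T}}(\mu)) = \Mom_\ell(\boxplus_{\mathcal{T}'}(\mu))$ for all $\ell \leq n$ and all $\mu_1,\dots,\mu_N$.

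For continuity, fix $\mathcal{T},\mathcal{T}' \in \Tree(N)$ with $\rho_N(\mathcal{T},\mathcal{T}') \leq e^{-n}$, and take $\mu_1,\dots,\mu_N \in \Sigma_1(\mathcal{B})$. By condition~(1), $\rad(\boxplus_{\mathcal{T}}(\mu)) \leq N$, whence $\norm{\Mom_\ell(\boxplus_{\mathcal{T}}(\mu))} \leq N^\ell$, and similarly for $\mathcal{T}'$. The step above shows the difference of the $\ell$-th moments vanishes for $\ell \leq n$, so
\[
d_N(\boxplus_{\mathcal{T}}(\mu),\boxplus_{\mathcal{T}'}(\mu)) = \sup_{\ell > n} \frac{\norm{\Mom_\ell(\boxplus_{\mathcal{T}}(\mu)) - \Mom_\ell(\boxplus_{\mathcal{T}'}(\mu))}}{(2N)^\ell} \leq \sup_{\ell > n} 2^{1-\ell} = 2^{-n}.
\]
Taking the supremum over $\mu \in \Sigma_1(\mathcal{B})^N$ yields $d_{\Func(\mathcal{B},N)}(\boxplus_{\mathcal{T}},\boxplus_{\mathcal{T}'}) \leq 2^{-n}$, which tends to $0$ as $\rho_N(\mathcal{T},\mathcal{T}') \to 0$. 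Uniform continuity of $\boxplus_{\mathcal{T}}$ on $\Sigma_1(\mathcal{B})^N$ required for condition~(3) follows from the same combinatorial formula: each $\ell$-th moment is a fixed multilinear expression (of total degree $\ell$, with $\mathcal{T}$-dependent but bounded coefficients) in the moments of order $\leq \ell$ of the inputs.

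The main obstacle is the bookkeeping in the key observation: one must check that the only data from $\mathcal{T}$ entering the order-$\ell$ moment is which strings of length $\leq \ell$ belong to $\mathcal{T}$. Theorem~\ref{thm:combinatorics} (together with Remark~\ref{rem:graphhomo}, which recasts $\pi \in \mathcal{NC}(\chi,\mathcal{T})$ as a local condition on the strings $\chi(\chain(V))$) does exactly this work, and the rest is a straightforward geometric-series tail estimate.
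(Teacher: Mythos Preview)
Your proof is correct and follows essentially the same approach as the paper: both verify conditions (1) and (2) via the operator realization and the triangle inequality, reduce condition (3) to the combinatorial moment formula of Theorem~\ref{thm:combinatorics}, and obtain continuity in $\mathcal{T}$ from the observation that $B_n(\mathcal{T})=B_n(\mathcal{T}')$ forces agreement of moments up to order $n$, leaving a geometric tail of size $2^{-n}$. (One minor slip: the length of $\chi(\chain(V))$ equals $\depth_\pi(V)$, not $\depth_\pi(V)+1$, but the bound $\depth_\pi(V)\le|\pi|\le\ell\le n$ still goes through.)
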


\begin{proof}
First, we must show that $\boxplus_{\mathcal{T}} \in \Func(\mathcal{B},N)$.  Let $X_j$ be an operator on $(\mathcal{H}_j,\xi_j)$ with law $\mu_j$ and $\norm{X_j} = \rad(\mu_j)$.  Let $(\mathcal{H},\xi)$ be the $\mathcal{T}$-free product of $(\mathcal{H}_1,\xi_1)$, \dots, $(\mathcal{H}_N,\xi_N)$ with the $*$-homomorphisms $\lambda_{\mathcal{T},j}: \mathcal{L}(\mathcal{H}_j) \to \mathcal{L}(\mathcal{H})$.  Then
\[
\norm*{\sum_{j=1}^N \lambda_{\mathcal{T},j}(X_j) } \leq \sum_{j=1}^N \norm{X_j},
\]
which implies that $\rad(\boxplus_{\mathcal{T}}(\mu_1,\dots,\mu_N)) \leq \rad(\mu_1) + \dots + \rad(\mu_N)$, so that (1) of Definition \ref{def:functionoperad} holds.  Moreover, (2) holds because we have
\[
c \sum_{j=1}^N \lambda_{\mathcal{T},j}(X_j) = \sum_{j=1}^N \lambda_{\mathcal{T},j}(cX_j).
\]
Next, to show the uniform continuity condition (3), it suffices to show that for every $\ell$, the moment $\Mom_\ell(\boxplus_{\mathcal{T}}(\mu_1,\dots,\mu_N))$ is a uniformly continuous function of $\mu_1$, \dots, $\mu_N \in \Sigma_1(\mathcal{B})$.  Letting $\mu = \boxplus_{\mathcal{T}}(\mu_1,\dots,\mu_N)$ and letting $X_1$, \dots, $X_\ell$ be as above, by Theorem \ref{thm:combinatorics}, we have
\[
\Mom_\ell(\mu)[b_0,\dots,b_\ell] = \sum_{\chi \in [\ell]^{[N]}} \sum_{\pi \in \mathcal{NC}(\chi,\mathcal{T})} b_0 \Lambda_\pi[X_{\chi(1)} b_1, \dots, X_{\chi(\ell)} b_\ell],
\]
where $\Lambda_\pi$ is given as in Theorem \ref{thm:combinatorics}.  Let us denote
\[
\kappa_{\Bool,\chi,\pi}(\mu_1,\dots,\mu_N)[b_1,\dots,b_{\ell-1}] = \Lambda_\pi[X_{\chi(1)} b_1, X_{\chi(2)} b_2, \dots, X_{\chi(\ell)}].
\]
Then it suffices to show that for each partition $\pi$, the quantity $\kappa_{\Bool,\chi,\pi}(\mu_1,\dots,\mu_N)$ depends continuously on $\Mom_k(\mu_j)$ for $j \in [N]$ and $k \leq \ell$ with respect to the norm on multilinear forms.  This follows from the fact that $\kappa_{\Bool,\chi,\pi}(\mu_1,\dots,\mu_N)$ depends continuously on the Boolean cumulants $\kappa_{\Bool,k}(\mu_j)$ for $j \in [N]$ and $k \leq \ell$, while the Boolean cumulants depend continuously on the moments of $\mu_1$, \dots, $\mu_N$ of degree $\leq \ell$.  We leave the details of these estimates to the reader.

Finally, to show that $\mathcal{T} \mapsto \boxplus_{\mathcal{T}}$ is continuous, note that if $B_\ell(\mathcal{T}) = B_\ell(\mathcal{T}')$, then by Theorem \ref{thm:combinatorics} the first $\ell$ moments of $\boxplus_{\mathcal{T}}(\mu_1,\dots,\mu_N)$ and $\boxplus_{\mathcal{T}'}(\mu_1,\dots,\mu_N)$ agree.  Hence, because these laws have radius $\leq N$, we obtain
\[
d_N(\boxplus_{\mathcal{T}}(\mu_1,\dots,\mu_N), \boxplus_{\mathcal{T}'}(\mu_1,\dots,\mu_N)) \leq \sum_{\ell' > \ell} \frac{1}{(2N)^{\ell'}} \cdot 2N^{\ell'} \leq \frac{1}{2^\ell},
\]
which is a uniform estimate for $\mu_1$, \dots, $\mu_N \in \Sigma_1(\mathcal{B})$.
\end{proof}

\subsection{Convolution and Operad Composition}

Next, we show that the map $\mathcal{T} \mapsto \boxplus_{\mathcal{T}}$ is an operad morphism $\Tree \to \Func(\mathcal{B})$.  In other words, we show that it respects composition in the sense that
\[
\boxplus_{\mathcal{T}(\mathcal{T}_1,\dots,\mathcal{T}_k)} = \boxplus_{\mathcal{T}}(\boxplus_{\mathcal{T}_1},\dots,\boxplus_{\mathcal{T}_k}).
\]
To accomplish this, we show that the operations $\assemb_{\mathcal{T}(\mathcal{T}_1,\dots,\mathcal{T}_k})$ and $\assemb_{\mathcal{T}}(\assemb_{\mathcal{T}_1},\dots,\assemb_{\mathcal{T}_k})$ produce isomorphic $\mathrm{C}^*$-correspondences in the following sense.

\begin{theorem} \label{thm:composition}
Let $\mathcal{T} \in \Tree(k)$ and $\mathcal{T}_j \in \Tree(n_j)$ for $j = 1, \dots, k$.  Let $\mathcal{T}' = \mathcal{T}(\mathcal{T}_1,\dots,\mathcal{T}_k)$.  Let $N_j = n_1 + \dots + n_j$, let $N = N_k$, and let $\iota_j: [n_j] \to [N]$ be the map $i \mapsto i + N_{j-1}$.

Let $(\mathcal{H}_{j,i}, \xi_{j,i})$ be a $\mathcal{B}$-$\mathcal{B}$-correspondence with a $\mathcal{B}$-central unit vector for $j = 1$, \dots, $k$ and $i = 1, \dots, n_j$.  Let
\begin{align*}
(\mathcal{H},\xi) &= \assemb_\mathcal{T}[\assemb_{\mathcal{T}_1}[(\mathcal{H}_{1,1},\xi_{1,1}),\dots,(\mathcal{H}_{1,n_1},\xi_{1,n_1})], \dots, \assemb_{\mathcal{T}_k}[(\mathcal{H}_{k,1},\xi_{k,1}),\dots,(\mathcal{H}_{k,n_k},\xi_{k,n_k})]] \\
(\mathcal{K},\zeta) &= \assemb_{\mathcal{T}(\mathcal{T}_1,\dots,\mathcal{T}_k)}[(\mathcal{H}_{1,1},\xi_{1,1}),\dots,(\mathcal{H}_{1,n_1},\xi_{1,n_1}), \dots \dots , (\mathcal{H}_{k,1},\xi_{k,1}),\dots,(\mathcal{H}_{k,n_k},\xi_{k,n_k})],
\end{align*}
and let us also denote
\[
(\mathcal{H}_j,\xi_j) = \assemb_{\mathcal{T}_j}[(\mathcal{H}_{j,1},\xi_{j,1}), \dots, (\mathcal{H}_{j,n_j}, \xi_{j,n_j})].
\]
Then there is a unique unitary isomorphism $\Phi: (\mathcal{H},\xi) \to (\mathcal{K},\zeta)$ of $\mathcal{B}$-$\mathcal{B}$-correspondences with $\mathcal{B}$-central unit vectors such that for every $j \in [k]$ and $i \in [n_j]$ the diagram
\begin{equation} \label{eq:compositionCD}
\begin{tikzcd}
\mathcal{L}(\mathcal{H}_{j,i}) \arrow{r}{\lambda_{\mathcal{T}_j,i}} \arrow[swap]{d}{\lambda_{\mathcal{T}',\iota_j(i)}} & \mathcal{L}(\mathcal{H}_j) \arrow{d}{\lambda_{\mathcal{T},j}} \\
\mathcal{L}(\mathcal{K}) \arrow[swap]{r}{\Ad_{\Phi}} & \mathcal{L}(\mathcal{H})
\end{tikzcd}
\end{equation}
commutes, where $\Ad_\Phi(x) = \Phi x \Phi^*$.
\end{theorem}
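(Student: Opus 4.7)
The plan is to construct $\Phi$ explicitly by identifying canonical orthogonal direct-sum decompositions of $\mathcal{H}$ and $\mathcal{K}$ whose index sets are in natural bijection, and then verify the intertwining property by a case analysis on summands.

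First I would set up parallel decompositions. By construction $\mathcal{K} = \bigoplus_{s' \in \mathcal{T}'} \mathcal{H}_{s'}^\circ$. Iterating the defining decomposition for $\mathcal{H}$ together with $\mathcal{H}_{j_l}^\circ = \bigoplus_{s_l \in \mathcal{T}_{j_l} \setminus \{\emptyset\}} \mathcal{H}_{j_l, s_l}^\circ$, and using the distributivity of the internal tensor product over direct sums (Construction \ref{const:bimoduletensorproduct}), one sees that $\mathcal{H}$ is a direct sum over tuples $(t = j_1 \dots j_p;\, s_1, \dots, s_p)$ with $t \in \mathcal{T}$ alternating and $s_l \in \mathcal{T}_{j_l} \setminus \{\emptyset\}$, the summand being $\mathcal{H}_{j_1, s_1}^\circ \otimes_{\mathcal{B}} \dots \otimes_{\mathcal{B}} \mathcal{H}_{j_p, s_p}^\circ$. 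The key combinatorial point is that such tuples are in bijection with the vertices of $\mathcal{T}'$: every $s' \in \mathcal{T}'$ decomposes uniquely by grouping consecutive letters that lie in a common $\iota_j([n_j])$. Since $s'$ is alternating, each maximal block is an alternating string $(\iota_{j_l})_*(s_l)$ with $s_l \in \mathcal{T}_{j_l}$ alternating; adjacent blocks necessarily use distinct $\iota_j$'s, so $t = j_1 \dots j_p$ is alternating; and membership of $s'$ in $\mathcal{T}'$ is equivalent, by definition of the operad composition, to $t \in \mathcal{T}$ and each $s_l \in \mathcal{T}_{j_l}$. The inverse map is concatenation. Under this bijection the two summands on the two sides are literally the same iterated tensor product of spaces $\mathcal{H}_{j_l, m}^\circ$, so the identity map on each summand assembles into a unitary $\mathcal{B}$-$\mathcal{B}$-bimodule map $\Phi: \mathcal{H} \to \mathcal{K}$ with $\Phi(\xi) = \zeta$ (both vectors correspond to the empty tuple/empty string, where the summand is $\mathcal{B}$ and the distinguished vector is $1$).

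The main obstacle is verifying the diagram \eqref{eq:compositionCD}. Fix $x \in \mathcal{L}(\mathcal{H}_{j,i})$ and a vector $v$ in the summand indexed by $(t; s_1, \dots, s_p) \leftrightarrow s'$. Unwinding the definition of $\lambda_{\mathcal{T}', \iota_j(i)}(x)$ via the regrouping
\[
\mathcal{K} \cong \Bigl[\mathcal{H}_{j,i} \otimes_{\mathcal{B}} \bigoplus_{r \in S_{\mathcal{T}', \iota_j(i)}} \mathcal{H}_r^\circ \Bigr] \oplus \bigoplus_{r \in S'_{\mathcal{T}', \iota_j(i)}} \mathcal{H}_r^\circ,
\]
and similarly unwinding the composite $\lambda_{\mathcal{T}, j} \circ \lambda_{\mathcal{T}_j, i}$ on $\mathcal{H}$ in two stages, both actions split into the same four alternatives: the result vanishes; it acts within the existing leading block (when $j = j_1$ and $x$ modifies $s_1$ without changing the block label); it removes the leading letter of that block (when $j = j_1$ and $s_1$ starts with $(i)$); or it opens a new leading block (when $j \neq j_1$, $(i) \in \mathcal{T}_j$, and $j t \in \mathcal{T}$). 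The bijection of index sets is designed precisely so that these alternatives match: the condition $\iota_j(i) s' \in \mathcal{T}'$ with $j \neq j_1$ is equivalent to $j t \in \mathcal{T}$ together with $(i) \in \mathcal{T}_j$, while the cases $j = j_1$ correspond to internal modifications of $s_1$ in $\mathcal{T}_{j_1}$. In each case the explicit tensor-product formulas, using $\mathcal{H}_{j,i} \cong \mathcal{B}\xi_{j,i} \oplus \mathcal{H}_{j,i}^\circ$ and the identification $\mathcal{H}_{s_l}^\circ \oplus \mathcal{H}_{i s_l}^\circ \cong \mathcal{H}_{j,i} \otimes \mathcal{H}_{s_l}^\circ$, produce the same vector on both sides under $\Phi$. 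The verification reduces to routine but careful bookkeeping, and this is where the content of the theorem really lies.

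Uniqueness follows because the commuting-diagram condition together with $\Phi(\xi) = \zeta$ determines $\Phi$ on every vector of the form $\lambda_{\mathcal{T}, j_1}(\lambda_{\mathcal{T}_{j_1}, i_1}(x_1)) \cdots \lambda_{\mathcal{T}, j_q}(\lambda_{\mathcal{T}_{j_q}, i_q}(x_q)) \xi$, and by acting with appropriate operators $x_l$ supported on $\xi_{j_l, i_l}$ and $\mathcal{H}_{j_l, i_l}^\circ$ one can reach a dense set of elementary tensors in each summand of $\mathcal{H}$; existence of the $\Phi$ constructed above shows the two sides of the diagram act consistently, so this determination is well-posed.
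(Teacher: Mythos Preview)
Your proposal is correct and follows essentially the same approach as the paper's proof: both construct $\Phi$ by expanding $\mathcal{H}$ via distributivity of $\otimes_{\mathcal{B}}$ over direct sums into a sum indexed by tuples $(t; s_1,\dots,s_p)$, match this bijectively with the vertex set of $\mathcal{T}'$ via the definition of operad composition, verify the commutative diagram by a summand-by-summand case analysis of how $\lambda_{\mathcal{T}',\iota_j(i)}(x)$ and $\lambda_{\mathcal{T},j}\circ\lambda_{\mathcal{T}_j,i}(x)$ act, and deduce uniqueness from cyclicity of the unit vector. Your four-case breakdown of the intertwining verification is slightly more explicit than the paper's treatment, but the content is the same.
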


\begin{proof}
For $i \in [n_j]$, let us denote
\[
(\mathcal{K}_{\iota(i)},\zeta_{\iota(i)}') = (\mathcal{H}_{j,i},\xi_{j,i}), \qquad \mathcal{K}_{\iota(i)}^\circ = \mathcal{H}_{j,i}^\circ
\]
so that
\[
(\mathcal{K},\zeta) = \assemb_{\mathcal{T}'}[(\mathcal{K}_1,\zeta_1),\dots,(\mathcal{K}_N,\zeta_N)].
\]
Observe that
\[
\mathcal{H}_j^\circ = \bigoplus_{s \in \mathcal{T}_j \setminus \{\emptyset\}} \mathcal{H}_{j,s}^\circ,
\]
where
\[
\mathcal{H}_{j,s}^\circ = \mathcal{H}_{j,s(1)}^\circ \otimes_{\mathcal{B}} \dots \otimes_{\mathcal{B}} \mathcal{H}_{j,s(\ell)}^\circ,
\]
for an alternating string $s$ of length $\ell$.  Now $\mathcal{H}^\circ$ is the direct sum of
\[
\mathcal{H}_{j_1,\dots,j_\ell}^\circ = \mathcal{H}_{j_1}^\circ \otimes_{\mathcal{B}} \dots \otimes_{\mathcal{B}} \mathcal{H}_{j_\ell}^\circ
\]
over all strings $j_1$, \dots, $j_\ell$ in $\mathcal{T}$.  Substituting in the definition of $\mathcal{H}_j^\circ$ and distributing tensor products over direct sums, we obtain (up to canonical isomorphism) the direct sum of all terms of the form
\[
\mathcal{H}_{j_1,s_1}^\circ \otimes_{\mathcal{B}} \dots \otimes_{\mathcal{B}} \mathcal{H}_{j_\ell,s_\ell}^\circ,
\]
where $s_i \in \mathcal{T}_{j_i} \setminus \{\emptyset\}$.  By definition of $\mathcal{T}' = \mathcal{T}(\mathcal{T}_1,\dots,\mathcal{T}_k)$, this is equivalent to the direct sum of all the terms $\mathcal{K}_s^\circ$, where $s \in \mathcal{T}' \setminus \{\emptyset\}$.  We thus obtain a canonical isomorphism $\Phi: (\mathcal{H},\xi) \to (\mathcal{K},\zeta)$.

To check \eqref{eq:compositionCD}, fix $i \in [n_j]$.  For $x \in \mathcal{L}(\mathcal{H}_{j,i}) = \mathcal{L}(\mathcal{K}_{\iota_j(i)})$, the operator $\lambda_{\mathcal{T}',\iota(i)}(x)$ is define to act by $x \otimes \id$ on every direct summand of the form
\[
\mathcal{K}_s^\circ \otimes \mathcal{K}_{\iota_j(i) s}^\circ \cong \mathcal{K}_{\iota(i)} \otimes \mathcal{K}_s^\circ.
\]
Consider such a direct summand, let $r$ be the largest index such that $s(1)$, \dots, $s(r) \in \iota_j([n_j])$ (which may be zero), and let us write $s(1)$, \dots, $s(r)$ as $\iota_j(s_0)$ for some $s_0 \in \mathcal{T}_j$.  The remaining substring $s(r+1)$, \dots, $s(\ell)$ can then be expressed as $\iota_{j_1}(s_1)$, \dots, $\iota_{j_w}(s_w)$ where $j j_1 \dots j_w \in \mathcal{T}$ and $s_1 \in \mathcal{T}_{j_1}$, \dots, $s_w \in \mathcal{T}_{j_w}$.  Then we have
\[
\mathcal{K}_s^\circ \oplus \mathcal{K}_{\iota_j(i) s}^\circ \subseteq \Phi \left( (\mathcal{B} \oplus \mathcal{H}_j^\circ) \otimes_{\mathcal{B}} \mathcal{H}_{j_1}^\circ \otimes_{\mathcal{B}} \dots \otimes_{\mathcal{B}} \mathcal{H}_{j_w}^\circ \right).
\]
Now $\lambda_{\mathcal{T},j}(\lambda_{\mathcal{T}_j,i}(x))$ acts on this direct summand of the space $\assemb[(\mathcal{H}_1,\xi_1),\dots,(\mathcal{H}_k,\xi_k)]$ by $\lambda_{\mathcal{T}_j,i}(x) \otimes \id$, where $\mathcal{B} \oplus \mathcal{H}_j^\circ$ is viewed as a copy of $\mathcal{H}_j$.  Within this copy of $\mathcal{H}_j$, the subspace $(\mathcal{B} \oplus \mathcal{H}_{j,i}^\circ) \otimes_{\mathcal{B}} \mathcal{H}_{s_0}^\circ$ corresponds to the space $(\mathcal{B} \oplus \mathcal{K}_{\iota_j(i)}^\circ) \otimes_{\mathcal{B}} \mathcal{K}_{(\iota_j)_*(s_0)}^\circ$.  The action of $\lambda_{\mathcal{T}_j,i}(x)$ on this subspace is defined through the action of $x$ on $\mathcal{B} \oplus \mathcal{H}_{j,i}^\circ$.

The other direct summands of $\mathcal{K}$ have the form $\mathcal{K}_s^\circ$ where $s(1) \neq \iota_j(i)$ and $\iota_j(i) s \not \in \mathcal{T}'$.  On this subspace, the operator $\lambda_{\iota_j(i)}(x)$ acts by zero, and one can show that $\lambda_{\mathcal{T},j} \circ \lambda_{\mathcal{T}_j,i}(x)$ also acts by zero on the corresponding subspace of $\mathcal{H}$.  Thus, the action of $\lambda_{\mathcal{T},j} \circ \lambda_{\mathcal{T}_j,i}(x)$ corresponds under the isomorphism $\Phi$ to the action of $\lambda_{\mathcal{T}',\iota_j(i)}(x)$ as desired.

Finally, to show that the isomorphism $\Phi$ mapping $\xi$ to $\zeta$ and satisfying \eqref{eq:compositionCD} is unique, it suffices to note that $\zeta$ is a cyclic vector for the action on $\mathcal{K}$ of the algebra generated by $\lambda_{\mathcal{T}',i}(\mathcal{L}(\mathcal{K}_i))$ for $i \in [N]$.
\end{proof}

\begin{corollary} \label{cor:operadmorphism}
If $\mathcal{T} \in \Tree(k)$ and $\mathcal{T} \in \Tree(n_j)$ for $j = 1$, \dots, $k$, then
\[
\boxplus_{\mathcal{T}}(\boxplus_{\mathcal{T}_1}, \dots, \boxplus_{\mathcal{T}_k}) = \boxplus_{\mathcal{T}(\mathcal{T}_1,\dots,\mathcal{T}_k)}.
\]
In other words, $\mathcal{T} \mapsto \boxplus_{\mathcal{T}}$ is an operad morphism.
\end{corollary}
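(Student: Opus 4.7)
The plan is to deduce this corollary almost immediately from Theorem \ref{thm:composition} together with Corollary \ref{cor:convolvedlaw}. The key observation is that Theorem \ref{thm:composition} has already done the heavy lifting by providing a unitary isomorphism $\Phi: (\mathcal{H},\xi) \to (\mathcal{K},\zeta)$ of pointed $\mathcal{B}$-$\mathcal{B}$-correspondences which intertwines the two natural inclusions of $\mathcal{L}(\mathcal{H}_{j,i})$ into the respective $\mathcal{T}'$-free and iterated product spaces.

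More concretely, I would fix laws $\mu_{j,i} \in \Sigma(\mathcal{B})$ for $j \in [k]$, $i \in [n_j]$, and set $N = n_1 + \dots + n_k$ and $\mathcal{T}' = \mathcal{T}(\mathcal{T}_1,\dots,\mathcal{T}_k)$. For each $(j,i)$, realize $\mu_{j,i}$ as the law of a self-adjoint operator $X_{j,i}$ on a pointed correspondence $(\mathcal{H}_{j,i},\xi_{j,i})$ (for instance, take $\mathcal{H}_{j,i} = L^2(\mathcal{B}\ip{X},\mu_{j,i})$ with multiplication by $X$). Form the $\mathcal{T}_j$-free product $(\mathcal{H}_j,\xi_j)$ and set $Y_j = \sum_{i=1}^{n_j} \lambda_{\mathcal{T}_j,i}(X_{j,i}) \in \mathcal{L}(\mathcal{H}_j)$. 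By the definition of $\boxplus_{\mathcal{T}_j}$, the operator $Y_j$ has law $\nu_j := \boxplus_{\mathcal{T}_j}(\mu_{j,1},\dots,\mu_{j,n_j})$ with respect to $\xi_j$.

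Next, I would form the iterated $\mathcal{T}$-free product $(\mathcal{H},\xi) = \assemb_{\mathcal{T}}[(\mathcal{H}_1,\xi_1),\dots,(\mathcal{H}_k,\xi_k)]$ and consider the operator $W = \sum_{j=1}^k \lambda_{\mathcal{T},j}(Y_j) \in \mathcal{L}(\mathcal{H})$. By Corollary \ref{cor:convolvedlaw} applied to the realizations $Y_j \in \mathcal{L}(\mathcal{H}_j)$ of $\nu_j$, the law of $W$ with respect to $\xi$ equals $\boxplus_{\mathcal{T}}(\nu_1,\dots,\nu_k)$, which is exactly the left-hand side of the desired identity. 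Expanding, one has
\[
W = \sum_{j=1}^k \sum_{i=1}^{n_j} \lambda_{\mathcal{T},j} \circ \lambda_{\mathcal{T}_j,i}(X_{j,i}).
\]
Applying the commutative diagram \eqref{eq:compositionCD} from Theorem \ref{thm:composition} term by term yields
\[
\Phi W \Phi^* = \sum_{j=1}^k \sum_{i=1}^{n_j} \lambda_{\mathcal{T}',\iota_j(i)}(X_{j,i}) \in \mathcal{L}(\mathcal{K}).
\]
Since $\Phi$ is a unitary of pointed $\mathcal{B}$-$\mathcal{B}$-correspondences sending $\xi$ to $\zeta$, the law of $W$ with respect to $\xi$ equals the law of $\Phi W \Phi^*$ with respect to $\zeta$, which by the definition of $\mathcal{T}'$-free convolution is precisely $\boxplus_{\mathcal{T}'}(\mu_{1,1},\dots,\mu_{k,n_k})$.

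There is essentially no obstacle beyond bookkeeping: the real content was already established in Theorem \ref{thm:composition}, and the corollary is a routine translation from the level of Hilbert module isomorphisms to the level of laws of sums. The only minor subtlety is ensuring that one can use the realization $Y_j$ of $\nu_j$ rather than the canonical realization on $L^2(\mathcal{B}\ip{X},\nu_j)$ when computing $\boxplus_{\mathcal{T}}(\nu_1,\dots,\nu_k)$, which is precisely the content of Corollary \ref{cor:convolvedlaw}.
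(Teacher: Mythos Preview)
Your proposal is correct and follows essentially the same route as the paper's proof: realize each $\mu_{j,i}$ by an operator $X_{j,i}$, form the sum $W=\sum_{j,i}\lambda_{\mathcal{T},j}\circ\lambda_{\mathcal{T}_j,i}(X_{j,i})$ in the iterated product, invoke Corollary~\ref{cor:convolvedlaw} to identify its law as $\boxplus_{\mathcal{T}}(\boxplus_{\mathcal{T}_1}(\ldots),\ldots)$, transfer via the unitary $\Phi$ of Theorem~\ref{thm:composition}, and recognize the resulting operator on $(\mathcal{K},\zeta)$ as realizing $\boxplus_{\mathcal{T}'}(\mu_{1,1},\ldots,\mu_{k,n_k})$. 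Your explicit flagging of the need for Corollary~\ref{cor:convolvedlaw} (to use the realization $Y_j$ rather than the canonical one) is exactly the point the paper relies on as well.
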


\begin{proof}
Let $\mu_{j,i}$ be a non-commutative law for each $j \in [k]$ and each $i \in [n_j]$.  Then there exists some $(\mathcal{H}_{j,i},\xi_{j,i})$ and $X_{j,i} \in \mathcal{L}(\mathcal{H}_{j,i})$ self-adjoint such that the law of $X_{j,i}$ is $\mu_{j,i}$.  Let $\mathcal{H}_j$, $\mathcal{H}$, $\mathcal{K}$, etc., be as in the previous proposition.  Then by Corollary \ref{cor:convolvedlaw}, the operator
\[
\sum_{j=1}^k \lambda_{\mathcal{T},j} \left( \sum_{i=1}^{n_j} \lambda_{\mathcal{T}_j,i}(X_{j,i}) \right) = \sum_{j=1}^k \sum_{i=1}^{n_j} \lambda_{\mathcal{T},j} \circ \lambda_{\mathcal{T}_j,i}(X_{j,i}) \in \mathcal{L}(\mathcal{H})
\]
has the law
\[
\boxplus_{\mathcal{T}}(\boxplus_{\mathcal{T}_1}(\mu_{1,1},\dots,\mu_{1,n_1}), \dots, \boxplus_{\mathcal{T}_k}(\mu_{k,1},\dots,\mu_{k,n_k})).
\]
By the previous proposition, the corresponding operator in $\mathcal{L}(\mathcal{K})$ has the same law.  This operator is
\[
\sum_{j=1}^k \sum_{i=1}^{n_j} \lambda_{\mathcal{T}',\iota_j(i)}(X_{j,i})
\]
which by Corollary \ref{cor:convolvedlaw} has the law
\[
\boxplus_{\mathcal{T}'}(\mu_{1,1},\dots,\mu_{1,n_1}, \dots, \mu_{k,1},\dots,\mu_{k,n_k}).
\]
\end{proof}

\subsection{Permutation Equivariance and Convolution Identities}

To complete the proof that $\mathcal{T} \mapsto \boxplus_{\mathcal{T}}$ is a morphism of topological symmetric operads, it only remains to check permutation equivariance.  As in our study of composition, we will proceed by manipulating the $\mathcal{T}$-free product $\mathrm{C}^*$-correspondences.

In fact, these manipulations work in a greater level of generality where we replace a permutation $\sigma: [N] \to [N]$ by an arbitrary map $\psi: [N'] \to [N]$.  Thus, our main result Theorem \ref{thm:permutation} has several applications besides permutation invariance.  As we will see below, the case where $\psi$ is surjective enables us to prove identities relating several convolution operations (Corollary \ref{cor:convolutionidentity}), while the case where $\psi$ is injective relates to the study of conditional expectations (Remark \ref{rem:conditionalexpectations}).

\begin{theorem} \label{thm:permutation}
Let $\psi$ be a function $[N'] \to [N]$ and let $\psi_*$ be the function from strings on the alphabet $[N']$ to strings on the alphabet $[N]$ given by $\psi_*(j_1 \dots j_\ell) = \psi(j_1) \dots \psi(j_\ell)$ for every string $j_1 \dots j_\ell \in \mathcal{T}_{N',\free}$.  Let $\mathcal{T}_{\Ran(\psi)} \subset \mathcal{T}_{N,\free}$ be the tree consisting of all alternating strings on the alphabet $\Ran(\psi) = \psi([N'])$.

Suppose that $\mathcal{T} \in \Tree(N)$ and $\mathcal{T}' \in \Tree(N')$ are such that $\psi_*$ defines a bijection $\mathcal{T}' \to \mathcal{T} \cap \mathcal{T}_{\Ran(\psi)}$.  (In particular, this requires that $\psi_*(s)$ is alternating for every $s \in \mathcal{T}'$.)

Let $(\mathcal{H}_1,\xi_1)$, \dots, $(\mathcal{H}_N, \xi_N)$ be $\mathcal{B}$-$\mathcal{B}$-correspondences with $\mathcal{B}$-central unit vectors.  Then there is a unique unitary embedding of $\mathcal{B}$-$\mathcal{B}$-correspondences with $\mathcal{B}$-central unit vectors
\[
\Psi: \assemb_{\mathcal{T}'}[(\mathcal{H}_{\psi(1)}, \xi_{\psi(1)}), \dots, (\mathcal{H}_{\psi(N)}, \xi_{\psi(N)})] \to \assemb_{\mathcal{T}}[(\mathcal{H}_1,\xi_1), \dots, (\mathcal{H}_N, \xi_N)]
\]
such that for $j \in \Ran \psi$, the diagram
\begin{equation} \label{eq:permutationCD}
\begin{tikzcd}[row sep = large]
\mathcal{L}(\mathcal{H}_j) \arrow[swap]{d}{\lambda_{\mathcal{T},j}} \arrow{rd}{\sum_{i \in \psi^{-1}(j)} \lambda_{\mathcal{T}',i}} & \\
\mathcal{L}(\mathcal{H}) \arrow[swap]{r}{\Ad_{\Psi^*}} & \mathcal{L}(\mathcal{K})
\end{tikzcd}
\end{equation}
commutes, where
\begin{align*}
(\mathcal{H},\xi) &= \assemb_{\mathcal{T}}[(\mathcal{H}_1,\xi_1),\dots,(\mathcal{H}_N,\xi_N)] \\
(\mathcal{K},\zeta) &= \assemb_{\mathcal{T}'}[(\mathcal{H}_{\psi(1)}, \xi_{\psi(1)}), \dots, (\mathcal{H}_{\psi(N')}, \xi_{\psi(N')}].
\end{align*}
and $\Ad_{\Psi^*}(x) = \Psi^* x \Psi$.  Moreover, we have
\begin{equation} \label{eq:conditionalexpectationproperty}
\Ad_{\Psi^*}[a_1 a a_2] = \Ad_{\Psi^*}[a_1] \Ad_{\Psi^*}[a] \Ad_{\Psi^*}[a_2]
\end{equation}
provided that $a_1$, $a_2 \in \Alg(\lambda_{\mathcal{T},j}(\mathcal{L}(\mathcal{H}_j)): j \in \Ran \psi)$.
\end{theorem}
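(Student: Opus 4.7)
The approach is to construct $\Psi$ explicitly from the direct sum decompositions. Recall $\mathcal{K} = \bigoplus_{s \in \mathcal{T}'} \mathcal{K}_s^\circ$ with $\mathcal{K}_s^\circ = \mathcal{H}_{\psi(s(1))}^\circ \otimes_{\mathcal{B}} \cdots \otimes_{\mathcal{B}} \mathcal{H}_{\psi(s(\ell))}^\circ$, which is literally the Hilbert $\mathcal{B}$-$\mathcal{B}$-bimodule $\mathcal{H}_{\psi_*(s)}^\circ$; and $\mathcal{H} = \bigoplus_{t \in \mathcal{T}} \mathcal{H}_t^\circ$. Since $\psi_*$ restricts to a bijection $\mathcal{T}' \to \mathcal{T} \cap \mathcal{T}_{\Ran(\psi)}$, I define $\Psi$ summand-by-summand by the canonical identification $\mathcal{K}_s^\circ = \mathcal{H}_{\psi_*(s)}^\circ \hookrightarrow \mathcal{H}$. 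This produces a $\mathcal{B}$-$\mathcal{B}$-bimodular isometry whose image is $W := \bigoplus_{t \in \mathcal{T} \cap \mathcal{T}_{\Ran(\psi)}} \mathcal{H}_t^\circ$, and $\Psi\zeta = \xi$ because both vectors represent $1 \in \mathcal{B}$ in their respective $\emptyset$-summands.

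To verify \eqref{eq:permutationCD}, I would compute both sides on each summand $\mathcal{K}_{s'}^\circ$ for $s' \in \mathcal{T}'$. Fix $j \in \Ran(\psi)$ and $x \in \mathcal{L}(\mathcal{H}_j)$, and set $t = \psi_*(s')$. The operator $\lambda_{\mathcal{T},j}(x)$ is non-zero on $\mathcal{H}_t^\circ$ in exactly two cases: Case~A, where $t$ starts with $j$, say $t = jt''$, so the action uses $\mathcal{H}_t^\circ \oplus \mathcal{H}_{t''}^\circ \cong \mathcal{H}_j \otimes_\mathcal{B} \mathcal{H}_{t''}^\circ$; and Case~B, where $t$ does not start with $j$ but $jt \in \mathcal{T}$, so the action uses $\mathcal{H}_{jt}^\circ \oplus \mathcal{H}_t^\circ \cong \mathcal{H}_j \otimes_\mathcal{B} \mathcal{H}_t^\circ$. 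In Case~A, write $s' = is''$ with $i = s'(1) \in \psi^{-1}(j)$; then $\lambda_{\mathcal{T}',i}(x)$ acts on $\mathcal{K}_{s'}^\circ = \mathcal{K}_{is''}^\circ$ through the identification $\mathcal{K}_{is''}^\circ \oplus \mathcal{K}_{s''}^\circ \cong \mathcal{K}_i \otimes_\mathcal{B} \mathcal{K}_{s''}^\circ = \mathcal{H}_j \otimes_\mathcal{B} \mathcal{H}_{t''}^\circ$, matching $\lambda_{\mathcal{T},j}(x)$ under $\Psi$. In Case~B, the bijection furnishes a unique preimage of $jt$ under $\psi_*$, which must be of the form $is'$ for a unique $i \in \psi^{-1}(j)$ with $is' \in \mathcal{T}'$, and again $\lambda_{\mathcal{T}',i}(x)$ acts via the analogous tensor identification. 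For every other $i'' \in \psi^{-1}(j)$ one checks that $s'(1) \neq i''$ and $i''s' \notin \mathcal{T}'$ (either $\psi_*(i''s') = jt$ would fail to be alternating, or injectivity of $\psi_*$ would be violated, or $jt \notin \mathcal{T}$ already in Case~C where both sides vanish), so $\lambda_{\mathcal{T}',i''}(x)$ annihilates $\mathcal{K}_{s'}^\circ$; hence the diagonal sum collapses to the single matching term.

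For the conditional expectation property \eqref{eq:conditionalexpectationproperty}, the key observation is that $W$ is invariant under $\lambda_{\mathcal{T},j}(\mathcal{L}(\mathcal{H}_j))$ for every $j \in \Ran(\psi)$, since these operators only move between summands $\mathcal{H}_t^\circ$ with $t \in \mathcal{T}_{\Ran(\psi)}$, and $\mathcal{T} \cap \mathcal{T}_{\Ran(\psi)}$ is closed under prepending or deleting a leading letter from $\Ran(\psi)$. Hence the projection $P = \Psi \Psi^*$ commutes with $\Alg(\lambda_{\mathcal{T},j}(\mathcal{L}(\mathcal{H}_j)) : j \in \Ran(\psi))$, and using $P\Psi = \Psi$ and $\Psi^*P = \Psi^*$ one obtains $\Ad_{\Psi^*}(a_1 a a_2) = \Psi^* a_1 P a P a_2 \Psi = \Ad_{\Psi^*}(a_1)\Ad_{\Psi^*}(a)\Ad_{\Psi^*}(a_2)$. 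Uniqueness follows because $W$ coincides with the cyclic subspace of $\xi$ under this algebra (a standard calculation in the $\mathcal{T}$-free product); the commutation relations then determine any admissible $\Psi'$ on the cyclic subspace of $\zeta$ under the diagonal operators, and that subspace, being mapped isometrically onto $W$ by our explicit $\Psi$, must equal all of $\mathcal{K}$. The main obstacle is the combinatorial case analysis in the second paragraph, where one has to carefully exploit the bijectivity of $\psi_*$ to isolate the unique contributing index $i$ on each summand.
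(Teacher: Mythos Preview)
Your proof is correct and follows essentially the same approach as the paper's: both construct $\Psi$ summand-by-summand via the identification $\mathcal{K}_s^\circ = \mathcal{H}_{\psi_*(s)}^\circ$, verify \eqref{eq:permutationCD} by casework on summands (the paper organizes this by pairs $\mathcal{K}_s^\circ \oplus \mathcal{K}_{is}^\circ$ rather than your Cases A/B/C, but the content is identical), deduce \eqref{eq:conditionalexpectationproperty} from the invariance of $W = \Ran\Psi$ under the algebra generated by $\lambda_{\mathcal{T},j}(\mathcal{L}(\mathcal{H}_j))$ for $j \in \Ran\psi$, and prove uniqueness from cyclicity. Your Case~B argument that the unique $\psi_*$-preimage of $jt$ must have the form $is'$ is a nice detail that the paper leaves implicit.
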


\begin{proof}
Let $(\mathcal{K}_j, \zeta_j) = (\mathcal{H}_{\psi(j)}, \xi_{\psi(j)})$, so that
\[
(\mathcal{K},\zeta) = \assemb_{\mathcal{T}'}[(\mathcal{K}_1, \zeta_1), \dots, (\mathcal{K}_{N'}, \zeta_{N'})].
\]
For each $s \in \mathcal{T}'$, we have $\psi_*(s) \in \mathcal{T}$ by our assumptions about $\psi$, $\mathcal{T}$, and $\mathcal{T}$, and we also have
\[
\mathcal{K}_s^\circ = \mathcal{H}_{\psi_*(s)}^\circ.
\]
Since $\psi_*$ defines a bijection $\mathcal{T} \to \mathcal{T} \cap (\Ran \psi_*)$, we have $s \neq s' \implies \psi_*(s) \neq \psi_*(s')$.  Thus, we may define an injective unitary map $\Psi: \mathcal{K} \to \mathcal{H}$ by mapping $\mathcal{K}_s^\circ$ onto $\mathcal{H}_{\psi_*(s)}^\circ$ for each $s \in \mathcal{T}'$.  Clearly, $\Psi$ maps the given unit vector $\zeta \in \mathcal{K}$ to the given unit vector $\xi \in \mathcal{H}$.

Suppose $j \in \Ran \psi$ and let us check \eqref{eq:permutationCD}.  For $x \in \mathcal{L}(\mathcal{H}_j)$, we must show that
\[
\Psi \lambda_{\mathcal{T},j}(x) \Psi^* = \sum_{i \in \psi^{-1}(j)} \lambda_{\mathcal{T}',i}(x).
\]
Let us consider the action of each of these operators on $\mathcal{K}_s^\circ \oplus \mathcal{K}_{is}^\circ$ where $i \in \psi^{-1}(j)$.  Under the map $\Psi$, we have
\[
\mathcal{K}_s^\circ \oplus \mathcal{K}_{is}^\circ \cong \mathcal{H}_{\psi_*(s)}^\circ \oplus \mathcal{H}_{j \psi_*(s)}^\circ.
\]
The action of $\lambda_{\mathcal{T},j}(x)$ on this space is given by $x \otimes \id_{\mathcal{H}_{\psi_*(s)}^\circ}$.  This is equivalent to the action of $\lambda_{\mathcal{T}',i}(x)$ on $\mathcal{K}_s^\circ \otimes \mathcal{K}_{is}$.  Moreover, if $i' \neq i$ is in $\psi^{-1}(j)$, then by our assumptions on $\psi$, the strings $i's$ and $i'is$ are not in $\mathcal{T}'$.  Thus, the action of $\lambda_{\mathcal{T}',i'}(x)$ on $\mathcal{K}_s^\circ \otimes \mathcal{K}_{is}^\circ$ is zero.  Therefore, we have $\Psi \lambda_{\mathcal{T},j}(x) \Psi^* = \sum_{i \in \psi^{-1}(j)} \lambda_{\mathcal{T}',i}(x)$ when restricted to this subspace.

The other direct summands of $(\mathcal{K},\zeta)$ have the form $\mathcal{K}_s$ where $s(1) \not \in \psi^{-1}(j)$ and $is \not \in \mathcal{T}'$ for $i \in \psi^{-1}(j)$.  The operators $\lambda_{\mathcal{T}',i}(x)$ act by zero on this subspace.  Our assumptions on $\psi$ guarantee that $\psi_*(s)$ does not begin with $j$ and $j \psi_*(s)$ is not in $\mathcal{T}$.  Thus, $\lambda_{\mathcal{T},j}(x)$ also acts by zero on this subspace.  Thus, \eqref{eq:permutationCD} commutes as desired.

Next, we show uniqueness of $\Psi$.  By our assumptions, $\Psi(\mathcal{K})$ is the direct sum of $\mathcal{H}_s^\circ$ for $s \in \mathcal{T} \cap (\Ran \psi_*)$.  Now $\Alg(\lambda_j(\mathcal{L}(\mathcal{H}_j)): j \in \Ran \psi) \xi$ is dense in this subspace.  It follows that $\Alg( (\sum_{i \in \psi^{-1}} \lambda_{\mathcal{T}',i})(\mathcal{L}(\mathcal{H}_j)): j \in \Ran \psi) \xi$ is dense in $\mathcal{K}$.  If a map $\Psi': \mathcal{K} \to \mathcal{H}$ satisfies \eqref{eq:permutationCD}, then $\Psi$ and $\Psi'$ must agree on $\Alg( (\sum_{i \in \psi^{-1}} \lambda_{\mathcal{T}',i})(\mathcal{L}(\mathcal{H}_j)): j \in \Ran \psi) \xi$ and hence on all of $\mathcal{K}$.

Finally, to prove \eqref{eq:conditionalexpectationproperty}, observe that $\Ran \Psi$ is an invariant subspace for $\lambda_{\mathcal{T},j}(x)$ when $j \in \Ran \psi$.  It follows that $\Ran \Psi$ is an invariant subspace for every element of $\Alg(\lambda_{\mathcal{T},j}(\mathcal{L}(\mathcal{H}_j)): j \in \Ran \psi)$.  Moreover, $\Psi \Psi^*$ is the projection onto the image of $\Psi$.  Thus, if $a_1$, $a$, and $a_2$ are as in \eqref{eq:conditionalexpectationproperty}, then
\[
a_2 \Psi = \Psi \Psi^* a_2 \Psi
\]
and
\[
\Psi^* a_1 = (a_1^* \Psi)^* = (\Psi \Psi^* a_1^* \Psi)^* = \Psi^* a_1 \Psi^* \Psi
\]
so that
\[
\Psi^* a_1 a a_2 \Psi = (\Psi^* a_1 \Psi \Psi^*) a (\Psi \Psi^* a_2 \Psi) = (\Psi^* a_1 \Psi)(\Psi^* a \Psi)(\Psi^* a_2 \Psi).
\]
\end{proof}

The next corollary follows from Proposition \ref{thm:permutation} using similar reasoning as in the proof of Corollary \ref{cor:operadmorphism}.

\begin{corollary} \label{cor:convolutionidentity}
Let $\mathcal{T} \in \Tree(N)$ and $\mathcal{T}' \in \Tree(N')$.  Suppose that $\psi: [N'] \to [N]$ is surjective, and suppose that $\psi_*$ restricts to a bijection $\mathcal{T}' \to \mathcal{T}$.  Then we have for non-commutative laws $\mu_1$, \dots, $\mu_N$ that
\[
\boxplus_{\mathcal{T}'}(\mu_{\psi(1)}, \dots, \mu_{\psi(N')}) = \boxplus_{\mathcal{T}}(\mu_1,\dots,\mu_N).
\]
In particular, if $\sigma: [N] \to [N]$ is a permutation, then
\[
\boxplus_{\mathcal{T}_\sigma} = (\boxplus_{\mathcal{T}})_\sigma.
\]
Hence, $\mathcal{T} \mapsto \boxplus_{\mathcal{T}}$ defines a morphism of topological symmetric operads.
\end{corollary}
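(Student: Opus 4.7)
The plan is to specialize Theorem \ref{thm:permutation} and combine it with Theorem \ref{thm:realizationoflaw}. First I realize each $\mu_j$ as the $\mathcal{B}$-valued law of a self-adjoint $X_j \in \mathcal{L}(\mathcal{H}_j)$ on a $\mathcal{B}$-$\mathcal{B}$-correspondence $(\mathcal{H}_j, \xi_j)$ with $\mathcal{B}$-central unit vector. Because $\psi$ is surjective we have $\Ran(\psi) = [N]$, so $\mathcal{T}_{\Ran(\psi)} = \mathcal{T}_{N,\free}$ and the hypothesis that $\psi_*$ restricts to a bijection $\mathcal{T}' \to \mathcal{T}$ is exactly the hypothesis of Theorem \ref{thm:permutation}. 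That theorem then supplies a unitary embedding $\Psi \colon \mathcal{K} \to \mathcal{H}$ with $\Psi\zeta = \xi$ satisfying \eqref{eq:permutationCD} and \eqref{eq:conditionalexpectationproperty}, where $(\mathcal{H},\xi)$ and $(\mathcal{K},\zeta)$ are the two $\assemb$-product spaces attached to the data.

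Next I set $Y = \sum_{j=1}^N \lambda_{\mathcal{T},j}(X_j) \in \mathcal{L}(\mathcal{H})$ and $Y' = \sum_{i=1}^{N'} \lambda_{\mathcal{T}',i}(X_{\psi(i)}) \in \mathcal{L}(\mathcal{K})$, so by Corollary \ref{cor:convolvedlaw} the law of $Y$ is $\boxplus_{\mathcal{T}}(\mu_1,\dots,\mu_N)$ and the law of $Y'$ is $\boxplus_{\mathcal{T}'}(\mu_{\psi(1)},\dots,\mu_{\psi(N')})$. Summing \eqref{eq:permutationCD} over all $j \in [N]$ (all of which lie in $\Ran\psi$) gives
\[
\Psi^* Y \Psi = \sum_{j=1}^N \sum_{i \in \psi^{-1}(j)} \lambda_{\mathcal{T}',i}(X_j) = \sum_{i=1}^{N'} \lambda_{\mathcal{T}',i}(X_{\psi(i)}) = Y'.
\]
Since each summand of $Y$ lies in $\Alg(\lambda_{\mathcal{T},j}(\mathcal{L}(\mathcal{H}_j)) : j \in \Ran\psi)$, a short induction on the length of a word, using \eqref{eq:conditionalexpectationproperty} together with the $\mathcal{B}$-$\mathcal{B}$-modularity of $\Psi$ (which yields $\Psi^* b \Psi = b$ for $b \in \mathcal{B}$), shows $\Psi^*(b_0 Y b_1 \dots Y b_\ell) \Psi = b_0 Y' b_1 \dots Y' b_\ell$. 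Pairing with $\zeta$ and using $\Psi\zeta = \xi$ then identifies all $\mathcal{B}$-valued moments of $Y$ and $Y'$, which proves the main identity.

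For the permutation specialization, I take $N' = N$ and $\psi = \sigma$. By definition $\mathcal{T}_\sigma = (\sigma^{-1})_*(\mathcal{T})$, so $\sigma_*$ is a bijection $\mathcal{T}_\sigma \to \mathcal{T}$ and the main identity becomes $\boxplus_{\mathcal{T}_\sigma}(\mu_{\sigma(1)},\dots,\mu_{\sigma(N)}) = \boxplus_{\mathcal{T}}(\mu_1,\dots,\mu_N)$; relabeling $\nu_j = \mu_{\sigma(j)}$ rewrites this as $\boxplus_{\mathcal{T}_\sigma}(\nu_1,\dots,\nu_N) = \boxplus_{\mathcal{T}}(\nu_{\sigma^{-1}(1)},\dots,\nu_{\sigma^{-1}(N)}) = (\boxplus_{\mathcal{T}})_\sigma(\nu_1,\dots,\nu_N)$ as dictated by the $\Perm(N)$-action of Definition \ref{def:functionoperad}. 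Combined with Lemma \ref{lem:operadcontinuity} (continuity) and Corollary \ref{cor:operadmorphism} (composition), this closes out the axioms for a morphism of topological symmetric operads.

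The only delicate point is the iteration of \eqref{eq:conditionalexpectationproperty} to alternating words $b_0 Y b_1 \dots Y b_\ell$; the rest is bookkeeping, and Theorem \ref{thm:permutation} does the real structural work. I would not expect any obstacle beyond verifying that $\Ran\Psi$ is invariant under the correct subalgebra, which is precisely what makes \eqref{eq:conditionalexpectationproperty} applicable.
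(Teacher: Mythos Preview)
Your proof is correct and follows essentially the same approach as the paper, which simply states that the corollary follows from Theorem~\ref{thm:permutation} by reasoning analogous to the proof of Corollary~\ref{cor:operadmorphism}. One minor simplification: since $\psi$ is surjective and $\psi_*$ is a bijection $\mathcal{T}' \to \mathcal{T}$, the embedding $\Psi$ is in fact a unitary \emph{isomorphism} $(\mathcal{K},\zeta) \to (\mathcal{H},\xi)$, so $\Ad_{\Psi^*}$ is automatically multiplicative and your appeal to \eqref{eq:conditionalexpectationproperty} and the induction on word length, while valid, is not strictly needed.
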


\begin{example} \label{ex:FBMpermutation1}
The tree $\mathcal{T}_{N,\free}$ used to define $N$-ary free convolution is invariant under permutations of the labels $[N]$ and hence the operation of free convolution is independent of the ordering; in particular, the binary free convolution operation is commutative.  The same holds for the tree $\emptyset \cup [N]$ used for Boolean convolution.  In the monotone case, the permutation $\sigma: i \mapsto N - i + 1$ maps the tree for monotone convolution to the tree for anti-monotone convolution.  Hence, monotone convolution of $\mu_1$, \dots, $\mu_N$ is equivalent to anti-monotone convolution of $\mu_N$, \dots, $\mu_1$.
\end{example}

Particular applications of Corollary \ref{cor:convolutionidentity} to prove convolution identities will be discussed in \S \ref{subsec:convolutionidentities}.  Another important special case of Theorem \ref{thm:permutation} is when $\psi: [N'] \to [N]$ is injective.  This case will furnish another proof of Corollary \ref{cor:indexsubset} and relates to conditional expectations.

\begin{example} \label{ex:indexsubset}
Let $N' \leq N$ and let $\psi: [N'] \to [N]$ be injective.  Let $\mathcal{T} \in \Tree(N)$ and let $\mathcal{T}'$ be the rooted subtree consisting of strings on the alphabet $\psi([N'])$.  By permutation equivariance, it suffices to consider the case where $\psi$ is the standard inclusion $[N'] \to [N]$, so that we are in the same situation as Corollary \ref{cor:indexsubset}.  Let $(\mathcal{H}_1,\xi_1)$, \dots, $(\mathcal{H}_N,\xi_N)$ be given $\mathcal{B}$-$\mathcal{B}$-correspondences with $\mathcal{B}$-central unit vectors, and let
\begin{align*}
(\mathcal{H},\xi) &= \assemb_{\mathcal{T}}[(\mathcal{H}_1,\xi_1),\dots,(\mathcal{H}_N,\xi_N)]
(\mathcal{H}',\xi') &= \assemb_{\mathcal{T}'}[(\mathcal{H}_1,\xi_1),\dots,(\mathcal{H}_{N'},\xi_{N'})].
\end{align*}
Let $\Psi: (\mathcal{H}',\xi') \to (\mathcal{H},\xi)$ be given by Theorem \ref{thm:permutation}.

Let $\chi: [\ell] \to [N']$ and let $a_j \in \mathcal{L}(\mathcal{H}_{\chi(j)})$ for $j = 1$, \dots, $\ell$.  Then we have by \eqref{eq:conditionalexpectationproperty} and the fact that $\Psi \xi' = \xi$ that
\begin{align*}
\ip{\xi', \lambda_{\mathcal{T}',\chi(1)}(a_1) \dots \lambda_{\mathcal{T}',\chi(\ell)}(a_\ell) \xi'} &= \ip{\xi', \Psi^* \lambda_{\mathcal{T},\chi(1)}(a_1) \dots \lambda_{\mathcal{T},\chi(\ell)}(a_\ell) \Psi \xi} \\
&= \ip{\xi, \lambda_{\mathcal{T},\chi(1)}(a_1) \dots \lambda_{\mathcal{T},\chi(\ell)}(a_\ell) \xi}.
\end{align*}
Hence, we have an alternative proof of Corollary \ref{cor:indexsubset}.
\end{example}

\begin{remark} \label{rem:conditionalexpectations}
Examining the last example and the statement of the theorem, we might hope that the map $\Ad_{\Psi^*}$ defines a conditional expectation from $\Alg(\lambda_{\mathcal{T},j}(\mathcal{L}(\mathcal{H}_j)): j \in [N])$ to $\Alg(\lambda_{\mathcal{T}',j}(\mathcal{L}(\mathcal{H}_j)): j \in [N'])$.

More precisely, let $\mathcal{A} \subseteq \mathcal{L}(\mathcal{H})$ be the $\mathrm{C}^*$-algebra generated by $\lambda_{\mathcal{T},j}(\mathcal{L}(\mathcal{H}_j))$ for $j = 1$, \dots, $N$.  Let $\mathcal{C} \subseteq \mathcal{L}(\mathcal{H})$ be the $\mathrm{C}^*$-algebra generated by $\lambda_{\mathcal{T},j}(\mathcal{L}(\mathcal{H}_j))$ for $j = 1$, \dots, $N'$, and let $\mathcal{C}' \subset \mathcal{T}(\mathcal{H}')$ be the $\mathrm{C}^*$-algebra generated by $\lambda_{\mathcal{T}',j}(\mathcal{L}(\mathcal{H}_j))$ for $j = 1$, \dots, $N'$.

Suppose that it happens that $\Ad_{\Psi^*}$ maps $\mathcal{A}$ into $\mathcal{C}'$ and that it restricts to an isomorphism $\mathcal{C} \to \mathcal{C}'$.  Then we may identify $\mathcal{C}'$ with $\mathcal{C} \subseteq \mathcal{A}$, and then \eqref{eq:conditionalexpectationproperty} says that $\Ad_{\Psi^*}$ defines a conditional expectation from $\mathcal{A}$ onto the subalgebra $\mathcal{C}$.  For example, this holds if $N' = 1$ and $1$ is in $\mathcal{T}$; indeed, $\Ad_{\Psi^*}$ maps $\mathcal{A}$ into $\mathcal{C}'$ since $\mathcal{C}' = \mathcal{L}(\mathcal{H}_{\psi(1)}) = \mathcal{C}$, and it restricts to an isomorphism $\mathcal{C} \to \mathcal{C}'$ since it is the identity map on $\mathcal{L}(\mathcal{H}_1)$.  Also, in the free, Boolean, and monotone cases, $\Ad_{\Psi^*}$ maps $\mathcal{A}$ into $\mathcal{C}$ and restricts to an isomorphism $\mathcal{C} \to \mathcal{C}'$ for every value of $N'$ and $N$.

For general $\mathcal{T}$, we do not know whether $\Ad_{\Psi^*}$ maps $\mathcal{A}$ into $\mathcal{C}'$ or whether it is injective on $\mathcal{C}$.  However, \eqref{eq:conditionalexpectationproperty} says that $\Ad_{\Psi^*}$ is a $*$-homomorphism on $\mathcal{C}$ and that it is a $\mathcal{C}$-$\mathcal{C}$-bimodule map (where the right and left actions of $\mathcal{C}$ on $\mathcal{C}'$ are given by first applying $\Ad_{\Psi^*}$ to the elements of $\mathcal{C}$).
\end{remark}

\subsection{The Case of Digraphs} \label{subsec:digraphoperad}

We discussed in \S \ref{subsec:productexamples} the case where $\mathcal{T} = \Walk(G)$ for some $G \in \Digraph(N)$.  It turns out that $\Digraph = (\Digraph(k))_{k \geq 1}$ can be made into an operad with a composition operation compatible with that of $\Tree$.

\begin{definition}
Let $G \in \Digraph(k)$ and suppose that $G_1 \in \Digraph(n_1)$, \dots, $G_k \in \Digraph(n_k)$.  Let $N_j = n_1 + \dots + n_j$ and $N = N_k$.  Define $\iota_j: [n_j] \to [N]$ by $\iota_j(i) = N_{j-1} + i$, so that $[N] = \bigsqcup_{j=1}^k \iota_j([n_j])$.  We define the composition $G(G_1,\dots,G_k) \in \Digraph(N)$ as the digraph with edge set
\[
\{(\iota_i(v), \iota_j(w)): i \sim_G j, v \in [n_i], w \in [n_j]\} \cup \{(\iota_j(v), \iota_j(w): v, w \in [n_j], v \sim_{G_j} w\}.
\]
\end{definition}

In other words, to construct $G(G_1,\dots,G_k)$, we take the disjoint union of $G_1$, \dots, $G_k$ (with the appropriate relabeling of the vertices), and then for each directed edge $(i,j)$ in $G$, we add a directed edge from every vertex of $G_i$ to every vertex of $G_j$.

\begin{example}
Let $K_N$ be the complete graph, $K_N^c$ the totally disconnected graph, and $K_N^{<}$ the directed complete graph as in \S \ref{subsec:productexamples}.  Given two digraphs $G$ and $G'$, the composition $K_2^c(G,G')$ is the disjoint union of $G$ and $G'$.  The composition $K_2(G,G')$ is obtained from the disjoint union by adding an undirected edge from every vertex of $G$ to every vertex of $G'$.  In particular, $K_2(K_n^c,K_m^c)$ is the complete bipartite graph $K_{n,m}$.  The composition $K_2^{<}$ is obtained by adding a directed edge from every vertex of $G$ to every vertex of $G'$.
\end{example}

One can check that the composition operation on $\Digraph$ defined above satisfies operad associativity.  Moreover, the graph $\bullet$ with one vertex acts as the identity in $\Digraph(1)$.  Thus, $\Digraph$ is an operad.  Moreover, just as in the case of $\Tree$, the symmetric group $\Perm(N)$ acts on $\Digraph(N)$ by permutation of the labels $1$, \dots, $N$, and this action endows $\Digraph$ with the structure of a symmetric operad.

One can also check that $G \mapsto \Walk(G)$ is an injective function $\Digraph(N) \to \Tree(N)$.  Furthermore, we have
\[
\Walk(G(G_1,\dots,G_k)) = \Walk(G)(\Walk(G_1), \dots, \Walk(G_k)).
\]
Indeed, suppose $i_0$, \dots, $i_\ell$ is a walk in $G$ and let $s_j$ be a walk in $G_{i_j}$ for each $j$.  Then $\iota_{i_0}(s_0) \dots \iota_{i_\ell}(s_\ell)$ is a walk in $G(G_1,\dots,G_k)$.  It begins in $G_0$ by following the walk $s_0$, then cross an edge from $G_{i_0}$ to $G_{i_1}$, then follow the walk $s_1$ in $G_{i_1}$, and so forth.  Every walk in $G(G_1,\dots,G_k)$ can be constructed in this way.  As a consequence, we have the following proposition.

\begin{proposition} \label{prop:digraphmorphism}
The maps $G \mapsto \Walk(G)$ defines an operad morphism $\Digraph \to \Tree$, and in fact this is a morphism of symmetric operads.  Consequently, the map $G \mapsto \boxplus_{\Walk(G)}$ is also a morphism of symmetric operads.
\end{proposition}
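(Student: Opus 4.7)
The plan is to verify in turn the three axioms for a symmetric operad morphism (identity, composition, permutation-equivariance), and then deduce the final statement by composition with the morphism already constructed in Corollaries \ref{cor:operadmorphism} and \ref{cor:convolutionidentity}. In each case the combinatorics on the digraph side has been set up to mirror the combinatorics on the tree side, so the checks are essentially direct translations.

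For the identity, the single-vertex digraph $\bullet$ has no edges (irreflexivity), so the only walks in $\bullet$ are the empty walk and the length-$0$ walk at the unique vertex; hence $\Walk(\bullet) = \{\emptyset, 1\}$, which is the identity element of $\Tree(1)$. For composition, I would prove the displayed identity
\[
\Walk\bigl(G(G_1,\dots,G_k)\bigr) = \Walk(G)\bigl(\Walk(G_1),\dots,\Walk(G_k)\bigr)
\]
by showing two set inclusions. In one direction, an element of the right-hand side is by definition a concatenation $(\iota_{i_1})_*(s_1)\dots(\iota_{i_\ell})_*(s_\ell)$ with $i_1\dots i_\ell \in \Walk(G)$ and $s_r\in\Walk(G_{i_r})\setminus\{\emptyset\}$; one reads off immediately that within a block $(\iota_{i_r})_*(s_r)$ consecutive letters are $\sim_{G_{i_r}}$-adjacent (hence adjacent in $G(G_1,\dots,G_k)$), and across a boundary the two consecutive letters belong to distinct $G_{i_r},G_{i_{r+1}}$ with $i_r\sim_G i_{r+1}$, which is exactly a cross-edge in $G(G_1,\dots,G_k)$. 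Conversely, given a string in $\Walk(G(G_1,\dots,G_k))$, I would break it into maximal substrings of letters lying in a single $\iota_j([n_j])$; by maximality and by the definition of the cross-edges, the sequence of block-labels is an alternating walk in $G$, and each maximal block is itself a nonempty walk in the corresponding $G_j$. This gives the unique decomposition matching the right-hand side.

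For permutation-equivariance, equip $\Digraph(N)$ with the right $\Perm(N)$-action defined by $i\sim_{G_\sigma}j\iff \sigma(i)\sim_G\sigma(j)$; then $j_1\dots j_\ell\in\Walk(G_\sigma)$ iff $\sigma(j_1)\dots\sigma(j_\ell)\in\Walk(G)$, which is exactly the condition $j_1\dots j_\ell\in\Walk(G)_\sigma$ under the action of $\Perm(N)$ on $\Tree(N)$ recalled in \S\ref{sec:operad}. Checking that this action satisfies the two symmetric-operad axioms is then a routine inspection at the level of vertex labels, and I would relegate it to a short paragraph. The identity and associativity of composition in $\Digraph$ itself, which are used implicitly when asserting that $\Digraph$ is an operad, are similarly straightforward from the definition.

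The consequence that $G\mapsto\boxplus_{\Walk(G)}$ is a morphism of symmetric operads is then immediate: the composition of the symmetric-operad morphism $\Walk\colon\Digraph\to\Tree$ just established with the symmetric-operad morphism $\mathcal{T}\mapsto\boxplus_{\mathcal{T}}$ provided by Corollaries \ref{cor:operadmorphism} and \ref{cor:convolutionidentity} is again such a morphism. The only step with any real content is the composition identity, and the main (mild) obstacle there is the bookkeeping needed to justify that the decomposition of a walk in $G(G_1,\dots,G_k)$ into maximal same-block subwalks is well-defined and matches the operadic formula for $\mathcal{T}(\mathcal{T}_1,\dots,\mathcal{T}_k)$; everything else is essentially tautological.
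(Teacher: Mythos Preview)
Your proposal is correct and takes essentially the same approach as the paper: the paper sketches the composition identity $\Walk(G(G_1,\dots,G_k)) = \Walk(G)(\Walk(G_1),\dots,\Walk(G_k))$ in the paragraph immediately preceding the proposition via the same ``follow a walk in $G_{i_r}$, then cross an edge, then follow a walk in $G_{i_{r+1}}$'' description, and leaves the identity, permutation-equivariance, and symmetric-operad axioms for $\Digraph$ to the reader. Your write-up simply fills in the details the paper omits, and your deduction of the final claim by composing with the morphism of Corollaries~\ref{cor:operadmorphism} and~\ref{cor:convolutionidentity} is exactly what the paper intends.
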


\begin{example}
Let $\bullet$ denote the graph with a single vertex.  Then, in light of Proposition \ref{prop:digraphmorphism}, the identity
\[
K_2(\bullet,K_2) = K_3 = K_2(K_2,\bullet)
\]
implies the associativity of the binary operation $\boxplus$.  Furthermore, we have
\[
K_N = K_2(\bullet, K_2(\bullet, \dots ))
\]
which implies that the $N$-ary free convolution operation is obtained by iterating binary free convolution operation.  The same observations hold for Boolean independence and the totally disconnected graph $K_N^c$, and for monotone independence and the digraph $K_N^{<}$.
\end{example}

\begin{example}
The permutation invariance property of $K_N$ and $K_N^c$ implies that free and Boolean convolution are permutation-invariant, and in particular the associated binary operations are commutative.  Moreover, the order-reversing permutation sends the digraph for monotone convolution to the digraph for anti-monotone convolution.
\end{example}

Our construction has the following behavior with respect to subgraphs.  Let $G \in \Digraph(N)$ and suppose that $\mathcal{A}_1$, \dots, $\mathcal{A}_N$ are $\Walk(G)$-freely independent in $(\mathcal{A},E)$.  Let $N' \leq N$, and let $G'$ be the induced sub-digraph of $G$ on the vertex set $[N']$.  Then $\mathcal{A}_1$, \dots, $\mathcal{A}_{N'}$ are $\Walk(G')$-independent as a consequence of Corollary \ref{cor:indexsubset}.  By permutation equivariance, we can say more generally that if $\psi: [N'] \to [N]$ is injective and if $G'$ is the digraph on $[N']$ given by $i \sim_{G'} j$ if and only if $\psi(i) \sim_G \psi(j)$, then $\mathcal{A}_{\psi(1)}$, \dots, $\mathcal{A}_{\psi(N')}$ are $\Walk(G')$-freely independent.

In particular, this allows us to describe the pairwise interaction of the algebras $\mathcal{A}_i$ and $\mathcal{A}_j$ for $i \neq j$.  Indeed, we can apply the above argument to the function $\psi: \{1,2\} \to [N]$ given by $\psi(1) = i$ and $\psi(2) = j$.  It follows that if there are directed edges from $i$ to $j$ and $j$ to $i$, then $\mathcal{A}_i$ and $\mathcal{A}_j$ are freely independent.  If there are no edges between $i$ and $j$, then $\mathcal{A}_i$ and $\mathcal{A}_j$ are Boolean independent.  If there is a directed edge from $i$ to $j$, but no edge from $j$ to $i$, then $\mathcal{A}_i$ and $\mathcal{A}_j$ are monotone independent.

The digraph construction thus produces a mixture of free, Boolean, and monotone independence similar to several constructions in previous work.  If we assume that the digraph $G$ forms a poset, that is, $\sim_G$ is a strict partial order, then we obtain the construction of Wysocza{\'n}ski \cite{Wysoczanski2010}.  In particular, if $S$ is a totally ordered subset of the vertices, then the algebras $(\mathcal{A}_i)_{i \in S}$ are monotone independent (when the indices are ordered according to the partial order $\sim_G$).

Next, suppose the digraph $G$ is an undirected graph, that is, $i \sim_G j$ if and only if $j \sim_G i$.  Then each pair of algebras is either freely independent or Boolean independent.  If $S \subseteq [N]$ is a clique (that is, the induced subgraph $G'$ is a complete graph), then $(\mathcal{A}_i: i \in S)$ are freely independent.  If $S$ is an anti-clique (that is, the induced subgraph $G'$ is a totally disconnected digraph), then $(\mathcal{A}_i: i \in S)$ are Boolean independent.  This is the same construction as that of Kula and Wysocza{\'n}ski \cite{KW2013} except that it is phrased in terms of an undirected graph rather than a poset.

We also remark that, given an undirected graph $G$, the $\Lambda$-free product of M{\l}otkowski \cite{Mlotkowski2004} (further studied in \cite{SW2016}) allows us to join $N$ algebras $\mathcal{A}_1$, \dots, $\mathcal{A}_N$, so that each pair $\mathcal{A}_i$ and $\mathcal{A}_j$ is classically independent if $i \sim_G j$ and freely independent if $i \not \sim_G j$ (in the scalar-valued setting).  However, our framework does not include classical independence or this construction.

\begin{remark}
Our operad $\Digraph$ is reminiscent of the graph operad used in the study of traffic freeness by Male \cite[\S 4.2]{Male}, although it is neither the same object nor used in the same way here.  Indeed, Male uses a graph operad to describe other algebraic operations besides addition and multiplication that can be performed on random variables, whereas we use the graph operation to describe the structure of independence.
\end{remark}

\begin{remark}
The theory developed here is distinct from, but similar in spirit to, work of Accardi, Lenczewski, and Sa{\l}apata \cite{ALS2007} that describes operations on rooted graphs that produce the free, Boolean, monotone, orthogonal, and subordination convolutions of the spectral measures of the adjacency operators.  For instance, the free product of rooted graphs will lead to the free convolution of the spectral measures associated to the adjacency operators, the star product corresponds to Boolean convolution, and the comb product to monotone convolution.  In our paper, the graphs themselves describe the product operations rather than being the objects that we take the products of.  However, it is worth investigating whether there is a $\mathcal{T}$-free product of graphs that corresponds to the convolution operations in our paper.
\end{remark}

\section{Convolution Identities and Decomposition} \label{sec:convolutionidentities}

\subsection{Background on Analytic Transforms}

For the sake of connecting the examples in this section with previous literature, we will use the analytic characterizations of the free, Boolean, monotone, and orthogonal convolutions.  Recall that the Cauchy-Stieltjes transform of a probability measure $\mu$ on $\R$ is given by
\[
G_\mu(z) = \int_{\R} \frac{1}{z - t}\,d\mu(t) = E[(z - X)^{-1}],
\]
where $\im z > 0$ and where $X$ is a random variable having the law $\mu$.  In the $\mathcal{B}$-valued setting, in order for $\mu$ to be characterized by $G_\mu$, it is necessary to view $G_\mu$ as a function defined not only for elements of $\mathcal{B}$ but also for $n \times n$ matrices over $\mathcal{B}$ for every $n$, or more precisely, a fully matricial function; see \cite[\S 5 - 6]{Voiculescu2004} \cite[\S 5.2 - 5.3]{PV2013} \cite{KVV2014} \cite{Williams2017} for more information.

Let $\mu \in \Sigma(\mathcal{B})$ and let $X$ be a self-adjoint variable in $(\mathcal{A},E)$ realizing the law $\mu$.  Let $X^{(n)} \in M_n(\mathcal{A})$ be the matrix with $X$ on the diagonal and zero elsewhere.  If $z \in M_n(\mathcal{B})$ with $\im z = (z - z^*) / 2i \geq \epsilon > 0$, then we define
\[
G_\mu^{(n)}(z) = E^{(n)}[(z - X^{(n)})^{-1}],
\]
where $E^{(n)}$ is the map $M_n(\mathcal{A}) \to M_n(\mathcal{B})$ obtained by applying $E$ entrywise.

The following results can be deduced from the moment formulas for each type of independence (discussed in \S \ref{sec:combinatorics}), and since the arguments are well-known and particular to each case, we defer them to the references cited.  A convenient summary is also found in \cite[\S 3]{ALS2007}.  We caution that conventions may differ slightly in some papers.

\begin{theorem} ~
\begin{enumerate}[(1)]
	\item {\bf Free Case:} Let $F_\mu^{(n)}(z) = G_\mu^{(n)}(z)^{-1}$ (the multiplicative inverse).  Let $(F_\mu^{(n)})^{-1}(z)$ denote the functional inverse of $F_\mu^{(n)}$, and set $\Phi_\mu^{(n)}(z) = (F_\mu^{(n)})^{-1}(z) - z$.  Then $\Phi_\mu^{(n)}$ is defined for $\im z$ sufficiently large (depending on $\rad(\mu)$) and we have
	\[
	\Phi_{\mu \boxplus \nu}^{(n)} = \Phi_\mu^{(n)} + \Phi_\nu^{(n)}.
	\]
	See \cite[\S 4.11]{Voiculescu1995}.
	
	\item {\bf Boolean Case:} Let $K_\mu^{(n)}(z) = z - F_\mu^{(n)}(z)$.  Then
	\[
	K_{\mu \uplus \nu}^{(n)} = K_\mu^{(n)} + K_\nu^{(n)}.
	\]
	See \cite[\S 2]{SW1997}, \cite[Theorem 2.2]{Bercovici2006}, \cite[Corollary 4.6]{Popa2009}.
	
	\item {\bf Monotone Case:} We have
	\[
	F_{\mu \rhd \nu}^{(n)} = F_\mu^{(n)} \circ F_\nu^{(n)}.
	\]
	See \cite[Theorem 3.1]{Muraki2000}, \cite{Bercovici2005}, \cite[Theorem 3.2]{Popa2008a}.
	
	\item {\bf Orthogonal Case:} We have
	\[
	K_{\mu \vdash \nu}^{(n)} = K_\mu^{(n)} \circ F_\nu^{(n)}.
	\]
	See \cite[Thm.\ 6.2, Cor.\ 6.3]{Lenczewski2007}.
\end{enumerate}
\end{theorem}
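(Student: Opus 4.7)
The plan is to derive all four identities from the combinatorial moment formula of Theorem \ref{thm:combinatorics}, proceeding case-by-case according to the shape of $\mathcal{NC}(\chi,\mathcal{T})$ for each tree. The matricial extension is routine: every argument goes through verbatim with $(\mathcal{B},X)$ replaced by $(M_n(\mathcal{B}),X^{(n)})$, since $\mathcal{T}$-free products commute with matrix amplification in the sense that moments of $\boxplus_{\mathcal{T}}(\mu_1,\ldots,\mu_N)$ applied to matrix-valued coefficients can be computed by applying Theorem \ref{thm:combinatorics} in $M_n(\mathcal{B})$. I therefore suppress the superscript $(n)$ throughout.

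For the Boolean case, $\mathcal{NC}(\chi,\mathcal{T}_{N,\Bool})$ reduces to monochromatic interval partitions, so Theorem \ref{thm:combinatorics} immediately gives $K_{\Bool,\ell}(\mu\uplus\nu)[b_0 Xb_1\cdots Xb_\ell]=K_{\Bool,\ell}(\mu)[\cdots]+K_{\Bool,\ell}(\nu)[\cdots]$ after using multilinearity to split $X_1+X_2$. Packaging Boolean cumulants via the standard identity $K_\mu(z)=z-F_\mu(z)$ expressed as a series in the Boolean cumulants of $\mu$ (adapted matricially as in \S\ref{sec:preliminaries}) gives $K_{\mu\uplus\nu}=K_\mu+K_\nu$. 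For the free case, the condition $\pi\in\mathcal{NC}(\chi,\mathcal{T}_{N,\free})$ unfolds to $\pi$ being compatible with $\chi$ and adjacent blocks in the nesting tree $\graph(\pi)$ carrying distinct colors. This is exactly Speicher's vanishing-of-mixed-cumulants characterization transcribed in the language of Boolean cumulants; converting the formula back to free cumulants (via the standard M\"obius relation on $\mathcal{NC}(\ell)$) recovers $R_{\mu\boxplus\nu}=R_\mu+R_\nu$, equivalently $\Phi_{\mu\boxplus\nu}=\Phi_\mu+\Phi_\nu$.

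The monotone case uses the peeling identity of Proposition \ref{prop:monotonemoments}(3). Realizing $X_1,X_2$ as monotone-independent in some $(\mathcal{A},E)$ and expanding
\[
G_{\mu\rhd\nu}(z)=E\bigl[(z-X_1-X_2)^{-1}\bigr]=\sum_{n\geq 0}E\bigl[(z-X_2)^{-1}\bigl(X_1(z-X_2)^{-1}\bigr)^n\bigr],
\]
the peeling allows one to replace each block of powers of $(z-X_2)^{-1}$ between two occurrences of $X_1$ by its expectation, which is a power of $G_\nu(z)$; resumming identifies the answer as $G_\mu(F_\nu(z))$, giving $F_{\mu\rhd\nu}=F_\mu\circ F_\nu$. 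The orthogonal case is analogous: $\mathcal{T}_{\orth}=\{\emptyset,1,21\}$ forces every $\pi\in\mathcal{NC}^\circ(\chi,\mathcal{T}_{\orth})$ to have a spine of color $1$ with color-$2$ singleton blocks nested at depth one. Corollary \ref{cor:combinatorics2} then presents $K_{\mu\vdash\nu}(z)$ as a Boolean-cumulant series in $X_1$ with each $X_1$ sandwiched by powers of $(z-X_2)^{-1}$, yielding $K_{\mu\vdash\nu}=K_\mu\circ F_\nu$.

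The principal obstacle is not the combinatorics, which follow cleanly from Theorem \ref{thm:combinatorics}, but the passage from formal power-series identities to genuine functional identities on the matricial upper half-plane. The resolvent expansions converge in operator norm on $M_n(\mathcal{B})$ once $\|(\im z)^{-1}\|$ is small relative to $\rad(\mu)+\rad(\nu)$, which is guaranteed by the subadditivity of $\rad$ under $\mathcal{T}$-free convolution (a consequence of Proposition \ref{prop:operatornormbound}). One must then verify that the uniform estimates are sufficient to promote the pointwise equalities to identities of fully matricial functions. The monotone and orthogonal cases will be the most delicate, because the peeling/resumming step interleaves variables from the two factors and its justification under the infinite sum requires that the rearrangement of an absolutely summable series of operator-valued terms be tracked simultaneously at every matrix level.
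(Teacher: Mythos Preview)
The paper does not give its own proof of this theorem: it states the four identities, cites the original references, and remarks that they ``can be deduced from the moment formulas for each type of independence (discussed in \S\ref{sec:combinatorics}), and since the arguments are well-known and particular to each case, we defer them to the references cited.'' So there is no proof in the paper to compare against; your proposal is an attempt to carry out what the paper merely asserts is possible.

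Your sketch is broadly on the right track, but two points deserve correction. In the orthogonal case, your description of $\mathcal{NC}^\circ(\chi,\mathcal{T}_{\orth})$ is inaccurate: the colour-$2$ blocks at depth two are \emph{not} forced to be singletons---they are arbitrary interval blocks nested inside the unique colour-$1$ outer block. This matters, because when you sum over all ways to partition each gap into colour-$2$ interval blocks, you reconstitute the \emph{moments} of $\nu$ in that gap (via the Boolean moment--cumulant relation), and it is these moments that assemble into $G_\nu(z)$ and hence $F_\nu(z)$ inside the colour-$1$ Boolean cumulant. Your conclusion $K_{\mu\vdash\nu}=K_\mu\circ F_\nu$ is still correct, but the combinatorial mechanism is not what you describe. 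In the free case, your passage from ``free cumulants are additive'' to ``$\Phi_{\mu\boxplus\nu}=\Phi_\mu+\Phi_\nu$'' hides a genuinely nontrivial step: the identification of $\Phi_\mu$ with the generating function of free cumulants (the operator-valued $R$-transform machinery of \cite[\S4]{Voiculescu1995}). This is precisely the content of the reference the paper cites, so you have not avoided the deferral---you have relocated it. The Boolean and monotone sketches are essentially correct, and your closing remarks about promoting formal series identities to genuine analytic identities on the matricial upper half-plane are well placed; that passage is indeed where most of the care is required in the cited references.
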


\subsection{Some Convolution Identities} \label{subsec:convolutionidentities}

As an application of Corollary \ref{cor:convolutionidentity}, we discuss several convolution identities that were studied in previous literature, often from the analytic viewpoint.

\begin{example}
The identity $\mu \rhd \nu = (\mu \vdash \nu) \uplus \nu$ studied in \cite[Cor.\ 6.6]{Lenczewski2007} is a special case of Corollary \ref{cor:convolutionidentity}.  Let $\psi: [3] \to [2]$ given by $\psi(1) = 1$, $\psi(2) = 2$, $\psi(3) = 2$.  Then we claim that $\psi_*$ defines an isomorphism $\mathcal{T}_{2,\Bool}(\mathcal{T}_{\orth}, \id) \to \mathcal{T}_{2,\mono}$.

To compute $\mathcal{T}_{2,\Bool}(\mathcal{T}_{\orth}, \id)$, let $\iota_1: [2] \to [3]$ and $\iota_2: [1] \to [3]$ be the inclusions given by $\iota_1(j) = j$ and $\iota_2(1) = 3$.  Because $\mathcal{T}_{2,\Bool} = \{\emptyset, 1, 2\}$, we have
\[
\mathcal{T}_{2,\Bool}(\mathcal{T}_{\orth}, \id) = \{\emptyset\} \cup \{(\iota_1)_*(s): s \in \mathcal{T}_{\orth} \setminus \{\emptyset\}\} \cup \{(\iota_2)_*(1)\}.
\]
Since $\mathcal{T}_{\orth} = \{\emptyset, 1, 21\}$, we obtain
\[
\mathcal{T}_{2,\Bool}(\mathcal{T}_{\orth}, \id) = \{\emptyset, 1, 21, 3\}.
\]
The map $\psi_*$ defines a bijection from $\mathcal{T}_{2,\Bool}(\mathcal{T}_{\orth}, \id)$ to $\{\emptyset, 1, 2, 21\} = \mathcal{T}_{2,\mono}$.  Thus, the Corollary implies that $\mu \rhd \nu = (\mu \vdash \nu) \uplus \nu$.
\end{example}

\begin{remark}
This identity is also easy to prove in terms of analytic transforms (as done in \cite{Lenczewski2007}) since it simply says that $F_\mu \circ F_\nu = z - K_\mu \circ F_\nu - K_\nu$.
\end{remark}

\begin{example} \label{ex:subordination}
The identity
\begin{equation} \label{eq:subordinationconvolution}
\mu \boxplus \nu = (\mu \boxright \nu) \lhd \nu
\end{equation}
studied in \cite[\S 7]{Lenczewski2007}, \cite{Nica2009}, \cite[Proposition 7.2]{Liu2018} can be deduced from Corollary \ref{cor:convolutionidentity} as follows.  Let $\mathcal{T}' = \mathcal{T}_{2,\mono \dagger}(\mathcal{T}_{\sub},\id)$.  Let $\psi: \{1,2,3\}$ be given by $\psi(1) = 1$, $\psi(2) = 2$, $\psi(3) = 2$.  Then we claim that $\psi_*$ defines a graph isomorphism from $\mathcal{T}'$ to $\mathcal{T}_{2,\free}$; by the Corollary, this will be sufficient to establish \eqref{eq:subordinationconvolution}.

To compute the composed tree $\mathcal{T}'$ as an element of $\Tree(3)$, let $\iota_1: [2] \to [3]$ and $\iota_2: [1] \to [3]$ be the inclusions given by $\iota_1(j) = j$ and $\iota_2(1) = 3$.  Because $\mathcal{T}_{2,\mono \dagger} = \{\emptyset, 1, 2, 12 \}$, we evaluate $\mathcal{T}_{2,\mono \dagger}(\mathcal{T}_{\sub}, \id)$ as
\begin{align*}
&\{\emptyset\} \cup \{(\iota_2)_*(1)\} \cup \{(\iota_1)_*(s): s \in \mathcal{T}_{\sub} \setminus \{\emptyset\}\} \cup \{(\iota_1)_*(s) (\iota_2)_*(1): s \in \mathcal{T}_{\sub} \setminus \{\emptyset\}\} \\
=& \{\emptyset\} \cup \{3\} \cup \{s: s \in \mathcal{T}_{\sub} \setminus \{\emptyset\}\} \cup \{s3: s \in \mathcal{T}_{\sub} \setminus \{\emptyset\}\}.
\end{align*}
Recall that $\mathcal{T}_{\sub} \setminus \{\emptyset\}$ consists of all alternating strings on $\{1,2\}$ which end with $1$.  When we apply $(\psi)_*$ to $\mathcal{T}_{2,\mono \dagger}(\mathcal{T}_{\sub}, \id)$, then the $3$ is replaced by a $2$.  So out of the four terms above, the first term produces the empty string, the second produces $2$, the third term produces all alternating strings on $\{1,2\}$ that end in $1$, and then the fourth term produces all alternating strings on $\{1,2\}$ that end in $12$.  Therefore, $\psi_*$ defines an isomorphism $\mathcal{T}(\id,\mathcal{T}_{\sub}) \to \mathcal{T}_{2,\free}$, which proves our claim.
\end{example}

\begin{remark}
In terms of analytic transforms, \eqref{eq:subordinationconvolution} translates to $F_{\mu \boxplus \nu} = F_\mu \circ F_{\nu \boxright \mu}$ and $G_{\mu \boxplus \nu} = G_\mu \circ F_{\mu \boxright \nu}$.  The fact that $G_{\mu \boxplus \nu} = G_\mu \circ F$ for some analytic $F$ from the upper half-plane to the upper half-plane was first observed by Voiculescu \cite[Proposition 4.4]{VoiculescuFE1} and the theory was further developed by \cite[Theorem 3.1]{Biane1998}, \cite{Voiculescu2000}, \cite{Voiculescu2002b}, and \cite{BMS2013}.  The approach of studying the subordination convolution itself is due to Lenczewski \cite[\S 7]{Lenczewski2007}, the multivariable case was handled in \cite{Nica2009}, and the operator-valued case was studied in \cite[Proposition 7.2]{Liu2018}.
\end{remark}

\begin{example}
Other identities that can be deduced from Corollary \ref{cor:convolutionidentity} in a similar fashion include \cite[eq.\ (1.7)]{Lenczewski2007} \cite[Prop.\ 7.4]{Liu2018}
\[
\mu \boxplus \nu = (\mu \boxright \nu) \uplus (\nu \boxright \mu)
\]
and \cite[\S 9]{Lenczewski2007} \cite[Prop.\ 7.8]{Liu2018}
\[
\mu \boxright \nu = \mu \vdash (\nu \boxright \mu)
\]
and \cite[Prop.\ 8.1]{Liu2018}
\[
(\mu_1 \boxplus \mu_2) \boxright \nu = (\mu_1 \boxright \nu) \boxplus (\mu_2 \boxright \nu).
\]
We leave the details as an exercise.
\end{example}

\begin{remark}
Our proofs are in some sense not new.  Indeed, the Hilbert module manipulations used in previous work precisely correspond to the manipulations of strings used here.  Our point is exactly that Corollary \ref{cor:convolutionidentity} reduces the work to manipulations of strings.
\end{remark}

\subsection{Boolean-Orthogonal Decomposition Theorem} \label{subsec:BOdecomp}

Lenczewski \cite{Lenczewski2007} considered decompositions of the free convolution into iterated Boolean and orthogonal convolutions (and similar results for product operations on graphs were given in \cite{ALS2007}).  We now show that there are Boolean-orthogonal decompositions for general $\mathcal{T}$-free convolution operations.  These decompositions will be obtained inductively from the following result.

\begin{proposition} \label{prop:BOdecomp}
Let $\mathcal{T} \in \Tree(N)$.  For $j \in [N] \cap \mathcal{T}$, let
\[
\mathcal{T}_j = \{s \in \mathcal{T}_{N,\free}: sj \in \mathcal{T}\},
\]
that is, $\mathcal{T}_j$ is the branch of $\mathcal{T}$ rooted at the vertex $j$.  Then we have
\begin{equation} \label{eq:BOdecomp}
\boxplus_{\mathcal{T}}(\mu_1,\dots,\mu_N) = \biguplus_{j \in [N] \cap \mathcal{T}} [\mu_j \vdash \boxplus_{\mathcal{T}_j}(\mu_1,\dots,\mu_N)].
\end{equation}
\end{proposition}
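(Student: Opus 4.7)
The plan is to recognize the right-hand side of \eqref{eq:BOdecomp} as a single $\mathcal{T}^*$-free convolution for an appropriate composed tree $\mathcal{T}^*$ and then apply Corollary~\ref{cor:convolutionidentity} to identify $\mathcal{T}^*$ with $\mathcal{T}$. Enumerate $[N] \cap \mathcal{T} = \{j_1, \ldots, j_k\}$ (the degenerate case $\mathcal{T} = \{\emptyset\}$ is trivial, as both sides collapse to $\delta_0$). Two applications of Corollary~\ref{cor:operadmorphism}---first writing $\mu_{j_i} \vdash \boxplus_{\mathcal{T}_{j_i}}(\mu_1, \ldots, \mu_N) = \boxplus_{\mathcal{T}_{\orth}(\id, \mathcal{T}_{j_i})}(\mu_{j_i}, \mu_1, \ldots, \mu_N)$, and then combining under the outer Boolean convolution---rewrite the right-hand side as $\boxplus_{\mathcal{T}^*}$ applied to the $k(N+1)$-tuple
\[
(\mu_{j_1}, \mu_1, \ldots, \mu_N, \; \mu_{j_2}, \mu_1, \ldots, \mu_N, \; \ldots, \; \mu_{j_k}, \mu_1, \ldots, \mu_N),
\]
where $\mathcal{T}^* = \mathcal{T}_{k,\Bool}\bigl(\mathcal{T}_{\orth}(\id, \mathcal{T}_{j_1}), \ldots, \mathcal{T}_{\orth}(\id, \mathcal{T}_{j_k})\bigr)$.

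Next I define a surjection $\psi \colon [k(N+1)] \to [N]$ that collapses this tuple onto $(\mu_1, \ldots, \mu_N)$: within block $i$ (of $N+1$ consecutive slots), send the first slot to $j_i$ and the remaining $N$ slots to $1, 2, \ldots, N$ in order. Corollary~\ref{cor:convolutionidentity} then reduces the proposition to showing that $\psi_*$ restricts to a bijection $\mathcal{T}^* \to \mathcal{T}$.

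That combinatorial bijection is the crux of the argument. Unpacking the iterated composition, each non-empty vertex of $\mathcal{T}^*$ has the form $(\iota_i)_*(1)$ or $(\iota_i)_*\bigl((\iota_2)_*(s) \cdot 1\bigr)$ for some $i \in [k]$ and $s \in \mathcal{T}_{j_i} \setminus \{\emptyset\}$, with $\iota_i$ the outer block inclusion and $\iota_2$ the inner shift from the orthogonal composition. Tracing $\psi_*$ through these successive shifts sends them to $j_i$ and $s j_i$ respectively, and by the defining condition $s j_i \in \mathcal{T}$ for $\mathcal{T}_{j_i}$, the image $s j_i$ is an alternating string lying in $\mathcal{T}$. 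Conversely, any non-empty $t \in \mathcal{T}$ has a last letter $j$ which itself lies in $[N] \cap \mathcal{T}$ (since $\mathcal{T}$ is a rooted subtree, the single-letter string $j$ is on the path from $\emptyset$ to $t$), so $j = j_i$ for a unique $i$; writing $t = s j_i$ then identifies $s \in \mathcal{T}_{j_i}$ uniquely and yields a unique preimage in $\mathcal{T}^*$. Injectivity across blocks is automatic because the last letter of $\psi_*$ applied to any vertex in block $i$ is always $j_i$, and the $j_i$'s are distinct. The main obstacle is purely bookkeeping: tracking the two inclusions $\iota_i$ and $\iota_2$ carefully, and verifying alternation of $s j_i$ via the definition of $\mathcal{T}_{j_i}$.
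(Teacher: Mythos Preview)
Your proposal is correct and follows essentially the same approach as the paper: both arguments build the composed tree $\mathcal{T}_{k,\Bool}\bigl(\mathcal{T}_{\orth}(\id,\mathcal{T}_{j_1}),\dots,\mathcal{T}_{\orth}(\id,\mathcal{T}_{j_k})\bigr)$, define the surjection $\psi:[k(N+1)]\to[N]$ sending the first slot of each block to $j_i$ and the remaining slots to $1,\dots,N$, and verify that $\psi_*$ is a bijection onto $\mathcal{T}$ before invoking Corollary~\ref{cor:convolutionidentity}. The only cosmetic difference is that the paper first reduces by permutation invariance to the case $[N]\cap\mathcal{T}=\{1,\dots,n\}$, whereas you work directly with a general enumeration $\{j_1,\dots,j_k\}$; your version is arguably cleaner.
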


\begin{proof}
Note that the operation $\uplus_{j \in [N] \cap \mathcal{T}}$ on the right hand side of \eqref{eq:BOdecomp} is well-defined because Boolean convolution is commutative and associative.

Let $n = |[N] \cap \mathcal{T}|$.  Using permutation invariance, we may assume without loss of generality that $[N] \cap \mathcal{T} = \{1,\dots,n\}$ in order to simplify notation.

Let $N' = n(N+1)$.  Define $\iota_0: [n] \to [N']$ by
\[
\iota_0(i) = (N+1)(i-1) + 1
\]
and for $j = 1, \dots, n$, define $\iota_j: [N] \to [N']$ by
\[
\iota_j(i) = (N+1)(j-1) + i.
\]
In other words, the maps $\iota_j$ are defined so that
\[
(1,\dots,N') = (\iota_0(1), \iota_1(1), \dots, \iota_1(N), \iota_0(2), \iota_2(1), \dots, \iota_2(N), \dots \dots , \iota_0(n), \iota_n(1), \dots, \iota_n(N)).
\]
Note that $[N']$ is the disjoint union of the index sets $\iota_j([N])$ for $j = 1$, \dots, $n$.  Define $\psi: [N'] \to [N]$ by $\psi \circ \iota_j(i) = i$.  Let
\[
\mathcal{T}' = \mathcal{T}_{n,\Bool}(\mathcal{T}_{\orth}(\id, \mathcal{T}_1), \dots, \mathcal{T}_{\orth}(\id, \mathcal{T}_N))
\]

We claim that $\psi_*$ restricts to a graph isomorphism $\mathcal{T}' \to \mathcal{T}$.  Recall that $\mathcal{T}_{n,\Bool} = \{\emptyset, 1,\dots, n\}$ and $\mathcal{T}_{\orth} = \{\emptyset, 1, 21\}$.  In the composition $\mathcal{T}'$, the index $j \in \mathcal{T}_{n,\Bool}$ is replaced by strings from $\mathcal{T}_{\orth}(\id, \mathcal{T}_j)$ on the indices $\iota_0(j)$, $\iota_j(1)$, \dots, $\iota_j(N)$ (referring to their labels in the overall product $\mathcal{T}'$ rather than $\mathcal{T}_{\orth}(\id,\mathcal{T}_j)$), which means that $\mathcal{T}' \setminus \{\emptyset\}$ is the disjoint union of the sets of strings that arise from each terms $\mathcal{T}_{\orth}(\id,\mathcal{T}_j)$.  Meanwhile, the indices $1$ and $2$ in the $j$th copy of $\mathcal{T}_{\orth}$ are replaced respectively by $\iota_0(j)$ and by $\iota_j$ of strings in $\mathcal{T}_j$.  This means that $\mathcal{T}_{\orth}(\id,\mathcal{T}_j)$ contributes to $\mathcal{T}'$ the strings
\[
\{\iota_0(j)\} \cup \{(\iota_j)_*(s) \iota_0(j), s \in \mathcal{T}_j \setminus \{\emptyset\}\}
\]
or in other words $\{(\iota_j)_*(s) \iota_0(j), s \in \mathcal{T}_j \}$.  When we apply $\psi_*$ to these strings, we obtain precisely the strings of the form $sj$ where $s \in \mathcal{T}_j$.  Since $\mathcal{T}$ is the disjoint union of $\mathcal{T}_j \cdot j$ for $j = 1, \dots, n$, we see that $\psi_*$ defines an isomorphism $\mathcal{T}' \to \mathcal{T}$ as asserted.

It follows from Corollary \ref{cor:convolutionidentity} that
\[
\boxplus_{\mathcal{T}}(\mu_1,\dots,\mu_N) = \boxplus_{\mathcal{T}'}(\mu_{\psi(1)},\dots, \mu_{\psi(N')}),
\]
which is exactly \eqref{eq:BOdecomp}.
\end{proof}

Proposition \ref{prop:BOdecomp} can be used iteratively to obtain decompositions into Boolean and orthogonal convolutions for every finite tree $\mathcal{T}$.  Indeed, by the proposition, $\boxplus_{\mathcal{T}}$ can be decomposed into Boolean and orthogonal convolutions together with the convolution operations $\boxplus_{\mathcal{T}_j}$.  We then apply the proposition again to decompose $\boxplus_{\mathcal{T}_j}$ in terms of the convolutions for each branch of $\mathcal{T}_j$.  Continuing inductively, we will obtain a decomposition of $\mathcal{T}$ into Boolean and orthogonal convolutions, because each step will decrease the depth of the remaining branches, and when the depth of a branch becomes zero, this branch is simply the tree $\{\emptyset\}$.  (Here by the \emph{depth} of a rooted tree, we mean the maximum distance of any vertex from the root.)

Because the Boolean and orthogonal convolutions have simple descriptions in terms of the $K$ transform $K_\mu(z) = z - F_\mu(z)$, this decomposition technique provides a formula for computing the Cauchy transform of $\boxplus_{\mathcal{T}}(\mu_1,\dots,\mu_N)$ for every finite $\mathcal{T} \in \Tree(N)$.  Explicitly, \eqref{eq:BOdecomp} yields
\begin{equation} \label{eq:BOdecomp2}
K_{\boxplus_{\mathcal{T}}(\mu_1,\dots,\mu_N)}(z) = \sum_{j \in [N] \cap \mathcal{T}} K_{\mu_j}(z - K_{\boxplus_{\mathcal{T}_j}(\mu_1,\dots,\mu_N)}(z)).
\end{equation}

In fact, this allows us to approximate the $K$-transform of $\boxplus_{\mathcal{T}}(\mu_1,\dots,\mu_N)$ even when $\mathcal{T}$ is infinite because $\mathcal{T}$ can be approximated by finite trees.  For instance, we could let $\mathcal{T}_{(d)}$ be the truncation of $\mathcal{T}$ to depth $d$.  Then $\boxplus_{\mathcal{T}_{(d)}}(\mu_1,\dots,\mu_N) \to \boxplus_{\mathcal{T}}(\mu_1,\dots,\mu_N)$ as $d \to +\infty$.

The Boolean-orthogonal decompositions for $\mathcal{T}_{(d+1)}$ and $\mathcal{T}_{(d)}$ are closely related; indeed, the decomposition for $\mathcal{T}_{(d)}$ is obtained from the decomposition for $\mathcal{T}_{(d+1)}$ by replacing each of the branches at level $d+1$ by $\{\emptyset\}$.  In the formula for the Cauchy transform, this amounts to replacing the $K_{\boxplus_{\mathcal{S}}(\mu_1,\dots,\mu_N)}$ by $0$ for every branch $\mathcal{S}$ at level $d + 1$.  Intuitively, the sequence of decompositions for $\mathcal{T}_{(d)}$ can be viewed in the limit as a ``continued convolution decomposition'' for $\mathcal{T}$ analogous to the way that continued fractions are obtained from iterated addition and division operations (see \cite[p.\ 347-349]{Lenczewski2007}).

The case of digraphs is again especially interesting because the continued convolution decomposition can be expressed in terms of a fixed point equation system.

\begin{proposition}
Let $G$ be a digraph on the vertex set $[N]$.  For each $j$, let $\Walk_j(G)$ be the tree consisting of walks starting at vertex $j$, that is,
\[
\Walk_j(G) = \{\emptyset\} \cup \{j_1 \dots j_{\ell - 1} j: j \sim_G j_{\ell-1} \sim_G \dots \sim_G j_1\}.
\]
Let $\mu_1$, \dots, $\mu_N$ be non-commutative laws and let
\begin{align*}
\nu &= \boxplus_{\Walk(G)}(\mu_1,\dots,\mu_N), &
\nu_j &= \boxplus_{\Walk_j(G)}(\mu_1,\dots,\mu_N).
\end{align*}
Then we have
\begin{align*}
\nu &= \biguplus_{j \in [N]} \nu_j, &
\nu_j &= \mu_j \vdash \biguplus_{i: j \sim_G i} \nu_i.
\end{align*}
Thus, the $K$-transforms satisfy the relations
\begin{align*}
K_\nu(z) &= \sum_{j=1}^N K_{\nu_j}(z), &
K_{\nu_j}(z) &= K_{\mu_j} \left(z - \sum_{i: j \sim_G i} K_{\nu_j}(z) \right).
\end{align*}
\end{proposition}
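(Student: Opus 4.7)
The plan is to derive both identities by applying the Boolean-orthogonal decomposition of Proposition \ref{prop:BOdecomp} twice: once to $\Walk(G)$ and once to each $\Walk_j(G)$. The guiding observation is that, above level one, these two trees share the same single branch.

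First I would analyze the branches. Every singleton $j$ is a walk of length zero on $G$, so $[N] \cap \Walk(G) = [N]$ and $[N] \cap \Walk_j(G) = \{j\}$. Set $\mathcal{S}_j := \{s \in \mathcal{T}_{N,\free} : sj \in \Walk(G)\}$. A nonempty string $s = j_1 \cdots j_{\ell-1}$ lies in $\mathcal{S}_j$ exactly when $j \sim_G j_{\ell-1} \sim_G \cdots \sim_G j_1$, which is the condition that $j_{\ell-1}$ is a neighbor of $j$ and $s \in \Walk_{j_{\ell-1}}(G)$. Since each nonempty element of $\Walk_i(G)$ ends in the letter $i$, the sets $\Walk_i(G) \setminus \{\emptyset\}$ for distinct $i$ are pairwise disjoint, so
$$\mathcal{S}_j = \{\emptyset\} \,\sqcup\, \bigsqcup_{i \,:\, j \sim_G i} \bigl(\Walk_i(G) \setminus \{\emptyset\}\bigr),$$
and the same reasoning identifies the unique branch of $\Walk_j(G)$ at $j$ as $\mathcal{S}_j$.

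Next, enumerating the neighbors of $j$ as $i_1, \dots, i_{d_j}$, I would consider the operadic composition $\mathcal{T}' := \mathcal{T}_{d_j, \Bool}(\Walk_{i_1}(G), \dots, \Walk_{i_{d_j}}(G))$ in $\Tree(N d_j)$. By the definition of composition in $\Tree$, the vertex set of $\mathcal{T}'$ is $\{\emptyset\}$ together with the strings $(\iota_k)_*(s)$ for $s \in \Walk_{i_k}(G) \setminus \{\emptyset\}$, and the surjection $\psi \colon [N d_j] \to [N]$ determined by $\psi \circ \iota_k = \id_{[N]}$ induces a bijection $\psi_* \colon \mathcal{T}' \to \mathcal{S}_j$ (using the disjointness observed above). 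Corollary \ref{cor:operadmorphism} combined with Corollary \ref{cor:convolutionidentity} then yields
$$\boxplus_{\mathcal{S}_j}(\mu_1, \dots, \mu_N) = \boxplus_{\mathcal{T}_{d_j, \Bool}}(\nu_{i_1}, \dots, \nu_{i_{d_j}}) = \biguplus_{i \,:\, j \sim_G i} \nu_i.$$

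Finally, Proposition \ref{prop:BOdecomp} applied to $\Walk_j(G)$ yields $\nu_j = \mu_j \vdash \boxplus_{\mathcal{S}_j}(\mu_1, \dots, \mu_N) = \mu_j \vdash \biguplus_{i : j \sim_G i} \nu_i$, and applied to $\Walk(G)$ it yields $\nu = \biguplus_j \nu_j$ after substitution. The $K$-transform equations drop out from the standard formulas $K_{\mu \uplus \nu} = K_\mu + K_\nu$ and $K_{\mu \vdash \rho} = K_\mu \circ F_\rho$ together with $F_\rho(z) = z - K_\rho(z)$. The sole delicate point is verifying that $\psi_*$ is a bijection onto $\mathcal{S}_j$; once the disjoint-union description of $\mathcal{S}_j$ is in hand, everything else is routine bookkeeping.
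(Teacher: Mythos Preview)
Your argument is correct and follows essentially the same route as the paper: both identify the branch at $j$ and invoke Proposition~\ref{prop:BOdecomp}. The only organizational difference is that the paper introduces the auxiliary notation $\Walk_S(G) = \bigcup_{j \in S} \Walk_j(G)$ and applies Proposition~\ref{prop:BOdecomp} once to this general object, observing that its branch at $j$ is $\Walk_{S(j)}(G)$ with $S(j) = \{i : j \sim_G i\}$; this yields $\boxplus_{\Walk_S(G)}(\mu_1,\dots,\mu_N) = \biguplus_{j \in S} \nu_j$ for every $S$, and both identities then fall out by taking $S = [N]$, $S = \{j\}$, and $S = S(j)$. You instead compute $\boxplus_{\mathcal{S}_j}$ by building the Boolean composition $\mathcal{T}_{d_j,\Bool}(\Walk_{i_1}(G),\dots)$ and invoking Corollaries~\ref{cor:operadmorphism} and~\ref{cor:convolutionidentity} directly---which is exactly the machinery underlying the proof of Proposition~\ref{prop:BOdecomp} itself. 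The paper's parametrization by $S$ avoids this second direct appeal to the operad corollaries, but the two arguments are otherwise the same.
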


\begin{proof}
For $S \subseteq [N]$, let us denote
\[
\Walk_S(G) = \bigcup_{j \in S} \Walk_j(G),
\]
which describes all the walks that begin at a vertex in $S$.  We also denote $S(j) = \{i: j \sim_G i\}$.  If we fix $S$ and take $\mathcal{T} = \Walk_S(G)$ in Proposition \ref{prop:BOdecomp}, then we obtain
\[
\mathcal{T}_j = \begin{cases} \bigcup_{i: j \sim_G i} \Walk_i(G) = \Walk_{S(j)}(G), & j \in S \\ \{\emptyset\}, & \text{otherwise.} \end{cases}
\]
Therefore,
\[
\boxplus_{\Walk_S(G)}(\mu_1, \dots, \mu_N) = \biguplus_{j \in S} \mu_j \vdash \boxplus_{\Walk_{S(j)}}(\mu_1,\dots,\mu_N).
\]
In particular, taking $S = \{j\}$, we get
\[
\nu_j = \boxplus_{\Walk_j(G)}(\mu_1,\dots,\mu_N) = \mu_j \vdash \boxplus_{\Walk_{S(j)}}(\mu_1,\dots,\mu_N),
\]
and substituting this back into the previous equation,
\[
\boxplus_{\Walk_S(G)}(\mu_1,\dots,\mu_N) = \biguplus_{j \in S} \nu_j.
\]
By taking $S = [N]$, we get $\nu = \biguplus_{j=1}^N \nu_j$.  Also, by combining the previous relations,
\[
\nu_j = \mu_j \vdash \boxplus_{\Walk_{S(j)}}(\mu_1,\dots,\mu_N) = \mu_j \vdash \biguplus_{i \in S(j)} \nu_i,
\]
which proves the desired convolution identities, and the relation for the analytic transforms follows immediately.
\end{proof}

This proposition provides a strategy to compute the $\Walk(G)$-free convolution of $\mu_1$, \dots, $\mu_N$.  We first find $(K_{\nu_j}(z))_{j \in [N]}$ by solving the fixed-point equation system
\[
K_{\nu_j}(z) = K_{\mu_j}(z + \sum_{i: j \sim_G i} K_{\nu_j}(z)), \qquad j = 1, \dots, N,
\]
and then obtain the $K$-transform of the convolution as $\sum_{j=1}^N K_{\nu_j}(z)$. This fixed-point equation system is a generalization of the fixed-point equations used to compute the free convolution of two laws (that case corresponds to $G = K_2$).  This suggests as an avenue for future research that the complex-analytic and numerical tools used for free convolution in \cite{Voiculescu2002b,BMS2013} should also be applied for the convolution operations associated to digraphs.

\section{The $\mathcal{T}$-free Cumulants} \label{sec:cumulants}

Up to this point, the paper has focused on the basic properties of $\mathcal{T}$-free convolutions as well as how such convolutions relate to each other through the operads $\Tree$ and $\Func(\mathcal{B})$.  The remaining sections of the paper will, for a fixed choice of $\mathcal{T}$, lay out a theory of $\mathcal{T}$-free independence that closely parallels the free, Boolean, and monotone cases.

Theorem \ref{thm:combinatorics} above provides a way to compute joint moments using the Boolean cumulants.  In this section, we construct $\mathcal{T}$-free cumulants that will aid in the analysis of $\boxplus_{\mathcal{T}}(\mu,\dots,\mu)$.  This construction generalizes the operator-valued free, Boolean, and monotone cumulants; see \S \ref{subsec:cumulantexamples} for discussion and references.

In the following, we fix $\mathcal{T}$ and denote by $n$ the number of singleton strings contained in $\mathcal{T}$, that is, $n = |\{j \in [N]: j \in \mathcal{T}\}|$, or more succinctly $n = |[N] \cap \mathcal{T}|$.  We assume throughout the section that $n \geq 2$.

\subsection{Definition of the Cumulants}

If $S$ is a totally ordered finite set and $\pi \in \mathcal{NC}(S)$, it will be convenient to work sometimes with colorings $\chi: S \to [N]$ and sometimes with colorings defined on the blocks of $\pi$, that is, functions $\pi \to [N]$.  Note that for $\pi \in \mathcal{NC}(S)$, the space $[N]^\pi$ can be canonically identified with the subspace of $[N]^S$ consisting of functions which are constant on each block.  We use this identification throughout the rest of the paper.

\begin{definition}
Let $\pi \in \mathcal{NC}(S)$ and $\chi \in [N]^\pi \subseteq [N]^S$.  The \emph{$\chi$-components of $\pi$} are the connected components of the graph formed by removing from $\graph(\pi)$ the root vertex $\emptyset$ and all the edges between blocks with different colors under $\chi$.   Each $\chi$-component $\pi'$ can be viewed a subset of $\pi$, and $\pi$ is the disjoint union of its $\chi$-components.   If $\chi \in [N]^\pi$, we define $\pi / \chi$ to be the partition obtained by joining each of the $\chi$-components of $\pi$ into a single block.  Note that $\pi / \chi$ is still non-crossing and $\chi$ also defines a coloring on $\pi / \chi$.  Moreover, if $\pi'$ is a $\chi$-component of $\pi$, then $\pi'$ is a non-crossing partition of the corresponding block in $\pi / \chi$.  For example, see Figure \ref{fig:chicomponents}.
\end{definition}

\begin{figure}

\begin{center}

\begin{tikzpicture}[scale = 0.6]

\begin{scope}

	\node at (-2,2.5) {$\pi$};

	\foreach \j in {1,...,15} {
		\node[label=below:{\j}] ({\j}) at (\j,0) {};
	}

	\begin{scope}
		\fill (1,0) circle (0.2);
		\fill (5,0) circle (0.2);
		\fill (12,0) circle (0.2);
		\draw (1) to (1,4) to (12,4) to (12);
		\draw (5) to (5,4);
		\node at (6.5,4.5) {$V_1$};
	\end{scope}

	\begin{scope}
		\draw (2,0) circle (0.2);
		\draw (3,0) circle (0.2);
		\draw[dashed] (2) to (2,1) to (3,1) to (3);
		\node at (2.5,1.5) {$V_2$};
	\end{scope}

	\begin{scope}
		\fill (4,0) circle (0.2);
		\node at (4,0.7) {$V_3$};
	\end{scope}

	\begin{scope}
		\draw (6,0) circle (0.2);
		\draw (9,0) circle (0.2);
		\draw (11,0) circle (0.2);
		\draw[dashed] (6) to (6,2.5) to (11,2.5) to (11);
		\draw[dashed] (9) to (9,2.5);
		\node at (8.5,3) {$V_4$};
	\end{scope}

	\begin{scope}
		\draw (7,0) circle (0.2);
		\draw (8,0) circle (0.2);
		\draw[dashed] (7) to (7,1) to (8,1) to (8);
		\node at (7.5,1.5) {$V_5$};
	\end{scope}

	\begin{scope}
		\fill (10,0) circle (0.2);
		\node at (10,0.7) {$V_6$};
	\end{scope}
	
	\begin{scope}
		\fill (13,0) circle (0.2);
		\fill (15,0) circle (0.2);
		\draw (13) to (13,1.5) to (15,1.5) to (15);
		\node at (14,2) {$V_7$};
	\end{scope}

	\begin{scope}
		\draw (14,0) circle (0.2);
		\node at (14,0.7) {$V_8$};
	\end{scope}
\end{scope}

\begin{scope}[shift = {(8,-2)}]
	\node at (-10,-3) {$\graph(\pi)$};

	\node[circle,draw,dotted] (V0) at (0,0) {$\emptyset$};
	\node[circle,draw,thick] (V1) at (-2,-2) {$V_1$};
	\node[circle,draw,thick] (V2) at (-4,-4) {$V_2$};
	\node[circle,draw,thick] (V3) at (-2,-4) {$V_3$};
	\node[circle,draw] (V4) at (0,-4) {$V_4$};
	\node[circle,draw] (V5) at (-1,-6) {$V_5$};
	\node[circle,draw,thick] (V6) at (1,-6) {$V_6$};
	\node[circle,draw,thick] (V7) at (2,-2) {$V_7$};
	\node[circle,draw] (V8) at (2,-4) {$V_8$};
	
	\draw[dotted] (V0) to (V1);
	\draw[dotted] (V1) to (V2);
	\draw (V1) to (V3);
	\draw[dotted] (V1) to (V4);
	\draw[dashed] (V4) to (V5);
	\draw[dotted] (V4) to (V6);
	\draw[dotted] (V0) to (V7) to (V8);
\end{scope}

\begin{scope}[shift = {(0,-14)}]

	\node at (-2,2.5) {$\pi / \chi$};

	\foreach \j in {1,...,15} {
		\node[label=below:{\j}] ({\j}) at (\j,0) {};
	}

	\begin{scope}
		\fill (1,0) circle (0.2);
		\fill (4,0) circle (0.2);
		\fill (5,0) circle (0.2);
		\fill (12,0) circle (0.2);
		\draw (1) to (1,3) to (12,3) to (12);
		\draw (4) to (4,3);
		\draw (5) to (5,3);
		\node at (6.5,3.5) {$V_1 \cup V_3$};
	\end{scope}

	\begin{scope}
		\draw (2,0) circle (0.2);
		\draw (3,0) circle (0.2);
		\draw[dashed] (2) to (2,1) to (3,1) to (3);
		\node at (2.5,1.5) {$V_2$};
	\end{scope}

	\begin{scope}
		\draw (6,0) circle (0.2);
		\draw (7,0) circle (0.2);
		\draw (8,0) circle (0.2);
		\draw (9,0) circle (0.2);
		\draw (11,0) circle (0.2);
		\draw[dashed] (6) to (6,1.5) to (11,1.5) to (11);
		\draw[dashed] (7) to (7,1.5);
		\draw[dashed] (8) to (8,1.5);
		\draw[dashed] (9) to (9,1.5);
		\node at (8.5,2) {$V_4 \cup V_5$};
	\end{scope}

	\begin{scope}
		\fill (10,0) circle (0.2);
		\node at (10,0.7) {$V_6$};
	\end{scope}
	
	\begin{scope}
		\fill (13,0) circle (0.2);
		\fill (15,0) circle (0.2);
		\draw (13) to (13,1.5) to (15,1.5) to (15);
		\node at (14,2) {$V_7$};
	\end{scope}

	\begin{scope}
		\draw (14,0) circle (0.2);
		\node at (14,0.7) {$V_8$};
	\end{scope}
\end{scope}

\end{tikzpicture}

\caption{At top, we show a non-crossing partition $\pi$ of $[15]$ into 8 blocks, together with a coloring $\chi: \pi \to \{1,2\} \cong \{\text{black}, \text{white}\}$, where the elements with $\chi = 1$ are dark with plain lines and the elements with $\chi = 2$ are lighter with dashed lines.  At middle, we depict $\graph(\pi)$. We make the nodes for blocks with $\chi = 1$ darker and the lines connecting the adjacent blocks for $\chi = 2$ dashed.  We draw the root vertex and edges between different colors with dotted lines.  At bottom, we show $\pi / \chi$.  The $\chi$-components in this example are $\{V_1,V_3\}$, $\{V_2\}$, $\{V_4,V_5\}$, $\{V_6\}$, $\{V_7\}$, $\{V_8\}$.  Note that although $V_1$ and $V_4$ are next to each other and the same color, they are not adjacent in the graph and are in different $\chi$-components.}  \label{fig:chicomponents}

\end{center}

\end{figure}
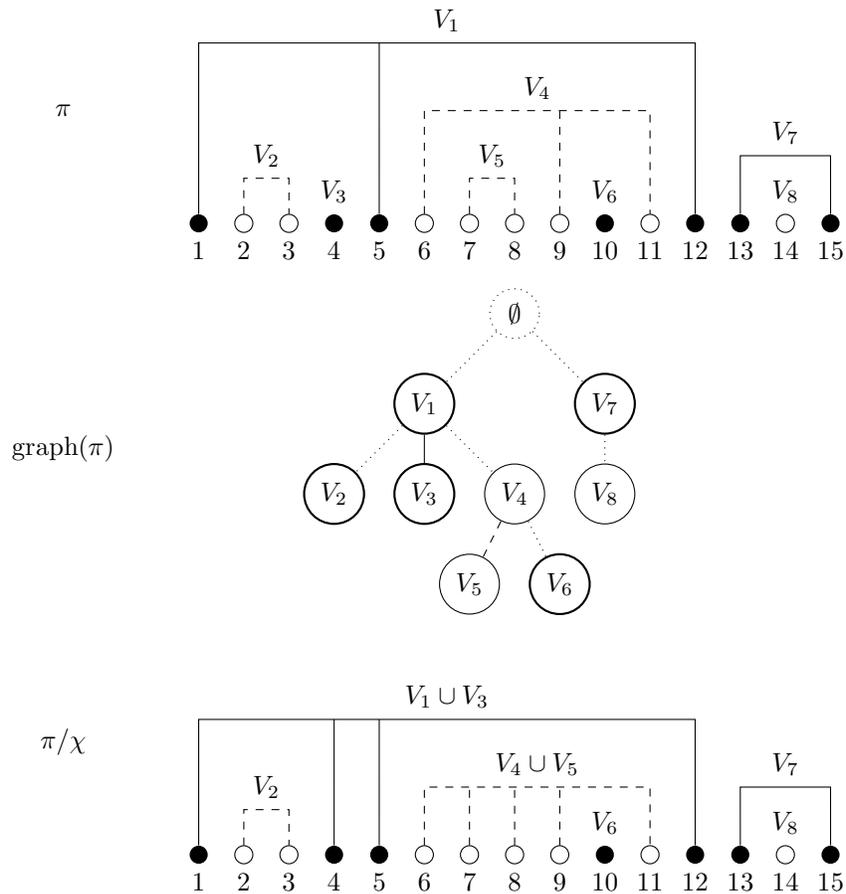

\begin{definition}
Let $S$ be a finite totally ordered set and $\chi: S \to [N]$.  We define
\[
\mathcal{NC}_w(\chi,\mathcal{T}) = \{\pi \in \mathcal{NC}(\chi): \pi / \chi \in \mathcal{NC}(\chi,\mathcal{T})\}.
\]
We also define
\[
\mathcal{X}_w(\pi,\mathcal{T}) = \{\chi \in [N]^S: \pi \in \mathcal{NC}_w(\chi,\mathcal{T})\}.
\]
By definition, we have $\pi \in \mathcal{NC}_w(\chi,\mathcal{T})$ if and only if $\chi \in \mathcal{X}_w(\pi,\mathcal{T})$.  In this case, we say that $\pi$ is \emph{weakly compatible with } $\chi$ and $\mathcal{T}$.  (The ``$w$'' in the above notations stands for ``weak.'')
\end{definition}

\begin{remark} \label{rem:weaklycompatible}
This condition can be equivalently expressed as follows.  Recall that $\red(s)$ denotes the alternating reduction of a string $s$ (Definition \ref{def:string}).  We have $\pi \in \mathcal{NC}_w(\chi,\mathcal{T})$ if and only if for every $V \in \pi$ with $\chain(V) = (V,V_1,\dots,V_d)$, we have
\[
\red[\chi(\chain(V))] = \red[\chi(V) \chi(V_1) \dots \chi(V_d)] \in \mathcal{T}.
\]
The reason for this is that if $V \in \pi$ and $V'$ is the block of $\pi / \chi$ containing $V$, then we have $\chi(\chain(V')) = \red[\chi(\chain(V))]$.  Indeed, whenever there are a repeated consecutive letters in the string $\chi(\chain(V))$, the corresponding blocks of $\pi$ will be put into the same $\chi$-component, and hence they will become a single block in $\pi / \chi$.
\end{remark}

\begin{lemma} \label{lem:cumulantcoefficients}
There exist unique coefficients $\alpha_{\mathcal{T},\pi}$ for partitions $\pi$ of totally ordered finite sets $S$ such that
\begin{equation} \label{eq:partitioncoefficientnormalization}
|\pi| = 1 \implies \alpha_{\mathcal{T},\pi} = 1
\end{equation}
and
\begin{equation} \label{eq:partitioncoefficientidentity}
\alpha_{\mathcal{T},\pi} = \frac{1}{n^{|\pi|}} \sum_{\chi \in \mathcal{X}_w(\pi,\mathcal{T})} \prod_{\text{$\chi$-components } \pi'} \alpha_{\mathcal{T},\pi'}.
\end{equation}
Moreover, we have $\alpha_{\mathcal{T},\pi} \geq 0$.  Of course, the coefficients are invariant under changing coordinates from $S$ to another set $S'$ by an order-preserving bijection.
\end{lemma}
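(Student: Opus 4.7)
The plan is to construct $\alpha_{\mathcal{T},\pi}$ by induction on $|\pi|$ and verify that the resulting coefficients satisfy both \eqref{eq:partitioncoefficientnormalization} and \eqref{eq:partitioncoefficientidentity} simultaneously. Uniqueness will follow because the recursion will determine $\alpha_{\mathcal{T},\pi}$ from the values at strictly smaller partitions. For the base case $|\pi| = 1$, the normalization forces $\alpha_{\mathcal{T},\pi} = 1$; this is consistent with \eqref{eq:partitioncoefficientidentity} because when $\pi = \{V\}$, a coloring $\chi$ belongs to $\mathcal{X}_w(\pi,\mathcal{T})$ precisely when $\chi(V) \in [N] \cap \mathcal{T}$ (by Remark \ref{rem:weaklycompatible}), the unique $\chi$-component is always $\pi$ itself, and so the right-hand side reduces tautologically to $(1/n) \cdot n \cdot \alpha_{\mathcal{T},\pi} = \alpha_{\mathcal{T},\pi}$.

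For the inductive step, fix $\pi$ with $|\pi| \geq 2$ and suppose $\alpha_{\mathcal{T},\pi'}$ has been defined and is nonnegative for all $\pi'$ with $|\pi'| < |\pi|$. The crucial observation is to identify precisely which $\chi \in \mathcal{X}_w(\pi,\mathcal{T})$ produce a $\chi$-component equal to all of $\pi$. Since the $\chi$-components are obtained from $\graph(\pi)$ by deleting the root and every edge between differently-colored blocks, such a full $\chi$-component forces $\chi$ to be constant on $\pi$ and $\pi$ to be irreducible (so that removing $\emptyset$ leaves a connected tree). Conversely, when $\pi$ is irreducible and $\chi \equiv j$, Remark \ref{rem:weaklycompatible} gives $\chi \in \mathcal{X}_w(\pi,\mathcal{T})$ if and only if $j \in [N] \cap \mathcal{T}$, providing exactly $n$ such colorings. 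For every remaining $\chi \in \mathcal{X}_w(\pi,\mathcal{T})$, each $\chi$-component is a proper subset of $\pi$, hence has fewer than $|\pi|$ blocks.

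Isolating these self-referential contributions, \eqref{eq:partitioncoefficientidentity} rearranges to
\[
\alpha_{\mathcal{T},\pi}\left(1 - \frac{c}{n^{|\pi|-1}}\right) = \frac{1}{n^{|\pi|}} \sum_{\chi} \prod_{\chi\text{-components } \pi'} \alpha_{\mathcal{T},\pi'},
\]
where $c = 1$ if $\pi$ is irreducible and $c = 0$ otherwise, and the sum ranges over those $\chi \in \mathcal{X}_w(\pi,\mathcal{T})$ whose $\chi$-components are all proper subsets of $\pi$. Because $n \geq 2$ and $|\pi| \geq 2$, the prefactor $1 - c/n^{|\pi|-1}$ is strictly positive, so this equation has a unique solution for $\alpha_{\mathcal{T},\pi}$, and by the inductive hypothesis the right-hand side is a nonnegative combination of nonnegative quantities, yielding existence, uniqueness, and positivity in a single stroke.

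The only delicate step is the case analysis that isolates the $n$ constant colorings yielding the self-referential term; this is exactly where the hypothesis $n \geq 2$ is essential, since if $n = 1$ then $1 - 1/n^{|\pi|-1}$ would vanish and the recursion would fail to pin down $\alpha_{\mathcal{T},\pi}$.
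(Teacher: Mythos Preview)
Your proof is correct and follows essentially the same approach as the paper's: induction on $|\pi|$, isolating the self-referential terms coming from constant colorings in the irreducible case, and solving for $\alpha_{\mathcal{T},\pi}$ using that $1 - 1/n^{|\pi|-1} > 0$ when $n \geq 2$ and $|\pi| \geq 2$. Your unified treatment of the reducible and irreducible cases via the indicator $c$ is a minor presentational streamlining of the paper's explicit case split, but the substance is identical.
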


\begin{proof}
We define the coefficients $\alpha_{\mathcal{T},\pi}$ by induction on $|\pi|$.  In this case $|\pi| = 1$, we set $\alpha_{\mathcal{T},\pi} = 1$.

Now suppose that $|\pi| > 1$.  Suppose that $\pi$ is reducible (that is, $\pi \not \in \mathcal{NC}^\circ(S)$).  Then $\graph(\pi) \setminus \{\emptyset\}$ has multiple components.  Thus, every coloring of $\pi$ will produce multiple $\chi$-components, so on the right hand side of \eqref{eq:partitioncoefficientidentity} every value of $\pi'$ satisfies $|\pi'| < |\pi|$.  By induction, $\alpha_{\mathcal{T},\pi'}$ is defined, and we define $\alpha_{\mathcal{T},\pi}$ by \eqref{eq:partitioncoefficientidentity}.

On the other hand, suppose that $\pi \in \mathcal{NC}^\circ(S)$.  Then $\alpha_{\mathcal{T},\pi}$ occurs both on the left and the right hand sides of \eqref{eq:partitioncoefficientidentity}; it occurs on the right hand for each coloring $\chi$ which is constant.  For constant $\chi$, we have $\pi \in \mathcal{NC}_w(\chi,\mathcal{T})$ if and only if the constant value of $\chi$ is one of the singleton strings in $\mathcal{T}$.  Thus, there are $n$ terms for the constant colorings $\chi$ on the right hand side.  Thus, subtracting $(n / n^{|\pi|})\alpha_{\mathcal{T},\pi}$ on both sides, we see that \eqref{eq:partitioncoefficientidentity} is equivalent to
\begin{equation} \label{eq:partitioncoefficientidentity2}
\left(1 - \frac{1}{n^{|\pi|-1}} \right) \alpha_{\mathcal{T},\pi} = \frac{1}{n^{|\pi|}} \sum_{\text{non-constant } \chi \in \mathcal{X}_w(\pi,\mathcal{T})} \prod_{\text{$\chi$-components } \pi'} \alpha_{\mathcal{T},\pi'}.
\end{equation}
For non-constant $\chi$, each of the $\chi$-components $\pi'$ satisfies $|\pi'| < |\pi|$, so that $\alpha_{\mathcal{T},\pi'}$ is well-defined by induction hypothesis.  On the left hand side, because $|\pi| > 1$ and $n \geq 2$, we have $1 - 1 / n^{|\pi| -1} \neq 0$ and therefore there is a unique value of $\alpha_{\mathcal{T},\pi}$ satisfying the equation.

Therefore, there is a unique collection of coefficients $\alpha_{\mathcal{T},\pi}$ satisfying \eqref{eq:partitioncoefficientnormalization} for $|\pi| = 1$ and \eqref{eq:partitioncoefficientidentity} for $|\pi| > 1$.  Moreover, \eqref{eq:partitioncoefficientidentity} also holds for $|\pi| = 1$ because there are exactly $n$ colorings of $\pi$ that are compatible with $\mathcal{T}$.

The fact that $\alpha_{\mathcal{T},\pi} \geq 0$ follows by induction from \eqref{eq:partitioncoefficientidentity2}.  The invariance under change of coordinates is left as an exercise.
\end{proof}

\begin{definition}
Let $\mathcal{T} \in \Tree(N)$ and suppose that $n \geq 2$ as above, and let $(\mathcal{A},E)$ be a $\mathcal{B}$-valued probability space.  We define the \emph{$\mathcal{T}$-free cumulants} $K_{\mathcal{T},\ell}: \mathcal{A}^\ell \to \mathcal{B}$ by the relations
\begin{equation} \label{eq:momentcumulantformula}
E[a_1 \dots a_\ell] = \sum_{\pi \in \mathcal{NC}(\ell)} \alpha_{\mathcal{T},\pi} K_{\mathcal{T},\pi}[a_1,\dots,a_\ell].
\end{equation}
\end{definition}

These cumulants are well-defined by M\"obius inversion (Lemma \ref{lem:partitionMobiusinversion}) because $\alpha_{\mathcal{T},\pi} = 1$ for every partition $\pi$ with one block.  In \S \ref{subsec:extensivity}, we show that these cumulants satisfy similar axioms to those of \cite{HS2011b}, which justifies our choice of definition.  In \S \ref{subsec:cumulantexamples} below, we verify directly that in the free, Boolean, and monotone cases, our definition reduces to the definitions given in previous literature.

Since our general moment formula (Theorem \ref{thm:combinatorics}) relies on the Boolean cumulants, it will be convenient to express the Boolean cumulants in terms of the $\mathcal{T}$-free cumulants.  In the lemma below, $K_{\Bool,\ell}$ is given by Definition \ref{def:Booleancumulants}, since we have not yet shown that this agrees with $\mathcal{T}_{N,\Bool}$-free cumulants.

\begin{lemma} \label{lem:TBcumulantconversion}
Let $(\mathcal{A},E)$ be a $\mathcal{B}$-valued probability space.  Then we have
\[
K_{\Bool,\ell}[a_1,\dots,a_\ell] = \sum_{\pi \in \mathcal{NC}^\circ(\ell)} \alpha_\pi K_{\mathcal{T},\pi}[a_1,\dots,a_\ell].
\]
\end{lemma}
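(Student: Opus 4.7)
The plan is to apply the uniqueness clause in the Boolean moment-cumulant relation (Lemma \ref{lem:partitionMobiusinversion}) to the sequence of $\mathcal{B}$-quasi-multilinear forms
\[
\tilde{K}_\ell[a_1,\dots,a_\ell] := \sum_{\pi \in \mathcal{NC}^\circ(\ell)} \alpha_{\mathcal{T},\pi} K_{\mathcal{T},\pi}[a_1,\dots,a_\ell].
\]
Since $\mathcal{NC}^\circ(1)$ consists of the unique one-block partition and $\alpha_{\mathcal{T},\pi} = 1$ there, one has $\tilde{K}_1 = K_{\mathcal{T},1} = E$, which is enough to invoke the lemma. Thus it suffices to verify the Boolean moment identity $E[a_1 \cdots a_\ell] = \sum_{\sigma \in \mathcal{I}(\ell)} \tilde{K}_\sigma[a_1,\dots,a_\ell]$, and then uniqueness forces $\tilde{K}_\ell = K_{\Bool,\ell}$.

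The key structural input is that both $\alpha_{\mathcal{T},\cdot}$ and $K_{\mathcal{T},\cdot}$ are multiplicative under concatenation of partitions. Every $\pi \in \mathcal{NC}(\ell)$ decomposes uniquely as $\pi = \pi_1 \sqcup \dots \sqcup \pi_k$ with each $\pi_i$ irreducible on an interval $V_i$, and this data is equivalent to specifying an interval partition $\sigma = \{V_1,\dots,V_k\} \in \mathcal{I}(\ell)$ together with irreducible partitions $\pi_{V} \in \mathcal{NC}^\circ(V)$ for each $V \in \sigma$. The identity
\[
K_{\mathcal{T},\pi}[a_1,\dots,a_\ell] = K_{\mathcal{T},\pi_1}[a_{V_1}] \cdots K_{\mathcal{T},\pi_k}[a_{V_k}]
\]
follows by induction on $k$ from Definition \ref{def:picomposition}, since every interval block of $\pi$ lies entirely inside some $\pi_i$, and one may perform all reduction steps inside $\pi_1$ first, each producing a $\mathcal{B}$-valued scalar that can be pulled to the left by $\mathcal{B}$-quasi-multilinearity, before proceeding to $\pi_2$, etc. Multiplicativity of $\alpha_{\mathcal{T},\cdot}$, namely $\alpha_{\mathcal{T},\pi} = \alpha_{\mathcal{T},\pi_1} \cdots \alpha_{\mathcal{T},\pi_k}$, will be proved by induction on $k$ using \eqref{eq:partitioncoefficientidentity}. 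The three facts to check are: (i) a coloring $\chi \in [N]^\pi$ is the same data as a pair of colorings $\chi_1 \in [N]^{\pi_1}$ and $\chi' \in [N]^{\pi_2 \sqcup \dots \sqcup \pi_k}$; (ii) after deleting the root from $\graph(\pi)$, the subtrees coming from $\pi_1$ and from $\pi_2 \sqcup \dots \sqcup \pi_k$ are disjoint, so the $\chi$-components of $\pi$ are exactly the union of the $\chi_1$-components of $\pi_1$ and the $\chi'$-components of $\pi_2 \sqcup \dots \sqcup \pi_k$; (iii) the nesting chain $\chain(V)$ of any block $V$ of $\pi$ lies entirely within the $\pi_i$ containing $V$, so by Remark \ref{rem:weaklycompatible} the weak compatibility condition $\chi \in \mathcal{X}_w(\pi,\mathcal{T})$ factors as $\chi_1 \in \mathcal{X}_w(\pi_1,\mathcal{T})$ and $\chi' \in \mathcal{X}_w(\pi_2 \sqcup \dots \sqcup \pi_k,\mathcal{T})$. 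Substituting into \eqref{eq:partitioncoefficientidentity} and splitting the sum separates $\alpha_{\mathcal{T},\pi}$ as a product.

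With these two multiplicativity statements in hand, for an interval partition $\sigma$ Definition \ref{def:picomposition} specializes to $\tilde{K}_\sigma[a_1,\dots,a_\ell] = \prod_{V \in \sigma} \tilde{K}_{|V|}[a_V]$, so
\[
\sum_{\sigma \in \mathcal{I}(\ell)} \tilde{K}_\sigma[a_1,\dots,a_\ell] = \sum_{\sigma \in \mathcal{I}(\ell)} \sum_{(\pi_V)_{V \in \sigma}} \prod_{V \in \sigma} \alpha_{\mathcal{T},\pi_V} K_{\mathcal{T},\pi_V}[a_V].
\]
The outer double sum parameterizes exactly $\mathcal{NC}(\ell)$ via the concatenation bijection, so after invoking the multiplicativity of $\alpha_{\mathcal{T},\cdot}$ and $K_{\mathcal{T},\cdot}$ this collapses to $\sum_{\pi \in \mathcal{NC}(\ell)} \alpha_{\mathcal{T},\pi} K_{\mathcal{T},\pi}[a_1,\dots,a_\ell]$, which equals $E[a_1 \cdots a_\ell]$ by \eqref{eq:momentcumulantformula}.

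The main obstacle is the multiplicativity of $\alpha_{\mathcal{T},\pi}$ under concatenation, which is an identity about how the weighted sum in \eqref{eq:partitioncoefficientidentity} interacts with splitting the nesting forest of $\pi$. Everything else is a clean bookkeeping exercise with the recursive definitions, and once that is handled the Boolean moment identity falls out of a single interchange of summation.
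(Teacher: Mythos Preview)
Your proposal is correct and follows essentially the same approach as the paper: reduce to verifying the Boolean moment identity for $\tilde{K}_\ell$, decompose each $\pi \in \mathcal{NC}(\ell)$ uniquely as an interval partition $\sigma$ with irreducible pieces $\pi_V \in \mathcal{NC}^\circ(V)$, and use multiplicativity of both $K_{\mathcal{T},\cdot}$ and $\alpha_{\mathcal{T},\cdot}$ under concatenation, the latter proved via the defining fixed-point relation \eqref{eq:partitioncoefficientidentity}. The paper carries out exactly this argument, including the same three-part justification (colorings factor, $\chi$-components factor because the nesting forest splits, and $n^{|\tau|}$ factors) for the multiplicativity of $\alpha_{\mathcal{T},\cdot}$; your side remark that $\tilde{K}_1 = E$ is true but not actually needed for the uniqueness step.
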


\begin{remark}
The conversion between classical, free, Boolean, and monotone cumulants is known (see \cite{Lehner2002}, \cite{BN2008b}, \cite{AHLV2015}), and the lemma here includes the two easiest cases, namely converting free or monotone cumulants to Boolean cumulants.
\end{remark}

\begin{proof}[Proof of Lemma \ref{lem:TBcumulantconversion}]
Let $\Gamma_\ell[a_1,\dots,a_\ell]$ be the quantity on the right hand side.  By uniqueness of the Boolean cumulants, it suffices to show that
\[
E[a_1 \dots a_\ell] = \sum_{\pi \in \mathcal{I}(\ell)} \Gamma_\pi[a_1,\dots,a_\ell].
\]
Suppose that $\pi \in \mathcal{I}(\ell)$ and that the blocks of $\pi$ are listed as $V_1$, \dots, $V_{|\pi|}$ in order from the left to right.  Then we have
\[
\Gamma_\pi[a_1,\dots,a_\ell] = \Gamma_{V_1}[a_1,\dots,a_{\max V_1}] \dots \Gamma_{V_{|\pi|}}[a_{\min V_{|\pi|}}, \dots, a_\ell]
\]
For brevity, we denote this as
\[
\Gamma_\pi[a_1,\dots,a_\ell] = \prod_{V \in \pi} \Gamma_V[a_j: j \in V],
\]
where terms in the product are understood to be multiplied from left to right and the indices $(a_j: j \in V)$ are understood to run from left to right.  Substituting in the definition of $\Gamma_\ell$, we obtain
\begin{align*}
\Gamma_\pi[a_1,\dots,a_\ell] &= \prod_{V \in \pi} \sum_{\tau_V \in \mathcal{NC}^\circ(V)} K_{\mathcal{T},\tau_V}[a_j: j \in V] \\
&= \sum_{\substack{\tau_V \in \mathcal{NC}^\circ(V) \\ \text{for each } V}} \prod_{V \in \pi} \alpha_{\tau_V} K_{\mathcal{T},\tau_V}[a_j: j \in V].
\end{align*}
Let $\tau$ be the partition $\tau = \bigsqcup_{V \in \pi} \tau_V$; in other words, $\tau$ is the partition obtained by subdividing each block $V$ of $\pi$ according to $\tau_V$.  Then $\tau$ is a non-crossing partition of $[\ell]$ and we have
\[
\prod_{V \in \pi} K_{\mathcal{T},\tau_V}[a_j: j \in V] = K_{\mathcal{T},\tau}[a_1,\dots,a_\ell].
\]
We also claim, and will verify at the end of the proof, that
\[
\alpha_{\mathcal{T},\tau} = \prod_{V \in \pi} \alpha_{\mathcal{T},\tau_V},
\]
so that
\[
\prod_{V \in \pi} \alpha_{\tau_V} K_{\mathcal{T},\tau_V}[a_j: j \in V] = \alpha_{\mathcal{T},\tau} K_{\mathcal{T},\tau}[a_1,\dots,a_\ell].
\]
Every partition $\tau$ can be obtained uniquely in this way from an interval partition $\pi$ and a tuple of partitions $\tau_V \in \mathcal{NC}^\circ(V)$ for each block $V$ of $\pi$.  Indeed, the partitions $\tau_V$ correspond to the components of $\graph(\pi) \setminus \{\emptyset\}$.  Therefore, we have
\[
\sum_{\pi \in \mathcal{I}(\ell)} \Gamma_\pi[a_1,\dots,a_\ell] = \sum_{\tau \in \mathcal{NC}(\ell)} \alpha_{\mathcal{T},\tau} K_\tau[a_1,\dots,a_\ell] = E[a_1 \dots a_\ell]
\]
as desired.

It remains to show that $\alpha_{\mathcal{T},\tau} = \prod_{V \in \pi} \alpha_{\mathcal{T},\tau_V}$ whenever $\pi$, $\tau_V$, and $\tau$ are as above.  From \eqref{eq:partitioncoefficientidentity}, we have
\[
\alpha_{\mathcal{T},\tau} = \frac{1}{n^{|\tau|}} \sum_{\chi \in \mathcal{X}_w(\tau,\mathcal{T})} \prod_{\text{$\chi$-components } \tau'} \alpha_{\mathcal{T},\tau'}.
\]
There is a bijective correspondence between colorings $\chi$ of $\tau$ and tuples of colorings $\chi_V$ of $\tau_V$ for each $V \in \pi$, given by $\chi_V = \chi|_V$.  One can check that
\begin{itemize}
	\item $\chi \in \mathcal{X}_w(\tau,\mathcal{T})$ if and only if $\chi_V \in \mathcal{X}_w(\tau_V,\mathcal{T})$ for each $V \in \pi$;
	\item every $\chi$-component of $\tau$ is contained in some block $V$ of $\pi$, and in fact the $\chi$-components of $\tau$ contained in $V$ are precisely the $\chi_V$-components of $\tau_V$;
	\item we have $|\tau| = \sum_{V \in \pi} |\tau_V|$ and hence $n^{|\tau|} = \prod_{V \in \pi} n^{|\tau_V|}$.
\end{itemize}
Therefore, altogether
\[
\alpha_{\mathcal{T},\tau} = \prod_{V \in \pi} \frac{1}{n^{|\tau_V|}} \sum_{\chi_V \in \mathcal{X}_w(\tau_V,\mathcal{T}) \text{ for each } V} \prod_{\chi_V \text{-components } \tau_V'} \alpha_{\mathcal{T},\tau_V'} = \prod_{V \in \pi} \alpha_{\mathcal{T},\tau_V}.
\]
\end{proof}

\subsection{Extensivity and Axiomatic Characterization} \label{subsec:extensivity}

The next theorem shows that the $\mathcal{T}$-free cumulants satisfy the extensivity property that was discussed in \cite[\S 3]{HS2011b} in the free, Boolean, and monotone cases.  This property implies that the cumulants can be used to compute iterated convolutions of a $\mathcal{B}$-valued law, which will be useful in the next section for the central limit theorem (see Observation \ref{obs:lawcumulantextensivity}).

\begin{theorem} \label{thm:extensivity}
Suppose that $\mathcal{T} \in \Tree(N)$.  Let $(\mathcal{A},E)$ be a $\mathcal{B}$-valued probability space.  Let $\tilde{\mathcal{A}}$ be the $\mathcal{T}$-free product of $N$ copies of $\mathcal{A}$ and let $\lambda_{\mathcal{T},j}: \mathcal{A} \to \tilde{\mathcal{A}}$ be the map from the $j$th factor into the product.  Then we have
\begin{equation} \label{eq:extensivity}
K_{\mathcal{T},\ell}\left[ \sum_{j=1}^N \lambda_{\mathcal{T},j}(a_1), \dots, \sum_{j=1}^N \lambda_{\mathcal{T},j}(a_\ell) \right] = n K_{\mathcal{T},\ell}[a_1,\dots,a_\ell].
\end{equation}
\end{theorem}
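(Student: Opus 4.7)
The plan is to induct on $\ell$ and reduce the claim to the intermediate identity
\[
\tilde{E}[b_1 \cdots b_\ell] \;=\; \sum_{\tau \in \mathcal{NC}(\ell)} \alpha_{\mathcal{T},\tau}\, n^{|\tau|}\, K_{\mathcal{T},\tau}[a_1,\dots,a_\ell], \qquad (\star)
\]
where $b_i := \sum_{j=1}^N \lambda_{\mathcal{T},j}(a_i)$ and $\tilde E$ denotes the expectation on $\tilde{\mathcal{A}}$. Applying the moment-cumulant formula in $\tilde{\mathcal{A}}$ also gives $\tilde{E}[b_1 \cdots b_\ell] = \sum_\tau \alpha_{\mathcal{T},\tau}\, \tilde{K}_{\mathcal{T},\tau}[b_1,\dots,b_\ell]$. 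By the inductive hypothesis (applied to blocks of size strictly less than $\ell$) together with multilinearity, $\tilde{K}_{\mathcal{T},\tau}[b_1,\dots,b_\ell] = n^{|\tau|} K_{\mathcal{T},\tau}[a_1,\dots,a_\ell]$ whenever $|\tau| \geq 2$. Subtracting these terms from $(\star)$ and using $\alpha_{\mathcal{T},1_\ell} = 1$ (where $1_\ell$ is the single-block partition) isolates the desired equation $\tilde{K}_{\mathcal{T},\ell}[b_1,\dots,b_\ell] = n\, K_{\mathcal{T},\ell}[a_1,\dots,a_\ell]$. The base case $\ell=1$ is immediate: a direct inspection of $(\mathcal{H},\xi)$ shows that $\lambda_{\mathcal{T},j}(a)\xi = a\xi$ when the singleton string $j$ lies in $\mathcal{T}$ and $\lambda_{\mathcal{T},j}(a)\xi = 0$ otherwise, so $\tilde{E}[b_1] = n\,E[a_1]$.

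To establish $(\star)$, I will expand $\tilde{E}[b_1\cdots b_\ell]$ by multilinearity over colorings $\chi:[\ell]\to[N]$ and invoke Theorem~\ref{thm:combinatorics}. Because every factor of the $\mathcal{T}$-free product is a copy of the same algebra $\mathcal{A}$, the multilinear forms $\Lambda_{\chi,\sigma}$ reduce to the single-algebra Boolean cumulants $K_{\Bool,\sigma}[a_1,\dots,a_\ell]$, giving
\[
\tilde{E}[b_1 \cdots b_\ell] \;=\; \sum_{\chi}\,\sum_{\sigma \in \mathcal{NC}(\chi,\mathcal{T})} K_{\Bool,\sigma}[a_1,\dots,a_\ell].
\]
I then apply Lemma~\ref{lem:TBcumulantconversion} to each block $V$ of $\sigma$, replacing $K_{\Bool,V}$ by $\sum_{\tau_V \in \mathcal{NC}^\circ(V)} \alpha_{\mathcal{T},\tau_V}\, K_{\mathcal{T},\tau_V}$. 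Assembling the $\tau_V$'s into a refinement $\tau := \bigsqcup_{V \in \sigma} \tau_V \in \mathcal{NC}(\ell)$ and using the associativity of partition composition to identify the $\sigma$-composition of the $K_{\mathcal{T},\tau_V}$ with $K_{\mathcal{T},\tau}$, I reorganize the sum by $\tau$, so that the coefficient of $K_{\mathcal{T},\tau}[a_1,\dots,a_\ell]$ becomes $\sum_{(\chi,\sigma)} \prod_{V \in \sigma} \alpha_{\mathcal{T},\tau|_V}$, summed over pairs with $\sigma \in \mathcal{NC}(\chi,\mathcal{T})$, $\sigma$ a coarsening of $\tau$, and $\tau|_V \in \mathcal{NC}^\circ(V)$ for each $V \in \sigma$.

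The main obstacle, and the combinatorial heart of the proof, will be to produce a bijection between these pairs $(\chi,\sigma)$ and the colorings $\chi \in \mathcal{X}_w(\tau,\mathcal{T})$, given by $(\chi,\sigma) \mapsto \chi$ with inverse $\chi \mapsto (\chi, \tau/\chi)$. The forward direction is routine: since $\tau$ refines $\sigma$, a coloring constant on blocks of $\sigma$ is automatically constant on blocks of $\tau$. The delicate point is the reverse direction: I must verify that for every $\chi \in \mathcal{X}_w(\tau,\mathcal{T})$ the coarsening $\tau/\chi$ satisfies $\tau|_V \in \mathcal{NC}^\circ(V)$ for each of its blocks $V$. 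This will follow from the observation that a $\chi$-component of $\tau$ has a unique topmost block $W$ (the one closest to the root) and every other block of the component is strictly nested inside $W$, hence has strictly larger minimum and strictly smaller maximum; therefore $\min V = \min W$ and $\max V = \max W$ both lie in $W$, which gives the required irreducibility. Under the bijection, the sub-partitions $\tau|_V$ for $V \in \sigma = \tau/\chi$ correspond exactly to the $\chi$-components of $\tau$, so the coefficient of $K_{\mathcal{T},\tau}$ rewrites as $\sum_{\chi \in \mathcal{X}_w(\tau,\mathcal{T})} \prod_{\chi\text{-components } \pi'} \alpha_{\mathcal{T},\pi'}$, which equals $n^{|\tau|}\alpha_{\mathcal{T},\tau}$ by the recursive formula~\eqref{eq:partitioncoefficientidentity} of Lemma~\ref{lem:cumulantcoefficients}. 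This yields $(\star)$ and completes the induction.
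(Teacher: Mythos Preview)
Your proof is correct and follows essentially the same route as the paper. Both arguments reduce the statement to the identity $(\star)$, establish $(\star)$ by expanding with Theorem~\ref{thm:combinatorics}, converting Boolean cumulants to $\mathcal{T}$-free cumulants via Lemma~\ref{lem:TBcumulantconversion}, and then recognizing the defining recursion~\eqref{eq:partitioncoefficientidentity} after the same bijection $(\chi,\sigma)\leftrightarrow \chi\in\mathcal{X}_w(\tau,\mathcal{T})$ with $\sigma=\tau/\chi$. The only cosmetic difference is that the paper deduces the cumulant identity from $(\star)$ by a single appeal to M\"obius inversion (Lemma~\ref{lem:partitionMobiusinversion}), whereas you unfold that inversion as an explicit induction on $\ell$; these are the same argument, and your inductive step is justified once one notes that $a\mapsto\sum_j\lambda_{\mathcal{T},j}(a)$ is a $\mathcal{B}$-$\mathcal{B}$-bimodule map (so that inserting a scalar $c\in\mathcal{B}$ into an argument of $\tilde K_{\mathcal{T},\tau}$ is compatible with the hypothesis).
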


\begin{proof}
Because the $\mathcal{T}$-cumulants can be recovered from the moments by M\"obius inversion, it suffices to show that
\begin{equation} \label{eq:extensivityproof1}
E \left[ \left( \sum_{j=1}^N \lambda_j(a_1) \right) \ldots \left( \sum_{j=1}^N \lambda_j(a_\ell) \right) \right] = \sum_{\pi \in \mathcal{NC}(\ell)} \alpha_\pi n^{|\pi|} K_{\mathcal{T},\pi}[a_1,\dots,a_\ell].
\end{equation}
We expand the left hand side by multilinearity then apply Theorem \ref{thm:combinatorics} to conclude that
\begin{align*}
\sum_{\chi: [\ell] \to [N]} E[\lambda_{\chi(1)}(a_1) \dots \lambda_{\chi(\ell)}(a_\ell)] &= \sum_{\chi: [\ell] \to [N]} \sum_{\pi \in \mathcal{NC}(\chi,\mathcal{T})} K_{\Bool,\pi}[a_1,\dots,a_\ell] \\
&= \sum_{\pi \in \mathcal{NC}(\ell)} \sum_{\substack{\chi \in [N]^\pi \\ \pi \in \mathcal{NC}(\chi,\mathcal{T})}} K_{\Bool,\pi}[a_1,\dots,a_\ell].
\end{align*}
For each block $V$ of $\pi$, we can express $K_{\Bool,|V|}$ in terms of the $\mathcal{T}$-free cumulants by Lemma \ref{lem:TBcumulantconversion}.  This results in a sum over partitions $\tau_V \in \mathcal{NC}^\circ(V)$ for each block $V \in \pi$.  Given partitions $\tau_V \in \mathcal{NC}(V)$ for each $V \in \pi$, we view $\tau_V$, the union $\bigsqcup_{V \in \pi} \tau_V$ defines a non-crossing partition of $[\ell]$, which is the partition obtained by subdividing each block $V$ of $\pi$ according to the partition $\tau_V$.  The above expression then becomes
\[
\sum_{\pi \in \mathcal{NC}(\ell)} \sum_{\substack{\chi \in [N]^\pi \\ \pi \in \mathcal{NC}(\chi,\mathcal{T})}} \sum_{\substack{\tau_V \in \mathcal{NC}(V) \\ \text{for each } V \in \pi}} \prod_{V \in \pi} \alpha_{\mathcal{T},\tau_V} K_{\mathcal{T},\bigsqcup_{V \in \pi} \tau_V}[a_1,\dots,a_\ell].
\]
For each choice of $\pi$, $\chi$, and $(\tau_V)_{V \in \pi}$, let $\tau = \bigsqcup_{V \in \pi} \tau_V$.  Then $\chi$ defines a coloring of $\tau$.  Moreover, we have $\tau / \chi = \pi$ and the $\chi$-components of $\tau$ are precisely $\{\tau_V: V \in \pi\}$.  Conversely, every choice of $\tau$ and $\chi \in [N]^\tau$ arises in this way from a unique choice of $\pi$ and $(\tau_V)_{V \in \pi}$.  Thus, the left hand side of \eqref{eq:extensivityproof1} is equal to
\[
\sum_{\tau \in \mathcal{NC}(\ell)} \left( \sum_{\substack{\chi \in [N]^\tau \\ \tau / \chi \in \mathcal{NC}(\chi,\mathcal{T})}} \prod_{\chi \text{-components } \tau' \text{ of } \tau} \alpha_{\mathcal{T},\tau} \right) K_{\mathcal{T},\tau}[a_1,\dots,a_\ell].
\]
Now $\tau / \chi \in \mathcal{NC}(\chi,\mathcal{T})$ is equivalent to $\tau \in \mathcal{NC}_w(\chi,\mathcal{T})$.  The condition that $\tau / \chi \in \mathcal{NC}(\chi,\mathcal{T})$ is equivalent by definition to $\chi \in \mathcal{X}_w(\chi,\mathcal{T})$.  So using \eqref{eq:partitioncoefficientidentity}, this becomes
\[
\sum_{\tau \in \mathcal{NC}(\ell)} n^{|\tau|} \alpha_{\mathcal{T},\tau} K_{\mathcal{T},\tau}[a_1,\dots,a_\ell],
\]
which is the right hand side of \eqref{eq:extensivityproof1}.
\end{proof}

In fact, the cumulants are uniquely characterized by extensivity and polynomial dependence of the moments on the cumulants.  The following proposition is a generalization of \cite[Theorem 3.1]{HS2011b} to other independences and to the operator-valued setting, with essentially the same proof.

\begin{proposition} \label{prop:cumulantaxioms}
Let $\mathcal{B}$ be a given $\mathrm{C}^*$-algebra and let $\mathcal{T} \in \Tree(N)$ with $n = |[N] \cap \mathcal{T}| \geq 2$.  Suppose we are given, for every $\mathcal{B}$-valued probability space $(\mathcal{A},E)$ and every $\ell \geq 1$, a map $\Gamma_\ell: \mathcal{A}^\ell \to \mathcal{B}$ (where the dependence on $\mathcal{A}$ is suppressed in the notation) such that the following axioms are satisfied:
\begin{enumerate}[(1)]
	\item \emph{Multilinearity:} $\Gamma_\ell$ is $\mathcal{B}$-quasi-multilinear.
	\item \emph{Polynomiality:} For each $\ell \geq 1$, there exists a $J \geq 0$, natural numbers $\ell_1$, \dots, $\ell_J$, complex numbers $\beta_j$, maps $\psi_j: [\ell_j] \to [\ell]$, and partitions $\pi_j \in \mathcal{NC}(\ell_j)$ with $|\pi_j| > 1$, such that
	\[
	E[a_1 \dots a_\ell] = \Gamma_\ell[a_1, \dots, a_\ell] + \sum_{j=1}^J \beta_j \Gamma_{\pi_j}[a_{\psi(1)}, \dots, a_{\psi(\ell_j)}],
	\]
	where the objects $J$, $\ell_j$, $\psi_j$, and $\pi_j$ are independent of the algebra $\mathcal{A}$.
	\item \emph{Extensivity:} If $\tilde{\mathcal{A}}$ is the $\mathcal{T}$-free product of $N$ copies of $\mathcal{A}$ and if
	\[
	\Gamma_\ell \left[ \sum_{j=1}^N \lambda_{\mathcal{T},j}(a_1), \dots, \sum_{j=1}^N \lambda_{\mathcal{T},j}(a_\ell) \right] = n \Gamma_\ell[a_1,\dots,a_\ell].
	\]
\end{enumerate}
Then $\Gamma_\ell = K_{\mathcal{T},\ell}$.
\end{proposition}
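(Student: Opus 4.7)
The plan is to argue by induction on $\ell$, following in spirit the Hasebe--Saigo axiomatic characterization \cite[Thm.~3.1]{HS2011b} which handles the free, Boolean, and monotone cases. For the base case $\ell = 1$, the polynomiality axiom reads $E[a] = \Gamma_1[a] + \sum_j \beta_j \Gamma_{\pi_j}[a,\dots,a]$ with each $\pi_j$ satisfying $|\pi_j| > 1$, hence $\ell_j \geq 2$. Replacing $a$ by $\lambda a$ for $\lambda \in \mathbb{C}$ and invoking multilinearity of each $\Gamma_{\pi_j}$, the identity becomes a polynomial identity in $\lambda$ with the left side purely linear. Matching the coefficient of $\lambda$ forces $\Gamma_1 = E = K_{\mathcal{T},1}$ (and also forces the higher-degree corrections to cancel among themselves).

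For the inductive step, assume $\Gamma_k = K_{\mathcal{T},k}$ for all $k < \ell$. First I would apply multivariable $\mathbb{C}$-scaling $a_k \mapsto \lambda_k a_k$ and compare multidegrees to reduce to the case in which every $\psi_j$ in the polynomiality axiom is a permutation of $[\ell]$ (with $\ell_j = \ell$); terms of any other multidegree must cancel among themselves since the left side is of pure multidegree $(1,\dots,1)$. By induction, each surviving $\Gamma_{\pi_j}$ equals $K_{\mathcal{T},\pi_j}$, because $|\pi_j| > 1$ means every block of $\pi_j$ has size strictly less than $\ell$. Comparing the resulting polynomiality formula with the moment-cumulant formula \eqref{eq:momentcumulantformula}, the difference $\Delta_\ell := \Gamma_\ell - K_{\mathcal{T},\ell}$ is expressible as a linear combination
\[
\Delta_\ell[a_1,\dots,a_\ell] = \sum_{\alpha} c_\alpha K_{\mathcal{T},\pi_\alpha}[a_{\psi_\alpha(1)},\dots,a_{\psi_\alpha(\ell)}]
\]
indexed by pairs $(\pi_\alpha, \psi_\alpha)$ with $\pi_\alpha \in \mathcal{NC}(\ell)$, $|\pi_\alpha| > 1$, and $\psi_\alpha \in \Perm(\ell)$.

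The key tool is an auxiliary scaling identity: for every $\pi \in \mathcal{NC}(\ell)$,
\[
K_{\mathcal{T},\pi}\Bigl[\sum_{j=1}^N \lambda_{\mathcal{T},j}(a_1), \dots, \sum_{j=1}^N \lambda_{\mathcal{T},j}(a_\ell)\Bigr] = n^{|\pi|}\, K_{\mathcal{T},\pi}[a_1,\dots,a_\ell].
\]
This is proved by induction on $|\pi|$: the base case $|\pi|=1$ is Theorem \ref{thm:extensivity}, and the inductive step picks an interval block $V$ of $\pi$, applies Theorem \ref{thm:extensivity} to convert the inner factor $K_{\mathcal{T},|V|}[\sum_j \lambda_{\mathcal{T},j}(a_\cdot)]$ into $n\,K_{\mathcal{T},|V|}[a_\cdot] \in \mathcal{B}$, pulls this $\mathcal{B}$-scalar through the adjacent factor $\sum_j \lambda_{\mathcal{T},j}(a_{k+1})$ using the $\mathcal{B}$-bimodularity of each $\lambda_{\mathcal{T},j}$, and then invokes the inductive hypothesis on $\pi \setminus V$.

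Given the scaling identity, extensivity of both $\Gamma_\ell$ and $K_{\mathcal{T},\ell}$ (the latter by Theorem \ref{thm:extensivity}) implies $n\,\Delta_\ell = \sum_\alpha c_\alpha n^{|\pi_\alpha|} K_{\mathcal{T},\pi_\alpha}[a_{\psi_\alpha(1)},\dots]$, so
\[
\sum_\alpha c_\alpha (n^{|\pi_\alpha|} - n)\, K_{\mathcal{T},\pi_\alpha}[a_{\psi_\alpha(1)},\dots,a_{\psi_\alpha(\ell)}] = 0.
\]
Since $|\pi_\alpha| > 1$ and $n \geq 2$, the scalar factors $n^{|\pi_\alpha|} - n$ are nonzero. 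The main obstacle is then to deduce $c_\alpha = 0$ from this universal vanishing, which is a linear-independence statement for the family of iterated products of lower cumulants as one ranges over all $\mathcal{B}$-valued probability spaces. I would establish this by exhibiting probability spaces in which the values of $K_{\mathcal{T},k}$ can be prescribed freely on sufficiently many generators; the $\mathcal{T}$-free Fock space models developed in \S \ref{sec:infinitelydivisible} provide such realizations. Distinct patterns $(\pi_\alpha,\psi_\alpha)$ then produce distinct monomials in the prescribed cumulant values, which secures linear independence and hence $\Delta_\ell = 0$.
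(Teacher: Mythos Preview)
Your inductive setup and the auxiliary scaling identity $K_{\mathcal{T},\pi}[\sum_j \lambda_j(a_1),\dots] = n^{|\pi|}K_{\mathcal{T},\pi}[a_1,\dots]$ are both correct and are indeed needed (the paper uses the analogous identity for $\Gamma_\pi$, hidden in the phrase ``it follows from the axioms''). However, your final step---deducing that each $c_\alpha$ vanishes from the single relation $\sum_\alpha c_\alpha(n^{|\pi_\alpha|}-n)K_{\mathcal{T},\pi_\alpha}[a_{\psi_\alpha(\cdot)}]=0$---rests on a linear-independence claim that is neither obvious nor adequately supported by your Fock-space sketch. The Fock-space theorem in \S\ref{sec:infinitelydivisible} prescribes cumulants of operators of a very specific form $M'(b)+L(\zeta)^*+L(\zeta')+M(x)$; showing that distinct pairs $(\pi_\alpha,\psi_\alpha)$ (some of which may have the same $|\pi_\alpha|$, and some of which may coincide as multilinear forms due to symmetries of $\pi_\alpha$) can be separated by such choices requires real work that you have not supplied. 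At minimum you would have to pass to a multivariable Fock model, identify the resulting $K_{\mathcal{T},\pi_\alpha}[a_{\psi_\alpha(\cdot)}]$ with genuinely distinct monomials in free parameters, and rule out accidental coincidences---none of which is immediate.

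The paper's proof bypasses this difficulty entirely with a short trick: rather than applying extensivity once, it \emph{iterates} the $\mathcal{T}$-free product $k$ times, forming $a_i^{(k)}\in\mathcal{A}^{(k)}$. Polynomiality in $\mathcal{A}^{(k)}$ plus the scaling identity give
\[
E\bigl[a_1^{(k)}\cdots a_\ell^{(k)}\bigr] \;=\; n^k\,\Gamma_\ell[a_1,\dots,a_\ell] \;+\; \sum_{j=1}^J \beta_j\,n^{k|\pi_j|}\,\Gamma_{\pi_j}[a_{\psi_j(1)},\dots,a_{\psi_j(\ell_j)}],
\]
so the $\mathcal{B}$-valued polynomial $p(t)=t\,\Gamma_\ell[a_\bullet]+\sum_j\beta_j t^{|\pi_j|}\Gamma_{\pi_j}[a_{\psi_j(\bullet)}]$ satisfies $p(n^k)=E[a_1^{(k)}\cdots a_\ell^{(k)}]$ for every $k$. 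The same holds for the polynomial $\tilde p$ built from any other system $\tilde\Gamma$ satisfying the axioms. Since $p$ and $\tilde p$ agree at infinitely many points, they are equal as polynomials; because every $|\pi_j|\geq 2$, the linear coefficient of $p$ is exactly $\Gamma_\ell[a_1,\dots,a_\ell]$, and likewise for $\tilde p$, so $\Gamma_\ell=\tilde\Gamma_\ell$. No induction on $\ell$ and no linear independence of iterated cumulants is needed---the only linear independence used is that of the monomials $1,t,t^2,\dots$.
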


\begin{remark}
Regarding axiom (2), we remark that ``polynomial dependence'' requires a modified statement in the operator-valued setting.  Indeed, in the scalar-valued setting, the cumulants are $\C$-multilinear, so that $K_\pi[a_1,\dots,a_\ell]$ is the product of $K_V[a_j: j \in V]$ over all blocks $V$ of $\pi$, but in the operator-valued setting, the composition of multilinear forms is not as simple as a product.
\end{remark}

\begin{remark}
Here we have stated the polynomial dependence of the moments on the cumulants rather than polynomial dependence of the cumulants on the moments as in \cite{HS2011b}.  However, if one assumes in (2) that the blocks of $\pi_j$ all have size $< \ell$, then these two conditions are equivalent by M{\"o}bius inversion.
\end{remark}

\begin{proof}[Proof of Proposition \ref{prop:cumulantaxioms}]
We have shown that $K_{\mathcal{T},\ell}$ satisfies the three axioms, so it suffices to show uniqueness of $\Gamma_\ell$'s satisfying the three axioms.  Let $(\Gamma_\ell)_{\ell \in \N}$ and $(\tilde{\Gamma}_\ell)_{\ell \geq 1}$ be two such sequences of multilinear forms.

Fix $\ell$ and fix $a_1$, \dots, $a_\ell$ in a $\mathcal{B}$-valued probability space $(\mathcal{A},E)$.  Let $\mathcal{A}^{(k)}$ be defined inductively by saying $\mathcal{A}^{(1)} = \mathcal{A}$ and that $\mathcal{A}^{(k+1)}$ is the $\mathcal{T}$-free product of $N$ copies of $\mathcal{A}^{(k)}$.  Let $a_i^{(k)} \in \mathcal{A}^{(k)}$ be defined inductively by saying that $a_i = a_i$ and $a_i^{(k+1)} = \sum_{j=1}^N \lambda_{\mathcal{T},j}(a_i^{(k+1)})$.

It follows from the axioms that
\[
E[a_1^{(k)} \dots a_\ell^{(k)}] = n^k \Gamma_\ell[a_1,\dots,a_\ell] + \sum_{j=1}^J n^{k|\pi_j|} \Gamma_{\pi_j}[a_{\psi(1)}, \dots, a_{\psi(\ell_j)}].
\]
Consider the $\mathcal{B}$-valued polynomial
\[
p(t) = t \Gamma_\ell[a_1,\dots,a_\ell] + \sum_{j=1}^J t^{|\pi_j|} \beta_j \Gamma_{\pi_j}[a_{\psi(1)}, \dots, a_{\psi(\ell_j)}].
\]
Let $\tilde{p}$ be the corresponding polynomial for $\tilde{\Gamma}_\ell$.  Then we have for each $k$ that
\[
p(n^k) = E[a_1^{(k)} \dots a_\ell^{(k)}] = \tilde{p}(n^k)
\]
since $E[a_1^{(k)} \dots a_\ell^{(k)}]$ is defined independently of $\Gamma_\ell$.  Since these two polynomials agree at infinitely many values of $t$, we may equate their coefficients for each power of $t$.  Because we have $|\pi_j| > 2$ for each $j$, the linear term of $p$ is $\Gamma_\ell[a_1,\dots,a_\ell]$ and similarly the linear term of $\tilde{p}$ is $\tilde{\Gamma}_\ell[a_1,\dots,a_\ell]$, and hence $\Gamma_\ell[a_1,\dots,a_\ell] = \tilde{\Gamma}_\ell[a_1,\dots,a_\ell]$.
\end{proof}

\subsection{The Free, Monotone, and Boolean Cases} \label{subsec:cumulantexamples}

Now we show that in the free, Boolean, and monotone cases our definition of the cumulants agrees with the definitions given in previous literature for those special cases.  Of course, one could deduce this directly from Proposition \ref{prop:cumulantaxioms} because the free, Boolean, and monotone cumulants are known to satisfy these axioms (compare \cite{HS2011b}).  But we would rather give a direct computation of the coefficients $\alpha_{\mathcal{T},\pi}$ using \eqref{eq:partitioncoefficientidentity} because that will shed light on the intuition behind our construction of cumulants.

\begin{proposition} \label{prop:freecumulants}
We have $\alpha_{\mathcal{T}_{N,\free},\pi} = 1$ for every non-crossing partition $\pi$.  In particular, \eqref{eq:momentcumulantformula} reduces to the free moment-cumulant formula used in \cite{Speicher1994} \cite{Speicher1998}.  The coefficients and hence the cumulants for $\mathcal{T}_{N,\free}$ are independent of $N$, and we denote these cumulants by $K_{\free,\ell}$.
\end{proposition}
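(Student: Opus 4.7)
The plan is to verify the asserted value $\alpha_{\mathcal{T}_{N,\free},\pi} = 1$ by induction on $|\pi|$, using the defining recursion \eqref{eq:partitioncoefficientidentity} together with the crucial simplification that for $\mathcal{T} = \mathcal{T}_{N,\free}$ every coloring is weakly compatible. Specifically, by the reformulation in Remark \ref{rem:weaklycompatible}, weak compatibility requires $\red[\chi(\chain(V))] \in \mathcal{T}$ for every block $V$, and since $\mathcal{T}_{N,\free}$ contains \emph{every} alternating string, this condition is automatic. Hence $\mathcal{X}_w(\pi, \mathcal{T}_{N,\free}) = [N]^\pi$, and in particular $n = N$.

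First I would dispose of the base case $|\pi|=1$, where $\alpha_{\mathcal{T}_{N,\free},\pi} = 1$ by the normalization \eqref{eq:partitioncoefficientnormalization}. For the inductive step, assume $\alpha_{\mathcal{T}_{N,\free},\pi'} = 1$ for every non-crossing partition $\pi'$ with $|\pi'| < |\pi|$. Split into the two cases used in the proof of Lemma \ref{lem:cumulantcoefficients}. If $\pi$ is reducible, then for every coloring $\chi \in [N]^\pi$ each $\chi$-component $\pi'$ is a proper sub-partition, so $|\pi'| < |\pi|$ and the inductive hypothesis gives $\alpha_{\pi'}=1$. Consequently the right-hand side of \eqref{eq:partitioncoefficientidentity} simplifies to $N^{-|\pi|}\sum_{\chi \in [N]^\pi} 1 = 1$, as desired.

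If instead $\pi \in \mathcal{NC}^\circ$ is irreducible with $|\pi| > 1$, I would invoke the rearranged recursion \eqref{eq:partitioncoefficientidentity2}, which isolates $\alpha_\pi$ by pulling out the $N$ constant colorings (for which the unique $\chi$-component is $\pi$ itself). For every non-constant $\chi$, each $\chi$-component has strictly fewer blocks than $\pi$ because removing any single-color edge from $\graph(\pi) \setminus \{\emptyset\}$ strictly decreases component size; hence by induction the product $\prod_{\pi'} \alpha_{\mathcal{T}_{N,\free},\pi'}$ equals $1$ for each such $\chi$. Counting the $N^{|\pi|}-N$ non-constant colorings then turns \eqref{eq:partitioncoefficientidentity2} into
\[
\Bigl(1 - \tfrac{1}{N^{|\pi|-1}}\Bigr) \alpha_{\mathcal{T}_{N,\free},\pi} \;=\; \frac{N^{|\pi|}-N}{N^{|\pi|}} \;=\; 1 - \tfrac{1}{N^{|\pi|-1}},
\]
and since the scalar factor is nonzero (as $N \geq 2$ and $|\pi| > 1$), we conclude $\alpha_{\mathcal{T}_{N,\free},\pi} = 1$.

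Once the coefficients are identified, the moment-cumulant formula \eqref{eq:momentcumulantformula} reduces to the standard free moment-cumulant formula $E[a_1 \dots a_\ell] = \sum_{\pi \in \mathcal{NC}(\ell)} K_{\free,\pi}[a_1,\dots,a_\ell]$ of Speicher, which uniquely characterizes the free cumulants, so the $\mathcal{T}_{N,\free}$-cumulants coincide with the usual free cumulants. The final observation that the coefficients are independent of $N$ is built into the argument above, since the constant $1$ makes no reference to $N$ (one could also verify independence structurally by noting that $\mathcal{X}_w(\pi,\mathcal{T}_{N,\free})$ and the recursion \eqref{eq:partitioncoefficientidentity} depend on $N$ only through counts that cancel). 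There is no genuine obstacle here; the only subtlety worth flagging is the need to separate the irreducible case to handle the self-referential constant-coloring term, exactly as in the proof of Lemma \ref{lem:cumulantcoefficients}.
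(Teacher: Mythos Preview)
Your proof is correct and rests on the same key observation as the paper's: for $\mathcal{T}_{N,\free}$ every coloring is weakly compatible, so $\mathcal{X}_w(\pi,\mathcal{T}_{N,\free}) = [N]^\pi$ with $n=N$. The paper's version is slightly more streamlined---rather than redoing the reducible/irreducible case split from Lemma~\ref{lem:cumulantcoefficients}, it simply plugs the candidate $\alpha_\pi \equiv 1$ into \eqref{eq:partitioncoefficientidentity}, observes that both sides equal $1$ (since the right-hand side becomes $N^{-|\pi|}\cdot N^{|\pi|}\cdot 1$), and invokes the uniqueness already established in that lemma---but your explicit induction is an equally valid unwinding of the same argument.
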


\begin{proof}
In this case $n = N$, and it suffices to show that the coefficients $\alpha_{\free,\pi} = 1$ satisfy the fixed point equation
\[
\alpha_{\free,\pi} = \sum_{\chi \in \mathcal{X}_w(\pi,\mathcal{T}_{N,\free})} \frac{1}{N^{|\pi|}} \prod_{\chi \text{-components } \pi'} \alpha_{\free,\pi'}.
\]
The left hand side is $1$.  On the right hand side, note that every coloring $\chi$ of $\pi$ is weakly compatible with $\pi$ and $\mathcal{T}_{N,\free}$, and the number of colorings is exactly $N^{|\pi|}$, so the right hand side is also $1$.
\end{proof}

\begin{proposition} \label{prop:booleancumulants}
We have $\alpha_{\mathcal{T}_{N,\Bool},\pi} = 1$ if $\pi$ is an interval partition and $\alpha_{\mathcal{T}_{N,\Bool},\pi} = 0$ otherwise.  In particular, \eqref{eq:momentcumulantformula} reduces to the Boolean moment-cumulant formula used in \cite{SW1997} \cite{Popa2009} \cite[\S 3]{PV2013}, and $K_{\mathcal{T}_{N,\Bool},\ell}$ agrees with $K_{\Bool,\ell}$ given in Definition \ref{def:Booleancumulants}.
\end{proposition}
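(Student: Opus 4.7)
The plan is to verify the claimed formula for $\alpha_{\mathcal{T}_{N,\Bool},\pi}$ by directly checking that the proposed values satisfy the normalization \eqref{eq:partitioncoefficientnormalization} and the recursion \eqref{eq:partitioncoefficientidentity}; uniqueness in Lemma \ref{lem:cumulantcoefficients} then pins them down. Set $\tilde\alpha_\pi = 1$ if $\pi$ is an interval partition and $\tilde\alpha_\pi = 0$ otherwise. The normalization is immediate since a single-block partition is an interval partition. Note also that for $\mathcal{T} = \mathcal{T}_{N,\Bool}$ we have $n = N \geq 2$.

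The key preparatory step is to analyze $\mathcal{X}_w(\pi,\mathcal{T}_{N,\Bool})$ and the $\chi$-components. Because $\mathcal{T}_{N,\Bool}$ contains no strings of length $\geq 2$, Remark \ref{rem:weaklycompatible} says that $\chi$ is weakly compatible with $\pi$ iff $\red[\chi(\chain(V))]$ has length $\leq 1$ for every $V \in \pi$, i.e.\ iff $\chi$ is constant on each $\chain(V)$. Since $\chain(V)$ traces, inside $\graph(\pi)\setminus\{\emptyset\}$, the unique path from $V$ up to the minimal block containing it, this is equivalent to $\chi$ being constant on each connected component of $\graph(\pi)\setminus\{\emptyset\}$. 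Those components correspond bijectively to the irreducible factors $\pi_1,\dots,\pi_m$ of the concatenation decomposition of $\pi$. Because $\chi$ is constant on each such component, no edges between distinct colors are deleted, so the $\chi$-components of $\pi$ coincide with $\pi_1,\dots,\pi_m$ themselves, and there are exactly $N^m$ weakly compatible colorings.

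Plugging this into \eqref{eq:partitioncoefficientidentity} yields
\[
\text{RHS} \;=\; \frac{N^m}{N^{|\pi|}}\prod_{j=1}^{m}\tilde\alpha_{\pi_j}.
\]
I then split into cases. If $\pi$ is an interval partition then $m = |\pi|$, each $\pi_j$ is a single block with $\tilde\alpha_{\pi_j}=1$, so RHS $= 1 = \tilde\alpha_\pi$. Otherwise some $\pi_j$ is irreducible with $|\pi_j|\geq 2$, and it suffices to show $\tilde\alpha_{\pi'} = 0$ for every such $\pi'$. For an irreducible $\pi'$ the only weakly compatible colorings are the $N$ constant ones, each having $\pi'$ itself as its unique $\chi$-component; the recursion becomes $\tilde\alpha_{\pi'} = (N/N^{|\pi'|})\tilde\alpha_{\pi'} = N^{1-|\pi'|}\tilde\alpha_{\pi'}$, and since $|\pi'|\geq 2$ and $N\geq 2$ this forces $\tilde\alpha_{\pi'}=0$. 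Hence the product vanishes and RHS $=0=\tilde\alpha_\pi$ as well.

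With the coefficients pinned down, substituting $\alpha_{\mathcal{T}_{N,\Bool},\pi} = \mathbf{1}_{\pi\in\mathcal{I}(\ell)}$ into the moment--cumulant formula \eqref{eq:momentcumulantformula} collapses the sum to $\sum_{\pi\in\mathcal{I}(\ell)} K_{\mathcal{T}_{N,\Bool},\pi}[a_1,\dots,a_\ell]$, which is precisely the defining relation for the Boolean cumulants in Definition \ref{def:Booleancumulants}; uniqueness of M\"obius inversion then identifies $K_{\mathcal{T}_{N,\Bool},\ell}$ with $K_{\Bool,\ell}$. The only genuinely delicate point in the argument is the identification of the $\chi$-components with the irreducible components of $\pi$; once Remark \ref{rem:weaklycompatible} is unpacked, the rest is a short bookkeeping computation.
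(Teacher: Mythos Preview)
Your proof is correct and follows essentially the same approach as the paper: both verify that the proposed coefficients satisfy the defining recursion \eqref{eq:partitioncoefficientidentity} and invoke uniqueness from Lemma~\ref{lem:cumulantcoefficients}. Your version makes the structure of $\mathcal{X}_w(\pi,\mathcal{T}_{N,\Bool})$ more explicit by identifying weakly compatible colorings with those constant on the irreducible factors and observing that the $\chi$-components are exactly those factors, whereas the paper argues the two cases more directly without naming the irreducible decomposition; the final paragraph on irreducible $\pi'$ is harmless but redundant, since $\tilde\alpha_{\pi'}=0$ is already your definition and the general formula $\text{RHS}=N^{m-|\pi|}\prod_j\tilde\alpha_{\pi_j}$ covers that case.
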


\begin{proof}
It suffices to show that the coefficients $\alpha_{\Bool,\pi}$ given by $1$ if $\pi$ is an interval partition and zero otherwise satisfy the fixed point equation
\[
\alpha_{\Bool,\pi} = \sum_{\chi \in \mathcal{X}_w(\pi,\mathcal{T}_{N,\Bool})} \frac{1}{N^{|\pi|}} \prod_{\chi \text{-components } \pi'} \alpha_{\Bool,\pi'}.
\]
If $\pi$ is an interval partition, then for every coloring $\chi$, the partition $\pi / \chi$ is also an interval partition and hence contained in $\mathcal{NC}(\chi,\mathcal{T}_{N,\Bool})$, so that $\chi \in \mathcal{X}_w(\pi, \mathcal{T}_{N,\Bool})$.  Moreover, each $\chi$-component is an interval partition, and therefore every coloring of $\pi$ contributes a $1$ to the sum on the right hand side is $1$, so the right hand side is $1$.  On the other hand, suppose that $\pi$ is not an interval partition, so there are some blocks $V \prec W$.  For $\pi / \chi$ to be in $\mathcal{NC}(\chi,\mathcal{T}_{N,\Bool})$, it must be an interval partition, and thus $V$ and $W$ must be in the same $\chi$-component.  But then this $\chi$-component is not an interval partition, so this coloring $\chi$ contributes zero to the right hand side.  Thus, both sides of the equation are zero in this case.
\end{proof}

\begin{proposition} \label{prop:monotonecumulants}
Let $\Ord(\pi)$ be the set of total orders on the blocks of $\pi$ which extend the partial order $\prec$.  Then we have
\[
\alpha_{\mathcal{T}_{N,\mono},\pi} = \frac{|\Ord(\pi)|}{|\pi|!}.
\]
In particular, the $\mathcal{T}_{N,\mono}$-free cumulants agree with the monotone cumulants defined in \cite{HS2011a} \cite{HS2014}; compare \cite[\S 5]{HS2011b}, \cite[Theorem 3.4]{HS2014}, \cite[Definition 4.4]{AW2016}.  These cumulants are independent of $N$ and we denote them by $K_{\mono,\ell}$.
\end{proposition}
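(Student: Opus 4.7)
The plan is to apply the uniqueness portion of Lemma \ref{lem:cumulantcoefficients}: since the coefficients $\alpha_{\mathcal{T},\pi}$ are uniquely determined by the normalization $\alpha_{\mathcal{T},\pi} = 1$ for $|\pi|=1$ together with the fixed-point equation \eqref{eq:partitioncoefficientidentity}, it suffices to verify that the candidate $\alpha_\pi^{\mono} := |\Ord(\pi)|/|\pi|!$ satisfies both. The normalization is immediate. For the monotone tree $n = |[N] \cap \mathcal{T}_{N,\mono}| = N$, and clearing denominators reduces \eqref{eq:partitioncoefficientidentity} to the combinatorial identity
\[
|\Ord(\pi)| \cdot N^{|\pi|} = \sum_{\chi \in \mathcal{X}_w(\pi,\mathcal{T}_{N,\mono})} \binom{|\pi|}{(|\pi'|)_{\pi'}} \prod_{\chi\text{-components } \pi'} |\Ord(\pi')|,
\]
where $\binom{|\pi|}{(|\pi'|)_{\pi'}} = |\pi|!/\prod_{\pi'}|\pi'|!$ is the multinomial.

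The first step is to unpack weak compatibility with $\mathcal{T}_{N,\mono}$. By Remark \ref{rem:weaklycompatible}, $\chi \in \mathcal{X}_w(\pi,\mathcal{T}_{N,\mono})$ if and only if the sequence $\chi(V)\chi(V_2)\cdots\chi(V_d)$ along each $\chain(V)$ is non-increasing (so that its alternating reduction lies in $\mathcal{T}_{N,\mono}$), equivalently $V \prec W \Longrightarrow \chi(V) \leq \chi(W)$. The key structural observation is then: if $\chi$ is weakly compatible, $V \prec W$, and $\chi(V) = \chi(W)$, then every block along the (unique) path from $W$ up to $V$ in $\graph(\pi)$ carries the color $\chi(V)$, so $V$ and $W$ lie in the same $\chi$-component. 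Contrapositively, two blocks of equal color residing in distinct $\chi$-components are $\prec$-incomparable; consequently, within $(\pi,\prec)$ the induced sub-poset on blocks of any fixed color is precisely the disjoint union, as posets, of the $\chi$-components of that color.

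The identity is then established by double-counting the pairs $(\omega,\chi_0)$, where $\omega$ is a linear extension of $(\pi,\prec)$ and $\chi_0 \in [N]^\pi$ is an arbitrary coloring, a set manifestly of cardinality $|\Ord(\pi)| \cdot N^{|\pi|}$. To each such pair associate its \emph{sorted coloring} $\widetilde{\chi}$, defined so that $(\widetilde{\chi}(\omega(1)), \ldots, \widetilde{\chi}(\omega(|\pi|)))$ is the non-decreasing rearrangement of $(\chi_0(\omega(1)), \ldots, \chi_0(\omega(|\pi|)))$. Since $\widetilde{\chi}$ is non-decreasing along $\omega$ and $\omega$ extends $\prec$, $\widetilde{\chi}$ is automatically weakly compatible. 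Fixing a weakly compatible $\widetilde{\chi}$ with color multiplicities $m_c = |\widetilde{\chi}^{-1}(c)|$, the fiber over $\widetilde{\chi}$ decomposes as a product of two counts: the $\chi_0$'s producing $\widetilde{\chi}$ along a given admissible $\omega$ are the rearrangements of the multiset, numbering $|\pi|!/\prod_c m_c!$; and the linear extensions $\omega$ of $(\pi,\prec)$ along which $\widetilde{\chi}$ is non-decreasing factor, by the structural observation, as
\[
\prod_c \binom{m_c}{(|\pi'|)_{\pi' \text{ of color } c}} \prod_{\pi' \text{ of color } c} |\Ord(\pi')|.
\]
Expanding each inner multinomial as $m_c!/\prod_{\pi' \text{ of color } c}|\pi'|!$ and cancelling the factors $\prod_c m_c!$ leaves precisely $\binom{|\pi|}{(|\pi'|)_{\pi'}}\prod_{\pi'} |\Ord(\pi')|$ per $\widetilde{\chi}$; summing over $\widetilde{\chi}$ gives the identity.

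This verifies $\alpha_{\mathcal{T}_{N,\mono},\pi} = |\Ord(\pi)|/|\pi|!$. Since this formula is manifestly independent of $N$, so are the cumulants $K_{\mathcal{T}_{N,\mono},\ell}$, justifying the unambiguous notation $K_{\mono,\ell}$. The resulting moment-cumulant formula \eqref{eq:momentcumulantformula} then coincides with the definition of the monotone cumulants of Hasebe-Saigo \cite{HS2011a,HS2014} (alternatively, one may appeal to Proposition \ref{prop:cumulantaxioms}, since those cumulants satisfy multilinearity, polynomiality, and extensivity). The main obstacle in executing this plan is the multinomial bookkeeping in the fiber count, which the structural observation about same-color blocks in distinct $\chi$-components reduces to a transparent telescoping of $m_c!$'s.
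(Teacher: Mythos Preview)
Your proof is correct and verifies the same fixed-point identity \eqref{eq:partitioncoefficientidentity} as the paper, but by a genuinely different route. The paper first identifies $|\Ord(\pi)|/|\pi|!$ with the Lebesgue measure of the order polytope $\Upsilon_\pi = \{t \in (0,1]^\pi : V \prec W \Rightarrow t_V < t_W\}$, and then checks the identity geometrically: it scales to $(0,N]^\pi$, decomposes into unit cubes indexed by colorings $\chi$, observes that the cube for $\chi$ meets the order region exactly when $\chi$ is weakly compatible, and that in this case the intersection factors as a Cartesian product over the $\chi$-components, each contributing $|\Upsilon_{\pi'}|$. Your argument is instead a purely discrete double count of pairs $(\omega,\chi_0)$ via the ``sorted coloring'' map; the structural observation about same-colored comparable blocks (which the paper also uses, phrased as ``$t \blacktriangleleft \pi \Leftrightarrow t|_{\pi'} \blacktriangleleft \pi'$ for all $\chi$-components'') is what makes the fiber count factor through the multinomial. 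The two arguments are morally dual---integration versus counting---and each has its advantage: the paper's volume factorization avoids the multinomial bookkeeping entirely, while your approach avoids any appeal to measure theory and makes the identity a straight bijection-style count.
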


\begin{proof}
Denote $\alpha_{\mono,\pi} = |\Ord(\pi)| / |\pi|!$.   Let
\[
\Upsilon_\pi = \{t \in (0,1]^\pi: V \prec W \implies t_V < t_W\}.
\]
As in \cite[Remark 6.13]{Jekel2020}, we claim that $\alpha_{\mono,\pi} = |\Upsilon_\pi|$, where $| \cdot |$ denotes Lebesgue measure.  If $R$ is a total order on $\pi$, let $\Upsilon_R = \{t \in [0,1]^\pi: V\, R\, W \implies t_V < t_W$.  Then, up to sets of measure zero, $(0,1]^\pi$ is the disjoint union of the simplices $\Upsilon_R$ over every total order $R$ on $\pi$.  There are $|\pi|!$ such total orders and each $\Upsilon_R$ has the same measure, and thus $|\Upsilon_R| = 1 / |\pi|!$.  But $\Upsilon_\pi$ is the disjoint union of the sets $\Upsilon_R$ over all total orders $R$ that extend the partial order $\prec$, and thus $\alpha_{\mono,\pi} = |\Upsilon_\pi|$.

We must check the fixed point equation
\[
N^{|\pi|} \alpha_{\mono,\pi} = \sum_{\chi \in \mathcal{X}_w(\pi,\mathcal{T})} \prod_{\chi \text{-components } \pi'} \alpha_{\mono,\pi'}.
\]
The left hand side is equal to
\[
N^{|\pi|} \alpha_{\mono,\pi} = |N \Upsilon_\pi| = |\{t \in (0,N]^\pi: V \prec W \implies t_V < t_W\}|.
\]
For convenience, we will write $t \blacktriangleleft \pi$ to mean that $V \prec W \implies t_V < t_W$.  The set $(0,N]$ is the disjoint union of the subintervals $(j-1,j]$ for $j = 1$, \dots, $N$, and this induces a decomposition of $(0,N]^\pi$ into unit cubes.  Thus, we have
\[
N^{|\pi|} \alpha_{\mono,\pi} = \sum_{\chi \in [N]^\pi} |\{t \in \prod_{V \in \pi} (\chi(V) - 1, \chi(V)]: t \blacktriangleleft \pi \}|.
\]
The set on the right hand side will be empty unless $V \prec W \implies \chi(V) \leq \chi(W)$.  Moreover, the condition that $V \prec W \implies \chi(V) \leq \chi(W)$ is equivalent to $\pi$ and $\chi$ being weakly compatible with $\mathcal{T}_{N,\mono}$ as in the proof of Proposition \ref{prop:monotonemoments}.  Thus, we obtain
\[
N^{|\pi|} \alpha_{\mono,\pi} = \sum_{\chi \in \mathcal{X}_w(\pi,\mathcal{T}_{N,\mono})} |\{t \in \prod_{V \in \pi} (\chi(V) - 1, \chi(V)]: t \blacktriangleleft \pi \}|.
\]

If $\chi \in \mathcal{X}_w(\pi,\mathcal{T}_{N,\mono})$ and if $t \in \prod_{V \in \pi} (\chi(V) - 1, \chi(V)]$, then the condition $t \blacktriangleleft \pi$ is equivalent to $t|_{\pi'} \blacktriangleleft \pi'$ for every $\chi$-component $\pi'$.  Indeed, suppose that $t|_{\pi'} \blacktriangleleft \pi'$ for each $\pi'$ and that $V \prec W$ in $\pi$.  If $V$ and $W$ are in the same $\chi$-component $\pi'$, then $t_V < t_W$ because $t|_{\pi'} \blacktriangleleft \pi'$.  Otherwise, the $\pi / \chi$-block containing $V$ must be $\prec$ the $\pi / \chi$-block containing $W$, hence $\chi(V) < \chi(W)$ and so $t_V < t_W$.

It follows that
\[
\{t \in \prod_{V \in \pi} (\chi(V) - 1, \chi(V)]: t \blacktriangleleft \pi \} = \prod_{\chi \text{-components } \pi'} \{t \in (j(\pi') - 1,j(\pi')]^{\pi'}: t \blacktriangleleft \pi'\}.
\]
where $j(\pi')$ is the constant value of $\chi$ on $\pi'$.  Therefore,
\begin{align*}
N^{|\pi|} \alpha_{\mono,\pi} &= \sum_{\chi \in \mathcal{X}_w(\pi,\mathcal{T}_{N,\mono})} \prod_{\chi \text{-components } \pi'} |\{t \in (j(\pi') - 1,j(\pi')]^{\pi'}: t \blacktriangleleft \pi'\}| \\
&=  \sum_{\chi \in \mathcal{X}_w(\pi,\mathcal{T}_{N,\mono})} \prod_{\chi \text{-components } \pi'} |\Upsilon_{\pi'}|,
\end{align*}
where the last step follows from translation invariance of Lebesgue measure.  Now $|\Upsilon_{\pi'}| = \alpha_{\mono,\pi'}$, so the proof is complete.
\end{proof}

\subsection{Further Properties of the Coefficients $\alpha_{\mathcal{T},\pi}$} \label{subsec:cumulantcoefficientproperties}

In this section, we will reformulate the definition of the coefficients $\alpha_\pi$ in a more graph-theoretic way, along the lines of Remark \ref{rem:graphhomo}.  This will allow us in the next section to compute $\alpha_{\mathcal{T},\pi}$ when $\mathcal{T} = \Walk(G)$ for a $d$-regular digraph $G$.

\begin{definition}
For $\pi, \tau \in \mathcal{NC}(\ell)$, we say that $\tau$ is a \emph{quotient} of $\pi$ if there exists a coloring $\chi$ of $\pi$ such that $\tau = \pi / \chi$. 
\end{definition}

\begin{remark}
One can show that $\tau$ is a quotient of $\pi$ if and only if every block of $\tau$ is contained in a block of $\pi$, and each block of $\tau$ is the union of some blocks in $\pi$ which form a connected subgraph of $\graph(\pi)$.
\end{remark}

\begin{definition}
Let $\mathcal{T}$ and $\mathcal{T}'$ be rooted trees.  We say that $\phi: \mathcal{T} \to \mathcal{T}'$ is a \emph{rooted tree homomorphism} if $s \sim t$ in $\mathcal{T}$ implies $\phi(s) \sim \phi(t)$ in $\mathcal{T}'$ and the distance from $\phi(s)$ to the root of $\mathcal{T}'$ is the same as the distance from $s$ to the root of $\mathcal{T}$.  In other words, it is a graph homomorphism that respects the levels of the tree.  We denote the set of rooted tree homomorphisms from $\mathcal{T}$ to $\mathcal{T}'$ by $\Hom(\mathcal{T}, \mathcal{T}')$.
\end{definition}

\begin{lemma}
Let $\mathcal{T} \in \Tree(N)$ and $n = |[N] \cap \mathcal{T}|$.  Then
\begin{equation} \label{eq:partitioncoefficientidentity3}
\alpha_{\mathcal{T},\pi} = \frac{1}{n^{|\pi|}} \sum_{\tau \text{ quotient of } \pi} |\Hom(\graph(\tau),\mathcal{T})| \prod_{V \in \tau} \alpha_{\mathcal{T},\pi|_V}.
\end{equation}
\end{lemma}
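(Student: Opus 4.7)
The plan is to deduce \eqref{eq:partitioncoefficientidentity3} from the original defining relation \eqref{eq:partitioncoefficientidentity} by re-indexing the sum over colorings $\chi \in \mathcal{X}_w(\pi,\mathcal{T})$ according to the quotient $\tau = \pi/\chi$. The main point is to identify, for each fixed quotient $\tau$, the set of colorings $\chi$ producing that quotient with the set $\Hom(\graph(\tau),\mathcal{T})$.

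First, I would observe that given $\chi \in \mathcal{X}_w(\pi,\mathcal{T})$, its $\chi$-components are by definition the blocks of $\tau = \pi/\chi$. Since each $\chi$-component $\pi'$ coincides with the restriction $\pi|_V$ for the corresponding block $V \in \tau$, the inner product in \eqref{eq:partitioncoefficientidentity} becomes $\prod_{V \in \tau} \alpha_{\mathcal{T},\pi|_V}$, which depends only on $\tau$, not on the specific $\chi$. Thus
\[
n^{|\pi|}\alpha_{\mathcal{T},\pi} = \sum_{\tau \text{ quotient of }\pi}\; N(\tau)\,\prod_{V \in \tau}\alpha_{\mathcal{T},\pi|_V},
\]
where $N(\tau)$ counts the colorings $\chi \in \mathcal{X}_w(\pi,\mathcal{T})$ satisfying $\pi/\chi = \tau$.

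The core step is then to show $N(\tau) = |\Hom(\graph(\tau),\mathcal{T})|$. A coloring $\chi$ with $\pi/\chi = \tau$ is constant on each block of $\tau$, so it descends to a coloring $\bar\chi \in [N]^\tau$. The condition $\chi \in \mathcal{X}_w(\pi,\mathcal{T})$ then becomes $\tau \in \mathcal{NC}(\bar\chi,\mathcal{T})$, which by Remark \ref{rem:graphhomo} is exactly the condition that $\bar\chi$ corresponds to a rooted tree homomorphism $\phi_{\bar\chi,\tau} \in \Hom(\graph(\tau),\mathcal{T})$ via $\phi_{\bar\chi,\tau}(V) = \bar\chi(V)\bar\chi(V_1)\cdots\bar\chi(V_d)$ for $\chain(V) = (V,V_1,\ldots,V_d)$. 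This gives a map from $\{\chi : \pi/\chi = \tau,\ \chi \in \mathcal{X}_w(\pi,\mathcal{T})\}$ to $\Hom(\graph(\tau),\mathcal{T})$ that is clearly injective and, by the same formula, surjective.

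The subtle point—and the one requiring the most care—is the converse direction: I need to verify that whenever $\bar\chi \in [N]^\tau$ defines a genuine rooted tree homomorphism into $\mathcal{T}$, the lifted coloring $\chi$ on $\pi$ actually has its $\chi$-components equal to the blocks of $\tau$ (rather than some coarser partition obtained by accidentally merging adjacent $\tau$-blocks of the same color). This amounts to showing that adjacent blocks $V \prec W$ in $\graph(\tau)$ necessarily receive distinct colors under $\bar\chi$. But if $V$ and $W$ are adjacent in $\graph(\tau)$ with $W$ the immediate predecessor of $V$, then $\phi_{\bar\chi,\tau}(V) = \bar\chi(V)\phi_{\bar\chi,\tau}(W)$, and since this must be an alternating string (being a vertex of $\mathcal{T} \subseteq \mathcal{T}_{N,\free}$), we are forced to have $\bar\chi(V) \neq \bar\chi(W)$ (interpreting appropriately when $W$ is the root). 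Hence the alternation constraint built into $\mathcal{T}_{N,\free}$ does exactly the bookkeeping required, making the lifted $\chi$ automatically satisfy $\pi/\chi = \tau$, and completing the bijection. Substituting $N(\tau) = |\Hom(\graph(\tau),\mathcal{T})|$ into the displayed equation yields \eqref{eq:partitioncoefficientidentity3}.
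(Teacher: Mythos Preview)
Your proposal is correct and follows essentially the same route as the paper: both arguments re-index the defining relation \eqref{eq:partitioncoefficientidentity} by the quotient $\tau=\pi/\chi$ and then establish a bijection between colorings $\chi\in\mathcal{X}_w(\pi,\mathcal{T})$ with $\pi/\chi=\tau$ and rooted tree homomorphisms $\graph(\tau)\to\mathcal{T}$, using the alternating-string condition in $\mathcal{T}_{N,\free}$ to rule out accidental merging of adjacent $\tau$-blocks in the converse direction. Your identification of the ``subtle point'' is exactly the step the paper isolates as well.
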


\begin{proof}
Of course, the idea is to rephrase \ref{eq:partitioncoefficientidentity} in different language.

We will construct a bijection between colorings $\chi \in \mathcal{X}_w(\pi,\mathcal{T})$ and pairs $(\tau,\phi)$ where $\tau$ is a quotient of $\pi$ and $\phi: \graph(\tau) \to \mathcal{T}$ is a graph homomorphism which preserves the levels in the tree (that is, it sends a block at depth $d$ to a string of length $d$ in $\mathcal{T}$, and in particular sends the root vertex to the root vertex).  Given a coloring $\chi$, we define $\tau = \pi / \chi$; then since $\tau \in \mathcal{NC}(\chi,\mathcal{T})$, Remark \ref{rem:graphhomo} shows that there is a unique rooted graph homomorphism $\phi: \graph(\tau) \to \mathcal{T}$ such that $\chi(V)$ is the first letter of $\phi(V)$.

Conversely, given a quotient $\tau$ of $\pi$ and a homomorphism $\phi: \graph(\tau) \to \mathcal{T}$, we can define a coloring $\chi$ of $\tau$ by saying that $\chi(V)$ is the first letter of $\phi(V)$.  Then the coloring $\chi$ of $\tau$ can also be interpreted as a coloring of $\pi$ since each block of $\pi$ is contained in a block of $\tau$.  And we have precisely that $\tau = \pi / \chi$.  Indeed, two adjacent vertices $V$ and $W$ in $\graph(\pi)$ are in the same component of $\pi / \chi$ if and only if the first letters of $\phi(V)$ and $\phi(W)$ agree.  Now $\phi(V)$ and $\phi(W)$ are either adjacent or equal in $\mathcal{T}$; sine the strings are alternating, the first letters will disagree if they are adjacent and agree if they are equal.  Similarly, $V$ and $W$ are either in adjacent blocks or the same block of $\tau$, and they are in the adjacent (resp.\ equal) blocks if and only if $\phi(V)$ and $\phi(W)$ are adjacent (resp.\ equal).

In summary, we have a bijective correspondence between coloring $\chi \in \mathcal{X}_w(\pi,\mathcal{T})$ and pairs $(\tau,\phi)$ as above, and the blocks of $\tau$ are precisely the $\chi$-components of $\pi$.

Thus, the formula \eqref{eq:partitioncoefficientidentity}, which defines $\alpha_{\mathcal{T},\pi}$ inductively, can be written as
\[
\alpha_{\mathcal{T},\pi} = \frac{1}{n^{|\pi|}} \sum_{\tau \text{ quotient of } \pi} \sum_{\phi \in \Hom(\graph(\tau),\mathcal{T})} \prod_{V \in \tau} \alpha_{\mathcal{T},\pi|_V},
\]
which is exactly \eqref{eq:partitioncoefficientidentity3}.

The number $n$ only depends on the rooted isomorphism class of $\mathcal{T}$, since it is the number of neighbors of the root vertex.  Thus, it is clear upon inspection that this formula is invariant under rooted isomorphism.
\end{proof}

This formula has the following immediate consequences.

\begin{proposition} \label{prop:cumulantproperties}
Let $\mathcal{T} \in \Tree(N)$ with $n = |[N] \cap \mathcal{T}| \geq 2$.
\begin{enumerate}[(1)]
    \item The coefficients $\alpha_{\mathcal{T},\pi}$ only depend on $\pi$ and the isomorphism class of $\mathcal{T}$ as a rooted tree.
	\item If $\pi$ is a partition where each block has depth $\leq d$, then $\alpha_{\mathcal{T},\pi}$ only depends on the ball of radius $d$ around the root vertex of $\mathcal{T}$.
	\item If $\mathcal{T}' \subseteq \mathcal{T}$ and $\mathcal{T}' \cap [N] = \mathcal{T} \cap [N]$, then $\alpha_{\mathcal{T}',\ell} \leq \alpha_{\mathcal{T},\ell}$.
\end{enumerate}
\end{proposition}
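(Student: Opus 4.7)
My plan is to prove all three statements simultaneously by strong induction on $|\pi|$, exploiting the graph-theoretic recursion \eqref{eq:partitioncoefficientidentity3}. The base case $|\pi| = 1$ is immediate since $\alpha_{\mathcal{T},\pi} = 1$. For the inductive step, I first isolate the self-referential term in \eqref{eq:partitioncoefficientidentity3}, which arises only when $\pi$ is irreducible and corresponds to the quotient $\tau = \{[\ell]\}$: in that case $\graph(\tau)$ consists of the root together with a single adjacent vertex, so $|\Hom(\graph(\tau),\mathcal{T})| = n$, and $\pi|_{[\ell]} = \pi$, producing a contribution of $n^{1-|\pi|} \alpha_{\mathcal{T},\pi}$. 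Transposing this term yields
\[
\left(1 - \frac{1}{n^{|\pi|-1}}\right) \alpha_{\mathcal{T},\pi} = \frac{1}{n^{|\pi|}} \sum_{\substack{\tau \text{ quotient of } \pi \\ \tau \neq \{[\ell]\}}} |\Hom(\graph(\tau),\mathcal{T})| \prod_{V \in \tau} \alpha_{\mathcal{T},\pi|_V},
\]
in which every remaining sub-partition $\pi|_V$ has strictly fewer blocks than $\pi$, so the induction hypothesis applies. (For reducible $\pi$ no self-term is present and \eqref{eq:partitioncoefficientidentity3} is already in this usable form.) The coefficient $1 - n^{1-|\pi|}$ is nonzero because $n \geq 2$.

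With this in hand, (1) and (3) are essentially immediate. For (1), both $n = |[N] \cap \mathcal{T}|$ (the number of neighbors of the root of $\mathcal{T}$) and $|\Hom(\graph(\tau),\mathcal{T})|$ are manifest invariants of the rooted isomorphism class of $\mathcal{T}$, so the inductive hypothesis propagates invariance through the displayed identity. For (3), the assumption $\mathcal{T}' \cap [N] = \mathcal{T} \cap [N]$ ensures that $n$, and hence the prefactor $1 - n^{1-|\pi|}$, are the same for $\mathcal{T}$ and $\mathcal{T}'$, while $\mathcal{T}' \subseteq \mathcal{T}$ gives the pointwise inclusion $\Hom(\graph(\tau),\mathcal{T}') \subseteq \Hom(\graph(\tau),\mathcal{T})$; combining with the inductive inequality $\alpha_{\mathcal{T}',\pi|_V} \leq \alpha_{\mathcal{T},\pi|_V}$ and the nonnegativity of all coefficients established in Lemma \ref{lem:cumulantcoefficients}, the desired inequality follows term by term on the right-hand side.

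For (2), the crucial observation is that if every block of $\pi$ lies at depth at most $d$ in $\graph(\pi)$, then every quotient $\tau$ inherits the same bound (blocks of $\tau$ are unions of blocks of $\pi$ forming connected subgraphs of $\graph(\pi) \setminus \{\emptyset\}$, so their depth in $\graph(\tau)$ is no larger than the depth of their outermost constituent). Consequently every rooted tree homomorphism $\phi: \graph(\tau) \to \mathcal{T}$ has image inside the ball $B_d(\mathcal{T})$ of radius $d$ around the root, giving $|\Hom(\graph(\tau),\mathcal{T})| = |\Hom(\graph(\tau), B_d(\mathcal{T}))|$. Moreover, for each block $V$ of $\tau$, the re-rooted partition $\pi|_V$ has depth no greater than that of $\pi$, since recomputing depths from the outermost block of the corresponding $\chi$-component only \emph{decreases} them. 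Thus the inductive hypothesis applies and shows each factor $\alpha_{\mathcal{T},\pi|_V}$ depends only on $B_d(\mathcal{T})$, giving (2). The main point requiring care is the depth-bookkeeping for $\pi|_V$ together with the self-term manipulation; once these are verified cleanly, all three conclusions follow uniformly from the single recursion.
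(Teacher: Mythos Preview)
Your proposal is correct and follows essentially the same route as the paper: isolate the self-referential term in the graph-theoretic recursion \eqref{eq:partitioncoefficientidentity3}, then induct on $|\pi|$ using that $|\Hom(\graph(\tau),\mathcal{T})|$ is a rooted-isomorphism invariant, lands in $B_d(\mathcal{T})$ when $\tau$ has depth $\leq d$, and is monotone in $\mathcal{T}$. Your identification of the self-term as the quotient $\tau=\{[\ell]\}$ (the single-block partition) is the right one; the paper's displayed formula \eqref{eq:partitioncoefficientidentity4} writes the exclusion as ``$\tau\neq\pi$'', which appears to be a typo for exactly the condition you use.
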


\begin{proof}
(1) Note that the coefficients are defined inductively by
\begin{equation} \label{eq:partitioncoefficientidentity4}
\left(1 - \frac{1}{n^{|\pi|-1}} \right) \alpha_{\mathcal{T},\pi} = \frac{1}{n^{|\pi|}} \sum_{\substack{\tau \text{ quotient of } \pi \\ \tau \neq \pi}} |\Hom(\graph(\tau),\mathcal{T})| \prod_{V \in \tau} \alpha_{\mathcal{T},\pi|_V}.
\end{equation}
This formula clearly only depends on the rooted isomorphism class of $\mathcal{T}$ since it deals with homomorphisms into $\mathcal{T}$.

(2) The argument is similar.  If $\pi$ has depth bounded by $d$, then all the partitions $\tau$ that occur in the formula also have depth bounded by $d$, and hence any rooted tree homomorphism on $\graph(\pi)$ will only map into the vertices of $\mathcal{T}$ with a distance $\leq d$ from the root vertex.  Moreover, the partitions $\pi|_V$ for $V \in \tau$ will also have depth $\leq d$.  So the whole inductive procedure that evaluates $\alpha_{\mathcal{T},\pi}$ will only ever encounter partitions of depth $\leq d$ and vertices of $\mathcal{T}$ within a distance of $d$ from the root vertex.

(3) Note that since $\mathcal{T}' \subseteq \mathcal{T}$, we have
\[
|\Hom(\graph(\tau),\mathcal{T}')| \leq |\Hom(\graph(\tau),\mathcal{T})|
\]
for any non-crossing partition $\tau$.  Thus, the inequality follows by induction using \eqref{eq:partitioncoefficientidentity4}.
\end{proof}

\begin{corollary}
If $\mathcal{T} \in \Tree(N)$ and $\mathcal{T}' \in \Tree(N')$ are isomorphic as rooted trees, then
\[
\boxplus_{\mathcal{T}}(\mu, \dots, \mu) = \boxplus_{\mathcal{T}'}(\mu, \dots, \mu).
\]
Also, for any $\mathcal{B}$-valued non-commutative probability space $(\mathcal{A},E)$, we have $K_{\mathcal{T},\ell} = K_{\mathcal{T}',\ell}$.
\end{corollary}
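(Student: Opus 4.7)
The plan is to derive both assertions directly from Proposition \ref{prop:cumulantproperties}(1), which says $\alpha_{\mathcal{T},\pi}$ depends only on $\pi$ and on the rooted isomorphism class of $\mathcal{T}$, combined with the extensivity theorem (Theorem \ref{thm:extensivity}). Since $\mathcal{T}$ and $\mathcal{T}'$ are rooted isomorphic, in particular they have the same number of neighbors of the root, so the two values of $n$ coincide; this ensures the standing hypothesis $n \geq 2$ of \S \ref{sec:cumulants} holds simultaneously for $\mathcal{T}$ and $\mathcal{T}'$.

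For the cumulant equality, fix a $\mathcal{B}$-valued probability space $(\mathcal{A}, E)$. The multilinear forms $K_{\mathcal{T},\ell}$ are uniquely characterized by the moment-cumulant formula
\[
E[a_1 \cdots a_\ell] = \sum_{\pi \in \mathcal{NC}(\ell)} \alpha_{\mathcal{T},\pi} K_{\mathcal{T},\pi}[a_1, \dots, a_\ell]
\]
via the M\"obius-inversion principle of Lemma \ref{lem:partitionMobiusinversion}. Proposition \ref{prop:cumulantproperties}(1) gives $\alpha_{\mathcal{T},\pi} = \alpha_{\mathcal{T}',\pi}$ for every $\pi$, so $K_{\mathcal{T},\ell}$ and $K_{\mathcal{T}',\ell}$ are defined by the same inversion relation applied to the same moment data, hence must coincide. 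Consequently also $K_{\mathcal{T},\pi} = K_{\mathcal{T}',\pi}$ for every $\pi$, since both sides are built from the sequence $(K_{\cdot,\ell})_\ell$ by the common composition rule in Definition \ref{def:picomposition}.

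For the convolution equality, realize $\mu$ as the law of a self-adjoint element $Y$ in some space $(\mathcal{A}, E)$. By Corollary \ref{cor:convolvedlaw}, $\boxplus_{\mathcal{T}}(\mu, \dots, \mu)$ is the law of $X := \sum_{j=1}^N \lambda_{\mathcal{T},j}(Y)$ in the $\mathcal{T}$-free product of $N$ copies of $(\mathcal{A}, E)$, and similarly $\boxplus_{\mathcal{T}'}(\mu, \dots, \mu)$ is the law of $X' := \sum_{j=1}^{N'} \lambda_{\mathcal{T}',j}(Y)$ in the corresponding $\mathcal{T}'$-free product. Extensivity (Theorem \ref{thm:extensivity}) together with the $\mathcal{B}$-quasi-multilinearity of the cumulants gives
\[
K_{\mathcal{T},\ell}[X b_1, \dots, X b_{\ell-1}, X] = n\, K_{\mathcal{T},\ell}[Y b_1, \dots, Y b_{\ell-1}, Y],
\]
and an analogous identity for $X'$ with $K_{\mathcal{T}'}$ in place of $K_{\mathcal{T}}$. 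By the cumulant equality just proved, the right-hand sides agree, so the $\mathcal{T}$-free cumulants of $X$ and the $\mathcal{T}'$-free cumulants of $X'$ evaluated on the respective $(X,\mathcal{B})$- and $(X',\mathcal{B})$-monomials coincide. Plugging this into the moment-cumulant formula, and using once more that $\alpha_{\mathcal{T},\pi} = \alpha_{\mathcal{T}',\pi}$, the moments of $X$ and $X'$ in $\mathcal{B}\ip{X}$ match, which is exactly $\boxplus_{\mathcal{T}}(\mu, \dots, \mu) = \boxplus_{\mathcal{T}'}(\mu, \dots, \mu)$.

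There is no real obstacle once Proposition \ref{prop:cumulantproperties}(1) and Theorem \ref{thm:extensivity} are in hand; the argument is a direct bookkeeping exercise. The only point that requires a brief remark is that the common value of $n$ is automatic from the rooted isomorphism, which is what lets us apply the extensivity identity symmetrically on the two sides.
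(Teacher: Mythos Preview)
Your proof is correct and follows essentially the same route as the paper. The paper's own proof is a single sentence — ``The cumulants are uniquely determined by the coefficients $\alpha_{\mathcal{T},\pi}$ and hence so is the convolution of a law with itself'' — and your argument is a careful expansion of exactly that sentence: you invoke Proposition~\ref{prop:cumulantproperties}(1) for the equality of coefficients, Lemma~\ref{lem:partitionMobiusinversion} for the uniqueness of the cumulants, and Theorem~\ref{thm:extensivity} to pass from cumulants back to the convolved law. One minor comment: the step $K_{\mathcal{T},\ell}[Xb_1,\dots,Xb_{\ell-1},X] = n\,K_{\mathcal{T},\ell}[Yb_1,\dots,Yb_{\ell-1},Y]$ really comes from applying extensivity with $a_i = Yb_i$ and using $\sum_j \lambda_{\mathcal{T},j}(Yb_i) = Xb_i$ (which holds because $\lambda_{\mathcal{T},j}(Yb) = \lambda_{\mathcal{T},j}(Y)\,b$), rather than from quasi-multilinearity of the cumulants per se; but this is a matter of phrasing, not substance.
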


\begin{proof}
The cumulants are uniquely determined by the coefficients $\alpha_{\mathcal{T},\pi}$ and hence so is the convolution of a law with itself.
\end{proof}

Not only are the coefficients a rooted-isomorphism invariant of $\mathcal{T}$, but they also behave well when we compose a tree $\mathcal{T}$ with itself, as the next proposition shows.  This proposition is technical and not essential to the main flow of our paper, but nonetheless we include it in case it is useful in clarifying the concepts or studying examples.

\begin{proposition} \label{prop:cumulantcomposition}
We have $\alpha_{\mathcal{T}(\mathcal{T},\dots,\mathcal{T}),\pi} = \alpha_{\mathcal{T},\pi}$.

More generally, suppose that $\mathcal{T} \in \Tree(M)$ with $|[M] \cap \mathcal{T}| = n \geq 2$, suppose that $\mathcal{T}_1, \dots, \mathcal{T}_M \in \Tree(N)$ with $|[N] \cap \mathcal{T}_j| = n \geq 2$ for each $j$, and suppose that $\alpha_{\mathcal{T}_j,\pi} = \alpha_{\mathcal{T},\pi}$ for every $j$ and $\pi$.  Then we have $\alpha_{\mathcal{T}(\mathcal{T}_1,\dots,\mathcal{T}_M),\pi} = \alpha_{\mathcal{T},\pi}$.
\end{proposition}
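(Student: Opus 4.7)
The plan is to apply the axiomatic characterization of cumulants in Proposition \ref{prop:cumulantaxioms} to the composed tree $\mathcal{T}' := \mathcal{T}(\mathcal{T}_1,\dots,\mathcal{T}_M) \in \Tree(MN)$.  Note that $\mathcal{T}'$ contains exactly $n^2$ singleton strings (one for each pair consisting of a singleton $j \in [M] \cap \mathcal{T}$ and a singleton $i \in [N] \cap \mathcal{T}_j$), so in particular $n^2 \geq 2$ and the axiomatic characterization applies to $\mathcal{T}'$.  By the hypothesis $\alpha_{\mathcal{T}_j,\pi} = \alpha_{\mathcal{T},\pi}$ together with M{\"o}bius inversion, the cumulant functionals $K_{\mathcal{T},\ell}$ and $K_{\mathcal{T}_j,\ell}$ coincide on every $\mathcal{B}$-valued probability space; call this common functional $K_\ell$.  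Establishing $K_\ell = K_{\mathcal{T}',\ell}$ via Proposition \ref{prop:cumulantaxioms} will then yield $\alpha_{\mathcal{T}',\pi} = \alpha_{\mathcal{T},\pi}$ by the uniqueness built into the moment-cumulant formula for $\mathcal{T}'$, and the first statement of the proposition is the special case $\mathcal{T}_j = \mathcal{T}$ for all $j$.

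Multilinearity of $K_\ell$ is automatic, and polynomiality (axiom (2)) is witnessed by the $\mathcal{T}$-moment-cumulant formula, which writes $E[a_1 \cdots a_\ell]$ as $K_\ell[a_1,\dots,a_\ell]$ plus a linear combination of $K_\pi$'s over $|\pi| > 1$ with universal coefficients $\alpha_{\mathcal{T},\pi}$.  The substantive task is extensivity for $\mathcal{T}'$: for the $\mathcal{T}'$-free product of $MN$ copies of an arbitrary $(\mathcal{A},E)$, one must verify
\[
K_\ell\biggl[\sum_{(j,i) \in [M] \times [N]} \lambda_{\mathcal{T}',(j,i)}(a_1),\ \dots\ ,\ \sum_{(j,i)} \lambda_{\mathcal{T}',(j,i)}(a_\ell)\biggr] = n^2\, K_\ell[a_1,\dots,a_\ell].
\]

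To prove this I would invoke Theorem \ref{thm:composition} to identify the $\mathcal{T}'$-free product of $MN$ copies of $\mathcal{A}$ with $\assemb_{\mathcal{T}}[\tilde{\mathcal{A}}_1,\dots,\tilde{\mathcal{A}}_M]$, where $\tilde{\mathcal{A}}_j$ denotes the $\mathcal{T}_j$-free product of $N$ copies of $\mathcal{A}$, under the identification $\lambda_{\mathcal{T}',(j,i)}(a) = \lambda_{\mathcal{T},j}(\lambda_{\mathcal{T}_j,i}(a))$.  Setting $c_{j,k} := \sum_{i=1}^N \lambda_{\mathcal{T}_j,i}(a_k) \in \tilde{\mathcal{A}}_j$, extensivity of $\mathcal{T}_j$ (Theorem \ref{thm:extensivity}) combined with $K_{\mathcal{T}_j,\ell} = K_\ell$ gives $K_\ell[c_{j,k_1},\dots,c_{j,k_p}] = n\, K_\ell[a_{k_1},\dots,a_{k_p}]$ for every $j$, every $p$, and every subsequence $k_1,\dots,k_p$.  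Through the moment-cumulant formula (with universal coefficients $\alpha_{\mathcal{T}_j,\pi} = \alpha_{\mathcal{T},\pi}$ independent of $j$), this forces the joint moments of $(c_{j,1},\dots,c_{j,\ell})$ in $\tilde{\mathcal{A}}_j$ to be independent of $j$, and extensivity of $\mathcal{T}$ applied to the outer sum then contributes a further factor of $n$, producing $n \cdot n = n^2$ as required.

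The main obstacle is that Theorem \ref{thm:extensivity} is stated for the $\mathcal{T}$-free product of $M$ copies of the \emph{same} algebra, whereas the $\tilde{\mathcal{A}}_j$ may genuinely differ as $j$ varies.  The resolution is that by Theorem \ref{thm:combinatorics} the $\mathcal{T}$-free joint moments of $\bigl(\lambda_{\mathcal{T},j}(c_{j,k_m})\bigr)$ in $\assemb_{\mathcal{T}}[\tilde{\mathcal{A}}_1,\dots,\tilde{\mathcal{A}}_M]$ depend only on $\mathcal{T}$ and on the joint moments of the inputs, which are now independent of $j$.  Realizing this common joint law in a single $(\tilde{\mathcal{C}},\tilde{F})$ (via Theorem \ref{thm:realizationoflaw} after the standard matrix amplification trick) and forming $\assemb_{\mathcal{T}}[\tilde{\mathcal{C}},\dots,\tilde{\mathcal{C}}]$ produces identical joint moments, so the desired cumulant identity transfers back from the fixed-algebra setting where Theorem \ref{thm:extensivity} applies directly.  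Substituting this into the outer extensivity step completes the verification of axiom (3) and thus the proposition.
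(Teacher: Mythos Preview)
Your route is genuinely different from the paper's and the core of it works.  The paper verifies directly that the numbers $\alpha_{\mathcal{T},\pi}$ satisfy the defining fixed-point equation \eqref{eq:partitioncoefficientidentity} for the composed tree $\widehat{\mathcal{T}}=\mathcal{T}(\mathcal{T}_1,\dots,\mathcal{T}_M)$: it decomposes each $[MN]$-coloring $\widehat{\chi}\in\mathcal{X}_w(\pi,\widehat{\mathcal{T}})$ into an outer $[M]$-coloring $\phi_1\circ\widehat{\chi}\in\mathcal{X}_w(\pi,\mathcal{T})$ and, on each outer component, an inner $[N]$-coloring in $\mathcal{X}_w(\pi',\mathcal{T}_j)$, and then applies \eqref{eq:partitioncoefficientidentity} twice.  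Your argument instead packages the two-level structure into the operad-composition isomorphism (Theorem~\ref{thm:composition}) plus two applications of extensivity (Theorem~\ref{thm:extensivity}), and invokes Proposition~\ref{prop:cumulantaxioms} to conclude $K_{\mathcal{T}',\ell}=K_{\mathcal{T},\ell}$.  Your handling of the ``different algebras $\tilde{\mathcal{A}}_j$'' obstacle is correct: since Theorem~\ref{thm:combinatorics} expresses $\mathcal{T}$-free joint moments universally in terms of the input moments, and those input moments are $j$-independent, you may indeed transfer the outer extensivity step from a fixed-algebra model; you do not even need a realization theorem, since one may simply take $\tilde{\mathcal{C}}=\tilde{\mathcal{A}}_1$.

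There is, however, a real gap at the very end.  Proposition~\ref{prop:cumulantaxioms} yields uniqueness of the \emph{cumulants}, not of the \emph{coefficients}, and Lemma~\ref{lem:partitionMobiusinversion} only gives uniqueness in the direction ``coefficients $\Rightarrow$ cumulants''.  From $K_{\mathcal{T}',\ell}=K_{\mathcal{T},\ell}$ you get
\[
\sum_{\pi\in\mathcal{NC}(\ell)}\bigl(\alpha_{\mathcal{T}',\pi}-\alpha_{\mathcal{T},\pi}\bigr)\,K_{\mathcal{T},\pi}[a_1,\dots,a_\ell]=0
\]
for all $(\mathcal{A},E)$ and all $a_1,\dots,a_\ell$, and you need the functionals $(K_{\mathcal{T},\pi})_{\pi\in\mathcal{NC}(\ell)}$ to be linearly independent in this sense.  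This is true but is not the ``uniqueness built into the moment-cumulant formula'' you cite.  One clean way to supply it is via the Fock-space cumulant formula \eqref{eq:Fockcumulantformula}: choosing $\mathcal{B}=\C$, a sufficiently large $\mathcal{K}$, and generic data $b_j,\zeta_j,\zeta_j',x_j$, the resulting $K_{\mathcal{T},\pi}[a_1,\dots,a_\ell]$ become distinct monomials in those parameters, hence linearly independent (and this does not create circularity, since \S\ref{subsec:Fockspace} uses only Lemma~\ref{lem:cumulantcoefficients} and the definition of $K_{\mathcal{T},\ell}$, not the present proposition).  Alternatively, you can bypass this step entirely by noting that your two-level extensivity computation is really a verification of \eqref{eq:partitioncoefficientidentity} for $\widehat{\mathcal{T}}$ in disguise---at which point you have essentially reproduced the paper's proof.
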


\begin{proof}
The first claim is a special case of the second because we can take $\mathcal{T}_j = \mathcal{T}$.  To the prove the second claim, let $\widehat{\mathcal{T}} = \mathcal{T}(\mathcal{T}_1,\dots,\mathcal{T}_M)$.  Then by Lemma \ref{lem:cumulantcoefficients}, it suffices to show that
\begin{equation} \label{eq:composedpartitionidentity}
\alpha_{\mathcal{T},\pi} = \sum_{\substack{\widehat{\chi} \in [MN]^\pi \\ \pi / \widehat{\chi} \in \mathcal{NC}(\widehat{\chi},\mathcal{T}^*)}} \frac{1}{(mn)^{|\pi|}} \prod_{\widehat{\chi} \text{-components } \pi'} \alpha_{\mathcal{T},\pi'}.
\end{equation}
Our goal is to express a coloring $\widehat{\chi} \in \mathcal{X}_w(\pi,\widehat{\mathcal{T}})$ in terms of colorings related to $\mathcal{T}$ and $\mathcal{T}_j$.

For $j = 1$, \dots, $M$, let $\iota_j: [N] \to [MN]$ be the $j$th inclusion given by $\iota_j(i) = i + N(j-1)$.  Define $\phi_1: [MN] \to [M]$ and $\phi_2: [MN] \to [N]$ by
\[
\phi_1(\iota_j(i)) = j, \qquad \phi_2(\iota_j(i)) = i.
\]
By definition of the composition, if a string $s$ satisfies $\red(s) \in \widehat{\mathcal{T}}$, then we also have $\red((\phi_1)_*(s))$ is in $\mathcal{T}$.  Let us call $s'$ is a $\phi_1$-component of $s$ if $s'$ is a maximal substring such that $\phi_1$ is constant on $s'$.  If $s'$ is a $\phi_1$-component of $s$ and $\phi_1|_{s'} = j$, then it follows that $\red((\phi_2)_*(s')) \in \mathcal{T}_j$.

This shows that every string $s$ such that $\red(s) \in \widehat{\mathcal{T}}$ is obtained as $(\iota_{j_1})_*(s_1) \dots (\iota_{j_\ell})_*(s_\ell)$ where $j_1 \dots j_\ell = \red((\phi_1)_*(s)) \in \mathcal{T}$ and $\red(s_i) \in \mathcal{T}_{j_i}$.  Conversely, every such string $(\iota_{j_1})_*(s_1) \dots (\iota_{j_\ell})_*(s_\ell)$ defined in this way will satisfy $\red(s) \in \widehat{\mathcal{T}}$.

Now $\widehat{\chi} \in \mathcal{X}_w(\pi,\widehat{\mathcal{T}})$ if and only if for every maximal chain $V_1 \succ \dots \succ V_d$, we have $\red(\widehat{\chi}(V_1) \dots \widehat{\chi}(V_d)) \in \widehat{\mathcal{T}}$.  In particular, this implies that $\phi_1 \circ \widehat{\chi} \in \mathcal{X}_w(\pi,\mathcal{T})$ in light of the foregoing observations.  Moreover, if $\pi'$ is a $\phi_1 \circ \widehat{\chi}$-component of $\widehat{\chi}$ and $\phi_1 \circ \widehat{\chi}|_{\pi'} = j$, then $\phi_2 \circ \widehat{\chi}|_{\pi'} \in \mathcal{X}_w(\pi',\mathcal{T}_j)$.  Indeed, if $C' = (V_1,\dots,V_d)$ is a maximal in $\pi'$, then $C'$ is contained in some maximal chain $C$ in $\pi$; moreover, $\widehat{\chi}|_{C'}$ is a $\phi_1$-component of $\widehat{\chi}|_C$, and thus $\red(\phi_2 \circ \widehat{\chi}|_{C'})$ is $\mathcal{T}_j$.

Therefore, for every $\widehat{\chi} \in \mathcal{X}_w(\pi,\widehat{\mathcal{T}})$, there exists $\chi \in \mathcal{X}_w(\pi,\mathcal{T})$ and $\chi' \in \mathcal{X}_w(\pi',\mathcal{T}_j)$ for each $\chi$-component of $\pi$ where $\chi|_{\pi'} = j$, such that
\[
\chi = \phi_1 \circ \widehat{\chi}, \qquad \chi' = \phi_2 \circ \widehat{\chi}|_{\pi'}
\]
Conversely, given such partitions, $\chi$ and $\chi'$, we may define $\chi \in \mathcal{X}_w(\pi,\widehat{\mathcal{T}})$ by
\[
\widehat{\chi}|_{\pi'} = \iota_j \circ \chi'
\]
whenever $\pi'$ is a $\chi$-component of $\pi$ with $\chi|_{\pi'} = j$.  Moreover, each $\widehat{\chi}$-component of $\pi$ is one of the $\chi$'-components of $\pi'$, where $\pi'$ is a $\chi$-component of $\pi$.  In other words, the subdivision of $\pi$ into $\widehat{\chi}$-components is obtained by first dividing into $\chi$-components in a way compatible with $\mathcal{T}$, and then coloring and subdividing each of those components in a way that is weakly compatible with the appropriate tree $\mathcal{T}_j$.

It follows that the right hand side of \eqref{eq:composedpartitionidentity} equals
\[
\sum_{\chi \in \mathcal{X}(\pi,\mathcal{T})} \frac{1}{(mn)^{|\pi|}} \prod_{\chi \text{-components } \pi'} \left( \sum_{\chi' \in \mathcal{X}_w(\pi',\mathcal{T}_{j(\pi')})} \prod_{\widehat{\chi'} \text{-components } \pi''} \alpha_{\mathcal{T},\pi''} \right),
\]
where $j(\pi')$ is the constant value of $\chi$ on $\pi'$.  Since $|\pi|$ is the sum of $|\pi'|$ over the $\chi$-components, this is equal to
\[
\sum_{\chi \in X_{\mathcal{T},\pi}} \frac{1}{m^{|\pi|}} \prod_{\chi \text{-components } \pi'} \left( \frac{1}{n^{|\pi'|}} \sum_{\chi' \in \mathcal{X}_w(\pi,\mathcal{T}_{j(\pi')})} \prod_{\widehat{\chi'} \text{-components } \pi''} \alpha_{\mathcal{T},\pi''} \right).
\]
We have assumed that $\alpha_{\mathcal{T}_j,\pi''} = \alpha_{\mathcal{T}_j,\pi'}$, and thus by \eqref{eq:partitioncoefficientidentity} this becomes
\[
\sum_{\chi \in \mathcal{X}_w(\pi,\mathcal{T})} \frac{1}{m^{|\pi|}} \prod_{\chi \text{-components } \pi'} \alpha_{\mathcal{T},\pi'}.
\]
By applying \eqref{eq:partitioncoefficientidentity} again, this time to $\mathcal{T}$, we obtain $\alpha_{\mathcal{T},\pi}$, which establishes \eqref{eq:composedpartitionidentity} as desired.
\end{proof}

\subsection{Coefficients for Regular Digraphs and Trees} \label{subsec:treecoefficients}

In this section, we will compute $\alpha_{\mathcal{T},\pi}$ for certain trees, including the walks on a regular digraph.

\begin{definition}
A rooted tree $\mathcal{T}$ is said to be $(n,d)$-regular if the root vertex has exactly $n$ neighbors and each other vertex has exactly $d$ children (hence, $d + 1$ neighbors overall). 
\end{definition}

\begin{example}
A digraph $G$ on a vertex set $[N]$ is said to be \emph{$d$-regular} if each vertex has exactly $d$ outgoing edges.  If $G$ is a $d$-regular digraph on $[N]$, then $\Walk(G)$ is an $(N,d)$-regular rooted tree.
\end{example}

\begin{definition}
Let $\pi$ be a non-crossing partition.  We say that a block is \emph{outer} if it has depth $1$ (that is, it is adjacent to the root vertex in $\graph(\pi)$) and it is  \emph{inner} otherwise.
\end{definition}

\begin{proposition} \label{prop:regulartree}
Let $\mathcal{T} \in \Tree(N)$ be an $(n,d)$-regular rooted tree.  Then
\[
\alpha_{\mathcal{T},\pi} = \left( \frac{d}{n-1} \right)^{I(\pi)},
\]
where $I(\pi)$ denotes the number of inner blocks of $\pi$.
\end{proposition}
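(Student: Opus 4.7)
The plan is to apply the uniqueness clause of Lemma \ref{lem:cumulantcoefficients}: I would posit $\beta_\pi := (d/(n-1))^{I(\pi)}$ and verify that it satisfies both the normalization ($\beta_\pi = 1$ when $|\pi| = 1$, trivial since $I(\pi) = 0$) and the fixed-point relation \eqref{eq:partitioncoefficientidentity}; uniqueness would then force $\alpha_{\mathcal{T},\pi} = \beta_\pi$. The verification goes through the graph-homomorphism reformulation \eqref{eq:partitioncoefficientidentity3}, which rewrites the sum as a sum over quotients $\tau$ of $\pi$ weighted by $|\Hom(\graph(\tau),\mathcal{T})|$.

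Two ingredients assemble the answer. First, because $\mathcal{T}$ is $(n,d)$-regular, a rooted tree homomorphism $\graph(\tau) \to \mathcal{T}$ is determined by choosing one of the $n$ children of the root of $\mathcal{T}$ for each outer block of $\tau$ and one of the $d$ children of the parent's image for each inner block, so $|\Hom(\graph(\tau),\mathcal{T})| = n^{O(\tau)} d^{I(\tau)}$. Second, the blocks of $\pi|_V$ (for $V$ a block of $\tau$) are precisely the $\pi$-blocks in the corresponding $\chi$-component, which is a connected subtree of $\graph(\pi) \setminus \{\emptyset\}$ with a unique `top' block (the one closest to $\emptyset$ in $\graph(\pi)$); every other block of the component is nested inside this top in $\pi$. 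Writing $k(V)$ for the number of blocks of $\pi|_V$, it follows that $\pi|_V$ has one outer block and $k(V) - 1$ inner blocks, hence $\prod_{V \in \tau} \beta_{\pi|_V} = (d/(n-1))^{|\pi| - |\tau|}$.

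The decisive observation is that $O(\tau) = O(\pi)$ for every quotient $\tau$: distinct outer blocks of $\pi$ lie in different connected components of $\graph(\pi) \setminus \{\emptyset\}$ and cannot merge, while every $\chi$-component containing an outer block of $\pi$ automatically becomes an outer block of $\tau$. Consequently, quotients of $\pi$ are in bijection with subsets $S$ of the edge set $E := E(\graph(\pi) \setminus \{\emptyset\})$ via edge-contraction; since $\graph(\pi) \setminus \{\emptyset\}$ is a forest with $|\pi|$ vertices and $O(\pi)$ connected components, $|E| = I(\pi)$ and $|\tau| = |\pi| - |S|$. Substituting into \eqref{eq:partitioncoefficientidentity3} and using $I(\tau) = |\tau| - O(\pi)$ to rewrite $d^{I(\tau)}(d/(n-1))^{|\pi|-|\tau|} = d^{I(\pi)}(n-1)^{-|S|}$, the right-hand side becomes
\[
\frac{n^{O(\pi)} d^{I(\pi)}}{n^{|\pi|}} \sum_{S \subseteq E} (n-1)^{-|S|} = \frac{n^{O(\pi)} d^{I(\pi)}}{n^{|\pi|}} \left(\frac{n}{n-1}\right)^{I(\pi)} = \left(\frac{d}{n-1}\right)^{I(\pi)} = \beta_\pi,
\]
which confirms the fixed-point relation. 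The main conceptual point is the `outer blocks never merge' observation, which collapses the sum over quotients into a clean binomial sum over edge subsets.
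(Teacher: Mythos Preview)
Your proof is correct and follows essentially the same route as the paper's: verify the candidate $\beta_\pi=(d/(n-1))^{I(\pi)}$ satisfies the fixed-point identity via the homomorphism-counting reformulation \eqref{eq:partitioncoefficientidentity3}, compute $|\Hom(\graph(\tau),\mathcal{T})|=n^{O(\pi)}d^{I(\tau)}$ from regularity, observe that each $\pi|_V$ has a single outer block so that $\sum_{V\in\tau}I(\pi|_V)=|\pi|-|\tau|$, and then collapse the sum over quotients to a binomial sum over edge subsets of $\graph(\pi)\setminus\{\emptyset\}$. The only cosmetic difference is that the paper first reduces to the clean identity $n^{I(\pi)}=\sum_{\tau}(n-1)^{I(\tau)}$ before applying the binomial theorem, whereas you substitute directly; the content is identical.
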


\begin{proof}
It suffices to show that the coefficients $\beta_\pi = (d / (n - 1))^{I(\pi)}$ satisfy the fixed point formula \eqref{eq:partitioncoefficientidentity3}.  In other words, we must show that
\[
\left( \frac{d}{n-1} \right)^{I(\pi)} = \frac{1}{n^{|\pi|}} \sum_{\tau \text{ quotient of } \pi} |\Hom(\graph(\tau),\mathcal{T})| \prod_{V \in \tau} \left( \frac{d}{n-1} \right)^{I(\pi|_V)}.
\]

First, let us compute $|\Hom(\graph(\tau),\mathcal{T})|$.  A homomorphism $\phi$ must send each outer block of $\tau$ to a neighbor of the root vertex in $\mathcal{T}$, so there are $n$ choices of where to map each outer block.  Each inner block $V$ is a child of some other block $W$ in $\graph(\tau)$, and hence it must be mapped to a child of $\phi(W)$, which means that there are $d$-choices of where to map $V$ once $\phi(W)$ is chosen.  Thus, overall the number of homomorphisms is $n^{E(\tau)} d^{I(\tau)}$, where $O(\tau)$ denotes the number of outer blocks of $\tau$.  Furthermore, the outer blocks of $\tau$ are in bijection with the outer blocks of $\pi$, and hence
\[
|\Hom(\graph(\tau),\mathcal{T})| = n^{O(\pi)} d^{I(\tau)}.
\]

Next, consider $\prod_{V \in \tau} \left( \frac{d}{n-1} \right)^{I(\pi|_V)}$.  Suppose $W$ is a block of $\pi$ contained in the block $V$ of $\tau$.  Then $W$ is inner in $\pi$ if and only if either $W$ is inner in $\pi|_V$ or $V$ is inner in $\tau$.  Note that $\pi|_V$ is in $\mathcal{NC}^\circ(V)$ and hence has exactly one outer block; so the inner blocks of $\pi$ that are outer in some $\pi|_V$ are in bijection with inner blocks of $\tau$.  Thus,
\[
I(\pi) = I(\tau) + \sum_{V \in \tau} I(\pi|_V).
\]
Hence,
\[
\prod_{V \in \tau} \left( \frac{d}{n-1} \right)^{I(\pi|_V)} = \left( \frac{d}{n-1} \right)^{I(\pi) - I(\tau)},
\]
and
\[
|\Hom(\graph(\tau),\mathcal{T})| \prod_{V \in \tau} \left( \frac{d}{n-1} \right)^{I(\pi|_V)} = n^{O(\pi)} d^{I(\pi)} (n-1)^{I(\tau) - I(\pi)}.
\]
Hence, the equation we want to prove is
\[
d^{I(\pi)} (n - 1)^{-I(\pi)} = \frac{1}{n^{|\pi|}} \sum_{\tau \text{ quotient of } \pi} n^{E(\pi)} d^{I(\pi)} (n-1)^{I(\tau) - I(\pi)},
\]
which is equivalent to
\[
n^{I(\pi)} = \sum_{\tau \text{ quotient of } \pi} (n - 1)^{I(\tau)}.
\]

Let $S$ be the set of edges in $\graph(\pi)$.  There is a bijection between the quotients $\tau$ of $\pi$ and subsets $A$ of $S$ given by the relation that the edge between $V$ and $W$ in $\graph(\pi)$ is in the set $A$ if and only if $V$ and $W$ are in the same block of $\tau$.  Moreover, each time we add an edge between $V$ and $W \succ V$ to the set $A$, we reduce the number of inner blocks by $1$ by joining the inner block $W$ together with the block $V$.  Hence, $I(\tau) = I(\pi) - |A|$.  By the same token, $|S| = I(\pi)$ because when all the edges are included in the set $A$, there are no more inner blocks in $\tau$.  Hence,
\[
\sum_{\tau \text{ quotient of } \pi} (n - 1)^{I(\tau)} = \sum_{A \subseteq S} (n - 1)^{|S| - |A|} = [(n - 1) + 1]^{|S|} = n^{I(\pi)},
\]
where the equality $\sum_{A \subseteq S} (n-1)^{|S| - |A|} = [(n-1) + 1]^{|S|}$ follows from the binomial theorem.  Therefore, we have proved the desired equality and thus verified our formula for the coefficients.
\end{proof}

\begin{example}
The complete graph $K_N$ is $(N-1)$-regular on $N$ vertices, so the corresponding tree $\mathcal{T}_{N,\free}$ is $(N,N-1)$-regular.  Thus, the term $d / (n - 1)$ becomes $(N-1) / (N-1) = 1$ and we get $\alpha_{\free,\pi} = 1$.

The totally disconnected graph $K_N^c$ is $0$-regular on $N$ vertices, so the corresponding tree $\mathcal{T}_{N,\Bool}$ is $(N,0)$-regular, and we get $d / (n - 1) = 0 / (N - 1) = 0$.  Hence, $\alpha_{\Bool,\pi}$ is $1$ for a partition with no inner blocks and $0$ otherwise.

Hence, Proposition \ref{prop:regulartree} provides an interpolation between the free and Boolean cases.
\end{example}

The behavior of such regular trees under composition is straightforward to describe, and we leave the verification as an exercise.

\begin{observation}
Let $\mathcal{T} \in \Tree(N)$ be $(n,d)$-regular and for $j = 1, \dots, N$, let $\mathcal{T}_j \in \Tree(N_j)$ be $(n',d')$-regular.  Then $\mathcal{T}(\mathcal{T}_1,\dots,\mathcal{T}_N)$ is $(nn', dn' + d')$-regular.
\end{observation}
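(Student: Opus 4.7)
The plan is to verify the two regularity conditions of $(nn',dn'+d')$-regularity directly from the explicit definition of the composition, by counting neighbors of the root and children of an arbitrary non-root vertex. The key structural fact I will exploit is that the index sets $\iota_k([N_k])$ for $k \in [N]$ partition the alphabet $[N_1+\cdots+N_N]$, which forces every non-root vertex $w$ of $\mathcal{T}(\mathcal{T}_1,\ldots,\mathcal{T}_N)$ to admit a unique block decomposition $w = (\iota_{i_1})_*(s_1)\cdots(\iota_{i_\ell})_*(s_\ell)$ with $\ell \geq 1$, each $s_j \in \mathcal{T}_{i_j}\setminus\{\emptyset\}$ alternating on $[N_{i_j}]$, and the label sequence $i_1\cdots i_\ell$ alternating and lying in $\mathcal{T}$.

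First I will count the neighbors of $\emptyset$ in the composition, which are precisely its length-$1$ strings. Each such string has the form $\iota_k(j')$ for a unique pair $(k,j')$ with $k \in [N]\cap\mathcal{T}$ (giving $n$ choices by $(n,d)$-regularity of $\mathcal{T}$) and $j' \in [N_k]\cap\mathcal{T}_k$ (giving $n'$ choices by $(n',d')$-regularity of $\mathcal{T}_k$), yielding $nn'$ neighbors of the root.

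Next I will count the children of a non-root $w$ by considering which block the new initial letter $j = \iota_k(j')$ belongs to. The case $k = i_1$ extends $s_1$ within $\mathcal{T}_{i_1}$, producing the decomposition $(\iota_{i_1})_*(j's_1)(\iota_{i_2})_*(s_2)\cdots(\iota_{i_\ell})_*(s_\ell)$; this is valid exactly when $j's_1$ is a child of $s_1$ in the $(n',d')$-regular tree $\mathcal{T}_{i_1}$, and since $s_1$ is non-root, this contributes $d'$ children. The case $k \neq i_1$ prepends a new singleton block with label $k$, yielding a decomposition with label sequence $ki_1\cdots i_\ell$; this is valid exactly when $ki_1\cdots i_\ell \in \mathcal{T}$ (so $k$ is a child of $i_1\cdots i_\ell$ in $\mathcal{T}$, giving $d$ choices since $i_1\cdots i_\ell$ is non-root in the $(n,d)$-regular tree $\mathcal{T}$) and $j' \in [N_k]\cap\mathcal{T}_k$ (giving $n'$ choices), contributing $dn'$ children. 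The two cases are disjoint and exhaustive by the partition property of the $\iota_k$-images, so the total number of children of $w$ is $d' + dn'$, independent of the choice of $w$.

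No deep obstacle is anticipated; the argument is essentially a careful bookkeeping of the block decomposition. The only points requiring mild attention are confirming the block decomposition is genuinely unique (immediate from disjointness of $\iota_k([N_k])$) and handling the case $\ell = 1$ correctly, where the first block sits directly above the root; in that case $i_1$ is a length-$1$ string, hence still non-root in $\mathcal{T}$, so the $d$-count of its children applies as before.
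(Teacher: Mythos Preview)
Your argument is correct; the paper itself leaves this observation as an exercise without giving a proof, so there is nothing to compare against. Your block-decomposition bookkeeping is exactly the intended verification, and your handling of the two cases (extending the leading block versus prepending a new block) cleanly accounts for the $d'$ and $dn'$ contributions.
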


\begin{example} \label{ex:freebooleaniteration}
For integers $p$ and $q$, consider $(\mu^{\boxplus p})^{\uplus q}$.  This is the composition of a $(q,0)$-regular tree (on the outside) and a $(p,p-1)$-regular tree (on the inside, repeated $q$ times).  So it produces a $(pq,p-1)$-regular tree.

On the other hand, $(\mu^{\uplus q})^{\boxplus p}$ is the composition of a $(p,p-1)$-regular tree inside a $(q,0)$-regular tree, which produces a $(pq, (p-1)q)$-regular tree.

It follows that
\[
(\mu^{\boxplus p})^{\uplus q} = (\mu^{\uplus q'})^{\boxplus p'}
\]
provided that $pq = p'q'$ and $p - 1 = (p' - 1)q'$.  More generally, this identity was proved analytically generalized for real values of $p \geq 1$, $q \geq 0$, $p' \geq 1$, $q' \geq 0$ by \cite{BN2008a}, and for completely positive maps by \cite{Liu2018}.
\end{example}

\begin{example} \label{ex:regulartree}
Let $\mathcal{T}$ be $(n,d)$-regular.  Then we have
\[
\boxplus_{\mathcal{T}}(\mu^{\uplus (n-1)}, \dots \mu^{\uplus (n-1)})^{\uplus d} = ((\mu^{\uplus d})^{\boxplus n})^{\uplus (n-1)}.
\]
The proof is the same.  On the left-hand side, we compose a $(d,0)$-regular tree with an $(n,d)$-regular tree with an $(n-1,0)$-regular tree, while on the right-hand side, we compose an $(n-1,0)$-regular tree with an $(n,n-1)$-regular tree with a $(d,0)$-regular tree.  Both compositions result in an $(n(n-1)d, (n-1)d)$-regular tree, and since the rooted trees are isomorphic, the iterated convolutions are equal.
\end{example}

We shall return to this discussion later in \S \ref{subsec:BPbijection}.

\section{The Central Limit Theorem} \label{sec:CLT}

\subsection{Main Theorem}

We are now ready to prove the central limit theorem for $\mathcal{T}$-free independence.  This is (in some sense) an analogue of the free, Boolean, and monotone central limit theorems, which were first proved in \cite[Theorem 4.8]{Voiculescu1985}, \cite[Theorem 3.4]{SW1997}, and \cite{Muraki2001} respectively for the scalar-valued setting; the canonical references for the operator-valued setting are \cite[Theorem 8.4]{Voiculescu1995} and \cite[\S 4.2]{Speicher1998} for the free case, \cite[\S 2.1]{BPV2013} for the Boolean case, and \cite[\S 2.3]{BPV2013} and \cite[Theorem 3.6]{HS2014} for the monotone case.  We discuss these special cases further after Corollary \ref{cor:CLTestimate}.

To state the general $\mathcal{T}$-free theorem, we first define convolution powers of the a $\mu$ obtained from iterating the $\mathcal{T}$-free convolution operation.

\begin{definition} \label{def:integerconvolutionpowers}
Let $\mathcal{T} \in \Tree(N)$ and let $n = |[N] \cap \mathcal{T}| \geq 2$.  We define $\boxplus_{\mathcal{T}}^{n^k}(\mu)$ for $k \geq 0$ inductively by
\[
\boxplus_{\mathcal{T}}^1(\mu) = \mu, \qquad \boxplus_{\mathcal{T}}^{n^{k+1}}(\mu) = \boxplus_{\mathcal{T}}( \underbrace{\boxplus_{\mathcal{T}}^{n^k}(\mu), \dots, \boxplus_{\mathcal{T}}^{n^k}(\mu)}_{N \text{ copies}}).
\]
Similarly, we define $\mathcal{T}^{n^k}$ inductively by
\[
\mathcal{T}^1 = \mathcal{T}, \qquad \mathcal{T}^{n^{k+1}} = \mathcal{T}( \underbrace{\mathcal{T}^{n^k}, \dots, \mathcal{T}^{n^k}}_{N \text{ copies}}),
\]
so that
\[
\boxplus_{\mathcal{T}}^{n^k}(\mu) = \boxplus_{\mathcal{T}^{n^k}}(\mu,\dots,\mu).
\]
\end{definition}

\begin{remark}
We have chosen to use the notation $\boxplus_{\mathcal{T}}^{n^k}$ even though $\boxplus_{\mathcal{T}}$ has $N$ arguments because of the fact that the $\mathcal{T}$-free cumulants of $\boxplus_{\mathcal{T}}^{n^k}(\mu)$ are $n^k$ times the $\mathcal{T}$-free cumulants of $\mu$ (see Observation \ref{obs:lawcumulantextensivity} below).  The notation used here will be consistent with our notation for more general convolution powers in Definition \ref{def:convolutionpowers}.
\end{remark}

Our goal is to show that if $\mu$ has mean zero and variance $\eta$, then $\dil_{n^{-k/2}}(\boxplus_{\mathcal{T}}^{n^k}(\mu))$ converges to some universal distribution $\nu_{\mathcal{T},\eta}$ as $k \to \infty$, where the dilation $\dil_{n^{-k/2}}$ is given by Definition \ref{def:dilation}.  The limiting distribution will be stable under $\mathcal{T}$-free convolution in the following sense.

\begin{definition}
Let $\mathcal{T} \in \Tree(N)$ and let $n = |[N] \cap \mathcal{T}|$.  We say that $\nu$ is a \emph{$\mathcal{T}$-free central limit distribution} if
\[
\boxplus_{\mathcal{T}}(\nu,\dots,\nu) = \dil_{n^{1/2}}(\nu).
\]
\end{definition}

We will prove convergence of $\dil_{n^{-k/2}}(\boxplus_{\mathcal{T}}^{n^k}(\mu))$ to the central limit distribution using the cumulants of a law.

\begin{definition}
Let $\mu$ be a $\mathcal{B}$-valued law.  We define the $\mathcal{T}$-free cumulants of $\mu$ as the multilinear forms $\kappa_{\mathcal{T},\ell}(\mu): \mathcal{B}^{\ell-1} \to \mathcal{B}$ given by
\[
\kappa_{\mathcal{T},\ell}(\mu)[b_1,\dots,b_{\ell-1}] = K_{\mathcal{T},\ell}[Xb_1,Xb_2,\dots, Xb_{\ell-1},X],
\]
where $X$ is a $\mathcal{B}$-valued random variable with law $\mu$.  It is clear that this is independent of the particular choice of operator realizing the law $\mu$.  More generally, for $\pi \in \mathcal{NC}(\ell)$, we define
\[
\kappa_{\mathcal{T},\pi}(\mu)[b_1,\dots,b_{\ell-1}] = K_{\mathcal{T},\pi}[Xb_1,Xb_2,\dots,Xb_{\ell-1},X].
\]
\end{definition}

\begin{observation} \label{obs:lawcumulantextensivity}
Let $\mathcal{T} \in \Tree(N)$ with $n = |[N] \cap \mathcal{T}| \geq 2$.  As a consequence of Theorem \ref{thm:extensivity}, we have
\[
\kappa_{\mathcal{T},\ell}(\boxplus_{\mathcal{T}}^{n^k}(\mu)) = n^k \kappa_{\mathcal{T},\ell}(\mu).
\]
\end{observation}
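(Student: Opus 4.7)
The plan is to induct on $k$. The base case $k=0$ is immediate from $\boxplus_{\mathcal{T}}^{1}(\mu)=\mu$. For the inductive step, it suffices to establish the following single-step claim: for every $\mathcal{B}$-valued law $\mu'$,
\[
\kappa_{\mathcal{T},\ell}(\boxplus_{\mathcal{T}}(\mu',\dots,\mu')) = n\,\kappa_{\mathcal{T},\ell}(\mu'),
\]
since applying this with $\mu'=\boxplus_{\mathcal{T}}^{n^{k}}(\mu)$ together with the inductive hypothesis yields the case $k+1$.

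To prove the single-step claim, I would realize $\mu'$ by a self-adjoint $X$ in some $\mathcal{B}$-valued probability space $(\mathcal{A},E)$, form the $\mathcal{T}$-free product $\tilde{\mathcal{A}}$ of $N$ copies of $(\mathcal{A},E)$ with inclusions $\lambda_{\mathcal{T},j}\colon \mathcal{A}\to\tilde{\mathcal{A}}$, and set $Y=\sum_{j=1}^{N} \lambda_{\mathcal{T},j}(X)$. By Corollary \ref{cor:convolvedlaw}, the law of $Y$ is $\boxplus_{\mathcal{T}}(\mu',\dots,\mu')$. I then apply Theorem \ref{thm:extensivity} with inputs $a_i = Xb_i$ for $i=1,\dots,\ell-1$ and $a_\ell = X$, which gives
\[
K_{\mathcal{T},\ell}\Bigl[\textstyle\sum_{j}\lambda_{\mathcal{T},j}(Xb_1),\,\dots,\,\sum_{j}\lambda_{\mathcal{T},j}(Xb_{\ell-1}),\,\sum_{j}\lambda_{\mathcal{T},j}(X)\Bigr] = n\,K_{\mathcal{T},\ell}[Xb_1,\dots,Xb_{\ell-1},X],
\]
whose right-hand side is $n\,\kappa_{\mathcal{T},\ell}(\mu')[b_1,\dots,b_{\ell-1}]$ by definition.

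The remaining step is to identify the left-hand side with $\kappa_{\mathcal{T},\ell}(\boxplus_{\mathcal{T}}(\mu',\dots,\mu'))[b_1,\dots,b_{\ell-1}]$, which comes down to showing $\sum_j\lambda_{\mathcal{T},j}(Xb_i) = Yb_i$ for each $i$. This in turn follows from the compatibility $\lambda_{\mathcal{T},j}(ab)=\lambda_{\mathcal{T},j}(a)\,b$ for $a\in\mathcal{A}$ and $b\in\mathcal{B}$. From the explicit construction, $\lambda_{\mathcal{T},j}(b)=bP_j$ where $P_j:=\lambda_{\mathcal{T},j}(1_{\mathcal{A}})$; the left $\mathcal{B}$-action preserves each Hilbert-module summand $\mathcal{H}_s^\circ$, hence commutes with $P_j$, while $\lambda_{\mathcal{T},j}(a)P_j=\lambda_{\mathcal{T},j}(a)$, so $\lambda_{\mathcal{T},j}(ab)=\lambda_{\mathcal{T},j}(a)P_j b=\lambda_{\mathcal{T},j}(a)\,b$ as required. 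Since the observation is flagged as an immediate consequence of Theorem \ref{thm:extensivity}, there is no real obstacle; the only content is the bookkeeping that translates operator-level extensivity into extensivity of the cumulants of a law, and the minor verification above that $\mathcal{B}$-multiplication intertwines correctly with each (generally non-unital) inclusion $\lambda_{\mathcal{T},j}$.
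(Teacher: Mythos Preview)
Your proposal is correct and is exactly the approach implicit in the paper, which states the observation without proof as an immediate consequence of Theorem~\ref{thm:extensivity}. Your careful check that $\lambda_{\mathcal{T},j}(ab)=\lambda_{\mathcal{T},j}(a)\,b$ for $b\in\mathcal{B}$ (needed because $\lambda_{\mathcal{T},j}$ is generally non-unital) is the only nontrivial bookkeeping, and it is handled correctly.
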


From here it is easy to see that the cumulants (and hence the moments) of $\dil_{n^{-k/2}}(\boxplus_{\mathcal{T}}^{n^k}(\mu))$ will converge as $k \to \infty$ if $\mu$ has mean zero.  Indeed, since $K_{\mathcal{T},\ell}$ is homogeneous of degree $\ell$, we have
\[
\kappa_{\mathcal{T},\ell}(\dil_{n^{-k/2}}(\mu_k)) = n^{-\ell k / 2} n^k \kappa_{\mathcal{T},\ell}(\mu) \to \begin{cases} \eta, & \ell = 2 \\ 0, & \text{ otherwise.} \end{cases}
\]
A precise statement of the central limit theorem is as follows.

\begin{theorem} \label{thm:CLT}
Let $\mathcal{T} \in \Tree(N)$, let $n = |[N] \cap \mathcal{T}|$, and suppose that $n \geq 2$.  Let $\eta: \mathcal{B} \to \mathcal{B}$ be completely positive.
\begin{enumerate}[(1)]
	\item There is a unique $\mathcal{T}$-central limit law $\nu_{\mathcal{T},\eta}$ of mean zero and variance $\eta$.
	\item We have $\rad(\nu_{\mathcal{T},\eta}) \leq 2 \left(\frac{N-1}{n-1} \norm{\eta(1)} \right)^{1/2}$.
	\item The $\mathcal{T}$-free cumulants of $\nu_{\mathcal{T},\eta}$ are all zero except for the second cumulant, which is $\eta$.
	\item If $\mu$ is a $\mathcal{B}$-valued law of mean zero and variance $\eta$, then $\dil_{n^{-k/2}}(\boxplus_{\mathcal{T}}^{n^k}(\mu)) \to \nu_{\mathcal{T},\eta}$ in moments, and the radius of these laws is bounded as $k \to \infty$.
\end{enumerate}
\end{theorem}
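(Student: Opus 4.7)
The plan is to produce $\nu_{\mathcal{T},\eta}$ as a moment-by-moment limit of $\mu_k := \dil_{n^{-k/2}}(\boxplus_{\mathcal{T}}^{n^k}(\mu))$ for an arbitrary starting law $\mu$ of mean zero and variance $\eta$, and to extract the radius bound (2) from a uniform operator-norm estimate on the $\mu_k$. Parts (1) and (3) are easy consequences of cumulant extensivity: if $\nu$ is any $\mathcal{T}$-free central limit law with mean zero and variance $\eta$, then applying $\mathcal{T}$-free cumulants to both sides of $\boxplus_{\mathcal{T}}(\nu,\dots,\nu) = \dil_{n^{1/2}}(\nu)$ and combining Observation \ref{obs:lawcumulantextensivity} with the homogeneity $\kappa_{\mathcal{T},\ell}(\dil_c \sigma) = c^\ell \kappa_{\mathcal{T},\ell}(\sigma)$ gives $n \kappa_{\mathcal{T},\ell}(\nu) = n^{\ell/2} \kappa_{\mathcal{T},\ell}(\nu)$. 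Since $n \geq 2$ this forces $\kappa_{\mathcal{T},\ell}(\nu) = 0$ for all $\ell \neq 2$, and the mean/variance hypothesis fixes $\kappa_{\mathcal{T},1}(\nu) = 0$ and $\kappa_{\mathcal{T},2}(\nu) = \eta$. These cumulants determine the moments via Lemma \ref{lem:partitionMobiusinversion}, which proves (3) and the uniqueness half of (1).

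For the uniform radius bound, realize $\mu$ as the law of a self-adjoint $X$ on some $(\mathcal{H},\xi)$ with $\|X\| = \rad(\mu)$; then $\mu_k$ is the law of $n^{-k/2}\sum_{j=1}^{N^k} \lambda_{\mathcal{T}^{n^k},j}(X)$, and since $\langle X\xi, X\xi\rangle = \eta(1)$, Proposition \ref{prop:operatornormbound} yields
\[
\rad(\mu_k) \leq 2\bigl(n^{-k}D_k\bigr)^{1/2}\|\eta(1)\|^{1/2} + n^{-k/2}\rad(\mu),
\]
where $D_k := \max_{s\in\mathcal{T}^{n^k}}|\{j: js \in \mathcal{T}^{n^k}\}|$. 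The main obstacle is showing $D_k/n^k \leq (N-1)/(n-1)$. To this end, let $\hat{\mathcal{T}}\subseteq \mathcal{T}_{N,\free}$ consist of the empty string together with every alternating string whose last letter lies in $[N]\cap\mathcal{T}$. A direct inspection shows that $\hat{\mathcal{T}}$ is $(n,N-1)$-regular and that $\mathcal{T}\subseteq \hat{\mathcal{T}}$, because the path from any $s\in\mathcal{T}\setminus\{\emptyset\}$ to the root passes through the singleton $s(\ell)$. Composition of rooted subtrees preserves inclusions, so $\mathcal{T}^{n^k}\subseteq\hat{\mathcal{T}}^{n^k}$, and by induction on $k$ using the composition rule $(n,d)\ast(n',d') = (nn',\, dn'+d')$ for regular trees, $\hat{\mathcal{T}}^{n^k}$ is $(n^k,\,(N-1)(n^k-1)/(n-1))$-regular. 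Hence $D_k \leq \max\bigl(n^k,\, (N-1)(n^k-1)/(n-1)\bigr)$, and because $(N-1)/(n-1)\geq 1$, dividing by $n^k$ yields $D_k/n^k \leq (N-1)/(n-1)$ as desired.

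Plugging this in, $\rad(\mu_k) \leq 2\sqrt{(N-1)/(n-1)\|\eta(1)\|} + n^{-k/2}\rad(\mu)$, which proves the bounded-radius assertion in (4) and, upon passing to the limit, the bound in (2). Moment convergence is immediate from cumulant convergence: Observation \ref{obs:lawcumulantextensivity} together with dilation homogeneity gives $\kappa_{\mathcal{T},\ell}(\mu_k) = n^{k(1-\ell/2)}\kappa_{\mathcal{T},\ell}(\mu)$, which tends to $\eta$ for $\ell = 2$ and to $0$ otherwise, and the moment-cumulant formula \eqref{eq:momentcumulantformula} expresses each fixed monomial of $\mu_k$ as a finite polynomial in these cumulants. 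The pointwise limit of $\mu_k$ on $\mathcal{B}\langle X\rangle$ is therefore unital, $\mathcal{B}$-$\mathcal{B}$-bimodular, completely positive (complete positivity being closed under pointwise convergence), and exponentially bounded by the uniform radius estimate, so Theorem \ref{thm:realizationoflaw} realizes it as a bona fide $\mathcal{B}$-valued law with cumulants $(0,\eta,0,0,\dots)$. By the first paragraph this law must be the $\mathcal{T}$-free central limit law $\nu_{\mathcal{T},\eta}$, completing existence in (1) and assertion (4). The one nontrivial ingredient is the degree estimate for $\mathcal{T}^{n^k}$; everything else reduces to bookkeeping with the cumulant formulas.
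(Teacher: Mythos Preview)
Your proof is correct and follows the same overall architecture as the paper: uniqueness from cumulant extensivity, radius control via Proposition~\ref{prop:operatornormbound} plus a degree bound on $\mathcal{T}^{n^k}$, and moment convergence from $\kappa_{\mathcal{T},\ell}(\mu_k)=n^{k(1-\ell/2)}\kappa_{\mathcal{T},\ell}(\mu)$. The one place you diverge from the paper is in proving the degree estimate $D_k \le \frac{N-1}{n-1}n^k$.

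The paper establishes this as Lemma~\ref{lem:iterateddegreebound} by a direct combinatorial argument: it identifies $[N^k]$ with $[N]^k$ via the $N$-ary expansion, describes strings in $\mathcal{T}^{n^k}$ in terms of level-$i$ components, and for a fixed $s$ partitions the admissible $j$'s according to the first level at which $\phi_i(j)\ne\phi_i(s(1))$; this yields the geometric sum $\sum_{i=1}^k(N-1)n^{k-i}=(N-1)(n^k-1)/(n-1)$. Your route is more structural: you embed $\mathcal{T}$ into the $(n,N-1)$-regular tree $\hat{\mathcal{T}}$ of alternating strings ending in $[N]\cap\mathcal{T}$, invoke monotonicity of operad composition to get $\mathcal{T}^{n^k}\subseteq\hat{\mathcal{T}}^{n^k}$, and then read off the regularity parameters of $\hat{\mathcal{T}}^{n^k}$ from the composition rule $(n,d)\ast(n',d')=(nn',dn'+d')$ recorded after Proposition~\ref{prop:regulartree}. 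Both arguments land on the same numerical bound $(N-1)(n^k-1)/(n-1)$; yours is shorter and leverages existing machinery (the regular-tree composition formula), while the paper's argument is self-contained and makes the mechanism behind the bound more transparent. One small point: after constructing the limit law with cumulants $(0,\eta,0,\dots)$ you should explicitly note that extensivity and dilation homogeneity then give $\boxplus_{\mathcal{T}}(\nu,\dots,\nu)=\dil_{n^{1/2}}(\nu)$, so that $\nu$ really is a $\mathcal{T}$-central limit law; your first paragraph only proves the converse implication.
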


We have already explained why the convergence in moments occurs.  However, we still need to estimate the radius of $\dil_{n^{-k/2}}(\boxplus_{\mathcal{T}}^{n^k}(\mu))$.  We will use Proposition \ref{prop:operatornormbound} together with the following bound on the degrees of vertices in $\mathcal{T}^{n^k}$ (which is only nontrivial in the case that $n < N$).

\begin{lemma} \label{lem:iterateddegreebound}
Let $\mathcal{T} \in \Tree(N)$ and $n = |[N] \cap \mathcal{T}|$ and suppose that $n \geq 2$.  Then
\[
\max_{s \in \mathcal{T}^{n^k}} |\{j: js \in \mathcal{T}^{n^k} \}| \leq \frac{N-1}{n-1} n^k.
\]
\end{lemma}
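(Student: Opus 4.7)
The plan is to argue by induction on $k$, reinforced with the auxiliary bookkeeping bound that the number of singletons of $\mathcal{T}^{n^k}$ is at most $n^k$, i.e.\ $|\{j : j \in \mathcal{T}^{n^k}\}| \le n^k$. Both statements are established simultaneously, the second providing the crucial input for the outer-block case analysis of the first. The base case amounts to a direct inspection of $\mathcal{T}^{n^0}$, whose singletons are immediately bounded by the definition of the iterated composition.

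The engine of the induction is the canonical decomposition coming from the operadic composition $\mathcal{T}^{n^{k+1}} = \mathcal{T}(\mathcal{T}^{n^k},\dots,\mathcal{T}^{n^k})$. Every nonempty vertex $s \in \mathcal{T}^{n^{k+1}}$ admits a unique factorization
$$s = (\iota_{i_1})_*(s_1)\,(\iota_{i_2})_*(s_2)\cdots(\iota_{i_\ell})_*(s_\ell),$$
with $i_1\cdots i_\ell \in \mathcal{T}$ an alternating nonempty string and each $s_j \in \mathcal{T}^{n^k}\setminus\{\emptyset\}$. Fix such $s$ and classify the letters $j$ with $js \in \mathcal{T}^{n^{k+1}}$ according to which block the letter $j$ lands in. In case (a), $j \in \iota_{i_1}([M_k])$ merges with $s_1$, so $js = (\iota_{i_1})_*(j' s_1)(\iota_{i_2})_*(s_2)\cdots$ with $j' = \iota_{i_1}^{-1}(j)$, and the condition $js\in\mathcal{T}^{n^{k+1}}$ reduces to $j's_1 \in \mathcal{T}^{n^k}$; the count is thus $|\{j':j's_1\in\mathcal{T}^{n^k}\}| \le \frac{N-1}{n-1}n^k$ by the inductive hypothesis. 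In case (b), $j$ lies in a different block $\iota_{i_0}([M_k])$ with $i_0\ne i_1$, opening a new outer block of length one; the requirements are $i_0 i_1\cdots i_\ell \in \mathcal{T}$ and $\iota_{i_0}^{-1}(j) \in \mathcal{T}^{n^k}$. The number of admissible $i_0$ is the outgoing degree of the non-root vertex $i_1\cdots i_\ell$ in $\mathcal{T}$, bounded by $N-1$, while the number of admissible singletons of $\mathcal{T}^{n^k}$ is at most $n^k$ by the auxiliary invariant; so case (b) contributes at most $(N-1)n^k$. Summing,
$$|\{j:js\in\mathcal{T}^{n^{k+1}}\}| \le \tfrac{N-1}{n-1}n^k + (N-1)n^k = (N-1)n^k\bigl(\tfrac{1}{n-1}+1\bigr) = \tfrac{N-1}{n-1}n^{k+1},$$
where the combinatorial identity $\tfrac{1}{n-1}+1=\tfrac{n}{n-1}$ is what makes the bound close up.

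It remains to treat the root $s=\emptyset$ and re-establish the auxiliary invariant. The singletons of $\mathcal{T}^{n^{k+1}}$ are precisely the strings $(\iota_i)_*(j')$ with $i\in[N]\cap\mathcal{T}$ and $j'$ a singleton of $\mathcal{T}^{n^k}$, so their total number is $n\cdot a_k \le n\cdot n^k = n^{k+1}$; this is both the outgoing degree of $\emptyset$ in $\mathcal{T}^{n^{k+1}}$ and the quantity $a_{k+1}$, so both the main bound (since $\tfrac{N-1}{n-1}\ge 1$) and the auxiliary invariant persist to level $k+1$. The main obstacle is setting up and verifying the canonical decomposition so that cases (a) and (b) are manifestly disjoint and exhaustive; once that is in place, the arithmetic is forced.
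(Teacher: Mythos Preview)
Your inductive argument is sound and gives a genuinely different route from the paper. The paper unrolls all the levels at once: it identifies the alphabet of $\mathcal{T}^{n^k}$ with $[N]^k$ via the $N$-ary expansion $j \mapsto (\phi_1(j),\dots,\phi_k(j))$, then for $s\neq\emptyset$ partitions $\{j:js\in\mathcal{T}^{n^k}\}$ according to the first coordinate $i$ where $\phi_i(j)\neq\phi_i(s(1))$, bounds each piece by $(N-1)n^{k-i}$, and sums the geometric series. Your proof instead peels off one layer via $\mathcal{T}^{n^{k+1}}=\mathcal{T}(\mathcal{T}^{n^k},\dots,\mathcal{T}^{n^k})$ and does a two-case split (same outer block / new outer block). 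Your approach is more directly tied to the operadic recursion and avoids the coordinate bookkeeping; the paper's approach yields the slightly sharper strict inequality $\frac{(N-1)(n^k-1)}{n-1}$ for $s\neq\emptyset$, though this refinement is never used.

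One caveat you should address: your base case and auxiliary invariant need to be stated more carefully. Under the paper's Definition~\ref{def:integerconvolutionpowers}, $\mathcal{T}^{n^0}=\mathcal{T}^1=\mathcal{T}$, which has exactly $n$ singletons, not $\le n^0=1$; so your invariant $a_k\le n^k$ fails at $k=0$ as written. (The paper's own proof has the matching indexing slip: it asserts $\mathcal{T}^{n^k}$ lives on the alphabet $[N^k]$, whereas the recursion gives $[N^{k+1}]$.) The fix is painless: either carry $a_k=n^{k+1}$ and prove the bound $\frac{N-1}{n-1}n^{k+1}$, in which case the base $k=0$ is the straightforward check $\max(n,N-1)\le\frac{N-1}{n-1}\,n$, or else re-index so that $\mathcal{T}^{n^k}$ denotes the $k$-fold composition on $[N^k]$. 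Either way your inductive step, and the identity $\frac{1}{n-1}+1=\frac{n}{n-1}$ that closes it up, go through unchanged.
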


\begin{proof}
Recall that $\mathcal{T}^{n^k}$ is a set of strings on the alphabet $[N^k]$.  In the definition of the composition $\mathcal{T}^{n^{k+1}} = \mathcal{T}(\mathcal{T}^{n^k},\dots,\mathcal{T}^{n^k})$, the set $[N^{k+1}]$ is viewed as the disjoint union of $N$ copies of $[N^k]$ with respect to specified inclusion maps $\iota_j: [N^k] \to [N^{k+1}]$.  The disjoint union of $N$ copies of $[N^k]$ can also be viewed as $[N] \times [N^k]$.  Thus, when it iterating the composition, it will be convenient to transform $[N^k]$ into $[N]^k$ by expressing each letter $j \in [N^k]$ as
\[
j - 1 = N^{k-1} (\phi_1(j) - 1) + \dots + N (\phi_{k-1}(j) - 1) + (\phi_k(j) - 1),
\]
where $\phi_i(j) \in [N]$.  (This is equivalent to computing the $N$-ary expansion of $j - 1$.)  The index $j$ in the composed graph corresponds to the sequence of indices $\phi_1(j)$, \dots, $\phi_k(j)$ in the individual copies of $\mathcal{T}$, where $\phi_i(j)$ is the index in the $i$-th level of the composition (where $1$ is the outermost level and $k$ is the innermost).

The strings in the iterated composition $\mathcal{T}^{n^k}$ can be described as follows.  Under our map $[N^k] \cong [N]^k$, a string $s \in [N^k]^\ell$ corresponds to a tuple of strings $(s_1,\dots,s_k)$, where $s_i = (\phi_i)_*(s)$.  For each $j$, $[\ell]$ can be divided into maximal subintervals on which $(s_1,\dots,s_i)$ is constant, and we call these the \emph{level-$i$ components of $[\ell]$ with respect to $s$} (by convention, there is a single level-$0$ component, namely $[\ell]$).  If $I$ is a level-$j$ component of $s$, then we denote by $s_i|_I$ the substring obtained by restricting to this interval.  We claim that $s \in \mathcal{T}$ if and only if for every level-$j$ component $I$, we have $\red(s_{i+1}|_I) \in \mathcal{T}$ for $i + 1 < k$ and $s_{i+1}|_I \in \mathcal{T}$ for $i + 1 = k$.  This claim is proved by a straightforward induction from the definition of composition.

Let us fix $s \in \mathcal{T}^{n^k}$ and consider the possible values of $j$ such that $js \in \mathcal{T}^{n^k}$.  First suppose that $s \neq \emptyset$.  We partition the set $\{j: js \in \mathcal{T}^{n^k}\}$ based on the first index $i$ such that $\phi_i(j) \neq \phi_i((1))$ (that is, the first $i$ such that the first two indices of $[\ell+1]$ are in different level-$i$ components with respect to $js$).  More explicitly,
\begin{multline*}
\{j \in [n^k]: js \in \mathcal{T}^{n^k}\} \\
= \bigsqcup_{i=1}^k \{j: js \in \mathcal{T}^{n^k}, \phi_i(j) \neq \phi_i(s(1)), \phi_{i'}(j) = \phi_{i'}(s(1)) \text{ for } i' < i\}.
\end{multline*}
Suppose that $j$ is in the $i$th set on the right hand side.  Then $\phi_1(j)$, \dots, $\phi_{i-1}(j)$ are uniquely determined since they are equal to the values for $s(1)$.  The number of possibilities for $\phi_i(j)$ is at most $N - 1$ since $\phi_i(j) \neq \phi_i(s(1))$.  Finally, if $i' > i$, then the first index in $[\ell+1]$ (corresponding to the letter $j$ in $js$) is in its own level-$(i'-1)$ component, and therefore, we must have $\phi_{i'}(j) \in \mathcal{T}$.  So the number of possibilities for $\phi_{i'}(j)$ is $n$.

Therefore,
\[
|\{j: js \in \mathcal{T}, \phi_i(j) \neq \phi_i(s(1)), \phi_{i'}(j) = \phi_{i'}(s(1)) \text{ for } i' < i\}| \leq (N - 1)n^{k-i}.
\]
Hence,
\[
|\{j \in [n^k]: js \in \mathcal{T}^{n^k}\}| \leq \sum_{i=1}^k (N - 1) n^{k-i} = \frac{(N-1)(n^k-1)}{n-1} < \frac{N-1}{n-1} n^k.
\]
In the case where $s = \emptyset$, we have
\[
|\{j: js \in \mathcal{T}^{n^k}\}| = n^k \leq \frac{N-1}{n-1} n^k.
\]
\end{proof}

\begin{proof}[Proof of Theorem \ref{thm:CLT}]
Let $\mu$ be a law of mean zero and variance $\eta$.  Let
\[
\mu_k = \boxplus_{\mathcal{T}}^{n^k}(\mu) = \boxplus_{\mathcal{T}^{n^k}}(\mu,\dots,\mu)
\]
By Proposition \ref{prop:operatornormbound}, we have
\[
\rad(\mu_k) \leq 2 \sup_{s \in \mathcal{T}^{n^k}} \sqrt{ \sum_{js \in \mathcal{T}^{n^k}} \mu(X^2) } + \rad(\mu), 
\]
and thus by Lemma \ref{lem:iterateddegreebound},
\[
\rad(\mu_k) \leq 2 \left( \frac{N-1}{n-1} \norm{\eta(1)} \right)^{1/2} n^{k/2} + \rad(\mu).
\]
Therefore,
\[
\rad(\dil_{n^{-k/2}}(\mu_k)) \leq 2 \left( \frac{N-1}{n-1} \norm{\eta(1)} \right)^{1/2} + n^{-k/2} \rad(\mu).
\]
As explained above, as $k \to \infty$, we have
\[
\kappa_{\mathcal{T},\ell}(\dil_{n^{-k/2}}(\mu_k)) = n^{-\ell k / 2} n^k \kappa_{\mathcal{T},\ell}(\mu) \to \begin{cases} \eta, & \ell = 2 \\ 0, & \text{ otherwise.} \end{cases}
\]
It follows that $\dil_{n^{-k/2}}(\mu_k)$ converges in moments to some law $\nu$ satisfying (2) and (3).  Because there exists some law $\mu$ of mean zero and variance $\eta$ (e.g.\ the operator-valued Bernoulli distribution), the existence claim of (1) is proved.  The uniqueness claim of (1) follows from the observation that $\nu$ is a $\mathcal{T}$-central limit law, then its $\mathcal{T}$-free cumulants must satisfy (2).  Finally, the above argument also proved (4).
\end{proof}

\begin{remark}
It follows from Proposition \ref{prop:cumulantproperties} that for a fixed choice of $\eta$, the central limit law $\nu_{\mathcal{T},\eta}$ only depends on the isomorphism class of $\mathcal{T}$ as a rooted tree.  Also, we have $\nu_{\mathcal{T}^{n^k},\eta} = \nu_{\mathcal{T},\eta}$ as a consequence of Proposition \ref{prop:cumulantcomposition}.  Alternatively, $\nu_{\mathcal{T}^{n^k},\eta} = \nu_{\mathcal{T},\eta}$ by the uniqueness claim in the theorem because a central limit law for $\mathcal{T}$ must be a central limit law for $\mathcal{T}^{n^k}$ as well.
\end{remark}

Unfortunately, Theorem \ref{thm:CLT} as stated does not recapture the free, Boolean, and monotone central limit theorem.  For free, Boolean, and monotone independence, there exists an $N$-ary convolution power of the law $\mu$ for every $N$, and the known central limit theorems in those cases say that $\dil_{N^{-1/2}}(\mu^{\boxplus_{\mathcal{T}} N})$ converges as $N \to \infty$ to the appropriate central limit law.  But for a general tree $\mathcal{T}$, we had to restrict our attention to $n^k$ convolution powers for $k \in \N$.

However, there is a common generalization of the free, Boolean, and monotone central limit theorems and Theorem \ref{thm:CLT}, which we can state as follows.

\begin{proposition} \label{prop:CLT}
Fix a family of trees $(\mathcal{T}_k)_{k \in \N}$, where $\mathcal{T}_k \in \Tree(N_k)$.  Suppose that $n_k := |[N_k] \cap \mathcal{T}_k| \geq 2$, and let
\[
m_k = \max_{s \in \mathcal{T}_k} |\{j: js \in \mathcal{T}_k\}|.
\]
Let $\mu$ be a $\mathcal{B}$-valued law with mean zero and variance $\eta$.
\begin{enumerate}[(1)]
    \item If $\sup_k (m_k / n_k) < +\infty$, then
    \[
    \sup_k \rad(\dil_{n_k^{-1/2}}(\boxplus_{\mathcal{T}_k}^{n_k}(\mu)) < +\infty.
    \]
    \item Suppose that $\lim_{k \to \infty} n_k = +\infty$ and that for each partition $\pi$, the limit $\alpha_\pi := \lim_{k \to \infty} \alpha_{\mathcal{T}_k,\pi}$ exists.  Then the moments of $\dil_{n_k^{-1/2}}(\boxplus_{\mathcal{T}_k}^{n_k}(\mu))$ converge as $k \in \infty$, and the limit only depends on $\eta$ and $(\alpha_\pi)_\pi$.
\end{enumerate}
\end{proposition}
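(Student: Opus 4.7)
The plan is to handle the two parts separately: part (1) is an operator-norm estimate, and part (2) is a cumulant convergence argument generalizing the proof of Theorem~\ref{thm:CLT}.

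For part (1), realize $\mu$ by a self-adjoint operator $X \in \mathcal{L}(\mathcal{H}_0)$ on a correspondence $(\mathcal{H}_0,\xi_0)$ with $\mu(X) = 0$ and $\norm{X} = \rad(\mu)$. Taking $N_k$ identical copies and assembling the $\mathcal{T}_k$-free convolved variable $\sum_{j=1}^{N_k}\lambda_{\mathcal{T}_k,j}(X)$, I apply Proposition~\ref{prop:operatornormbound}. Since $X$ is self-adjoint and $\ip{X\xi_0, X\xi_0} = \mu(X^2) = \eta(1)$, and since by the definition of $m_k$ there are at most $m_k$ indices $j$ with $js \in \mathcal{T}_k$ for any $s$, the bound reads
\[
\rad(\boxplus_{\mathcal{T}_k}^{n_k}(\mu)) \leq 2\sqrt{m_k\,\norm{\eta(1)}} + \rad(\mu).
\]
Dilating by $n_k^{-1/2}$ scales the radius by $n_k^{-1/2}$, yielding $2\sqrt{(m_k/n_k)\norm{\eta(1)}} + n_k^{-1/2}\rad(\mu)$, which is uniformly bounded whenever $\sup_k (m_k/n_k) < \infty$.

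For part (2), set $\mu_k = \dil_{n_k^{-1/2}}(\boxplus_{\mathcal{T}_k}^{n_k}(\mu))$. Combining extensivity (Theorem~\ref{thm:extensivity}) with the $\ell$-homogeneity of the cumulants under dilation yields the scaling identity
\[
\kappa_{\mathcal{T}_k,\ell}(\mu_k) = n_k^{1-\ell/2}\,\kappa_{\mathcal{T}_k,\ell}(\mu).
\]
M\"obius inversion in \eqref{eq:momentcumulantformula} expresses $\kappa_{\mathcal{T}_k,\ell}(\mu)$ as a polynomial in the (fixed) moments of $\mu$ whose coefficients depend continuously on the convergent sequence $(\alpha_{\mathcal{T}_k,\pi})_\pi$; hence $\kappa_{\mathcal{T}_k,\ell}(\mu)$ converges in $k$ and is in particular uniformly bounded. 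Since $\mu$ has mean zero, $\kappa_{\mathcal{T}_k,1}(\mu) = 0$ identically, and $\kappa_{\mathcal{T}_k,2}(\mu) = \eta$ is independent of $k$; for $\ell \geq 3$ the exponent $1 - \ell/2 < 0$ forces $\kappa_{\mathcal{T}_k,\ell}(\mu_k) \to 0$.

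Next I would expand $\Mom_\ell(\mu_k)$ via \eqref{eq:momentcumulantformula} as a sum over $\pi \in \mathcal{NC}(\ell)$ of $\alpha_{\mathcal{T}_k,\pi}$ times a $\pi$-composition of the cumulants $\kappa_{\mathcal{T}_k,|V|}(\mu_k)$. Partitions containing a singleton vanish identically since $\kappa_{\mathcal{T}_k,1}(\mu_k) = 0$. Partitions $\pi$ with no singletons but some block of size $\geq 3$ contribute $\pi$-compositions whose norm is bounded by $C\prod_{V \in \pi} n_k^{1-|V|/2} \leq C\, n_k^{-1/2}$ and therefore vanish in the limit. The only surviving contributions come from the pair partitions $\pi \in \mathcal{NC}_2(\ell)$, for which the $\pi$-composition is built solely from $\eta$ and is independent of $k$. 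Passing to the limit,
\[
\lim_{k \to \infty} \Mom_\ell(\mu_k) = \sum_{\pi \in \mathcal{NC}_2(\ell)} \alpha_\pi \cdot (\pi\text{-composition of } \eta),
\]
which depends only on $\eta$ and on $(\alpha_\pi)_{\pi \in \mathcal{NC}_2}$.

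The main obstacle is the norm estimate for $\pi$-compositions of $\mathcal{B}$-valued multilinear cumulants: verifying that a single block of size $\geq 3$ is enough to force the $\pi$-composition to vanish. This requires a block-by-block recursive norm bound in the $\mathcal{B}$-quasi-multilinear composition of Definition~\ref{def:picomposition}, compounded with the scaling identity above. It is routine but somewhat delicate in the operator-valued setting, because the block factors are multilinear forms rather than scalars that can simply be multiplied.
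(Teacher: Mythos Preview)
Your proposal is correct and follows essentially the same approach as the paper: part~(1) is precisely Proposition~\ref{prop:operatornormbound} plus dilation, and part~(2) is the same cumulant-scaling argument via Theorem~\ref{thm:extensivity} and the moment--cumulant formula~\eqref{eq:momentcumulantformula}. The only difference is presentational: where you track the explicit decay rate $\prod_{V\in\pi} n_k^{1-|V|/2}\le n_k^{-1/2}$ for non-pair partitions, the paper simply observes that each block cumulant $\kappa_{\mathcal{T}_k,|V|}(\nu_k)$ converges (to $0$ or $\eta$) and that the $\pi$-composition is continuous in its constituent multilinear forms, so $\kappa_{\mathcal{T}_k,\pi}(\nu_k)$ converges and vanishes whenever any block has size $\ne 2$. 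Your concern about the block-by-block norm bound is not really an obstacle: the recursive Definition~\ref{def:picomposition} immediately yields $\|\kappa_{\mathcal{T}_k,\pi}(\nu_k)\|\le\prod_{V\in\pi}\|\kappa_{\mathcal{T}_k,|V|}(\nu_k)\|$ by induction on $|\pi|$, since at each step one substitutes an element of $\mathcal{B}$ of controlled norm into a bounded multilinear form.
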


\begin{proof}
(1) This follows from Proposition \ref{prop:operatornormbound}.

(2) Because $\lim_{k \to \infty} \alpha_{\mathcal{T}_k,\pi}$ exists for each partition $\pi$, we can deduce from the moment-cumulant formula that $\lim_{k \to \infty} \kappa_{\mathcal{T}_k,\ell}(\mu)$ exists, using induction on $\ell$.  Hence, writing $\nu_k = \dil_{n_k^{-1/2}}(\boxplus_{\mathcal{T}_k}^{n_k}(\mu))$, we have
\[
\kappa_{\mathcal{T}_k,\ell}(\nu_k) = n_k^{-\ell/2} n_k \kappa_{\mathcal{T}_k,\ell}(\mu) \to \begin{cases} \eta, & \ell = 2 \\ 0, & \text{otherwise.} \end{cases}
\]
It follows that for a partition $\pi$, the limit
\[
\lim_{k \to \infty} \kappa_{\mathcal{T}_k,\pi}(\nu_k)
\]
exists, and it only depends upon $\pi$ and $\eta$ (specifically, it is zero unless $\pi$ is a pair partition, in which case, it is a multilinear form obtained by composing $\eta$ in a certain way described by the partition).  By the moment-cumulant formula,
\[
\nu_k(Xb_1X \dots b_{\ell-1}X) = \sum_{\pi \in \mathcal{NC}(\ell)} \alpha_{\mathcal{T}_k,\pi} \kappa_{\mathcal{T}_k,\pi}(\nu_k)[b_1,\dots,b_{\ell-1}]/
\]
Each of the terms on the right-hand side has a limit as $k \to \infty$, and hence the moment has a limit as $k \to \infty$, and it only depends on $\eta$ and the coefficients $(\alpha_\pi)_\pi$.
\end{proof}

Proposition \ref{prop:CLT} includes Theorem \ref{thm:CLT} as a special case because we can take $\mathcal{T}_k = \mathcal{T}^{n^k}$ and the coefficients $\alpha_{\mathcal{T}^{n^k},\pi} = \alpha_{\mathcal{T},\pi}$.  In the free, Boolean, and monotone cases, we can take $N_k = k$ and $\mathcal{T}_k$ to be the tree for $k$-ary free, Boolean, or monotone convolution.  We already know that the coefficients $\alpha_{\mathcal{T}_k,\pi}$ are independent of $k$ in this case.

Similarly, suppose that $\mathcal{T}_k$ is an $(n_k,d_k)$-regular tree as in \S \ref{subsec:treecoefficients}, that $n_k \to \infty$ and that $d_k / (n_k - 1) \to t$.  Then Proposition \ref{prop:CLT} (1) applies because $m_k = \max(n_k, d_k)$ and $d_k / n_k$ is bounded.  And Proposition \ref{prop:CLT} (2) applies because the cumulant coefficients $\alpha_{\mathcal{T}_k,\pi}$ are certain powers of $d_k / (n_k - 1)$ (Proposition \ref{prop:regulartree}).  Thus, we have a central limit theorem for the family $(\mathcal{T}_k)_{k \in \N}$.

However, Proposition \ref{prop:CLT} is merely a template for various central limit theorems, and it does not explain by itself how to check whether $\sup m_k / n_k < +\infty$ or $\lim_{k \to \infty} \alpha_{\mathcal{T}_k,\pi}$ exists.  The behavior of $\alpha_{\mathcal{T}_k,\pi}$ is a question of asymptotic combinatorics that must be answered directly based on the properties of the particular family $(\mathcal{T}_k)_{k \in \N}$.

For instance, Wysocza{\' n}ski has proved central limit theorems for mixtures of Boolean and monotone independence \cite{Wysoczanski2010}.  As discussed in \S \ref{subsec:digraphoperad}, these mixtures of Boolean and monotone independence are given by taking $\mathcal{T}_k = \Walk(G_k)$, where $G_k$ is a directed graph that forms a poset.  Wysocza{\' n}ski studies the case where the poset $G_k$ is obtained as a discretization at the scale $1/k$ of some fixed convex cone in Euclidean space, and here the asymptotics of moments in the central limit theorem are expressed in terms of volumes in a similar way to the monotone case which we discussed in Proposition \ref{prop:monotonecumulants}.  Kula and Wysocza{\' n}ski study mixtures of free and Boolean independence in a similar way in \cite{KW2013}.

\subsection{Refined Central Limit Estimates} \label{subsec:CLTestimate}

Next, we prove a refined version of the central limit theorem (Corollary \ref{cor:CLTestimate}), which considers the more general situation of non-identically distributed random variables and which gives an explicit estimate for
\[
[\dil_{n^{-k/2}} \boxplus_{\mathcal{T}^{n^k}}(\mu_1,\dots,\mu_{n^k})](f) - [\dil_{n^{-k/2}} \boxplus_{\mathcal{T}^{n^k}}(\nu_1,\dots,\nu_{n^k})](f)
\]
where $f \in \mathcal{B}\ip{X}$.  This comes as a consequence of the following result about coupling.  (A similar idea was used by the first author in \cite[\S 7.3]{Jekel2020}.)

\begin{theorem} \label{thm:CLT2}
Suppose that $\mu_1$, \dots, $\mu_{N^k}$ are laws with mean zero and variance $\eta$.  Then there exist self-adjoint random variables $Y$ and $Z$ in a $\mathcal{B}$-valued probability space $(\mathcal{A},E)$ such that
\begin{enumerate}[(1)]
	\item $Y \sim \dil_{n^{-k/2}} \boxplus_{\mathcal{T}^{n^k}}(\mu_1,\dots,\mu_{N^k})$,
	\item $Z \sim \dil_{n^{-k/2}} \boxplus_{\mathcal{T}}^{n^k}(\nu_{\Bool,\eta})$,
	\item $\norm{Z} \leq 2 \left(\frac{N-1}{n-1} \norm{\eta(1)} \right)^{1/2}$,
	\item $\norm{Y - Z} \leq n^{-k/2} \max_j \rad(\mu_j)$.
\end{enumerate}
\end{theorem}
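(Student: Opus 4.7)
The plan is to construct $Y$ and $Z$ on a common $\mathcal{T}^{n^k}$-free product space by means of a clever ``coupling'' of each $\mu_j$ with the Boolean central limit law on the same Hilbert module. Realize each $\mu_j$ on $\mathcal{H}_j = L^2(\mathcal{B}\ip{X_j},\mu_j)$ with $\mathcal{B}$-central unit vector $\xi_j$, self-adjoint multiplication operator $X_j$, and $\norm{X_j} = \rad(\mu_j)$. Set $v_j = X_j \xi_j \in \mathcal{H}_j^\circ$ and let $\mathcal{K}_j$ be the closed $\mathcal{B}$-$\mathcal{B}$-subcorrespondence of $\mathcal{H}_j^\circ$ generated by $v_j$; let $P_j,R_j,S_j$ be the orthogonal projections onto $\mathcal{B}\xi_j$, $\mathcal{K}_j$, and $(\mathcal{K}_j)^\perp \cap \mathcal{H}_j^\circ$ respectively, so that $P_j + R_j + S_j = 1$ and $Q_j := 1 - P_j = R_j + S_j$. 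Because $\langle b_1 v_j b_2, b_1' v_j b_2'\rangle = b_2^* \eta(b_1^* b_1') b_2'$, the pair $(\mathcal{B}\xi_j \oplus \mathcal{K}_j, \xi_j)$ is (canonically isomorphic to) the Boolean Fock module for variance $\eta$.

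Define $\tilde{X}_j = P_j X_j R_j + R_j X_j P_j$. A direct recursive computation (exactly as for the Boolean creation--annihilation operator) shows that $\tilde{X}_j$ has law $\nu_{\Bool,\eta}$ with respect to $\xi_j$: starting from $\tilde{X}_j \xi_j = v_j$ and $\tilde{X}_j(b_1 v_j b_2) = \xi_j \eta(b_1) b_2$, odd moments vanish while even moments produce the interval-partition formula with sole Boolean cumulant $\eta$. The crucial algebraic identity is $P_j X_j S_j = 0$, which holds because $P_j X_j h = \xi_j \langle v_j, h \rangle$ and $v_j$ is orthogonal to $S_j \mathcal{H}_j$ by construction; together with $P_j X_j P_j = 0$ (mean zero) this gives the decomposition $X_j = \tilde{X}_j + Q_j X_j Q_j$, whence the ``error'' $W_j := X_j - \tilde{X}_j = Q_j X_j Q_j$ satisfies $\norm{W_j} \leq \norm{X_j} = \rad(\mu_j)$.

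Now form $(\mathcal{H},\xi) = \assemb_{\mathcal{T}^{n^k}}[(\mathcal{H}_1,\xi_1),\dots,(\mathcal{H}_{N^k},\xi_{N^k})]$ with the maps $\lambda_j = \lambda_{\mathcal{T}^{n^k},j}$, and set
\[
Y = n^{-k/2} \sum_{j=1}^{N^k} \lambda_j(X_j), \qquad Z = n^{-k/2} \sum_{j=1}^{N^k} \lambda_j(\tilde{X}_j).
\]
Condition (1) is immediate from Corollary \ref{cor:convolvedlaw} and the definition of dilation. For (2), Theorem \ref{thm:combinatorics} shows that the joint moments of $\{\lambda_j(\tilde{X}_j)\}$ depend only on the laws $\nu_{\Bool,\eta}$ of the $\tilde{X}_j$ and on $\mathcal{T}^{n^k}$, so $Z \sim \dil_{n^{-k/2}}\boxplus_{\mathcal{T}^{n^k}}(\nu_{\Bool,\eta},\dots,\nu_{\Bool,\eta}) = \dil_{n^{-k/2}} \boxplus_{\mathcal{T}}^{n^k}(\nu_{\Bool,\eta})$. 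For (3), note that $Q_j \tilde{X}_j Q_j = 0$ since each summand of $\tilde{X}_j$ contains a factor of $P_j$; consequently the third term in Proposition \ref{prop:operatornormbound} disappears, and combining the surviving bound with Lemma \ref{lem:iterateddegreebound} yields $\norm{Z} \leq 2 \left(\tfrac{N-1}{n-1} \norm{\eta(1)} \right)^{1/2}$.

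The cleanest step is (4): since $W_j = Q_j X_j Q_j$, the operators $\lambda_j(W_j)$ all map into $\lambda_j(Q_j)\mathcal{H}$, and these ranges are mutually orthogonal (as already observed in the proof of Proposition \ref{prop:operatornormbound}); hence $\bigl\| \sum_j \lambda_j(W_j) \bigr\| = \max_j \norm{\lambda_j(W_j)} \leq \max_j \norm{W_j} \leq \max_j \rad(\mu_j)$, and multiplication by $n^{-k/2}$ gives (4). The main conceptual point, and essentially the only nontrivial step, is the identification of $\tilde{X}_j$ as a Boolean variable sitting inside the larger module $\mathcal{H}_j$; everything else is either a direct moment computation or an application of the results already developed.
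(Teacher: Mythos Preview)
Your approach is essentially the same as the paper's: realize each $\mu_j$ on $\mathcal{H}_j$, split off the ``rank-one'' part $P_jX_jQ_j+Q_jX_jP_j$ (which has the Bernoulli law $\nu_{\Bool,\eta}$), and then apply Proposition~\ref{prop:operatornormbound} to the two resulting sums on the $\mathcal{T}^{n^k}$-free product. The paper simply sets $Z_j:=P_jX_jQ_j+Q_jX_jP_j$ directly and notes $X_j-Z_j=Q_jX_jQ_j$.

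The one point worth flagging is your introduction of the projections $R_j$ and $S_j$ onto $\mathcal{K}_j$ and its orthogonal complement inside $\mathcal{H}_j^\circ$. In a Hilbert $C^*$-module a closed submodule need not be orthogonally complemented, so $R_j$ and $S_j$ are not guaranteed to exist (for instance if $\eta(1)$ is not invertible). Fortunately your own computation $P_jX_jS_j=0$ shows that $\tilde X_j=P_jX_jR_j+R_jX_jP_j$ coincides with $P_jX_jQ_j+Q_jX_jP_j$, which is well-defined using only the projection $P_j$ supplied by Lemma~\ref{lem:orthocomplement}. So the detour through $\mathcal{K}_j$ is unnecessary and, as written, slightly unjustified; dropping $R_j,S_j$ in favor of the paper's direct definition removes the issue with no change to the rest of the argument.
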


\begin{proof}[Proof of Theorem \ref{thm:CLT2}]
Let $(\mathcal{H}_j,\xi_j)$ be the space $L^2(\mathcal{B}\ip{Y_j},\mu_j)$ with $\xi_j = 1$.  Let $P_j$ be the projection onto $\xi_j$ and let $Q_j = 1 - P_j$.  Then since $Y_j$ has expectation zero, we have
\[
Y_j = P_j Y_j Q_j + Q_j Y_j P_j + Q_j Y_j Q_j.
\]
The operator $Z_j := P_j Y_j Q_j + Q_j Y_j P_j$ is distributed according to the operator-valued Bernoulli distribution $\nu_{\Bool,\eta}$.  Let $(\mathcal{H},\xi)$ be the $\mathcal{T}^{n^k}$-free product of $(\mathcal{H}_1,\xi_1)$, \dots, $(\mathcal{H}_{N^k}, \xi_{N^k})$.  Let
\[
Y = n^{-k/2} \sum_{j=1}^{N^k} \lambda_j(Y_j), \qquad Z = n^{-k/2} \sum_{j=1}^{N^k} \lambda_j(Z_j).
\]
Then $Y$ and $Z$ have the asserted distributions.  Because $Q_j Z_j Q_j = 0$, the proof of Proposition \ref{prop:operatornormbound} shows that
\[
\norm{Z} = n^{-k/2} \norm*{ \sum_{j=1}^{N^k} \lambda_j(Z_j) } \leq 2 \left( \frac{N-1}{n-1} \norm{\eta(1)} \right)^{1/2},
\]
that is, we may discard the third term on the right hand side in Proposition \ref{prop:operatornormbound}.  Similarly, since $P_j(Y_j - Z_j)Q_j = 0$ and $P_j(Y_j - Z_j) Q_j = 0$, we may discard the first and second terms on the right hand side of Proposition \ref{prop:operatornormbound} and obtain
\[
\norm{Y - Z} \leq n^{-k/2} \max_j \norm{Y_j - Z_j} \leq n^{-k/2} \max_j \rad(\mu_j).
\]
\end{proof}

\begin{remark}
The proof shows that in fact $\sqrt{(N - 1) / (n - 1)}$ may be replaced by $m_k^{1/2} n^{-k/2}$, where $m_k = \sup_{s \in \mathcal{T}^{n^k}} |\{j: js \in \mathcal{T}^{n^k}\}|$.  The same applies to Theorem \ref{thm:CLT} above and Corollary \ref{cor:CLTestimate} below.
\end{remark}

\begin{definition}
Let $f \in \mathcal{B}\ip{X}$ and $R > 0$ and $\epsilon$.  We define the operator-valued modulus of continuity $\Mod_{f,R}(\epsilon)$ as the supremum of $\norm{E[f(X) - f(Y)]}$, where $X$ and $Y$ are self-adjoint elements of a $\mathcal{B}$-valued probability space $(\mathcal{A},E)$ satisfying $\norm{X} \leq R$, $\norm{Y} \leq R$, and $\norm{X - Y} \leq \epsilon$.
\end{definition}

\begin{remark}
We cannot technically quantify over $(\mathcal{A},E)$ because these $\mathcal{B}$-valued probability spaces do not form a set.  However, in light of (the multivariable version of) Theorem \ref{thm:realizationoflaw}, the supremum can be expressed instead by quantifying over all possible joint laws of $(X,Y)$.  More precisely, let $\mathcal{B}\ip{X,Y}$ be the universal unital $*$-algebra generated by two self-adjoint indeterminates $X$ and $Y$. Then $\Mod_{f,R}(\epsilon)$ is the supremum of $\norm{\mu(f(X) - f(Y))}$ where $\mu: \mathcal{B}\ip{X,Y} \to \mathcal{B}$ is a positive $\mathcal{B}$-$\mathcal{B}$-bimodule map satisfying the exponential bounds
\begin{align*}
	\norm{\mu(b_0Xb_1 \dots X b_\ell)} &\leq R^\ell \norm{b_0} \dots \norm{b_\ell} \\
	\norm{\mu(b_0Yb_1 \dots Y b_\ell)} &\leq R^\ell \norm{b_0} \dots \norm{b_\ell} \\
	\norm{\mu(b_0(X - Y)b_1 \dots (X - Y)b_\ell)} &\leq \epsilon^\ell \norm{b_0} \dots \norm{b_\ell}.
\end{align*}
\end{remark}

\begin{corollary} \label{cor:CLTestimate}
Let $\mathcal{T} \in \Tree(N)$ and suppose that $n = |[N] \cap \mathcal{T}| \geq 2$.  Let $\mu_1$, \dots, $\mu_{n^k}$ and $\nu_1$, \dots, $\nu_{n^k}$ be non-commutative laws of mean zero, variance $\eta$, and radius $\leq R$.  Let
\[
R' = 2 \sqrt{\frac{N-1}{n-1}} \norm{\eta(1)}^{1/2}.
\]
Then for $f \in \mathcal{B}\ip{X}$, we have
\begin{multline*}
\norm*{[\dil_{n^{-k/2}} \boxplus_{\mathcal{T}^{n^k}}(\mu_1,\dots,\mu_{n^k})](f) - [\dil_{n^{-k/2}} \boxplus_{\mathcal{T}^{n^k}}(\nu_1,\dots,\nu_{n^k})](f)} \\ 
\leq 2 \Mod_{f,R' + n^{-k/2}R}(n^{-k/2} R).
\end{multline*}
\end{corollary}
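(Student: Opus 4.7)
The plan is to prove Corollary \ref{cor:CLTestimate} as a direct consequence of the coupling result in Theorem \ref{thm:CLT2}, combined with the definition of the operator-valued modulus of continuity.

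First I would apply Theorem \ref{thm:CLT2} to the family $(\mu_1,\dots,\mu_{N^k})$ to obtain self-adjoint random variables $Y^\mu$ and $Z^\mu$ in some $\mathcal{B}$-valued probability space $(\mathcal{A}^\mu,E^\mu)$ such that $Y^\mu$ realizes the law $\dil_{n^{-k/2}}\boxplus_{\mathcal{T}^{n^k}}(\mu_1,\dots,\mu_{N^k})$, $Z^\mu$ realizes the Boolean-style law $\dil_{n^{-k/2}}\boxplus_{\mathcal{T}}^{n^k}(\nu_{\Bool,\eta})$, and
\[
\norm{Z^\mu} \leq R', \qquad \norm{Y^\mu - Z^\mu} \leq n^{-k/2}R.
\]
In particular, $\norm{Y^\mu} \leq R' + n^{-k/2}R$. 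Then I would apply Theorem \ref{thm:CLT2} a second time to $(\nu_1,\dots,\nu_{N^k})$ to obtain analogous variables $Y^\nu$ and $Z^\nu$ in some space $(\mathcal{A}^\nu,E^\nu)$, with the same norm bounds.

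The key observation is that $Z^\mu$ and $Z^\nu$ have \emph{the same} non-commutative law, namely $\dil_{n^{-k/2}}\boxplus_{\mathcal{T}}^{n^k}(\nu_{\Bool,\eta})$, since this law depends only on $\mathcal{T}$, $k$, and $\eta$. Therefore $E^\mu[f(Z^\mu)] = E^\nu[f(Z^\nu)]$ for every $f \in \mathcal{B}\ip{X}$. Now by the very definition of $\Mod_{f,R'+n^{-k/2}R}$, the bounds $\norm{Y^\mu}, \norm{Z^\mu} \leq R' + n^{-k/2}R$ together with $\norm{Y^\mu - Z^\mu} \leq n^{-k/2}R$ give
\[
\norm{E^\mu[f(Y^\mu)] - E^\mu[f(Z^\mu)]} \leq \Mod_{f,R' + n^{-k/2}R}(n^{-k/2}R),
\]
and similarly for the $\nu$-side. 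Writing $\mu_* = \dil_{n^{-k/2}}\boxplus_{\mathcal{T}^{n^k}}(\mu_1,\dots,\mu_{N^k})$ and $\nu_* = \dil_{n^{-k/2}}\boxplus_{\mathcal{T}^{n^k}}(\nu_1,\dots,\nu_{N^k})$, the triangle inequality yields
\[
\norm{\mu_*(f) - \nu_*(f)} \leq \norm{E^\mu[f(Y^\mu)] - E^\mu[f(Z^\mu)]} + \norm{E^\nu[f(Z^\nu)] - E^\nu[f(Y^\nu)]} \leq 2\Mod_{f,R'+n^{-k/2}R}(n^{-k/2}R),
\]
which is the claim.

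There is essentially no serious obstacle: the entire content of the estimate has been packaged into Theorem \ref{thm:CLT2}, whose proof built the Bernoulli coupling on a common Hilbert module using the decomposition $Y_j = P_jY_jQ_j + Q_jY_jP_j + Q_jY_jQ_j$ and the refined norm bound from Proposition \ref{prop:operatornormbound}. The only mild subtlety is that the two couplings live in different probability spaces, but this is harmless because only the \emph{law} of the coupled Bernoulli element enters the triangle inequality, and the couplings interact with $f$ only through expectations.
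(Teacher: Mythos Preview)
Your proposal is correct and follows essentially the same approach as the paper's proof: apply Theorem \ref{thm:CLT2} once to the $\mu_j$'s and once to the $\nu_j$'s, observe that the intermediate Bernoulli-coupled law is the same in both cases, bound each half by the modulus of continuity, and conclude by the triangle inequality. The paper's version is just a terser statement of the same argument.
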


\begin{proof}
Using the operators $Y$ and $Z$ from the previous theorem, it follows that
\begin{multline*}
\norm*{[\dil_{n^{-k/2}} \boxplus_{\mathcal{T}^{n^k}}(\mu_1,\dots,\mu_{n^k})](f) - [\dil_{n^{-k/2}} \boxplus_{\mathcal{T}^{n^k}}(\nu_{\Bool,\eta},\dots,\nu_{\Bool,\eta})](f)} \\
\leq \Mod_{f,R' + n^{-k/2}R}(n^{-k/2} R).
\end{multline*}
The same holds with $\mu_j$ replaced by $\nu_j$, and thus we obtain the desired estimate by the triangle inequality.
\end{proof}

A few remarks are needed to explain how to apply this estimate, and how it relates to previous work on central limit theorems. First, Corollary \ref{cor:CLTestimate} gives us an explicit rate of convergence for Theorem \ref{thm:CLT}.  By taking $\nu_j = \nu_{\mathcal{T},\eta}$ in Corollary \ref{cor:CLTestimate}, we obtain the central limit estimate
\[
\norm*{[\dil_{n^{-k/2}} \boxplus_{\mathcal{T}^{n^k}}(\mu_1,\dots,\mu_{n^k})](f) - \nu_{\mathcal{T},\eta}(f)} \leq 2 \Mod_{f,R' + n^{-k/2}R}(n^{-k/2} R),
\]
where we have used the fact that $\nu_{\mathcal{T},\eta}$ is $\boxplus_{\mathcal{T}}$-stable.

Second, since we can always take $k = 1$ in Corollary \ref{cor:CLTestimate}, it gives us an explicit bound for the difference between $\dil_{n^{-1/2}}(\boxplus_{\mathcal{T}}(\mu_1,\dots,\mu_N))$ and $\dil_{n^{-1/2}}(\boxplus_{\mathcal{T}}(\nu_1,\dots,\nu_N))$ that can be applied in the context of any central limit theorem deriving from Proposition \ref{prop:CLT}.  For example, consider the free, Boolean, and monotone cases.  Suppose that $\mathcal{T}_N = \mathcal{T}_{N,\free}$, $\mathcal{T}_{N,\Bool}$, or $\mathcal{T}_{N,\mono}$.  Recall that because the cumulants for $\mathcal{T}_N$ are independent of $N$, the central limit law $\nu_\eta$ is independent of $N$, and only depends on the variance $\eta$.  We apply Corollary \ref{cor:CLTestimate} with $n = N$ and $k = 1$ to obtain the explicit estimate
\[
\norm*{[\dil_{N^{-1/2}} \boxplus_{\mathcal{T}_N}(\mu_1,\dots,\mu_N)](f) - \nu_{\eta}(f)} \leq 2 \Mod_{f,R' + N^{-1/2}R}(N^{-1/2} R),
\]
for the free, Boolean, and monotone cases.

Third, while we have stated the estimate Corollary \ref{cor:CLTestimate} for a non-commutative polynomial $f \in \mathcal{B}\ip{X}$, the same method can be used to estimate the difference $G_\mu^{(n)}(z) - G_\nu^{(n)}(z)$ in the operator-valued Cauchy transforms, where again $\mu = \dil_{n^{-k/2}} \boxplus_{\mathcal{T}^{n^k}}(\mu_1,\dots,\mu_{n^k})$ and $\nu = \dil_{n^{-k/2}} \boxplus_{\mathcal{T}^{n^k}}(\nu_1,\dots,\nu_{n^k})$.  All we need to do is consider a matrix-valued resolvent $(z - Y^{(n)})^{-1}$ rather than a polynomial $f(Y)$.

Consider first the case $\nu_j = \nu_{\Bool,\eta}$.   Then let $Y \sim \mu$ and $Z \sim \nu$ be given by Theorem \ref{thm:CLT2}.  Then $\norm{Y^{(n)} - Z^{(n)}} = \norm{Y - Z}$.  Let $z \in M_n(\mathcal{B})$ and suppose that $z - Y^{(n)}$ and $z - Z^{(n)}$ are invertible.  Then we have by the resolvent identity that
\[
\norm{(z - Y^{(n)})^{-1} - (z - Z^{(n)})^{-1}} \leq \norm{(z - Y^{(n)})^{-1}} \cdot \frac{R}{n^{k/2}} \cdot \norm{(z - Z^{(n)})^{-1}}.
\]
If we assume either that $\im z \geq \epsilon > 0$ or that
\[
\norm{z^{-1}} \leq \frac{1}{R' + R / n^{k/2} + \epsilon},
\]
then we obtain $\norm{(z - Y^{(n)})^{-1}} \leq 1/\epsilon$ and $\norm{(z - Z^{(n)})^{-1}} \leq 1 / \epsilon$.  Thus, for such values of $z$, we have
\[
\norm{(z - Y^{(n)})^{-1} - (z - Z^{(n)})^{-1}} \leq \frac{R}{\epsilon^2 n^{k/2}}
\]
and by taking the expectation, we obtain
\[
\norm{G_\mu^{(n)}(z) - G_\nu^{(n)}(z)} \leq \frac{R}{\epsilon^2 n^{k/2}}.
\]

For the case of a general $\nu_j$ with variance $\eta$ and $\rad(\nu_j) \leq R$, we apply the previous argument to both $\mu_j$ and $\nu_j$ and then use the triangle inequality to obtain $\norm{G_\mu^{(n)}(z) - G_\nu^{(n)}(z)} \leq 2R / \epsilon^2 n^{k/2}$.  In particular, this implies \cite[Theorem 1.1]{MS2013} in the free case (by taking $n = N$ and $k = 1$).

\begin{remark} \label{rem:sharpCLT}
In the case $\mathcal{B} = \C$, one could replace $f$ by an arbitrary continuous function on the real line using functional calculus.  The techniques of \cite{AP2010a}, \cite{AP2010b}, \cite{ANP2016}, \cite{AlPe2017} can be used to estimate the modulus of continuity of $f$ as a map $\mathcal{L}(\mathcal{H})_{sa} \to \mathcal{L}(\mathcal{H})$ in terms of its modulus of continuity as a map $\R \to \C$, and hence to estimate the quantity $\Mod_{f,R}$ defined above.  For instance, \cite[Corollary 7.5]{AP2010a} shows that if $f: \R \to \C$ is Lipschitz and $X$ and $Y$ are self-adjoint with spectrum in $[-R,R]$, then
\[
\norm{f(X) - f(Y)} \leq \text{const} \norm{f}_{\Lip} \norm{X - Y} \log \frac{2eR}{\norm{X - Y}}.
\]
Thus, in the situation of Corollary \ref{cor:CLTestimate} with $\mathcal{B} = \C$, if $\mu = \dil_{n^{-k/2}} \boxplus_{\mathcal{T}^{n^k}}(\mu_1,\dots,\mu_{n^k})$ and $\nu = \dil_{n^{-k/2}} \boxplus_{\mathcal{T}^{n^k}}(\nu_1,\dots,\nu_{n^k})$, then we would obtain
\[
d_{\text{Wasserstein}}(\mu,\nu) \leq \left(C + C' \log \frac{n^{k/2}}{M} \right) \frac{M}{n^{k/2}},
\]
where $C$ and $C'$ are universal constants.  The questions remain of whether we can remove the logarithmic factor, how to estimate other distances such as the Kolmogorov distance, and how to give estimates in terms of moments of $\mu_j$ rather than the operator norm.  These might be better addressed from the complex-analytic viewpoint.  For the sharpest known estimates in the scalar-valued free, Boolean, and monotone cases see \cite{CG2008}, \cite{AS2018}, \cite{ASW2018}.
\end{remark}

\section{Infinitely Divisible Laws and Fock Spaces} \label{sec:infinitelydivisible}

\subsection{Statement of Results}

In this section, we fix $\mathcal{T} \in \Tree(N)$ and suppose that $n = |[N] \cap \mathcal{T}| \geq 2$.

\begin{definition}
A $\mathcal{B}$-valued law $\mu$ is said to be \emph{infinitely divisible with bounded support} if there exist laws $\mu_{n^{-k}}$ for $k \geq 0$ such that
\[
\boxplus_{\mathcal{T}}^{n^k}(\mu_{n^{-k}},\dots,\mu_{n^{-k}}) = \mu
\]
and
\[
\liminf_{k \to \infty} \rad(\mu_{n^{-k}}) < +\infty.
\]
\end{definition}

Our main goal in this section is to characterize the laws that are infinitely divisible with bounded support.  This theorem generalizes previous work on non-commutative independences.  For the free case, see \cite[Theorem 4.3]{Voiculescu1986}, \cite{BV1992}, \cite{Biane1998}, \cite[\S 4.5 - 4.7]{Speicher1998}, \cite[\S 4]{PV2013}, \cite{ABFN2013}.  For the Boolean case, see \cite[Theorem 3.6]{SW1997}, \cite[\S 3]{PV2013}, \cite{ABFN2013}.  For the monotone case, see \cite{Muraki2001}, \cite[Ch.\ 3]{Belinschi2006}, \cite{Hasebe2010a}, \cite{Hasebe2010b}, \cite{HS2014}, \cite{AW2016}, \cite{Jekel2020}.

\begin{remark} \label{rem:IDradiusbound}
In the free, Boolean, and monotone cases, it is not necessary to assume the ``boundedness'' condition $\liminf_{k \to \infty} \rad(\mu_{n^{-k}}) < +\infty$ because it holds automatically (under the assumption that $\mu$ itself is exponentially bounded).  We do not know whether this is true for general trees $\mathcal{T}$.
\end{remark}

\begin{theorem} \label{thm:infinitelydivisible}
Let $\mathcal{T}$ be as above.
\begin{enumerate}[(1)]
	\item If $\mu$ is $\mathcal{T}$-freely infinitely divisible with bounded support, then there exists a unique self-adjoint $c \in \mathcal{B}$ and completely positive exponentially bounded $\sigma: \mathcal{B}\ip{Y} \to \mathcal{B}$ such that
\begin{equation} \label{eq:IDbijection}
\kappa_{\mathcal{T},\ell}(\mu)[b_1,\dots,b_{\ell-1}] =
\begin{cases} c, & \ell = 1, \\
\sigma(b_1 Y b_2 \dots Y b_{\ell-1}), & \ell \geq 2.
\end{cases}
\end{equation}
	We also have $\rad(\sigma) \leq \liminf_{k \to \infty} \rad(\mu_{n^{-k}})$.
	\item Conversely, given $c = \mathrm{C}^* \in \mathcal{B}$ and $\sigma: \mathcal{B}\ip{Y} \to \mathcal{B}$ completely positive and exponentially bounded, there exists a unique law $\mu$ that is infinitely divisible with bounded support and satisfies \eqref{eq:IDbijection}.  We also have
	\begin{equation} \label{eq:IDradiusbound}
	\rad(\mu_{n^{-k}}) \leq \frac{N-1}{n-1} n^{-k} \norm{c} + 2 \sqrt{ \frac{N-1}{n-1} } n^{-k/2} \norm{\sigma(1)}^{1/2} + \rad(\sigma).
	\end{equation}
\end{enumerate}
\end{theorem}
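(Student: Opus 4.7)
The plan is to prove (1) by identifying the $\mathcal{T}$-free cumulants of $\mu$ as pointwise limits of rescaled Boolean cumulants of the $\mu_{n^{-k}}$, transferring complete positivity through the Hilbert-module realization of Lemma \ref{lem:booleancumulants}; and to prove (2) by constructing $\mu$ and each $\mu_{n^{-k}}$ as limits of iterated $\mathcal{T}$-free convolutions of Boolean-infinitely-divisible laws built from a realization of $\sigma$.

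For (1), set $c := \kappa_{\mathcal{T},1}(\mu)$, which is self-adjoint. The extensivity identity in Observation \ref{obs:lawcumulantextensivity} gives $\kappa_{\mathcal{T},\ell}(\mu_{n^{-k}}) = n^{-k}\kappa_{\mathcal{T},\ell}(\mu)$, and Lemma \ref{lem:TBcumulantconversion} combined with this scaling yields
\[
n^k\kappa_{\Bool,\ell}(\mu_{n^{-k}}) = \sum_{\pi\in\mathcal{NC}^\circ(\ell)}\alpha_{\mathcal{T},\pi}\,n^{-k(|\pi|-1)}\kappa_{\mathcal{T},\pi}(\mu),
\]
whose $k\to\infty$ limit is $\kappa_{\mathcal{T},\ell}(\mu)$ since only the one-block term survives. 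Define $\sigma_k\colon\mathcal{B}\ip{Y}\to\mathcal{B}$ by $\sigma_k(b_0Yb_1\cdots Yb_\ell) = n^k\kappa_{\Bool,\ell+2}(\mu_{n^{-k}})[b_0,\ldots,b_\ell]$. Applying Lemma \ref{lem:booleancumulants} to the canonical Hilbert-module realization of $\mu_{n^{-k}}$ expresses $\sigma_k(f(Y)) = \ip{\tilde v_k, f(\tilde X_k)\tilde v_k}$, where $\tilde X_k$ is the compression $Q_kX_kQ_k$ on $\mathcal{H}_k^\circ$ and $\tilde v_k = n^{k/2}Q_kX_k\xi_k$; this exhibits each $\sigma_k$ as completely positive. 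A direct moment-cumulant computation gives $\ip{\tilde v_k,\tilde v_k} = \sigma(1)$ (independent of $k$), while $\|\tilde X_k\|\leq\rad(\mu_{n^{-k}})$. Since pointwise limits of completely positive maps are completely positive, the limit $\sigma$ has the required properties, with $\rad(\sigma)\leq\liminf_k\rad(\mu_{n^{-k}})$ upon passing to a subsequence realizing the liminf. Uniqueness of $(c,\sigma)$ is immediate since the cumulants determine $\mu$ via the moment-cumulant formula.

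For (2), realize $\sigma$ on $(\mathcal{K},\eta,Y)$ by Theorem \ref{thm:realizationoflaw}, and for $\epsilon\in(0,1]$ define $\tilde\nu_\epsilon\in\Sigma(\mathcal{B})$ as the distribution of the self-adjoint operator
\[
\tilde T_\epsilon = \begin{pmatrix} \epsilon c & \sqrt{\epsilon}\,\ip{\eta,\cdot} \\ \sqrt{\epsilon}\,\eta & Y \end{pmatrix}
\]
on $(\mathcal{B}\xi_0\oplus\mathcal{K},\xi_0)$ with $\xi_0=(1,0)$ the $\mathcal{B}$-central unit vector. A direct application of Lemma \ref{lem:booleancumulants} shows that the Boolean cumulants of $\tilde\nu_\epsilon$ are exactly $\epsilon c$ and $\epsilon\sigma(b_1Yb_2\cdots Yb_{\ell-1})$; inverting Lemma \ref{lem:TBcumulantconversion} shows its $\mathcal{T}$-free cumulants agree with those up to $O(\epsilon^2)$ corrections from partitions with $|\pi|\geq 2$. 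Define $\mu_{n^{-k}} := \lim_{j\to\infty}\boxplus_{\mathcal{T}}^{n^j}(\tilde\nu_{n^{-(k+j)}},\ldots,\tilde\nu_{n^{-(k+j)}})$; by extensivity the $\mathcal{T}$-free cumulants of the $j$th approximant equal the desired $n^{-k}c,\,n^{-k}\sigma$ up to $O(n^{-(2k+j)})$ error, so they (and hence the moments) converge as $j\to\infty$. To establish existence of the limit as a bounded law and obtain \eqref{eq:IDradiusbound}, decompose each $X_i\sim\tilde\nu_\epsilon$ into the scalar drift $\tilde\nu_\epsilon(X)=\epsilon c$ and a centered part with Boolean variance $\epsilon\sigma(1)$, then apply Proposition \ref{prop:operatornormbound} combined with Lemma \ref{lem:iterateddegreebound}: the drift contribution $\sum_i\lambda_i(\epsilon c)$ is bounded by $\tfrac{N-1}{n-1}n^j\cdot\epsilon\|c\|$, the centered contribution by $2\sqrt{\tfrac{N-1}{n-1}n^j\epsilon\|\sigma(1)\|}$, and the pointwise term tends to $\rad(\sigma)$; substituting $\epsilon=n^{-(k+j)}$ and letting $j\to\infty$ yields \eqref{eq:IDradiusbound}. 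Finally $\boxplus_{\mathcal{T}}^{n^k}(\mu_{n^{-k}},\ldots,\mu_{n^{-k}})=\mu$ holds by matching $\mathcal{T}$-cumulants on both sides.

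The main technical obstacle is the uniform radius bound in (2): one must carefully separate the drift contribution, which accumulates coherently as $\epsilon\|c\|$ times the branching degree and produces the $n^{-k}\|c\|$ term, from the martingale-like centered fluctuations, which accumulate as the square root of variance times branching degree and produce the $n^{-k/2}\|\sigma(1)\|^{1/2}$ term -- a decomposition that Proposition \ref{prop:operatornormbound} together with the degree bound of Lemma \ref{lem:iterateddegreebound} is precisely designed to handle. A secondary issue in (1) is controlling the multiplicative constant $M$ in the exponential bound for $\sigma$ uniformly in $k$, resolved by the identity $\ip{\tilde v_k,\tilde v_k}=\sigma(1)$.
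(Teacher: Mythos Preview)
Your argument for part (1) is essentially identical to the paper's: both realize the Boolean cumulants of $\mu_{n^{-k}}$ as matrix coefficients of a compressed multiplication operator (the paper packages this into Lemma~\ref{lem:BooleanID}, you invoke Lemma~\ref{lem:booleancumulants} directly), show $n^k\kappa_{\Bool,\ell}(\mu_{n^{-k}})\to\kappa_{\mathcal{T},\ell}(\mu)$ via Lemma~\ref{lem:TBcumulantconversion}, and pass complete positivity to the limit. Your observation that $\ip{\tilde v_k,\tilde v_k}=\sigma(1)$ is exactly the ingredient that makes the exponential bound on $\sigma$ go through with the correct constant; the paper's write-up is slightly terser here but relies on the same fact.

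For part (2) you take a genuinely different route. The paper constructs an explicit operator model: it builds a $\mathcal{T}$-free Fock space $\mathcal{F}_{\mathcal{T}}(\mathcal{K})$ via auxiliary measures $\theta_\pi$ on $([N]^{\N})^\pi$ and a disintegration lemma, defines creation/annihilation/multiplication operators on it, and shows directly that $X=M'(c)+L(1\otimes 1)^*+L(1\otimes 1)+M(Y)$ has the prescribed $\mathcal{T}$-free cumulants; the radius bound then comes from the operator norms of these four summands. Your approach avoids the Fock space entirely, instead producing $\mu_{n^{-k}}$ as a moment limit of $\boxplus_{\mathcal{T}}^{n^j}(\tilde\nu_{n^{-(k+j)}})$ where $\tilde\nu_\epsilon$ is a simple Boolean-type law, and extracting the radius bound from Proposition~\ref{prop:operatornormbound} plus Lemma~\ref{lem:iterateddegreebound}. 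Your method is more elementary and bypasses all of \S\ref{subsec:cumulantmeasures}--\S\ref{subsec:Fockspace}; the paper's method, in exchange, gives a concrete operator realizing $\mu$ itself (not merely the approximants), which is of independent interest and is what makes the examples in \S\ref{subsec:infdivexamples} possible.

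One small point to tighten: your bound on the drift $\|\sum_i\lambda_i(\epsilon c)\|$ is not a direct consequence of Proposition~\ref{prop:operatornormbound}, which treats only centered inputs. You need the separate observation that $\sum_i\lambda_i(c)$ acts on each $\mathcal{H}_s^\circ$ as $c$ times an integer counting $\{i:\lambda_i(1)|_{\mathcal{H}_s^\circ}\neq 0\}$, and that this count is $\leq\frac{N-1}{n-1}n^j$ by the degree estimate in Lemma~\ref{lem:iterateddegreebound} (checking the $s=\emptyset$ and $s\neq\emptyset$ cases separately). This is routine but should be stated.
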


In the next subsection, we prove (1) using the Boolean cumulants.  The proof of (2) will occupy \S \ref{subsec:cumulantmeasures} - \S \ref{subsec:Fockspace}.  Given a completely positive exponentially bounded $\sigma$ and self-adjoint constant $c$ in $\mathcal{B}$, we will obtain the corresponding $\mu$ as the law of an operator $X$ explicitly constructed on a certain $\mathcal{B}$-$\mathcal{B}$-correspondence (the $\mathcal{T}$-free Fock space defined below).

The study of Fock spaces has a long history, and has been present in free probability theory since its inception; see \cite[\S 2 - 3]{Voiculescu1985}.  The relationship between Fock spaces and infinitely divisible laws was first described in Glockner, Sch{\"u}rmann, and Speicher \cite{GSS1992} and the operator-valued case is due to Speicher \cite[\S 4.7]{Speicher1998}.  The operator-valued Boolean case was studied in \cite[Lemma 2.9]{PV2013}.  The operator-valued monotone case was studied in \cite[\S 6]{Jekel2020}.  Also relevant to the free and Boolean cases is \cite[\S 7]{ABFN2013}.

\subsection{Positivity of Cumulants}

To prove $\implies$ of Theorem \ref{thm:infinitelydivisible}, we use the following lemma about the Boolean cumulants.  This lemma actually turns out to be the Boolean case of Theorem \ref{thm:infinitelydivisible} because all laws are Boolean infinitely divisible (see \cite[\S 2]{PV2013}).  For related statements, see \cite[Proposition 3.1]{SW1997}, \cite{BP1999}, \cite[Theorem 5.6, Remark 5.7]{PV2013}, \cite[\S 7]{ABFN2013}, \cite[Corollary 3.3]{Williams2017}.

\begin{lemma} \label{lem:BooleanID}
Let $\mu$ be a $\mathcal{B}$-valued law.  Then there exists a unique completely positive and exponentially bounded $\sigma: \mathcal{B}\ip{Y} \to \mathcal{B}$ such that for $\ell \geq 2$,
\[
\kappa_{\Bool,\ell}(\mu)[b_1,\dots,b_{\ell-1}] = \sigma(b_1 Y \dots Y b_{\ell - 1}).
\]
Conversely, given a self-adjoint $c \in \mathcal{B}$ and a generalized law $\sigma$, there exists a unique law $\mu$ with mean $c$ and Boolean cumulants for $\ell \geq 2$ given by $\sigma$ as above.  Moreover, we have
\[
\rad(\sigma) \leq \rad(\mu) \leq \rad(\sigma) + \norm{\sigma(1)}^{1/2} + \norm{\mu(X)}.
\]
\end{lemma}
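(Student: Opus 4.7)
The approach uses Lemma~\ref{lem:booleancumulants}, which realizes Boolean cumulants as compressions $K_{\Bool,\ell}[a_1,\dots,a_\ell]P = Pa_1 Q a_2 Q\cdots Qa_\ell P$ in the GNS-type representation, so that $\sigma$ is identified with the ``law'' of a certain compression of $X$ onto the orthogonal complement of the cyclic vector. For the forward direction, I will realize $\mu$ via Theorem~\ref{thm:realizationoflaw} as the law of a self-adjoint $X$ on $\mathcal{H} = L^2(\mathcal{B}\ip{X},\mu)$ with $\mathcal{B}$-central unit vector $\xi$ and $\norm{X} = \rad(\mu)$. Let $P$ be the projection onto $\mathcal{B}\xi$, $Q = 1 - P$, $\mathcal{H}^\circ = Q\mathcal{H}$, $\eta = QX\xi$, and $T = QXQ|_{\mathcal{H}^\circ}$ (self-adjoint in $\mathcal{L}(\mathcal{H}^\circ)$, with $\norm{T} \leq \norm{X}$); then set
\[
\sigma(p(Y)) := \ip{\eta, p(T)\eta}.
\]
Complete positivity is automatic and exponential boundedness with $\rad(\sigma) \leq \norm{T} \leq \rad(\mu)$ is immediate, while the Boolean cumulant identity for $\ell \geq 2$ unwinds directly from Lemma~\ref{lem:booleancumulants} once one uses that $Q$ commutes with the left and right $\mathcal{B}$-actions (Lemma~\ref{lem:orthocomplement}).

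For the converse, given self-adjoint $c$ and a generalized law $\sigma$, apply Theorem~\ref{thm:realizationoflaw} to produce a $\mathrm{C}^*$-correspondence $\mathcal{K}$, a vector $\zeta \in \mathcal{K}$, and a self-adjoint $Y \in \mathcal{L}(\mathcal{K})$ with $\norm{Y} = \rad(\sigma)$ and $\sigma(p(Y)) = \ip{\zeta, p(Y)\zeta}$. Form the $\mathcal{B}$-$\mathcal{B}$-correspondence $\mathcal{H} = \mathcal{B}\xi \oplus \mathcal{K}$ with $\mathcal{B}$-central unit vector $\xi$, and define the self-adjoint operator
\[
X = \begin{pmatrix} c & V^* \\ V & Y \end{pmatrix}, \qquad V(\xi b) = \zeta b, \quad V^*(k) = \xi\ip{\zeta, k}.
\]
Taking $\mathcal{A}$ to be the $\mathrm{C}^*$-subalgebra of $\mathcal{L}(\mathcal{H})$ generated by $X$ and $\mathcal{B}$, Lemmas~\ref{lem:unitvector} and~\ref{lem:faithfulness} produce a $\mathcal{B}$-valued probability space. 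Iterating $QX\xi = \zeta$ and $QX|_{\mathcal{K}} = Y$ in $\ip{\xi, Xb_1 QXb_2 \cdots QX\xi}$ yields $\kappa_{\Bool,\ell}(\mu)[b_1,\dots,b_{\ell-1}] = \ip{\zeta, b_1 Y b_2 \cdots Y b_{\ell-1}\zeta} = \sigma(b_1 Y \cdots Y b_{\ell-1})$ for $\ell \geq 2$, along with $\ip{\xi, X\xi} = c$. Uniqueness in both directions then reduces to Möbius inversion between moments and Boolean cumulants (Definition~\ref{def:Booleancumulants}).

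The main technical point is the sharp bound $\rad(\mu) \leq \rad(\sigma) + \norm{\sigma(1)}^{1/2} + \norm{\mu(X)}$: naively applying the triangle inequality to $X = cP + (V + V^*) + Y$ (with $cP$ and $Y$ extended by zero) overestimates the cross term by a factor of $2$. The sharper estimate comes from a block computation: since $V$ is $\mathcal{B}$-right-linear and sends $\mathcal{B}\xi$ to $\mathcal{K}$, we have
\[
(V + V^*)^2 = V^*V \oplus VV^*,
\]
where $V^*V$ is left multiplication by $\ip{\zeta,\zeta} = \sigma(1)$ on $\mathcal{B}$ and $VV^*(k) = \zeta\ip{\zeta,k}$ on $\mathcal{K}$, both of norm $\norm{\sigma(1)}$, giving $\norm{V + V^*} = \norm{\sigma(1)}^{1/2}$. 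Combining with the block-diagonality of $cP + Y$ (whose norm is $\max(\norm{c}, \rad(\sigma))$) yields $\norm{X} \leq \max(\norm{c}, \rad(\sigma)) + \norm{\sigma(1)}^{1/2}$, which implies the claimed bound since $\mu(X) = c$. The reverse inequality $\rad(\sigma) \leq \rad(\mu)$ was already established in the forward direction.
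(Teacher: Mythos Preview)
Your proof is correct and follows essentially the same approach as the paper: realize $\sigma$ via the compression $QXQ$ acting on $QX\xi$ for the forward direction, and for the converse build $X$ on $\mathcal{B}\xi \oplus \mathcal{K}$ as the block operator with corners $c$, $V$, $V^*$, $Y$ (the paper writes $V$ as $L$), then read off the Boolean cumulants from Lemma~\ref{lem:booleancumulants} and bound $\norm{V+V^*} = \norm{\sigma(1)}^{1/2}$ exactly as you do. One small remark: the ``naive'' triangle inequality applied to the three-term decomposition $cP + (V+V^*) + Y$ already gives the lemma's stated bound $\norm{c} + \norm{\sigma(1)}^{1/2} + \rad(\sigma)$ once $\norm{V+V^*}$ is computed correctly, so there is no factor-of-$2$ loss there; your further grouping of the block-diagonal part to get $\max(\norm{c},\rad(\sigma)) + \norm{\sigma(1)}^{1/2}$ is in fact slightly sharper than what the paper records.
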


\begin{proof}
Let $X$ be the operator of multiplication by $X$ on the $\mathcal{B}$-$\mathcal{B}$-correspondence $\mathcal{H} = \mathcal{B}\ip{X} \otimes_\sigma \mathcal{B}$, let $\xi = 1 \otimes 1$, and let $\mathcal{H}^\circ$ be the orthogonal complement of $\xi$.  Let $P$ be the projection onto $\xi$ and let $Q = 1 - P$.  Define the generalized law $\sigma: \mathcal{B}\ip{Y} \to \mathcal{B}$ by
\[
\sigma(f(Y)) = \ip{\xi, XQ f(QXQ) QX \xi}.
\]
In light of the explicit realization of $\sigma$ by an operator, it is clear that $\sigma$ is completely positive and $\rad(\sigma) \leq \norm{QXQ} \leq \rad(\mu)$.  Moreover, by Lemma \ref{lem:booleancumulants}, the moments of $\sigma$ give the Boolean cumulants of $\mu$.  Uniqueness of $\sigma$ is clear from the well-definedness of the Boolean cumulants.

Conversely, let $\sigma$ and $c$ be given.  Let $\mathcal{K} = \mathcal{B}\ip{Y} \otimes_\sigma \mathcal{B}$ as in Theorem \ref{thm:realizationoflaw} and let $\mathcal{H} = \mathcal{B} \oplus \mathcal{K}$.  Let $\xi = 1 \in \mathcal{B} \subseteq \mathcal{H}$.  Let $Y$ be the operator of multiplication by $Y$ on $\mathcal{K}$ (as in Theorem \ref{thm:realizationoflaw}), which we extend to an operator on $\mathcal{H}$ by setting it to zero on $\mathcal{B}$.  Let $L: \mathcal{H} \to \mathcal{H}$ be given by
\[
L(\xi b + f \otimes b') = 1 \otimes b
\]
and note that
\[
L^*(\xi b + f \otimes b') = \xi \sigma(f) b'.
\]
Let $P$ be the projection onto $\mathcal{B} \xi$ and $Q = 1 - P$.  We define
\[
X = cP + L + L^* + Y
\]
Then observe that $\ip{\xi, X \xi} = c$ and for $\ell \geq 2$,
\begin{align*}
K_{\Bool,\ell}(Xb_1,Xb_2,\dots,Xb_{\ell-1},X) &= \ip{\xi, Xb_1 Q \dots Q Xb_{\ell-1} Q X \xi} \\
&= \ip{\xi, L^* b_1 Y b_2 \dots Y b_{\ell-1} L\xi} \\
&= \sigma(b_1 Y \dots Y b_{\ell-1}).
\end{align*}
Thus, the law $\mu$ of the operator $X$ has the desired properties.  Moreover,
\begin{align*}
\mu \leq \norm{X} &\leq \norm{Y} + \norm{L + L^*} + \norm{c} \\
&\leq \rad(\sigma) + \norm{\sigma(1)}^{1/2} + \norm{c},
\end{align*}
where the estimate for $\norm{L + L^*}$ follows from the fact that $L$ maps $\mathcal{B}$ into $\mathcal{K}$ and $L^*$ does the reverse, and hence
\[
\norm{L + L^*} = \norm{L} = \norm{L^*L}^{1/2} = \norm{L\xi} = \norm{\sigma(1)}^{1/2}.
\]
\end{proof}

\begin{proof}[Proof of Theorem \ref{thm:infinitelydivisible} (1)]
By the previous lemma, there exists a completely positive exponentially bounded $\sigma_{n^{-k}}: \mathcal{B}\ip{Y} \to \mathcal{B}$ such that
\[
\kappa_{\Bool,\ell}(\mu_{n^{-k}})[b_1,\dots,b_{\ell-1}] = \sigma_{n^{-k}}(b_1 Y b_2 \dots Y b_{\ell-1}) \text{ for } \ell \geq 2.
\]
and $\rad(\sigma_{n^{-k}}) \leq \rad(\mu_{n^{-k}})$.  Since the $\mathcal{T}$-free cumulants of $\mu_{n^{-k}}$ are $n^{-k}$ times those of $\mu$, we have
\begin{align*}
n^k \kappa_{\Bool,\ell}(\mu_{n^{-k}})[b_1,\dots,b_{\ell-1}] &= n^k \sum_{\substack{\pi \in \mathcal{NC}(\ell) \\ 1 \sim_\pi \ell}} n^{-k|\pi|} \alpha_{\mathcal{T},\pi} \kappa_{\mathcal{T},\pi}[b_1,\dots,b_{\ell-1}] \\
&= \kappa_{\mathcal{T},\ell}[b_1,\dots,b_{\ell-1}] + O(n^{-k}).
\end{align*}
Therefore, $n^k \sigma_{n^{-k}}$ converges in moments as $k \to \infty$ to some completely positive $\sigma$, which is exponentially bounded since $\rad(\sigma) \leq \liminf_{k \to \infty} \rad(\sigma_{n^{-k}}) \leq \liminf_{k \to \infty} \rad(\mu_{n^{-k}})$, and we have
\[
\kappa_{\mathcal{T},\ell}(\mu)[b_1,\dots,b_{\ell-1}] = \sigma(b_1 X b_2 \dots X b_{\ell-1}).
\]
\end{proof}

\subsection{Measures Modeling the Cumulant Coefficients} \label{subsec:cumulantmeasures}

We begin by introducing certain measures $\theta_\pi$ associated to a partition $\pi$ which will have total variation $\norm{\theta_\pi} = \alpha_{\mathcal{T},\pi}$, where $\alpha_{\mathcal{T},\pi}$ is the coefficient in the moment-cumulant formula.  Of course, the measures $\theta_\pi$ also depend on $\mathcal{T}$, but we suppress the $\mathcal{T}$-dependence for the sake of brevity.

Let $\Omega = [N]^{\N}$ equipped with the product topology and Borel $\sigma$-algebra.  We define Borel measures $\theta_\pi$ on $\Omega^\pi$ as follows using induction on $|\pi|$.

First, if $|\pi| = 1$, then $\theta_\pi$ is the uniform probability distribution on $([N] \cap \mathcal{T})^{\N} \subseteq [N]^{\N}$.  More precisely, $\theta_\pi$ is the probability measure which is the product of $\N$-indexed copies of the probability measure $(1/n) \sum_{j \in [N] \cap \mathcal{T}} \delta_j$ on the finite set $[N]$.  (Recall that although infinite products of measures are not defined in general, they do make sense for \emph{Borel probability} measures on compact metric spaces.)

To set up the inductive step, note that for a coloring $\chi \in [N]^\pi$, we can express $\Omega^\pi$ as the Cartesian product of $\Omega^{\pi'}$ as $\pi'$ ranges over the $\chi$-components of $\pi$.  We can also express $\Omega^\pi$ as $[N]^\pi \times \Omega^\pi$ using the decomposition $\Omega = [N] \times \Omega$, where the $[N]$ factor corresponds to the first coordinate in $\Omega$.  In light of these facts, we have
\begin{equation} \label{eq:productdecomposition}
\Omega^\pi \cong [N]^\pi \times \Omega^\pi \cong [N]^\pi \times \prod_{\chi \text{-components } \pi'} \Omega^{\pi'}.
\end{equation}
We claim that we can define $\theta_\pi$ such that
\begin{equation} \label{eq:partitionmeasureidentity}
\theta_\pi = \frac{1}{n^{|\pi|}} \sum_{\chi \in \mathcal{X}_w(\pi,\mathcal{T})} \delta_{\chi} \times \prod_{\chi \text{-components } \pi'} \theta_{\pi'}.
\end{equation}
If $\graph(\pi)$ has multiple components, then the only partitions $\pi'$ that occur on the right hand side satisfy $|\pi'| < |\pi|$, so it is immediate that we can define $\theta_\pi$ by this relation.

On the other hand, suppose that $\graph(\pi)$ has only one component, and hence the term $\theta_\pi$ occurs both on the left hand side and also on the right hand side, with one occurrence on the right for every constant coloring $\chi$.  In particular, we want
\begin{equation} \label{eq:measurefixedpoint}
\theta_\pi = \frac{1}{n^{|\pi|}} \sum_{\substack{\chi \text{ constant} \\ \chi \in [N] \cap \mathcal{T}}} \delta_\chi \times \theta_\pi + \frac{1}{n^{|\pi|}}\sum_{\text{non-constant } \chi \in \mathcal{X}_w(\pi,\mathcal{T})} \delta_\chi \times \prod_{\chi \text{-components } \pi'} \theta_{\pi'}.
\end{equation}
By the inductive hypothesis, the measures $\theta_{\pi'}$ have been defined for $|\pi'| < |\pi|$.  The right hand side of \eqref{eq:measurefixedpoint}, viewed as a function of $\theta_\pi$ with $\theta_{\pi'}$ fixed for $|\pi'| < |\pi|$, defines a map on the space of finite Borel measures on $\Omega^\pi$.  Moreover, this map is Lipschitz with respect to the total variation metric with Lipschitz constant $n / n^{|\pi|} < 1$.  Therefore, by the Banach fixed point theorem, there is a unique fixed point, and we define $\theta_\pi$ to be this fixed point.

We also establish the convention that $\Omega^{\varnothing}$ consists of a single point space, $\theta_\varnothing$ is the probability measure on this space, and $\alpha_{\mathcal{T},\varnothing} = 1$.

\begin{lemma}
The total mass of $\theta_\pi$ satisfies $\norm{\theta_\pi} = \alpha_{\mathcal{T},\pi}$.
\end{lemma}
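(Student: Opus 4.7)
The plan is to prove this by induction on $|\pi|$, mimicking exactly the inductive definition of $\alpha_{\mathcal{T},\pi}$ in Lemma \ref{lem:cumulantcoefficients}. The base case $|\pi| = 1$ is immediate: $\theta_\pi$ was declared to be the uniform probability measure on $([N] \cap \mathcal{T})^{\mathbb{N}}$, so $\norm{\theta_\pi} = 1 = \alpha_{\mathcal{T},\pi}$. Before starting the inductive step, I would make one preliminary observation: each $\theta_\pi$ is a \emph{positive} measure. In the reducible case this is manifest because \eqref{eq:partitionmeasureidentity} expresses $\theta_\pi$ as a finite sum of products of the (inductively positive) measures $\theta_{\pi'}$ with point masses $\delta_\chi$. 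In the irreducible case, the Banach fixed point producing $\theta_\pi$ can be obtained by iterating \eqref{eq:measurefixedpoint} starting from the zero measure; each iterate is then a nonnegative combination of product measures, and the limit in total variation is still a positive measure. Consequently $\norm{\theta_\pi}$ equals the total mass $\theta_\pi(\Omega^\pi)$.

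For the inductive step, I would simply apply the total mass functional to both sides of \eqref{eq:partitionmeasureidentity}. Since the total mass of a product of positive measures is the product of their total masses, and $\norm{\delta_\chi} = 1$, this yields
\[
\norm{\theta_\pi} = \frac{1}{n^{|\pi|}} \sum_{\chi \in \mathcal{X}_w(\pi,\mathcal{T})} \prod_{\chi\text{-components } \pi'} \norm{\theta_{\pi'}}.
\]
When $\graph(\pi) \setminus \{\emptyset\}$ is disconnected, every $\chi$-component $\pi'$ on the right satisfies $|\pi'| < |\pi|$, so the inductive hypothesis gives $\norm{\theta_{\pi'}} = \alpha_{\mathcal{T},\pi'}$, and the resulting identity is precisely the defining relation \eqref{eq:partitioncoefficientidentity} for $\alpha_{\mathcal{T},\pi}$.

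When $\pi$ is irreducible, the same total-mass computation still applies, but now the constant colorings $\chi$ contribute $n$ terms in which the lone $\chi$-component equals $\pi$ itself. Separating these from the rest gives
\[
\left(1 - \frac{1}{n^{|\pi|-1}}\right) \norm{\theta_\pi} = \frac{1}{n^{|\pi|}} \sum_{\text{non-constant } \chi \in \mathcal{X}_w(\pi,\mathcal{T})} \prod_{\chi\text{-components } \pi'} \norm{\theta_{\pi'}},
\]
where every $\pi'$ on the right is a proper sub-partition, so by induction $\norm{\theta_{\pi'}} = \alpha_{\mathcal{T},\pi'}$. This is exactly equation \eqref{eq:partitioncoefficientidentity2}, which (since $n \geq 2$ and $|\pi| > 1$) uniquely determines $\alpha_{\mathcal{T},\pi}$. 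Hence $\norm{\theta_\pi} = \alpha_{\mathcal{T},\pi}$, completing the induction.

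The only conceptual point that requires care is the positivity of $\theta_\pi$ in the irreducible case, since the Banach fixed-point construction a priori only produces a signed (or complex) finite measure. Once one checks positivity as sketched above — either via monotone iteration from zero or by noting that the contraction preserves the cone of positive measures — the rest of the argument is a direct bookkeeping parallel to Lemma \ref{lem:cumulantcoefficients}, and no further estimate is needed.
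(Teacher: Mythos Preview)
Your proof is correct and follows essentially the same route as the paper: take the total mass of both sides of \eqref{eq:partitionmeasureidentity}, observe that the resulting recursion for $\norm{\theta_\pi}$ coincides with the defining recursion \eqref{eq:partitioncoefficientidentity} for $\alpha_{\mathcal{T},\pi}$, and conclude by induction with base case $|\pi|=1$. Your additional discussion of positivity is not strictly needed (the $\delta_\chi$ factors give disjoint supports, so the total variation of the sum equals the sum of total variations regardless of sign, and total variation is multiplicative over products of finite signed measures), but it does no harm.
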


\begin{proof}
By evaluating the total mass of both sides of \eqref{eq:partitionmeasureidentity}, we obtain
\[
\norm{\theta_\pi} = \frac{1}{n^{|\pi|}} \sum_{\substack{\chi \in [N]^\pi \\ \pi / \chi \in \mathcal{NC}(\chi,\mathcal{T})}} \prod_{\chi \text{-components } \pi'} \norm{\theta_{\pi'}}.
\]
Thus, $\norm{\theta_\pi}$ satisfies the same identity that was used to define $\alpha_{\mathcal{T},\pi}$ in \eqref{eq:partitioncoefficientidentity}.  Also, for $|\pi| = 1$, we have $\norm{\theta_\pi} = 1 = \alpha_{\mathcal{T},\pi}$.  Thus, it follows by induction that $\norm{\theta_\pi} = \alpha_{\mathcal{T},\pi}$ for all $\pi$.
\end{proof}

Next, we provide a more explicit description of the measures $\theta_\pi$, both for the sake of our main goal of constructing the Fock space and to aid with the computation of examples later on.

Note that $\Omega^\pi \cong ([N]^\pi)^{\N}$, so that a point in $\Omega^\pi$ corresponds to an infinite sequence of colorings $\vec{\chi} = (\chi_1,\chi_2,\dots)$.  In particular, $(\chi_1,\dots,\chi_j)$ defines a coloring $\pi \to [N]^j$, so therefore it makes sense to talk about the $(\chi_1,\dots,\chi_j)$-components of $\pi$.  We define $\mathcal{X}_w^{(k)}(\pi,\mathcal{T}) \subseteq ([N]^\pi)^k$ to be the set of tuples $(\chi_1,\dots,\chi_k)$ such that
\[
\text{for every } j < k \text{ and every } (\chi_1,\dots,\chi_j) \text{-component } \pi', \text{ we have } \chi_{j+1} \in \mathcal{X}_w(\pi',\mathcal{T}).
\]

Let $\theta_\pi^{(k)}$ be the measure on $([N]^\pi)^k$ given by
\[
\theta_\pi^{(k)} = \frac{1}{n^{|\pi|k}} \sum_{(\chi_1,\dots,\chi_k) \in \mathcal{X}_w^{(k)}(\pi,\mathcal{T})} \delta_{(\chi_1,\dots,\chi_k)},
\]
that is, the uniform distribution on $\mathcal{X}_w^{(k)}(\pi,\mathcal{T})$ normalized to have total mass $|\mathcal{X}_w^{(k)}(\pi,\mathcal{T})| / n^k$.  Let $u_\pi$ be the probability measure on $([N]^\pi)^{\N}$ given by
\[
u_\pi = \prod_{k \in \N} \prod_{V \in \pi} \left( \frac{1}{n} \sum_{j \in [N] \cap \mathcal{T}} \delta_j \right),
\]
that is, the probability measure on infinite sequences $\vec{\chi}$ given by choosing $\chi_j(V)$ for each $j$ and $V$ independently from the uniform probability distribution on $[N] \cap \mathcal{T}$.  Considering the decomposition $\Omega^\pi \cong ([N]^\pi)^{\N} \cong ([N]^\pi)^k \times ([N]^\pi)^{\N}$, we may view the product measure $\theta_\pi^{(k)} \times u_\pi$ as a finite Borel measure on $\Omega^\pi$.

\begin{proposition} \label{prop:explicitmeasures}
If $\theta_\pi^{(k)}$ and $u_\pi$ are defined as above, then we have $\theta_\pi^{(k)} \times u_\pi \to \theta_\pi$ in total variation.
\end{proposition}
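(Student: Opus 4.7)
My plan is to prove this by induction on $|\pi|$, exploiting the fact that the approximating measures $\mu_\pi^{(k)} := \theta_\pi^{(k)} \times u_\pi$ satisfy a recursion analogous to \eqref{eq:partitionmeasureidentity}, and then applying a contraction argument in total variation.

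The key structural observation is the following recursive decomposition of $\mathcal{X}_w^{(k+1)}(\pi,\mathcal{T})$: a tuple $(\chi_1,\dots,\chi_{k+1})$ lies in this set if and only if $\chi_1 \in \mathcal{X}_w(\pi,\mathcal{T})$ and, for each $\chi_1$-component $\pi'$, the restriction $(\chi_2|_{\pi'},\dots,\chi_{k+1}|_{\pi'})$ lies in $\mathcal{X}_w^{(k)}(\pi',\mathcal{T})$. Combined with the fact that $u_\pi$ splits as a product of $u_{\pi'}$'s over the $\chi_1$-components when we separate the first-level coloring from the rest, this yields the identity
\[
\mu_\pi^{(k+1)} = \frac{1}{n^{|\pi|}} \sum_{\chi_1 \in \mathcal{X}_w(\pi,\mathcal{T})} \delta_{\chi_1} \times \prod_{\chi_1\text{-components }\pi'} \mu_{\pi'}^{(k)},
\]
which is exactly \eqref{eq:partitionmeasureidentity} with $\mu$'s in place of $\theta$'s. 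The first step of the proof is to verify this identity carefully.

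Next I would handle the base case $|\pi|=1$: here $\mathcal{X}_w^{(k)}(\pi,\mathcal{T}) = ([N]\cap\mathcal{T})^k$ with uniform weights, and combining this with $u_\pi$ yields exactly the infinite product $\theta_\pi$, so $\mu_\pi^{(k)} = \theta_\pi$ for all $k$. For the inductive step, fix $\pi$ with $|\pi| \geq 2$ and assume the conclusion for all partitions of smaller size. I split into two cases based on whether $\pi$ is reducible or irreducible. If $\pi$ is reducible, every $\chi_1$-component has strictly fewer blocks than $\pi$, so the recursion expresses $\mu_\pi^{(k+1)}$ as a finite linear combination of finite products of the $\mu_{\pi'}^{(k)}$; total-variation convergence of $\mu_{\pi'}^{(k)} \to \theta_{\pi'}$ for each $\pi'$, together with the submultiplicativity $\|\nu_1\times\nu_2 - \theta_1\times\theta_2\| \leq \|\nu_1-\theta_1\|\,\|\nu_2\| + \|\theta_1\|\,\|\nu_2-\theta_2\|$ with uniformly bounded total masses, gives $\mu_\pi^{(k+1)} \to \theta_\pi$ in total variation.

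The irreducible case is where the contraction argument is essential. Writing $\Phi(\nu;\{\rho_{\pi'}\})$ for the right-hand side of \eqref{eq:partitionmeasureidentity} with $\nu$ replacing $\theta_\pi$ (in the constant-coloring terms) and $\rho_{\pi'}$ replacing $\theta_{\pi'}$, the recursion above reads $\mu_\pi^{(k+1)} = \Phi(\mu_\pi^{(k)};\{\mu_{\pi'}^{(k)}\})$, while $\theta_\pi = \Phi(\theta_\pi;\{\theta_{\pi'}\})$. Since there are exactly $n$ constant colorings contributing to the self-reference, $\Phi(\,\cdot\,;\{\rho_{\pi'}\})$ is Lipschitz in total variation with constant $n/n^{|\pi|} \leq 1/n \leq 1/2$. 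The triangle inequality gives
\[
\|\mu_\pi^{(k+1)} - \theta_\pi\| \leq \tfrac{1}{2}\|\mu_\pi^{(k)} - \theta_\pi\| + \|\Phi(\theta_\pi;\{\mu_{\pi'}^{(k)}\}) - \Phi(\theta_\pi;\{\theta_{\pi'}\})\|,
\]
where the second term tends to $0$ by the inductive hypothesis (and continuity of finite products in TV). A standard elementary lemma on sequences satisfying $a_{k+1} \leq c a_k + b_k$ with $c < 1$ and $b_k \to 0$ then forces $\|\mu_\pi^{(k)} - \theta_\pi\| \to 0$, completing the induction.

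The main delicate step, and essentially the only nontrivial one, is carefully unfolding the product decomposition \eqref{eq:productdecomposition} at a single level to verify the recursion for $\mu_\pi^{(k+1)}$; once that identity is in hand, the convergence follows from the same Banach fixed-point argument that was used to construct $\theta_\pi$ in the first place, now applied uniformly to the discrepancies $\|\mu_\pi^{(k)} - \theta_\pi\|$.
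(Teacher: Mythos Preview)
Your proposal is correct and follows essentially the same strategy as the paper: establish the recursion $\mu_\pi^{(k+1)} = \tfrac{1}{n^{|\pi|}}\sum_{\chi}\delta_\chi \times \prod_{\pi'}\mu_{\pi'}^{(k)}$, induct on $|\pi|$ with the same base case, and in the irreducible case split off the $n$ constant colorings to get a contraction in the $\theta_\pi$-slot. The only difference is that the paper carries through a quantitative bound $\|\mu_\pi^{(k)}-\theta_\pi\|\leq f_\pi(k)/n^k$ for an explicit polynomial $f_\pi$, whereas you invoke the qualitative lemma ``$a_{k+1}\le c\,a_k+b_k$ with $c<1$, $b_k\to 0$ $\Rightarrow$ $a_k\to 0$''; since the proposition only asserts convergence and the quantitative rate is not used elsewhere, your version suffices and is slightly cleaner.
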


\begin{proof}
First, one can show by a direct computation that with respect to the product decomposition \eqref{eq:productdecomposition} we have
\[
\theta_\pi^{(k+1)} = \frac{1}{n^{|\pi|}} \sum_{\chi \in \mathcal{X}_w(\pi,\mathcal{T})} \delta_{\chi} \times \prod_{\chi \text{-components } \pi'} \theta_{\pi'}^{(k)}.
\]
In other words, the tuple $(\theta_\pi^{(k)})_\pi$ indexed by non-crossing partitions is obtained by iterating the function on such tuples given by the right hand side of \eqref{eq:partitionmeasureidentity}, whereas $(\theta_\pi)_{\pi}$ itself is a fixed point of this function.  Since we are changing the measures for each partition $\pi$ simultaneously, convergence does not immediately follow from the fact that the right hand side of \eqref{eq:partitionmeasureidentity} is a contraction with respect to the single variable $\theta_\pi$.

However, we will show directly that there exists a polynomial $f_\pi$ such that
\[
\norm{\theta_\pi^{(k)} \times u_\pi - \theta_\pi} \leq \frac{f_\pi(k)}{n^k}.
\]
We proceed by induction on $|\pi|$.  For the base case $|\pi| = 1$, we note that
\[
\theta_\pi^{(k)} \times u_\pi = u_\pi = \theta_\pi \text{ for all } k,
\]
so we can take $f_\pi = 0$.  For the induction step, observe that
\[
\theta_\pi^{(k+1)} - \theta_\pi = \frac{1}{n^{|\pi|}} \sum_{\chi \in \mathcal{X}_w(\pi,\mathcal{T})} \delta_{\chi} \times \left( \prod_{\chi \text{-components } \pi'} \theta_{\pi'}^{(k)} - \prod_{\chi \text{-components } \pi'} \theta_\pi \right).
\]
Letting $M_\pi := \sup_k \norm{\theta_\pi^{(k)}}$, we have
\begin{align*}
\norm*{ \prod_{\chi \text{-components } \pi'} \theta_{\pi'}^{(k)} - \prod_{\chi \text{-components } \pi'} \theta_\pi }
&\leq \sum_{\chi \text{-components } \pi'} \norm{\theta_{\pi'}^{(k)} \times u_\pi - \theta_{\pi'}} \prod_{\pi'' \neq \pi'} M_{\pi''}.
\end{align*}

In the case where $\pi$ is reducible (or equivalently $\graph(\pi) \setminus \{\emptyset\}$ has multiple components), every $\chi$-component $\pi'$ satisfies $|\pi'| < |\pi|$.  Thus, we may apply the induction hypothesis to $\pi'$.  The induction hypothesis applied to $\pi''$ above also implies that $M_{\pi''} < +\infty$.  Therefore, we have
\[
\norm{\theta_\pi^{(k+1)} - \theta_\pi} \leq \frac{1}{n^{|\pi|}} \sum_{\chi \in \mathcal{X}_w(\pi,\mathcal{T})} \sum_{\chi \text{-components } \pi'} \frac{f_{\pi'}(k)}{n^k} \prod_{\pi'' \neq \pi'} M_{\pi''},
\]
which proves the claim for $\pi$.  

In the case where $\pi$ is irreducible, we must separate out the $n$ constant colorings of $\pi$ in the sum and we get
\begin{align*}
\norm{\theta_\pi^{(k+1)} \times u_\pi - \theta_\pi} &\leq \frac{n}{n^{|\pi|}} \norm{\theta_\pi^{(k)} \times u_\pi - \theta_\pi} + \frac{1}{n^k} g(k) \\
&\leq \frac{1}{n} \norm{\theta^{(k)} \times u_\pi - \theta_\pi} + \frac{1}{n^k} g(k).
\end{align*}
where
\[
g(k) = \frac{1}{n^{|\pi|}} \sum_{\text{non-constant } \chi \in \mathcal{X}_w(\pi,\mathcal{T})} \sum_{\chi \text{-components } \pi'} f_{\pi'}(k) \prod_{\pi'' \neq \pi'} M_{\pi''}.
\]
It follows that
\[
n^{k+1} \norm{\theta_\pi^{(k+1)} \times u_\pi - \theta_\pi} \leq n^k \norm{\theta_\pi^{(k)} \times u_\pi - \theta_\pi} + g(k).
\]
We can define a polynomial $f_\pi$ by
\[
f_\pi(k) = \norm{\theta_\pi^{(0)} \times u_\pi - \theta_\pi} + \sum_{j=0}^{k-1} g(j),
\]
that is, $f_\pi$ is the discrete antiderivative of $g$.  Then we have by induction that
\[
n^k \norm{\theta_\pi^{(k)} \times u_\pi - \theta_\pi} \leq f_\pi(k)
\]
as desired.
\end{proof}

\begin{remark} \label{rem:measureswithN=n}
In the important special case that $n = N$, we have an even more explicit description of the measure.  Note that in this case $u_\pi$ is the uniform probability distribution on $\Omega^\pi \cong ([N]^\pi)^{\N}$.  Then we have
\[
d(\theta_\pi^{(k)} \times u_\pi) = 1_{\mathcal{X}_w^{(k)}(\pi,\mathcal{T}) \times \Omega^\pi} du_\pi,
\]
that is, the $\theta_\pi^{(k)} \times u_\pi$ is the uniform distribution $u_\pi$ restricted to $\mathcal{X}_w^{(k)}(\pi,\mathcal{T}) \times \Omega^\pi$.  By taking $k \to \infty$, we see that
\[
d\theta_\pi = 1_{\mathcal{X}_w^{(\infty)}} du_\pi,
\]
where
\[
\mathcal{X}_w^{(\infty)}(\pi,\mathcal{T}) = \bigcap_{k=0}^\infty \mathcal{X}_w^{(k)}(\pi,\mathcal{T}) \times \Omega^\pi,
\]
or equivalently $\mathcal{X}_w^{(\infty)}(\pi,\mathcal{T})$ is the set of tuples $\vec{\chi} = (\chi_1,\chi_2,\dots)$ such that for each $(\chi_1,\dots,\chi_j)$-component $\pi'$, we have $\chi_{j+1}|_{\pi'} \in \mathcal{X}_w(\pi',\mathcal{T})$.
\end{remark}

Next, we will prove a disintegration result for the measures $\theta_\pi$.  Suppose that $\pi \in \mathcal{NC}(\ell)$ and that $V$ is a maximal block of $\pi$ with respect to the nesting order $\prec$ (that is, $V$ does not surround any other blocks of $\pi$), and suppose that $\chain(V) = (V,V_1,\dots,V_\ell)$.  Then we will show (Lemma \ref{lem:disintegration}) that there is a disintegration of measures,
\[
d\theta_\pi((\omega_W)_{W \in \pi}) = d\gamma_{\omega_{V_1},\dots,\omega_{V_\ell}}(\omega_V) \, d\theta_{\pi \setminus V}((\omega_W)_{W \in \pi \setminus V}).
\]
where $\gamma_{\omega_1,\dots,\omega_\ell}$ is a family of measures indexed by $d$-tuples of elements of $\Omega$.

In probabilistic language, this means that choosing $(\omega_W)_{W \in \pi}$ according to $\theta_\pi$ is equivalent to first choosing $(\omega_W)_{W \in \pi \setminus V}$ according to $\theta_{\pi \setminus V}$ and then choosing $\omega_V$ according to a certain ``conditional distribution'' which only depends on $\omega|_{\chain(V)}$.  (Of course, since $\theta_\pi$ is not necessarily a probability measure, we are using the word ``conditional distribution'' loosely.)

In order to define $\gamma_{\omega_1,\dots,\omega_\ell}$, we use some auxiliary notation, closely related to the notation used in the proof of Lemma \ref{lem:iterateddegreebound}.  Let $\phi_i: \Omega \to [N]$ be the projection onto the $k$th coordinate in the product decomposition $\Omega = [N]^{\N}$.  For $(\omega_1,\dots,\omega_d) \in \Omega^\ell$, let $(\phi_i)_*(\omega_1,\dots,\omega_\ell)$ be the string $\phi_i(\omega_1) \dots \phi_i(\omega_\ell)$.  We define the \emph{level-$k$ components of $[\ell]$ with respect to $\omega$} as the maximal subintervals $I$ of $[\ell]$ on which $(\phi_1(\omega_j),\dots,\phi_k(\omega_j))$ is constant for $j \in I$.  By convention, $[\ell]$ is considered to be a single level-$0$ component.

We define $r_i = r_i(\omega_1,\dots,\omega_\ell)$ to be the index in $[\ell]$ such that $[r_i]$ is the level-$k$ component of $[\ell]$ containing $1$.  We define the string $t_i = t_i(\omega_\ell,\dots,\omega_1)$ by
\[
t_i = \red((\phi_i)_*(\omega_1,\dots,\omega_\ell)|_{[r_i]}) = \red(\phi_i(\omega_1),\dots,\phi_i(\omega_{r_i}).
\]
Then we define
\begin{equation} \label{eq:definegamma}
\gamma_{\omega_1,\dots,\omega_\ell} = \sum_{i=0}^\infty \frac{1}{n} \delta_{(\omega_1)_1} \times \dots \times \frac{1}{n} \delta_{(\omega_1)_i} \times \left( \frac{1}{n} \sum_{j: j t_i \in  \mathcal{T}} \delta_j \right) \times v^{\N},
\end{equation}
where $v = \frac{1}{n} \sum_{j \in [N] \cap \mathcal{T}} \delta_j$ is the uniform probability measure on $[N] \cap \mathcal{T}$.  In the case $\ell = 0$, we define $\gamma$ to be the measure $v^{\times \N}$.

\begin{lemma} \label{lem:disintegration} ~
\begin{enumerate}[(1)]
	\item The measure $\gamma_{\omega_1,\dots,\omega_\ell}$ depends Borel-measurably on $(\omega_1,\dots,\omega_\ell)$.
	\item We have $\norm{\gamma_{\omega_1,\dots,\omega_\ell}} \leq (N-1) / (n-1)$.
	\item Let $V$ be a maximal block of $\pi$, and let $V_\ell \prec V_{\ell-1} \prec \dots \prec V_1 \prec V$ be the blocks containing $V$.  Then we have a disintegration of measures
	\begin{equation} \label{eq:disintegration}
	d\theta_\pi( (\omega_W)_{W \in \pi} ) = d\gamma_{\omega_{V_1},\dots,\omega_{V_\ell}}(\omega_V) \,d\theta_{\pi \setminus V}((\omega_W)_{W \in \pi \setminus V}).
	\end{equation}
\end{enumerate}
\end{lemma}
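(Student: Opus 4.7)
My plan is to address the three parts in order; parts (1) and (2) are short computations, while part (3), the disintegration, is the substantive claim. For part (1), the index $r_i$ and the string $t_i$ are determined by the first $i$ coordinates of each $\omega_j$ and hence are Borel functions on $\Omega^\ell$, so each term of the series defining $\gamma_{\omega_1,\dots,\omega_\ell}$ is Borel-measurable as a function of $(\omega_1,\dots,\omega_\ell)$ valued in finite Borel measures on $\Omega$ with the total-variation norm; the limit is Borel-measurable because the series converges uniformly in total variation by part (2). For part (2), a direct computation yields
\[
\|\gamma_{\omega_1,\dots,\omega_\ell}\| \;=\; \sum_{i=0}^\infty \frac{|\{j \in [N] : jt_i \in \mathcal{T}\}|}{n^{i+1}}.
\]
Since $r_i \geq 1$ always (because $1$ lies in the level-$i$ component of $[\ell]$ containing $1$), $t_i$ is a non-empty alternating string; hence for $jt_i$ even to be alternating, which is necessary for $jt_i \in \mathcal{T} \subseteq \mathcal{T}_{N,\free}$, we need $j \neq t_i(1)$, giving at most $N-1$ admissible values of $j$. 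Summing the geometric series yields the bound $(N-1)/(n-1)$.

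For the disintegration in part (3) I would proceed by induction on $|\pi|$, leveraging the recursive equation \eqref{eq:partitionmeasureidentity} for $\theta_\pi$. The base case $|\pi|=1$ is immediate: $\ell = 0$, so by convention $\gamma = v^{\mathbb{N}} = \theta_{\{V\}} = \theta_\pi$ and $\theta_{\pi\setminus V}$ is the unit mass on a point. In the inductive step, apply \eqref{eq:partitionmeasureidentity} once and split the sum over $\chi \in \mathcal{X}_w(\pi,\mathcal{T})$ according to the $\chi$-component $\pi_V'$ containing $V$. When $\chi(V) \neq \chi(V_1)$ (or $\ell = 0$), one has $\pi_V' = \{V\}$, the factor $\theta_{\{V\}} = v^{\mathbb{N}}$ decouples $\omega_V$ from the other $\omega_W$'s, and the chain constraint in $\mathcal{X}_w(\pi,\mathcal{T})$ forces $\chi(V)$ into $\{j : j \cdot \red(\chi(V_1)\cdots\chi(V_\ell)) \in \mathcal{T}\}$, reproducing exactly the $i=0$ term of $\gamma$. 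When $\chi(V) = \chi(V_1)$, the component $\pi_V'$ contains both $V$ and $V_1$ (and possibly further blocks of the chain and sibling blocks); the inductive hypothesis applied to $\theta_{\pi_V'}$ with $V$ as its maximal block disintegrates $\omega_V$'s remaining coordinates against those of $\omega_{V_1},\dots,\omega_{V_m}$ (where $m$ is the chain length of $V$ inside $\pi_V'$), and reassembling reproduces the terms with $i \geq 1$ of $\gamma$. The combinatorial fact to verify is that the constraint ``$jt_i \in \mathcal{T}$'' at the $i$-th term of $\gamma$ matches the $\mathcal{X}_w$-weak-compatibility constraint that arises when $V$ first diverges from its parent's block at the $i$-th iteration, which holds because the chain below the merged ancestor block of $V$ after $i$ iterations has colors whose reduction is precisely $t_i$.

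The chief obstacle is the sub-case of the inductive step in which $\pi_V' = \pi$, where the inductive hypothesis is unavailable. A short analysis shows this occurs exactly when $\pi$ is irreducible and $\chi$ is constant, in which case \eqref{eq:partitionmeasureidentity} collapses to a contractive fixed-point equation with Lipschitz constant $n/n^{|\pi|} < 1$. My plan is to verify that the candidate measure $\tilde\theta_\pi := d\gamma_{\omega_{V_1},\dots,\omega_{V_\ell}}(\omega_V)\, d\theta_{\pi \setminus V}((\omega_W)_{W \neq V})$ satisfies the same fixed-point equation, so Banach uniqueness forces $\tilde\theta_\pi = \theta_\pi$; self-consistency of $\tilde\theta_\pi$ under \eqref{eq:partitionmeasureidentity} will follow from the self-similarity of $\gamma$, namely that restricting to the event $(\omega_V)_1 = (\omega_{V_1})_1$ and shifting the index $i$ by one reproduces $\gamma$. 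If this fixed-point bookkeeping proves cumbersome, a cleaner alternative is to work with the finite approximants $\theta_\pi^{(k)} \times u_\pi$ from Proposition \ref{prop:explicitmeasures}: the disintegration for each $k$ can be read off by direct enumeration of extensions of the fixed first-$k$-coordinate data inside $\mathcal{X}_w^{(k)}(\pi,\mathcal{T})$, and then one passes to the limit in total variation using the mass bound from part (2).
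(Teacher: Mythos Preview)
Your handling of parts (1) and (2) matches the paper's: (1) is left as an exercise there, and (2) is computed by the same geometric-series bound (the paper also separates the trivial case $\ell=0$, where $\gamma = v^{\mathbb N}$ has mass $1$).

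For part (3), the paper takes your ``cleaner alternative'' rather than your primary inductive route. It proves an exact finite-level disintegration
\[
d\bigl[\theta_\pi^{(k)}\times u_\pi\bigr]\bigl((\omega_W)_{W\in\pi}\bigr)
= d\gamma^{(k)}_{\omega_{V_1},\dots,\omega_{V_\ell}}(\omega_V)\,
d\bigl[\theta_{\pi\setminus V}^{(k)}\times u_{\pi\setminus V}\bigr]\bigl((\omega_W)_{W\ne V}\bigr)
\]
by directly enumerating, for each $(\chi_1,\dots,\chi_k)\in\mathcal X_w^{(k)}(\pi\setminus V,\mathcal T)$, the admissible values of $(\chi_1(V),\dots,\chi_k(V))$; the resulting truncated kernel $\gamma^{(k)}$ agrees with $\gamma$ on the first $k$ terms plus a remainder of mass $n^{-k}$, and one passes to the limit in total variation using Proposition~\ref{prop:explicitmeasures} together with the uniform convergence $\gamma^{(k)}\to\gamma$ furnished by the same estimate as in (2). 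Your induction-on-$|\pi|$ plan is a genuinely different and more intrinsic argument: it never introduces the approximants and instead leverages directly the defining fixed-point equation \eqref{eq:partitionmeasureidentity} and the self-similarity of $\gamma$ under the shift. The cost is the bookkeeping you anticipate: when $\chi(V)=\chi(V_1)$ you must match the inner kernel $\gamma$ for the chain of $V$ inside the component $\pi_V'$ (which is shorter than the chain in $\pi$) with the $i\ge 1$ terms of the outer $\gamma$ for the full chain, using that for $i\ge 1$ the level-$i$ component containing $1$ lies inside the level-$1$ component and hence inside the chain segment belonging to $\pi_V'$. This is doable but fiddly, whereas the finite-approximant argument makes the combinatorics explicit at every stage and takes a single limit at the end.
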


\begin{proof}
(1) is an exercise.

(2) The case $\ell = 0$ is immediate since the total variation is $1$.  For $\ell > 0$, we observe that
\[
\norm{\gamma_{\omega_1,\dots,\omega_\ell}} = \sum_{i=0}^\infty \frac{1}{n^{i+1}} |\{j: j t_i \in \mathcal{T}\}| \leq \sum_{i=0}^\infty \frac{N-1}{n^{i+1}} = \frac{N-1}{n-1}.
\]
(Compare the proof of Lemma \ref{lem:iterateddegreebound}.)

(3) We proceed by proving an approximate version of the statement for $\theta_\pi^{(k)}$, which in turn follows from describing $\mathcal{X}_w^{(k)}(\pi,\mathcal{T})$ in terms of $\mathcal{X}_w^{(k)}(\pi \setminus V, \mathcal{T})$.  If $\chi$ is a coloring of $\pi$, then we have $\chi \in \mathcal{X}_w(\pi,\mathcal{T})$ if and only $\chi|_{\pi \setminus V} \in \mathcal{X}_w(\pi \setminus V, \mathcal{T})$ and $\red(\chi(\chain(V))) \in \mathcal{T}$ (similar to the proof of Lemma \ref{lem:iterateddegreebound}).  Based on this fact, one can show that for $(\chi_1,\dots,\chi_k) \in ([N]^\pi)^k$, the condition $(\chi_1,\dots,\chi_k) \in \mathcal{X}_w^{(k)}(\pi,\mathcal{T})$ is equivalent to
\begin{enumerate}[(a)]
	\item $(\chi_1|_{\pi \setminus V}, \dots, \chi_k|_{\pi \setminus V}) \in \mathcal{X}_w^{(k)}(\pi \setminus V,\mathcal{T})$;
	\item for each $i$, if $\pi'$ is the $(\chi_1,\dots,\chi_i)$-component of $\pi$ that contains $V$, then we have
	\[
	\red[\chi_{i+1}(\chain_{\pi'}(V))] \in \mathcal{T}.
	\]
\end{enumerate}
If $V$ is in its own $(\chi_1,\dots,\chi_i)$-component of $\pi$, then (b) is equivalent to $\chi_{i+1}(V) \in [N] \cap \mathcal{T}$.  Otherwise, we have $\chain_{\pi'}(V) = (V,V_1,\dots,V_m)$ for some $m$ with $1 \leq m \leq \ell$.  Thus, (b) is equivalent to $\red(\chi_{i+1}(V) \dots \chi_{i+1}(V_m)) \in \mathcal{T}$, which is in turn equivalent (assuming that (a) holds) to $\chi_{i+1}(V) = \chi_{i+1}(V_1)$ or $\chi_{i+1}(V) \red(\chi_{i+1}(V_1) \dots \chi_{i+1}(V_m)) \in \mathcal{T}$.

The above argument describes all the possible ways to extend a tuple $(\chi_1',\dots,\chi_k') \in \mathcal{X}_w^{(k)}(\pi \setminus V, \mathcal{T})$ to a tuple in $\mathcal{X}_w^{(k)}(\pi,\mathcal{T})$ (and every tuple in $\mathcal{X}_w^{(k)}(\pi,\mathcal{T})$ is obtained in this way).  It follows that
\begin{multline} \label{eq:initialdisintegration}
d\theta_\pi^{(k)}(\chi_1,\dots,\chi_k) = d\Gamma_{\chi_1|_{\chain(V)}, \dots, \chi_k|_{\chain(V)}}^{(k)}(\chi_1(V),\dots,\chi_k(V)) \\
d\theta_{\pi \setminus V}^{(k)}(\chi_1|_{\pi \setminus V}, \dots, \chi_k|_{\pi \setminus V}),
\end{multline}
where $\Gamma_{\chi_1|_{\chain(V)},\dots,\chi_k|_{\chain(V)}}$ is the measure on $[N]^k$ given by
\begin{align*}
\Gamma_{\chi_1|_{\chain(V)},\dots,\chi_k|_{\chain(V)}}^{(k)} &= \sum_{i=0}^{k-1} \frac{1}{n} \delta_{\chi_1(V_1)} \times \dots \times \frac{1}{n} \delta_{\chi_i(V_1)} \times \left( \frac{1}{n} \sum_{j: j \red(\chi(V_1),\dots,\chi(V_{m_i})) \in \mathcal{T}} \delta_j \right) \times v^{\times (k-1-i)} \\
& \quad + \frac{1}{n^k} \delta_{\chi_1(V_1)} \times \dots \delta_{\chi_k(V_1)},
\end{align*}
where $m_i$ is the index such that $V_1$, \dots, $V_{m_i}$ are the blocks of $\chain(V)$ which are in the same $(\chi_1,\dots,\chi_i)$-component of $V$.

We must now translate \eqref{eq:initialdisintegration} from $([N]^\pi)^k$ coordinates into $([N]^k)^\pi$ coordinates (and hence $\Omega^\pi$ coordinates).  Suppose that $(\chi_1,\dots,\chi_k)$ is the first $k$ coordinates of a tuple $\vec{\chi} \in (\Omega^\pi)^{\N}$ corresponding to a point $(\omega_V)_{V \in \pi} \in (\Omega^{\N})^\pi$.  Then the string $\red(\chi_{j+1}(V_1) \dots \chi_{j+1}(V_m)) = \red(\chi(\chain_{\pi'}(V)))$ used in condition (b) is precisely $t_j(\omega_{V_1},\dots,\omega_{V_\ell}) = t_j(\omega|_{\chain_\pi(V)})$.  This implies that
\[
d[\theta_\pi^{(k)} \times u^\pi]((\omega_W)_{W\in\pi}) = d\gamma_{\omega_{V_1},\dots,\omega_{V_\ell}}^{(k)}(\omega_V) \, d[\theta_{\pi \setminus V} \times u^{\pi \setminus V}]((\omega_W)_{W \in \pi \setminus V}),
\]
where $\gamma_{\omega_1,\dots,\omega_\ell}^{(k)}$ is the measure on $\Omega$ given by
\begin{align*}
\gamma_{\omega_1,\dots,\omega_\ell}^{(k)} &= \sum_{i=0}^k \frac{1}{n} \delta_{(\omega_1)_1} \times \dots \times \frac{1}{n} \delta_{(\omega_1)_i} \times \left( \frac{1}{n} \sum_{j: j t_i \in  \mathcal{T}} \delta_j \right) \times v^{\times \N} \\
& \quad + \frac{1}{n^k} \delta_{(\omega_1)_1} \times \dots \times \delta_{(\omega_1)_k} \times v^{\times \N}.
\end{align*}
In light of the estimates in the proof of (2), we see that $\gamma_{\omega_1,\dots,\omega_\ell}^{(k)} \to \gamma_{\omega_1,\dots,\omega_\ell}$ in total variation as $k \to \infty$, and in fact the rate of convergence is independent of $(\omega_1,\dots,\omega_\ell)$.  Because $\theta_\pi^{(k)} \times u^\pi \to \theta_\pi$, we obtain \eqref{eq:disintegration} in the limit.
\end{proof}

\subsection{Operators Modeling the Cumulant Coefficients}

Let us define a measure $\gamma_\ell$ on $\Omega^\ell$ inductively by setting $\gamma_0$ to be the (unique) probability measure on the one-point space $\Omega^0$ and setting
\[
d\gamma_{\ell+1}(\omega_1,\dots,\omega_{\ell+1}) = d\gamma_{\omega_2,\dots,\omega_{\ell+1}}(\omega_1)\,d\gamma_\ell(\omega_2,\dots,\omega_{\ell+1}).
\]

We define the operator $S_\ell: L^2(\Omega^\ell,\gamma_\ell) \to L^2(\Omega^{\ell+1},\gamma_{\ell+1})$ by
\[
(S_\ell f)(\omega_1,\dots,\omega_{\ell+1}) = f(\omega_2,\dots,\omega_{\ell+1}).
\]
Note that $S_\ell$ is a well-defined bounded operator because $\gamma_{\ell+1}$ is defined by integrating against $\gamma_\ell$ the measures $\gamma_{\omega_2,\dots,\omega_{\ell+1}}$ which each have total mass $\leq (N - 1) / (n - 1)$ by Lemma \ref{lem:disintegration}.  More precisely, $\norm{S_\ell} \leq \sqrt{(N - 1) / (n - 1)}$.  We also have
\[
(S_\ell^*f)(\omega_1,\dots,\omega_\ell) = \int_{\Omega} f(\omega_0,\omega_1,\dots,\omega_\ell)\,d\gamma_{\omega_1,\dots,\omega_\ell}(\omega_0).
\]

\begin{lemma} \label{lem:Fockcoefficients}
Let $\pi \in \mathcal{NC}(k)$.  For $j \in V \in \pi$, denote
\[
T_j = \begin{cases}
S_{\depth_\pi(V)-1}^* S_{\depth_\pi(V)-1}, & |V| = 1 \\
S_{\depth_\pi(V)-1}^*, & |V| > 1, j = \min V \\
S_{\depth_\pi(V)-1}, & |V| > 1, j = \max V \\
1, & \text{ otherwise,}
\end{cases}
\]
where in the last case, $1$ represents the identity on $L^2(\Omega^\ell,\gamma_\ell)$ for $\ell = \depth(V)$.  Then the domain of $T_j$ equals the codomain of $T_{j+1}$, so that the composition $T_1 \dots T_k$ is a well-defined operator on $L^2(\Omega^0,\gamma_0) = \C$.  Since $\mathcal{L}(\C) \cong \C$, we may view $T_1 \dots T_k$ as a scalar.  Then
\[
T_1 \dots T_k = \alpha_{\mathcal{T},\pi}.
\]
\end{lemma}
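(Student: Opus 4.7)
The plan is to prove $T_1 \cdots T_k = \alpha_{\mathcal{T},\pi}$ by induction on $|\pi|$, leveraging the identity $\alpha_{\mathcal{T},\pi} = \|\theta_\pi\|$ established earlier and the disintegration formula of Lemma \ref{lem:disintegration}. Throughout, the interpretation I have in mind is that $T_1 \cdots T_k$, viewed as a scalar, coincides with $\int_{\Omega^\pi} 1 \, d\theta_\pi$.

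For the base cases $\pi = \varnothing$ and $|\pi| = 1$, direct computation suffices. For the empty partition, the product is empty and $\|\theta_\varnothing\| = 1$. For a single block $V$ of size $m$, the consecutive product $T_1 \cdots T_m$ reduces (in the case $m=1$ by definition, and in the case $m \geq 2$ by collapsing the intermediate identity operators) to $S_0^* S_0$ acting on $\C$, which equals $\|\gamma_\varnothing\| = 1$, matching the normalization $\alpha_{\mathcal{T},\pi} = 1$ for singleton partitions.

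For the inductive step, I would choose a block $V$ maximal with respect to $\prec$ (equivalently, deepest), which is necessarily an interval $V = \{j+1, \ldots, j+m\}$; set $d = \depth_\pi(V)$ and $\pi' = \pi \setminus V$ (with order-preserving relabeling). Since removing a deepest block does not change the depth or chain of any other block, the operators $T_i'$ of $\pi'$ match those of $\pi$ at positions away from $V$. The key collapsing observation is that $T_{j+1} T_{j+2} \cdots T_{j+m} = S_{d-1}^* S_{d-1}$ in all cases, and a short calculation shows that $S_{d-1}^* S_{d-1}$ acts on $L^2(\Omega^{d-1},\gamma_{d-1})$ as multiplication by the function $\Phi(\omega_1,\ldots,\omega_{d-1}) := \|\gamma_{\omega_1,\ldots,\omega_{d-1}}\|$. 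Granting the strengthened inductive claim that a composition of the form $T_1' \cdots T_{k-m}'$ with a multiplication operator $M_h$ inserted at the position corresponding to a block-chain in $\pi'$ equals $\int h \, d\theta_{\pi'}$, the induction concludes by setting $h = \Phi$ with coordinates $(\omega_{V_1}, \ldots, \omega_{V_{d-1}})$ indexed by $\chain_\pi(V) \setminus \{V\}$; then Lemma \ref{lem:disintegration} identifies $\int \Phi \, d\theta_{\pi'}$ with $\int 1 \, d\theta_\pi = \|\theta_\pi\| = \alpha_{\mathcal{T},\pi}$.

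The main obstacle, in my view, is rigorously establishing this strengthened inductive claim, i.e., the coordinate-matching between the arguments of the intermediate state $T_{j+m+1} \cdots T_k(1) \in L^2(\Omega^{d-1})$ and the blocks $V_1, \ldots, V_{d-1}$ surrounding $V$. The underlying geometric reason is clean: in a non-crossing partition the set of blocks open at any position is linearly ordered by $\prec$, and the operators $S_\ell, S_\ell^*$ shift the number of arguments up or down exactly as blocks open and close when reading $T_k, T_{k-1}, \ldots, T_1$ from right to left. To make this precise I would set up, as an auxiliary induction, a correspondence between the coordinates of the function $T_{j+1} \cdots T_k(1)$ and the currently open chain of blocks at position $j$, and verify that insertions of multiplication operators $M_h$ with coordinates matched to that chain commute past the remaining $T_j, T_{j-1}, \ldots, T_1$ in the appropriate integral sense. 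Once this bookkeeping is in hand, the inductive step above closes, establishing $T_1 \cdots T_k = \alpha_{\mathcal{T},\pi}$.
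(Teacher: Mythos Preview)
Your proposal is correct and is essentially the paper's approach: the paper formalizes your ``strengthened inductive claim'' as a separate lemma (Lemma~\ref{lem:Fockcoefficients2}) allowing a multiplication operator $M_{\depth(V_j)}(f_j)$ by an arbitrary $f_j \in C(\Omega^{\depth(V_j)})$ at \emph{every} position $j$, proves the integral formula $T_1 \cdots T_k = \int_{\Omega^\pi} \prod_j f_j(\omega|_{\chain(V_j)})\,d\theta_\pi(\omega)$ by the same induction on $|\pi|$ (remove a $\prec$-maximal block, collapse $S_{d-1}^* M_d(f_{i+1}) \cdots M_d(f_j) S_{d-1}$ to $M_{d-1}(g)$, apply the disintegration), and then specializes $f_j = 1$.
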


Although this lemma is the only fact we need for the Fock space construction in the next subsection, it will be helpful for the sake of induction to prove a more general statement.  For $\omega \in \Omega^\pi$, we denote
\[
\omega|_{\chain(V)} = (\omega_V, \omega_{V_1},\dots,\omega_{V_\ell}).
\]
Moreover, for $f \in C(\Omega^\ell)$, let us define
\[
M_\ell(f): L^2(\Omega^\ell,\gamma_\ell) \to L^2(\Omega^\ell,\gamma_\ell)
\]
to be the operator of multiplication by $f$.

\begin{lemma} \label{lem:Fockcoefficients2}
Let $\pi \in \mathcal{NC}(k)$.  Let $V_j$ denote the block of $\pi$ containing $j$ (here the $V_j$'s are not necessarily distinct).  Fix $f_j \in C(\Omega^{\depth(V_j)})$ for each $j$, and then define
\[
T_j = \begin{cases}
S_{\depth_\pi(V_j)-1}^* M_{\depth(V_j)}(f_j) S_{\depth_\pi(V_j)-1}, & |V_j| = 1 \\
S_{\depth_\pi(V_j)-1}^* M_{\depth(V_j)}(f_j), & |V_j| > 1, j = \min V_j \\
M_{\depth(V_j)}(f_j)S_{\depth_\pi(V_j)-1}, & |V_j| > 1, j = \max V_j \\
M_{\depth(V_j)}(f_j), & \text{ otherwise.}
\end{cases}
\]
Then $T_1 \dots T_k$ makes sense and is a map $\C \to \C$ given by some scalar, and we have
\begin{equation} \label{eq:cumulantmeasureintegration}
T_1 \dots T_k = \int_{\Omega^\pi} \prod_{j=1}^k f_j(\omega|_{\chain(V_j)})\,d\theta_\pi(\omega).
\end{equation}
\end{lemma}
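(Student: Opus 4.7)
The plan is to prove Lemma \ref{lem:Fockcoefficients2} by strong induction on $|\pi|$. The base case $|\pi| = 0$ (so $k = 0$) is trivial: both sides equal $1$, interpreting the empty product and the integral against $\theta_\varnothing$ (which is a probability measure on a point) as $1$.

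For the inductive step I would pick a maximal block $V$ of $\pi$ with respect to $\prec$; by the remark following Definition \ref{def:picomposition}, $V$ is an interval, say $V = \{j+1, \ldots, j+m\}$, at some depth $d = \depth_\pi(V)$. A direct computation using the explicit formulas for $S_{d-1}$ and $S_{d-1}^*$ shows that the factors of the composition associated with the indices of $V$ collapse into a single multiplication operator:
\[
T_{j+1} \cdots T_{j+m} = S_{d-1}^{*} M_d\bigl(\textstyle\prod_{i=1}^{m} f_{j+i}\bigr) S_{d-1} = M_{d-1}(\tilde g),
\]
where
\[
\tilde g(\omega_1, \ldots, \omega_{d-1}) = \int_{\Omega} \prod_{i=1}^{m} f_{j+i}(\omega_0, \omega_1, \ldots, \omega_{d-1}) \, d\gamma_{\omega_1,\ldots,\omega_{d-1}}(\omega_0).
\]
The key point is that $\tilde g$ is a function on $\Omega^{d-1}$ depending precisely on the variables $\omega|_{\chain(V_1)}$, where $V_1$ is the block of $\pi$ immediately surrounding $V$ (so $\depth_\pi(V_1) = d-1$ and $\chain(V_1) = \chain(V) \setminus \{V\}$).

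Next, I would pick an index $j^{*} \in V_1$ and set $\tilde f_{j^{*}} = f_{j^{*}} \cdot \tilde g$, $\tilde f_i = f_i$ for $i \in [k] \setminus (V \cup \{j^{*}\})$. The aim is to verify
\[
T_1 \cdots T_j \cdot M_{d-1}(\tilde g) \cdot T_{j+m+1} \cdots T_k = \tilde T_1 \cdots \tilde T_j \, \tilde T_{j+m+1} \cdots \tilde T_k,
\]
where the $\tilde T_i$ are the operators for $\pi \setminus V$ built from the $\tilde f_i$'s (note that $\depth$ and $\chain$ of all blocks $\neq V$ are unchanged when passing from $\pi$ to $\pi \setminus V$). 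Once this is established, applying the inductive hypothesis to $\pi \setminus V$ with data $(\tilde f_i)_{i \notin V}$ rewrites the right-hand side as $\int_{\Omega^{\pi \setminus V}} \prod_{i \notin V} \tilde f_i(\omega|_{\chain(V_i)}) \, d\theta_{\pi \setminus V}$. Expanding $\tilde f_{j^{*}} = f_{j^{*}} \tilde g$ and invoking the disintegration $d\theta_\pi = d\gamma_{\omega|_{\chain(V_1)}}(\omega_V) \, d\theta_{\pi \setminus V}$ from Lemma \ref{lem:disintegration} produces $\int_{\Omega^\pi} \prod_i f_i(\omega|_{\chain(V_i)}) \, d\theta_\pi$, closing the induction. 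Specializing $f_i \equiv 1$ recovers Lemma \ref{lem:Fockcoefficients} with $\norm{\theta_\pi} = \alpha_{\mathcal{T},\pi}$.

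The delicate step is the operator identity above. When $j^{*}$ is immediately adjacent to $V$ in the word (e.g.\ $j^{*} = j$ if $j \in V_1$, or symmetrically $j^{*} = j+m+1$ if $j+m+1 \in V_1$), the identity reduces to $T_{j^{*}} M_{d-1}(\tilde g) = \tilde T_{j^{*}}$ (or the symmetric version), which follows because $M_{d-1}(\tilde g)$ can simply be folded into the single multiplication factor of $T_{j^{*}}$. The harder case is when intermediate operators $T_{j^{*}+1}, \ldots, T_j$ (corresponding to blocks strictly nested in $V_1$ but sitting between $j^{*}$ and $V$) stand between $T_{j^{*}}$ and $M_{d-1}(\tilde g)$; by non-crossing these indices form a complete sub-partition nested below $V_1$, so an auxiliary induction of the same flavor shows that their composition is itself a multiplication operator on $L^2(\Omega^{d-1}, \gamma_{d-1})$, which commutes with $M_{d-1}(\tilde g)$. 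This bookkeeping, arranging the nested sub-compositions so that $M_{d-1}(\tilde g)$ can be slid over to $T_{j^{*}}$, is the main obstacle and the place where one must be most careful; everything else is essentially bookkeeping of definitions and the disintegration lemma.
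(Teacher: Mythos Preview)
Your proposal is correct and follows the paper's strategy: induct on $|\pi|$, collapse a maximal interval block $V$ to $M_{d-1}(\tilde g)$ via the identity $S_{d-1}^* M_d(\cdot) S_{d-1} = M_{d-1}(\cdot)$, absorb into the function at an index in the parent block, and close with the disintegration of Lemma~\ref{lem:disintegration}. The paper simply asserts that the index $i=\min V-1$ lies in the parent $V'$ (which is not true for an arbitrary maximal $V$) and handles $d=1$ separately, whereas you keep $V$ arbitrary and deal with intervening operators via your auxiliary commutation argument---so your write-up is, if anything, slightly more careful than the paper's on this point.
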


This immediately implies Lemma \ref{lem:Fockcoefficients} because we can take $f_j = 1$ for all $j$.

\begin{proof}[Proof of Lemma \ref{lem:Fockcoefficients2}]
We proceed by induction on $|\pi|$.  We take as the base case $\pi = \varnothing$.  Now suppose that $|\pi| \geq 1$.  Choose a block $V$ of $\pi$ that is maximal with respect to $\prec$.  Then $V$ is an interval block and hence can be expressed as $\{i+1,\dots,j\}$ for some $1 \leq i < j \leq k$.  Let $d = \depth(V)$.  A direct computation shows that
\[
S_{d-1}^* M_d(f_{i+1}) \dots M_d(f_j) S_{d-1} = M_{d-1}(g),
\]
where
\[
g(\omega_1,\dots,\omega_{d-1}) = \int_{\Omega} (f_{i+1} \dots f_j)(\omega_0,\omega_1,\dots,\omega_{d-1})\,d\gamma_{\omega_1,\dots,\omega_{d-1}}(\omega_0).
\]

In the case where $d = 1$, we have
\[
M_0(g) = \int_{\Omega^{\{V\}}} (f_{i+1} \dots f_j)(\omega_V)\,d\theta_{\{V\}}(\omega_V)
\]
and applying the inductive hypothesis to $\pi \setminus V$, we have
\begin{align*}
T_1 \dots T_k &= T_1 \dots T_i T_{j+1} \dots T_k \int_{\Omega^{\{V\}}} (f_{i+1} \dots f_j)(\omega_V)\,d\theta_{\{V\}}(\omega_V) \\
&= \int_{\Omega^{\pi \setminus V}} f_1(\omega|_{\chain(V_1)}) \dots f_i(\omega|_{\chain(V_i)}) f_{j+1}(\omega|_{\chain(V_{j+1})}) \dots f_k(\omega|_{\chain(V_k)})\,d\theta_{\pi \setminus V}(\omega) \\
& \qquad \int_{\Omega^{\{V\}}} (f_{i+1} \dots f_j)(\omega_V)\,d\theta_{\{V\}}(\omega_V) \\
&= \int_{\Omega^\pi} \prod_{j=1}^k f_j(\omega|_{\chain(V_j)})\,d\theta_\pi(\omega).
\end{align*}
since $\theta_{\pi \setminus V} = \theta_\pi \times \theta_{\{V\}}$ in this case.

On the other hand, suppose that $d > 1$.  Let $V'$ be the maximal block $\prec V$ (that is, the parent of $V$ in the rooted tree $\graph(\pi)$), and note that $i \in V'$ and $i < \max(V')$.  We can apply the inductive hypothesis to $\pi \setminus V$ with the list of functions $(f_1,\dots,f_{i-1}, f_i g, f_{j+1}, \dots, f_k)$ and obtain
\begin{multline*}
T_1 \dots T_i M_{d-1}(g) T_{j+1} \dots T_k \\
= \int_{\Omega^{\pi \setminus V}} f_1(\omega|_{\chain(V_1)}) \dots f_i(\omega|_{\chain(V_i)}) g(\omega|_{\chain(V')}) f_{j+1}(\omega|_{\chain(V_{j+1})}) \dots f_k(\omega|_{\chain(V_k)})\,d\theta_{\pi \setminus V}(\omega)
\end{multline*}
By our choice of $g$, this is equal to
\begin{multline*}
\int_{\Omega^{\pi \setminus V}}  f_1(\omega|_{\chain(V_1)}) \dots f_i(\omega|_{\chain(V_i)}) \\
\left( \int_{\Omega^{\{V\}}} f_{i+1}(\omega|_{\chain(V)}) \dots f_j(\omega|_{\chain(V)}) d\gamma_{\omega|_{\chain(V')}}(\omega_V) \right) \\
f_{j+1}(\omega|_{\chain(V_{j+1})} \dots f_k(\omega|_{\chain(V_k)})\,d\theta_{\pi \setminus V}(\omega)
\end{multline*}
But by Lemma \ref{lem:disintegration}, this is equal to the right hand side of \eqref{eq:cumulantmeasureintegration}, which completes the proof.
\end{proof}

\subsection{The $\mathcal{T}$-free Fock Space} \label{subsec:Fockspace}

Let $\mathcal{K}$ be a $\mathcal{B}$-$\mathcal{B}$-correspondence (for instance, we will take $\mathcal{K} = \mathcal{B}\ip{Y} \otimes_\sigma \mathcal{B}$ later on to prove Theorem \ref{thm:infinitelydivisible} (2)).  We define the \emph{$\mathcal{T}$-free Fock space over $\mathcal{K}$} to be the space
\begin{equation}
\mathcal{F}_{\mathcal{T}}(\mathcal{K}) = \mathcal{B} \xi \oplus \bigoplus_{\ell=1}^\infty \bigl( \underbrace{\mathcal{K} \otimes_{\mathcal{B}} \dots \otimes_{\mathcal{B}} \mathcal{K}}_\ell \bigr) \otimes_{\C} L^2(\Omega^\ell, \gamma_\ell),
\end{equation}
where $\mathcal{B}\xi$ is a copy of the $\mathcal{B}$-$\mathcal{B}$-correspondence $\mathcal{B}$.  We will define four types of operators on $\mathcal{F}(\mathcal{K})$, including creation and annihilation operators, and two types of multiplication operators.

We define creation and annihilation operators on $\mathcal{F}(\mathcal{K})$ as follows.  For $\zeta \in \mathcal{K}$, we define
\[
L_\ell(\zeta): \mathcal{K}^{\otimes_{\mathcal{B}} \ell} \to \mathcal{K}^{\otimes_{\mathcal{B}} \ell + 1}
\]
by
\[
L(\zeta) [(\zeta_1 \otimes \dots \otimes \zeta_\ell) \otimes f] = (\zeta \otimes \zeta_1 \otimes \dots \otimes \zeta_\ell) \otimes S_\ell f,
\]
In the case $\ell = 0$, the vector $\zeta_1 \otimes \dots \otimes \zeta_n$ is to be interpreted as an element of $\mathcal{B} \xi$.  Observing that
\[
\norm{\zeta \otimes \zeta_1 \otimes \dots \otimes \zeta_\ell}^2 = \norm{ \ip{\zeta,\zeta}^{1/2} \zeta_1 \otimes \dots \otimes \zeta_\ell}^2,
\]
we see that
\[
\norm{L_\ell(\zeta)} \leq \norm{\zeta}.
\]
Moreover, $L_\ell(\zeta)$ is adjointable with its adjoint being given by
\[
L_\ell(\zeta)^* (\zeta_1 \otimes \dots \otimes \zeta_\ell) = \begin{cases} 0, & \ell = 0 \\ \ip{\zeta,\zeta_1} \zeta_2 \otimes \dots \otimes \zeta_\ell, & \ell > 0.  \end{cases} 
\]
If we let $S_\ell$ be the operator defined in the previous subsection, then there is a bounded adjointable operator $L(\zeta): \mathcal{F}(\mathcal{K}) \to \mathcal{F}(\mathcal{K})$ given by
\begin{align*}
L(\zeta)|_{\mathcal{K}^{\otimes_{\mathcal{B}} \ell} \otimes L^2(\Omega^\ell,\gamma_\ell)} &= L_\ell(\zeta) \otimes S_\ell \\
L(\zeta)^*|_{\mathcal{K}^{\otimes_{\mathcal{B}} \ell} \otimes L^2(\Omega^\ell,\gamma_\ell)} &= \begin{cases} 0, & \ell = 0 \\ L_{\ell-1,\free}(\zeta)^* \otimes S_\ell^*, & \ell > 0. \end{cases}
\end{align*}
We call $L(\zeta)$ the \emph{creation operator} and $L(\zeta)^*$ the \emph{annihilation operator} associated to $\zeta$.  Note that
\[
\norm{L(\zeta)} = \norm{L(\zeta)^*} \leq \sqrt{\frac{N-1}{n-1}} \norm{\zeta}.
\]

For $x \in \mathcal{L}(\mathcal{K})$, there is a bounded operator $M_\ell(x): \mathcal{K}^{\otimes_{\mathcal{B}} \ell} \to \mathcal{K}^{\otimes_{\mathcal{B}} \ell}$ given by
\[
M_\ell(x)(\zeta_1 \otimes \dots \otimes \zeta_\ell) = \begin{cases} 0, & \ell = 1 \\
 (x \zeta_1 \otimes \zeta_2 \otimes \dots \otimes \zeta_\ell), & \ell > 0.
\end{cases}
\]
We define the \emph{multiplication operator} $M(x)$ as the direct sum of the operators $M_\ell(x) \otimes \id_{L^2(\Omega^\ell,\gamma_\ell)}$.  Note that $M$ defines a $*$-homomorphism $\mathcal{L}(\mathcal{K}) \to \mathcal{L}(\mathcal{F}_{\mathcal{T}}(\mathcal{K}))$.

For $b \in \mathcal{B}$, we define the \emph{multiplication operator} $M'(b)$ as the direct sum of the operators $M_\ell'(b) \otimes S_\ell^* S_\ell$ on $\mathcal{K}^{\otimes_{\mathcal{B}} \ell} \otimes L^2(\Omega^\ell,\gamma_\ell)$, where $M_\ell'(b)$ denotes the left multiplication action of $b$ on $\mathcal{K}^{\otimes_{\mathcal{B}} \ell}$ from the $\mathcal{B}$-$\mathcal{B}$-correspondence structure.  Here we take $M_0'(b) b'\xi = bb' \xi$; note in contrast that in the case above we took $M_0(x) = 0$.

\begin{theorem}
Suppose that
\[
a_j = M'(b_j) + L(\zeta_j)^* + L(\zeta_j') + M(x_j) \in \mathcal{L}(\mathcal{F}_{\mathcal{T}}(\mathcal{K})),
\]
where $x_j \in \mathcal{L}(\mathcal{K})$ and $b_j \in \mathcal{B}$ and $\zeta_j, \zeta_j' \in \mathcal{K}$ for $j = 1$, \dots, $\ell$.  Then we have
\begin{equation} \label{eq:Fockcumulantformula}
K_{\mathcal{T},\ell}[a_1,\dots,a_\ell] = \begin{cases} b_1, & \ell = 1 \\ \ip{\zeta_1, x_2 \dots x_{\ell-1} \zeta_\ell'}, & \ell > 1. \end{cases}
\end{equation}
\end{theorem}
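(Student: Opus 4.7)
The plan is to compute $\ip{\xi, a_1 \cdots a_\ell \xi}$ via multilinear expansion and then invoke the moment-cumulant formula \eqref{eq:momentcumulantformula} together with Möbius inversion (Lemma \ref{lem:partitionMobiusinversion}) to extract $K_{\mathcal{T},\ell}$. First I would expand $a_1 \cdots a_\ell$ by multilinearity into $4^\ell$ summands indexed by a choice of operator type (one of $M'(b_j)$, $L(\zeta_j)^*$, $L(\zeta_j')$, $M(x_j)$) at each position. By analyzing the action on the Fock-space depth, only terms corresponding to non-negative ``Dyck-type'' paths from depth $0$ to depth $0$ survive: $L(\zeta_j')$ raises depth by one, $L(\zeta_j)^*$ lowers it (killing the vacuum at depth $0$), $M(x_j)$ preserves depth but annihilates the vacuum, while $M'(b_j)$ preserves depth and acts as scalar multiplication by $b_j$ at depth $0$. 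The surviving terms biject with pairs $(\pi,\epsilon)$, where $\pi \in \mathcal{NC}(\ell)$ records the pairing structure ($\min V$ an annihilation, $\max V$ a creation for each block $V$ with $|V| \geq 2$, and $M'(b_j)$ at singleton blocks) and $\epsilon$ records a decoration ($M$ versus $M'$) at the non-extremal interior positions of blocks of size $\geq 2$.

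Next, using the tensor decomposition $\mathcal{K}^{\otimes_{\mathcal{B}} d} \otimes_{\C} L^2(\Omega^d,\gamma_d)$, I would factor each surviving term into a $\mathcal{K}$-part and an $L^2$-part. The $\mathcal{K}$-part nests the block contributions $\ip{\zeta_{\min V}, y_{j_2} \cdots y_{j_{k-1}} \zeta_{\max V}'}$ according to $\pi$ (with $y_{j_i}$ equal to $x_{j_i}$ or the left-action $\pi_{\mathcal{B}}(b_{j_i})$, depending on $\epsilon$) and multiplies in the singleton values $b_j$; the $L^2$-part is a scalar composition of the operators $S_d$, $S_d^*$, $S_d^* S_d$, and $\mathrm{id}$ in a pattern dictated by $(\pi, \epsilon)$. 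Noting that $S_d^* S_d$ is multiplication by the scalar function $\phi_d(\omega_1,\ldots,\omega_d) = \norm{\gamma_{\omega_1,\ldots,\omega_d}}$, I would apply Lemma \ref{lem:Fockcoefficients2} with $f_j \equiv 1$ at $M$-decorated positions and $f_j = \phi_d$ at $M'$-decorated interior positions, identifying the $L^2$-part with an integral against the measure $\theta_\pi$.

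Combining the $\mathcal{K}$-part and $L^2$-part and summing across all $(\pi,\epsilon)$, the expected conclusion is
\[
\ip{\xi, a_1 \cdots a_\ell \xi} = \sum_{\pi \in \mathcal{NC}(\ell)} \alpha_{\mathcal{T},\pi} \widetilde{K}_\pi[a_1,\dots,a_\ell],
\]
where $\widetilde{K}_\ell$ is the putative formula on the right-hand side of \eqref{eq:Fockcumulantformula} and $\widetilde{K}_\pi$ is its $\pi$-composition in the sense of Definition \ref{def:picomposition}. Since $\alpha_{\mathcal{T},\pi} = 1$ whenever $\pi$ is the one-block partition, Möbius inversion forces $\widetilde{K}_\ell = K_{\mathcal{T},\ell}$, which is the desired identity.

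The main obstacle lies in the regrouping step: an internal $M'(b_j)$ at depth $d$ inside a block $V$ of $\pi$ contributes, on the $\mathcal{K}$-side, a factor $\ip{\zeta_{\min V}, \cdots \pi_{\mathcal{B}}(b_j) \cdots \zeta_{\max V}'}$ that does not fit the $x$-only pattern of $\widetilde{K}_V$; on the $L^2$-side it inserts a $\phi_d$ multiplier. I expect that these off-pattern contributions must be reassembled with contributions from partitions $\pi'$ in which the position $j$ is split off into its own singleton block nested immediately inside $V$, and that this reassembly is mediated by the disintegration identity of Lemma \ref{lem:disintegration}: the integral $\int \phi_d \, d\theta_{\pi \setminus V}$ equals the total mass $\norm{\gamma_{\omega_1,\ldots,\omega_d}}$ averaged against $\theta_{\pi \setminus V}$, which is precisely the measure-theoretic content that converts one $\pi$-summand into another. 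Carefully checking this combinatorial-measure-theoretic reorganization — and verifying that after it every off-pattern cross-term cancels so that only the $x$-only formula survives in each $\widetilde{K}_\pi$ — is the crux of the proof.
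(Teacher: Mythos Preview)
Your overall architecture is correct and matches the paper's: expand $\ip{\xi,a_1\cdots a_\ell\xi}$ by multilinearity into four-type words, observe that survivors correspond to Dyck-type paths, factor each survivor as a $\mathcal{K}$-part times an $L^2$-part, identify the $L^2$-part with a cumulant coefficient, and conclude by M\"obius inversion. The difference is in how the bijection with $\mathcal{NC}(\ell)$ is set up, and here the paper is cleaner. In the paper, an occurrence of $M'(b_j)$ at any depth is declared to be a \emph{singleton block} of the partition (at depth one more than the current path height), while an occurrence of $M(x_j)$ is an interior element of the ambient block. With that convention, admissible paths biject \emph{directly} with $\mathcal{NC}(\ell)$ --- no auxiliary decoration $\epsilon$ is needed --- and the $L^2$-part of the $\pi$-term is exactly the operator string $T_1\cdots T_k$ of Lemma~\ref{lem:Fockcoefficients} (note that a singleton $\{j\}$ at depth $d$ contributes $T_j=S_{d-1}^*S_{d-1}$, which is precisely the $L^2$-factor of $M'(b_j)$ at path height $d-1$). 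Thus Lemma~\ref{lem:Fockcoefficients} with all $f_j\equiv 1$ gives $\alpha_{\mathcal{T},\pi}$ immediately, and the $\mathcal{K}$-part is checked to equal $\Lambda_\pi[a_1,\ldots,a_\ell]$ by a short induction peeling off a maximal block.

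Your $(\pi,\epsilon)$ parametrization is really the same bijection viewed through a detour: your ``coarse'' $\pi$ together with the $M'$-markings $\epsilon$ is just the refined partition $\pi'$ obtained by splitting each $M'$-marked interior position off as a nested singleton. Once you make that identification, the ``reassembly'' you worry about is a one-line re-indexing, not a cancellation: the $(\pi,\epsilon)$-term is \emph{equal} to $\alpha_{\mathcal{T},\pi'}\,\Lambda_{\pi'}[a_1,\ldots,a_\ell]$, because on the $\mathcal{K}$-side inserting $b_j$ via $M'_{d}(b_j)$ inside a block is exactly what $\Lambda_{\pi'}$ does with a singleton, and on the $L^2$-side your string with the extra $S_d^*S_d$ factors is literally the $T$-string of Lemma~\ref{lem:Fockcoefficients} for $\pi'$. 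Your proposed route through Lemma~\ref{lem:Fockcoefficients2} with $f_j=\phi_d=\norm{\gamma_{\cdot}}$ and then Lemma~\ref{lem:disintegration} is valid (it amounts to re-deriving $\alpha_{\mathcal{T},\pi'}=\norm{\theta_{\pi'}}$ by iteratively integrating out the singleton coordinates), but it is unnecessary once you see the direct bijection.
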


\begin{remark}
The Boolean case of this statement was given in \cite[Lemma 2.9]{PV2013} and the free case was given in \cite[Lemma 3.7]{PV2013}.
\end{remark}

\begin{proof}
Let $\Lambda_\ell[a_1,\dots,a_\ell]$ be the right hand side of \eqref{eq:Fockcumulantformula}.  This definition makes sense because $b_j$, $\zeta_j$, $\zeta_j'$, and $x_j$ are uniquely determined by $a_j$ by the following argument.  From our assumption that $n \geq 2$, we know $\mathcal{T}$ is not the trivial tree $\{\emptyset\}$, and hence $L^2(\Omega^1,\gamma_1)$ is nontrivial.  It follows that $b_j$, $\zeta_j$, $\zeta_j'$, and $x_j$ are uniquely determined by $T_j$.  Indeed, $b_j$ can be found from $\ip{\xi,T_j\xi}$ and $\zeta_j$ and $\zeta_j'$ can be found by evaluating $T_j \xi$ and $T_j^* \xi$, and $x_j$ is determined from the compression of $T_j$ by the projection onto $\mathcal{K} \otimes L^2(\Omega^1,\gamma_1)$.

Operators of the form $M'(b) + L(\zeta)^* + L(\zeta') + M(x)$ form a $\mathcal{B}$-$\mathcal{B}$-bimodule because we have
\begin{multline*}
[M'(b_1) + L(\zeta_1)^* + L(\zeta_1') + M(x_1)] + [M'(b_2) + L(\zeta_2)^* + L(\zeta_2') + M(x_2)] \\
= [M'(b_1+b_2) + L(\zeta_1+\zeta_2)^* + L(\zeta_1'+\zeta_2') + M(x_1 + x_2)]  
\end{multline*}
and
\begin{multline*}
b[M'(b_1) + L(\zeta_1)^* + L(\zeta_1') + M(x_1)]b' \\
= M'(bb_1b') + L((b')^*\zeta_1 b^*)^* + L(b \zeta_1 b') + M(bx_1b').
\end{multline*}
where $b$ and $b' \in \mathcal{B}$ are viewed on the left hand side of the equation as left multiplication operators on $\mathcal{F}_{\mathcal{T}}(\mathcal{K})$.  Moreover, the maps $\Lambda_\ell$ are $\mathcal{B}$-quasi-multilinear maps on this $\mathcal{B}$-$\mathcal{B}$-bimodule, and therefore, for $\pi \in \mathcal{NC}(\ell)$, the composition
\[
\Lambda_\pi[a_1,\dots,a_\ell]
\]
is well-defined by Definition \ref{def:picomposition}.  In order to prove the theorem, it suffices to show that
\[
\ip{\xi, a_1 \dots a_\ell \xi} = \sum_{\pi \in \mathcal{NC}(\ell)} \alpha_{\mathcal{T},\pi} \Lambda_\pi[a_1,\dots,a_\ell]
\]
because the cumulants are uniquely determined by the relation \eqref{eq:momentcumulantformula} (see Lemma \ref{lem:partitionMobiusinversion}).

In order to evaluate $\ip{\xi, a_1 \dots a_\ell \xi}$, we proceed along similar lines to the proof of Theorem \ref{thm:combinatorics}.  Denote
\begin{align*}
a_j^{(0,0)} &= M'(b_j) \\
a_j^{(0,1)} &= L(\zeta_j)^* \\
a_j^{(1,0)} &= L(\zeta_j') \\
a_j^{(1,1)} &= M(x_j).
\end{align*}
Observe that
\[
\ip{\xi, a_1 \dots a_\ell \xi} = \sum_{(\delta_j,\epsilon_j) \in \{0,1\}} \ip{\xi, a_1^{(\delta_1,\epsilon_1)} \dots a_\ell^{(\delta_\ell,\epsilon_\ell)} \xi}.
\]

Let $\mathcal{G}$ be the undirected multigraph with vertex set $\N_0 = \{0,1,2,\dots\}$ with an edge from $k$ to $k+1$, a self-loop $e_k^{(0)}$ at each vertex $k \geq 0$ and a distinct self-loop $e_k^{(1)}$ at each vertex $k \geq 1$.  We adopt the convention that the edges from $k$ to $k + 1$ have two possible orientations, while the self-loops have only one possible orientation.  We denote by $e_j^-$ the source vertex of $e_j$ and by $e_j^+$ the target vertex of $e_j$.

Define four sets of oriented edges
\begin{align*}
\mathcal{E}^{(0,0)} &= \{e_k^{(0)}: k \in \N_0\} \\
\mathcal{E}^{(0,1)} &= \{(k,k+1): k \in \N_0\} \\
\mathcal{E}^{(1,0)} &= \{(k+1,k): k \in \N_0\} \\
\mathcal{E}^{(1,1)} &= \{e_k^{(1)}: k \geq 1\}.
\end{align*}
A path in $\mathcal{G}$ will be given by a sequence of oriented edges $e_1$, \dots, $e_\ell$ where the source of $e_i$ is the target of $e_{i-1}$.  We say that a sequence $(\delta_1,\epsilon_1)$, \dots, $(\delta_\ell,\epsilon_\ell)$ and a path $e_1$, \dots, $e_\ell$ in $\mathcal{G}$ are \emph{compatible} if $e_j \in \mathcal{E}^{(\delta_j,\epsilon_j)}$ for each $j$.

Observe that if $(\delta_1,\epsilon_1)$, \dots, $(\delta_\ell,\epsilon_\ell)$ does not have a compatible path in $\mathcal{G}$, then 
\[
\ip{\xi, a_1^{(\delta_1,\epsilon_1)} \dots a_\ell^{(\delta_\ell,\epsilon_\ell)} \xi} = 0.
\]
On the other hand, if there is a compatible path, then for each $j$, the element $a_j^{(\delta_j,\epsilon_j)} \dots a_\ell^{(\delta_\ell,\epsilon_\ell)} \xi$ is in the $e_j^-$ indexed direct summand of $\mathcal{F}_{\mathcal{T}}(\mathcal{K})$.  Moreover, if there is a compatible path, then the choice of $(\delta_i,\epsilon_i)$ is uniquely determined by the path.  We call the paths that arise in this way \emph{admissible}.

Similar to the proof of Theorem \ref{thm:combinatorics}, there is bijective correspondence between admissible paths $e_1$, \dots, $e_\ell$ and partitions $\pi \in \mathcal{NC}(\ell)$ such that if $j \in V \in \pi$, then
\[
e_j \in \begin{cases}
\mathcal{E}^{(0,0)}, & V = \{j\} \\
\mathcal{E}^{(0,1)}, & |V| > 1, j = \min V \\ 
\mathcal{E}^{(1,0)}, & |V| > 1, j = \max V \\
\mathcal{E}^{(1,1)}, & \text{otherwise.}
\end{cases}
\]
and such that $\depth(V) = \max(e_j^-, e_j^+)$ if $|V| > 1$ and $\depth(V) = e_j^+ - 1 = e_j^- - 1$ if $V = \{j\}$.  Because we know that $a_j^{(\delta_j,\epsilon_j)} \dots a_\ell^{(\delta_\ell,\epsilon_\ell)} \xi$ is in the $e_j^-$ indexed direct summand of $\mathcal{F}_\mathcal{T}(\mathcal{K})$, we may replace the operator $a_j^{(\delta_j,\epsilon_j)}$ defined on the whole Fock space with its restriction to an operator
\[
\mathcal{K}^{\otimes_{\mathcal{B}} e_j^+} \otimes L^2(\Omega^{e_j^+},\gamma_{e_j^+}) \to \mathcal{K}^{\otimes_{\mathcal{B}} e_j^-} \otimes L^2(\Omega^{e_j^-},\gamma_{e_j^-}).
\]
This restriction is given by $Y_j \otimes T_j$, where $Y_j$ and $T_j$ are defined by the relation that if $j \in V \in \pi$, then
\[
Y_j = \begin{cases}
M_{\depth_\pi(V)-1}'(b_j), & V = \{j\} \\
L_{\depth_\pi(V)-1}(\zeta_j)^*, & |V| > 1, j = \min V \\ 
L_{\depth_\pi(V)-1}(\zeta_j'), & |V| > 1, j = \max V \\
M_{\depth_\pi(V)}(x_j), & \text{otherwise.}
\end{cases}
\]
and
\[
T_j = \begin{cases}
S_{\depth_\pi(V)-1}^* S_{\depth_\pi(V)-1}, & V = \{j\} \\
S_{\depth_\pi(V)-1}^*, & |V| > 1, j = \min V \\ 
S_{\depth_\pi(V)-1}, & |V| > 1, j = \max V \\
1, & \text{otherwise.}
\end{cases}
\]
Thus, we have
\begin{align*}
\ip{\xi, a_1^{(\delta_1,\epsilon_1)} \dots a_\ell^{(\delta_\ell,\epsilon_\ell)} \xi} &= \ip{\xi, (Y_1 \otimes T_1) \dots (Y_\ell \otimes T_\ell) \xi} \\
&= \ip{\xi, Y_1 \dots Y_\ell \xi} \cdot \ip{1, T_1 \dots T_\ell 1}.
\end{align*}
It follows from Lemma \ref{lem:Fockcoefficients} that $\ip{1, T_1 \dots T_\ell 1} = \alpha_{\mathcal{T},\pi}$.  Therefore, to complete the proof, it suffices to show that for a path and the corresponding partition $\pi$, we have
\[
\ip{\xi, Y_1 \dots Y_\ell \xi} = \Lambda_\pi[a_1,\dots,a_\ell].
\]

We verify this by induction for $|\pi| \geq 1$.  Let $V$ be a block of $\pi$ which is maximal with respect to $\prec$ and let $d = \depth(V)$.  Then $V$ can be written as $\{j+1,\dots,k\}$.  If $|V| = 1$, then $T_k = M_{d-1}'(b_k)$, while if $|V| > 1$, we have
\[
Y_{j+1} \dots Y_k = L_{d-1}(\zeta_{j+1})^* M_d(x_{j+2}) \dots M_d(x_{k-1}) L_{d-1}(\zeta_k') = M_{d-1}'(\ip{\zeta_{j+1}, x_{j+2} \dots x_{k-1} \zeta_k'}.
\]
In either case $Y_{j+1} \dots Y_k = M_{d-1}'(\Lambda_\pi[a_{j+1},\dots,a_k])$, and hence
\[
Y_1 \dots Y_\ell = Y_1 \dots Y_j \Lambda_\pi[a_{j+1},\dots,a_k] Y_{k+1} \dots Y_\ell.
\]
In the base case $|\pi| = 1$, we have $j = 0$ and $k = \ell$, so the proof is already complete.  Otherwise, we may group the scalar $\Lambda_\pi[a_{j+1},\dots,a_k] \in \mathcal{B}$ together with $Y_j$ or $Y_{k+1}$ and apply the inductive hypothesis for $\pi \setminus V$.
\end{proof}

\begin{proof}[Proof of Theorem \ref{thm:infinitelydivisible} (2)]
Let $\sigma: \mathcal{B}\ip{Y} \to \mathcal{B}$ be completely positive and exponentially bounded and let $c \in \mathcal{B}$ be self-adjoint.  Let $\mathcal{K} = \mathcal{B}\ip{Y} \otimes_\sigma \mathcal{B}$, and let $Y$ denote the operator of multiplication by $Y$ on $\mathcal{K}$.  Define
\[
X = M'(c) + L(1 \otimes 1)^* + L(1 \otimes 1) + M(Y) \in \mathcal{L}(\mathcal{F}_{\mathcal{T}}(\mathcal{K})).
\]
Let $\mu_{c,\sigma}$ be the law of $X$ with respect to $\xi$.  Then it follows from the previous theorem that the $\mathcal{T}$-free cumulants of $\mu_{c,\sigma}$ are given by $c$ and $\sigma$ as in \eqref{eq:IDbijection}.  Moreover, we have
\begin{align*}
\rad(\mu_{c,\sigma}) \leq \norm{X} &\leq \norm{M'(c)} + 2 \norm{L(1 \otimes 1)} + \norm{Y} \\
&\leq \frac{N-1}{n-1} \norm{c} + 2 \sqrt{\frac{N-1}{n-1}} \norm{\sigma(1)}^{1/2} + \rad(\sigma).
\end{align*}
It follows from Theorem \ref{thm:extensivity} that
\[
\mu_{c,\sigma} = \boxplus_{\mathcal{G}}^{n^k}(\mu_{n^{-k}c, n^{-k} \sigma}),
\]
and it follows from our previous estimate that
\[
\rad(\mu_{n^{-k}c,n^{-k}\sigma}) \leq n^{-k} \norm{c} + 2 \sqrt{\frac{N-1}{n-1}} n^{-k/2} \norm{\sigma(1)}^{1/2} + \rad(\sigma).
\]
Therefore, $\mu_{c,\sigma}$ is infinitely divisible with bounded support.
\end{proof}

\subsection{The Free, Boolean, and Monotone Cases} \label{subsec:infdivexamples}

We now explain how the constructions in this section work themselves out in the free, Boolean, and monotone cases.

\begin{example}[Free case]
Consider the tree $\mathcal{T}_{N,\free} \in \Tree(N)$.  Because $\mathcal{X}_w(\pi,\mathcal{T}_{N,\free})$ is all of $[N]^\pi$, we see that $\mathcal{X}_w^{(\infty)}(\pi,\mathcal{T}_{N,\free})$ is all of $\Omega^\pi$, and $\theta_\pi$ is the uniform distribution $u^\pi$.  It follows that $\gamma_{\omega_1,\dots,\omega_\ell}$ is the uniform distribution on $\Omega$, and the measure $\gamma_\ell = u^{\times \ell}$.  The operator $S_\ell$ is given by
\[
S_\ell f(\omega_1,\dots,\omega_{\ell+1}) = f(\omega_2,\dots,\omega_\ell).
\]
This satisfies $S_\ell^* S_\ell = 1$.

The Fock space given by our construction is
\[
\mathcal{F}_{N,\free}(\mathcal{K}) = \bigoplus_{\ell=0}^\infty \mathcal{K}^{\otimes_{\mathcal{B}} \ell} \otimes L^2(\Omega^\ell,u^{\times \ell}).
\]
In this case, $S_\ell$ and $S_\ell^*$ map constant functions to constant functions.  Therefore, as far as the joint law of the creation, annihilation, and multiplication operators is concerned, we might as well replace $L^2(\Omega^\ell, u^\ell)$ by the subspace of constant functions.  This amounts to replacing $L^2(\Omega^\ell, u^{\times \ell})$ by $\C$ and replacing $S_\ell$ by $1$.  These replacements will produce the space
\[
\bigoplus_{\ell=0}^\infty \mathcal{K}^{\otimes_{\mathcal{B}} \ell}
\]
which is the free Fock space defined in previous work \cite[\S 4.7]{Speicher1998}.
\end{example}

\begin{example}[The Boolean Case]
In the case of $\mathcal{T}_{N,\Bool}$, the measure $\theta_\pi$ is the uniform distribution if $\pi$ is an interval partition and zero otherwise.  The measure $\gamma_{\omega_1,\dots,\omega_\ell}$ is the uniform distribution if $\ell = 0$, and otherwise it is zero.  The measure $\gamma_1 = u$ and $\gamma_\ell = 0$ for $\ell > 1$.  In the Fock space, one may replace $L^2(\Omega^\ell,\gamma_\ell)$ by $\C$ if $\ell = 1$ and by zero if $\ell > 1$ and replace $S_\ell$ by $1$ for $\ell = 1$ and zero for $\ell > 1$.  This replacements will produce the space
\[
\mathcal{B}\xi \oplus \mathcal{K},
\]
which is the Boolean Fock space considered in previous work and in Lemma \ref{lem:BooleanID}.
\end{example}

\begin{example}[The Monotone Case]
Consider $\mathcal{T}_{N,\mono}$.  In light of Remark \ref{rem:measureswithN=n}, $\theta_\pi$ is given by restricting the uniform distribution on $\Omega^\pi$ to the set $\mathcal{X}_w^{(\infty)}(\pi,\mathcal{T}_{N,\mono})$.  Now $\chi \in \mathcal{X}_w(\pi,\mathcal{T}_{N,\mono})$ if and only if $V \prec W$ in $\pi$ implies that $\chi(V) \leq \chi(W)$.  From this we can see that $\vec{\chi} = (\chi_1,\chi_2,\dots)$ is in $\mathcal{X}_w^{(\infty)}(\pi,\mathcal{T}_{N,\mono})$ if and only if $V \prec W$ in $\pi$ implies that $\vec{\chi}(V) \leq \vec{\chi}(W)$ in the lexicographical order on $[N]^{\N}$.

Rephrasing this in terms of points $(\omega_V)_{V \in \pi} \in \Omega^\pi$, this means that $V \prec W$ in $\pi$ implies that $\omega_V \leq \omega_W$ in the lexicographical order on $\Omega$.  Now there is an isomorphism of measure spaces $\Omega \to [0,1]$ given by
\[
(j_1,j_2,\dots) \mapsto \sum_{i=1}^\infty \frac{1}{N^i} (j_i - 1),
\]
where the measurable inverse map is given by taking the $N$-ary expansion of numbers in $[0,1]$ and adding one to each digit.  This isomorphism carries the lexicographical order on $\Omega$ to the standard order on $[0,1]$ (up to null sets).  Hence, it maps $\mathcal{X}_w^{(\infty)}(\pi,\mathcal{T}_{N,\mono})$ onto the set
\[
\{t \in [0,1]^\pi: V \prec W \implies t_V \leq t_W\},
\]
which is equal (up to null sets) to the set $\Upsilon_\pi$ from the proof of Proposition \ref{prop:monotonecumulants}.  Thus, we obtain an alternative proof that $\alpha_{N,\mono} = |\Upsilon_\pi|$.

If we choose $\omega_1, \dots, \omega_\ell \in \Omega$ and let $t_1$, \dots, $t_\ell$ be the corresponding points in $[0,1]$. The measures $\gamma_\ell$ correspond to the Lebesgue measure restricted to the set
\[
\Upsilon_\ell = \{(t_1,\dots,t_\ell) \in [0,1]^\ell: t_1 \geq t_2 \geq \dots \geq t_\ell\}.
\]
Moreover, if $\omega_1 \leq \dots \leq \omega_\ell$, then $\gamma_{\omega_1,\dots,\omega_\ell}$ corresponds to the Lebesgue measure restricted to $[t_1,1]$.  Under this change of coordinates, the Fock space given by our construction becomes
\[
\bigoplus_{\ell=0}^\infty \mathcal{K}^{\otimes_{\mathcal{B}} \ell} \otimes L^2(\Upsilon_\ell,\text{Leb}).
\]
The scalar-valued creation operator $S_\ell$ satisfies
\begin{align*}
S_\ell f(t_1,\dots,t_{\ell+1}) &= f(t_2,\dots,t_\ell) \\
S_\ell^* f(t_1,\dots,t_\ell) &= \int_{t_1}^1 f(t,t_1,\dots,t_\ell)\,dt.
\end{align*}
Thus, our construction reduces to the constructions in previous literature; see \cite{Lu1997}, \cite{Muraki1997}, and \cite[\S 6.1 - 6.4]{Jekel2020}.
\end{example}

\begin{example}[Digraphs]
Suppose that $\mathcal{T} = \Walk(G)$, where $G$ is a digraph on the vertex set $[N]$.  Let $\sim_G$ denote the directed adjacency relation of $G$.  Then we claim that the condition $jt_i \in \mathcal{T}$ in the definition of $\gamma_{\omega_1,\dots,\omega_\ell}$ reduces to $(\omega_1)_i \sim_G j$.   More precisely, for $(\omega_1,\dots,\omega_\ell)$ in the support of $\gamma_\ell$, we have
\begin{equation} \label{eq:digraphconditionaldistribution}
\gamma_{\omega_1,\dots,\omega_\ell} = \sum_{i=0}^\infty \frac{1}{N} \delta_{(\omega_1)_1} \times \dots \times \frac{1}{N} \delta_{(\omega_1)_i} \times \left( \frac{1}{N} \sum_{j: (\omega_1)_i \sim j} \delta_j \right) \times v^{\N}.
\end{equation}
To verify the claim, first observe that if $(\omega_1,\dots,\omega_\ell)$ is in the support of $\gamma_\ell$, then the string $t_i$ must be in $\mathcal{T}$ (that is, the reverse of $t_i$ defines a path in $G$); this can be verified by induction on $\ell$.  It follows that $jt_i \in \mathcal{T}$ if and only if there is an edge in $G$ from the first letter of $t_i$ to $j$.  And the first letter of $t_i$ is $(\omega_1)_i$, which proves our claim.

So we have shown that the condition $j t_i \in \mathcal{T}$ in the definition of $\gamma_{\omega_1,\dots,\omega_\ell}$, which depends on $(\omega_1,\dots,\omega_\ell)$, can be replaced by the condition $(\omega_1)_i \sim_G j$, which only depends on $\omega_1$.  Thus, the measures $\gamma_\ell$ satisfy a ``Markov property'' in that the ``conditional distribution'' of the newest coordinate (which is the leftmost coordinate) only depends on the value of the second coordinate, and indeed these measures are obtained in a similar way to a random walk on the digraph $G$.
\end{example}

\subsection{Bercovici-Pata Bijections} \label{subsec:BPbijection}

As explained in the introduction, Bercovici and Pata \cite{BP1999} studied the bijection between infinitely divisible laws in the classical, free, and Boolean settings, and their work has since been extended to the monotone case \cite{AW2014}.  These bijections were adapted to operator-valued free, Boolean, and monotone independence in the case of (exponentially bounded) $\mathcal{B}$-valued laws \cite{PV2013} \cite{AW2016}.  These latter bijections adapt directly to $\mathcal{T}$-free convolutions as follows.

Theorem \ref{thm:infinitelydivisible} defines a bijection between laws $\mu$ that are $\mathcal{T}$-freely infinitely divisible with bounded support and pairs $(c,\sigma)$ where $c$ is a self-adjoint element of $\mathcal{B}$ and $\sigma: \mathcal{B}\ip{Y} \to \mathcal{B}$ is completely positive and exponentially bounded.  We denote by $\mathbb{BP}_{\mathcal{T}}$ the map sending $(c,\sigma)$ to the corresponding infinitely divisible law $\mu$.

As a corollary, given $\mathcal{T} \in \Tree(N)$ and $\mathcal{T}' \in \Tree(N')$ satisfying $n := |[N] \cap \mathcal{T}| \geq 2$ and $n' := |[N'] \cap \mathcal{T}'| \geq 2$, there is a bijection between the laws that are infinitely divisible with bounded support for $\mathcal{T}$ and those for $\mathcal{T}'$ given by $\mathbb{BP}_{\mathcal{T}',\mathcal{T}} = \mathbb{BP}_{\mathcal{T}'} \circ \mathbb{BP}_{\mathcal{T}}^{-1}$.  We call this map $\mathbb{BP}_{\mathcal{T}',\mathcal{T}}$ a \emph{Bercovici-Pata bijection}.  Note that the $\mathbb{T}'$-free cumulants of $\mathbb{BP}_{\mathcal{T}',\mathcal{T}}(\mu)$ are equal to the $\mathcal{T}$-free cumulants of $\mu$.

Theorem \ref{thm:infinitelydivisible} also allows us to define operator-valued convolution powers of a law that is infinitely divisible with bounded support.

\begin{definition} \label{def:convolutionpowers}
Let $\eta: \mathcal{B} \to \mathcal{B}$ be a completely positive map.  If $\mu$, $\nu \in \Sigma(\mathcal{B})$, we say that $\nu = \boxplus_{\mathcal{T}}^\eta(\mu)$ if we have $\kappa_{\mathcal{T},\ell}(\nu) = \eta \circ \kappa_{\mathcal{T},\ell}(\mu)$.  In particular, if $t \in [0,+\infty)$, we say that $\nu = \boxplus_{\mathcal{T}}^t(\mu)$ if $\kappa_{\mathcal{T},\ell}(\nu) = t \kappa_{\mathcal{T},\ell}(\mu)$.  The free, Boolean, and monotone $\eta$-convolution powers of $\mu$ are denoted $\mu^{\boxplus \eta}$, $\mu^{\uplus \eta}$, and $\mu^{\rhd \eta}$ respectively.
\end{definition}

\begin{remark}
By Observation \ref{obs:lawcumulantextensivity}, we have $\kappa_{\mathcal{T},\ell}(\boxplus_{\mathcal{T}}(\mu,\dots,\mu)) = n \kappa_{\mathcal{T},\ell}(\mu)$ and therefore the two definitions of $\boxplus_{\mathcal{T}}^{n^k}(\mu)$ given by Definitions \ref{def:integerconvolutionpowers} and \ref{def:convolutionpowers} agree.
\end{remark}

\begin{observation}
It follows from Theorem \ref{thm:infinitelydivisible} that
\[
\mathbb{BP}_{\mathcal{T}}(\eta(c), \eta \circ \sigma) = \boxplus_{\mathcal{T}}^\eta(\mathbb{BP}(c,\sigma)).
\]
In particular, if $\mu$ is infinitely divisible with bounded support, then $\boxplus_{\mathcal{T}}^\eta$ is defined for every completely positive $\eta: \mathcal{B} \to \mathcal{B}$.  Moreover, the Bercovici-Pata bijections $\mathbb{BP}_{\mathcal{T}',\mathcal{T}}$ respect operator-valued convolution powers.
\end{observation}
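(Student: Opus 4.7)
The proof is essentially a direct unwinding of definitions via the cumulant parametrization in Theorem \ref{thm:infinitelydivisible}, so no substantive obstacle appears; the task is to verify that every object that must be named makes sense.

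First, I would observe that the pair $(\eta(c), \eta \circ \sigma)$ satisfies the hypotheses of Theorem \ref{thm:infinitelydivisible}(2). Since $\eta$ is completely positive $\mathcal{B} \to \mathcal{B}$, the element $\eta(c)$ is self-adjoint, and the composition $\eta \circ \sigma : \mathcal{B}\ip{Y} \to \mathcal{B}$ is completely positive as a composition of completely positive maps. Exponential boundedness of $\eta \circ \sigma$ follows from the fact that $\eta$ is a bounded linear map (indeed, $\norm{\eta} = \norm{\eta(1)}$ for a completely positive $\eta$), giving $\rad(\eta \circ \sigma) \leq \rad(\sigma)$. Hence the law $\nu := \mathbb{BP}_{\mathcal{T}}(\eta(c), \eta \circ \sigma)$ is well-defined and infinitely divisible with bounded support.

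Next, by the cumulant formula \eqref{eq:IDbijection} defining $\mathbb{BP}_{\mathcal{T}}$, if $\mu = \mathbb{BP}_{\mathcal{T}}(c,\sigma)$ then the $\mathcal{T}$-free cumulants of $\mu$ are $\kappa_{\mathcal{T},1}(\mu) = c$ and $\kappa_{\mathcal{T},\ell}(\mu)[b_1,\dots,b_{\ell-1}] = \sigma(b_1 Y b_2 \dots Y b_{\ell-1})$ for $\ell \geq 2$. Applying the same formula to $\nu$ with $(\eta(c), \eta \circ \sigma)$ in place of $(c,\sigma)$, and observing that $\eta(c) = \eta \circ \kappa_{\mathcal{T},1}(\mu)$ and $(\eta \circ \sigma)(b_1 Y \dots Y b_{\ell-1}) = \eta(\kappa_{\mathcal{T},\ell}(\mu)[b_1,\dots,b_{\ell-1}])$, I conclude $\kappa_{\mathcal{T},\ell}(\nu) = \eta \circ \kappa_{\mathcal{T},\ell}(\mu)$ for every $\ell$. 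By Definition \ref{def:convolutionpowers}, this is precisely the statement $\nu = \boxplus_{\mathcal{T}}^\eta(\mu)$, which proves the displayed identity.

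For the two consequences, I would argue as follows. Given an arbitrary infinitely divisible $\mu$ with bounded support, write $\mu = \mathbb{BP}_{\mathcal{T}}(c,\sigma)$ via the bijection in Theorem \ref{thm:infinitelydivisible}; then the identity just proved shows that $\boxplus_{\mathcal{T}}^\eta(\mu) = \mathbb{BP}_{\mathcal{T}}(\eta(c), \eta \circ \sigma)$ is defined for every completely positive $\eta$. For the statement about the Bercovici-Pata bijections, recall that $\mathbb{BP}_{\mathcal{T}',\mathcal{T}} = \mathbb{BP}_{\mathcal{T}'} \circ \mathbb{BP}_{\mathcal{T}}^{-1}$, so if $\mu = \mathbb{BP}_{\mathcal{T}}(c,\sigma)$ then $\mathbb{BP}_{\mathcal{T}',\mathcal{T}}(\mu) = \mathbb{BP}_{\mathcal{T}'}(c,\sigma)$. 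Applying the main identity once to each tree, I obtain
\[
\mathbb{BP}_{\mathcal{T}',\mathcal{T}}(\boxplus_{\mathcal{T}}^\eta(\mu)) = \mathbb{BP}_{\mathcal{T}',\mathcal{T}}(\mathbb{BP}_{\mathcal{T}}(\eta(c), \eta \circ \sigma)) = \mathbb{BP}_{\mathcal{T}'}(\eta(c), \eta \circ \sigma) = \boxplus_{\mathcal{T}'}^\eta(\mathbb{BP}_{\mathcal{T}',\mathcal{T}}(\mu)),
\]
which is the compatibility of $\mathbb{BP}_{\mathcal{T}',\mathcal{T}}$ with $\eta$-convolution powers. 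The only subtlety worth noting explicitly, rather than a true obstacle, is verifying exponential boundedness of $\eta \circ \sigma$ so that Theorem \ref{thm:infinitelydivisible}(2) applies.
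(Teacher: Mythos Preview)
Your proposal is correct and matches the paper's approach: the paper states this as an Observation with no proof beyond the phrase ``It follows from Theorem~\ref{thm:infinitelydivisible},'' and you have simply written out the unwinding of definitions that this sentence implies. Your check that $\eta\circ\sigma$ remains completely positive and exponentially bounded, and your derivation of the compatibility with $\mathbb{BP}_{\mathcal{T}',\mathcal{T}}$, are exactly the details the paper leaves to the reader.
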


\begin{remark}
It follows from Proposition \ref{prop:cumulantproperties} that $\mathbb{BP}_{\mathcal{T}}$ only depends on the isomorphism class of $\mathcal{T}$ as a rooted tree, and hence $\mathbb{BP}_{\mathcal{T}',\mathcal{T}} = \id$ if $\mathcal{T}'$ and $\mathcal{T}$ are isomorphic as rooted trees.  Similarly, by Proposition \ref{prop:cumulantcomposition}, we have $\mathbb{BP}_{\mathcal{T}^{n^k},\mathcal{T}} = \id$ for every $k$.
\end{remark}

It is a remarkable fact that the Boolean-to-free Bercovici-Pata bijection is given by $\mathbb{BP}_{\free,\Bool}(\mu) = (\mu^{\boxplus 2})^{\uplus 1/2}$; this is a special case of the results in \cite[\S 1.2]{BN2008a}, \cite{BN2009}, and \cite{ABFN2013}.  Actually, we can find a similar expression for the Bercovici-Pata bijection from Boolean independence to $\mathcal{T}$-independence when $\mathcal{T}$ is an $(n,d)$-regular tree as in \S \ref{subsec:treecoefficients}.  (Recall we defined the non-standard terminology ``$(n,d)$-regular tree'' in \S \ref{subsec:treecoefficients}, and it means that the root has $n$ neighbors, and all the other vertices have $d$ children each.)  The proposition below in particular gives a combinatorial proof that $\mathbb{BP}_{\free,\Bool}(\mu) = (\mu^{\boxplus 2})^{\uplus 1/2}$.

\begin{proposition} \label{prop:regulartreebijection}
Let $\mathcal{T} \in \Tree(N)$ be an $(n,d)$-regular tree with $n \geq 2$.  Then we have
\[
\mathbb{BP}_{\mathcal{T},\Bool}(\mu) = \boxplus_{\mathcal{T}}(\mu^{\uplus \frac{1}{n-1}}, \dots, \mu^{\uplus \frac{1}{n-1}})^{\uplus \frac{n-1}{n}}.
\]
\end{proposition}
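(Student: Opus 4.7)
\smallskip

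The goal is to verify that the law $\rho := \boxplus_{\mathcal{T}}(\mu^{\uplus 1/(n-1)}, \dots, \mu^{\uplus 1/(n-1)})^{\uplus (n-1)/n}$ on the right-hand side satisfies $\kappa_{\mathcal{T},\ell}(\rho) = \kappa_{\Bool,\ell}(\mu)$ for every $\ell$, since by Theorem \ref{thm:infinitelydivisible} these cumulants determine $\mathbb{BP}_{\mathcal{T},\Bool}(\mu)$ and since the sequence of $\mathcal{T}$-free cumulants determines the law via the moment-cumulant formula. All the Boolean convolution powers appearing make sense (they are positive), so $\rho$ is a well-defined law. The plan is to compute $\kappa_{\Bool,\ell}(\rho)$ directly via the combinatorial machinery of \S \ref{sec:combinatorics} and then compare it to the Boolean-to-$\mathcal{T}$ cumulant conversion formula from Lemma \ref{lem:TBcumulantconversion}.

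First, set $\nu = \mu^{\uplus 1/(n-1)}$, so that $\kappa_{\Bool,\ell}(\nu) = \frac{1}{n-1}\kappa_{\Bool,\ell}(\mu)$ and (by multilinearity of $\pi$-composition) $\kappa_{\Bool,\pi}(\nu) = \frac{1}{(n-1)^{|\pi|}}\kappa_{\Bool,\pi}(\mu)$. Let $\tilde\mu = \boxplus_{\mathcal{T}}(\nu,\dots,\nu)$. Realizing $\tilde\mu$ on the $\mathcal{T}$-free product of $N$ copies of a space carrying $\nu$ and applying Corollary \ref{cor:combinatorics2} with all inputs equal to $\nu$, I would obtain
\[
\kappa_{\Bool,\ell}(\tilde\mu) = \sum_{\pi \in \mathcal{NC}^\circ(\ell)} |\mathcal{X}(\pi,\mathcal{T})| \, \kappa_{\Bool,\pi}(\nu),
\]
where $\mathcal{X}(\pi,\mathcal{T}) := \{\chi \in [N]^\pi : \pi \in \mathcal{NC}(\chi,\mathcal{T})\}$. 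The central combinatorial input is that, for an $(n,d)$-regular tree $\mathcal{T}$, the compatible colorings biject (as in Remark \ref{rem:graphhomo}) with rooted tree homomorphisms $\graph(\pi) \to \mathcal{T}$, so the counting already performed inside the proof of Proposition \ref{prop:regulartree} gives $|\mathcal{X}(\pi,\mathcal{T})| = n^{O(\pi)} d^{I(\pi)}$, which for an irreducible partition equals $n \cdot d^{|\pi|-1}$. Combining, I get
\[
\kappa_{\Bool,\ell}(\tilde\mu) \;=\; \frac{n}{n-1}\sum_{\pi \in \mathcal{NC}^\circ(\ell)} \left(\frac{d}{n-1}\right)^{|\pi|-1} \kappa_{\Bool,\pi}(\mu).
\]
Since Boolean cumulants scale linearly under Boolean convolution powers, $\kappa_{\Bool,\ell}(\rho) = \frac{n-1}{n}\kappa_{\Bool,\ell}(\tilde\mu)$, and the factor $\frac{n}{n-1}$ cancels, leaving
\[
\kappa_{\Bool,\ell}(\rho) = \sum_{\pi \in \mathcal{NC}^\circ(\ell)} \left(\frac{d}{n-1}\right)^{|\pi|-1} \kappa_{\Bool,\pi}(\mu).
\]

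On the other hand, Lemma \ref{lem:TBcumulantconversion} together with the explicit formula $\alpha_{\mathcal{T},\pi} = (d/(n-1))^{I(\pi)}$ from Proposition \ref{prop:regulartree}, combined with $I(\pi) = |\pi|-1$ for irreducible $\pi$, gives
\[
\kappa_{\Bool,\ell}(\rho) = \sum_{\pi \in \mathcal{NC}^\circ(\ell)} \left(\frac{d}{n-1}\right)^{|\pi|-1} \kappa_{\mathcal{T},\pi}(\rho).
\]
Equating the two expressions yields, for every $\ell$,
\[
\sum_{\pi \in \mathcal{NC}^\circ(\ell)} \left(\frac{d}{n-1}\right)^{|\pi|-1} \kappa_{\mathcal{T},\pi}(\rho) \;=\; \sum_{\pi \in \mathcal{NC}^\circ(\ell)} \left(\frac{d}{n-1}\right)^{|\pi|-1} \kappa_{\Bool,\pi}(\mu).
\]
The coefficient of the one-block partition is $1 \neq 0$ on both sides, so by the Möbius inversion principle (Lemma \ref{lem:partitionMobiusinversion}) I conclude inductively on $\ell$ that $\kappa_{\mathcal{T},\ell}(\rho) = \kappa_{\Bool,\ell}(\mu)$.

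The main technical hurdle I expect is the step that yields the formula for $\kappa_{\Bool,\ell}(\tilde\mu)$: one must carefully use Corollary \ref{cor:combinatorics2} (or, equivalently, the unsimplified form \eqref{eq:maincombinatorialformula} of Theorem \ref{thm:combinatorics} when $\mathcal{T} \notin \Tree'(N)$) and observe that, because all the input laws are the same $\nu$, the factor $\Lambda_{\chi,\pi}$ becomes independent of $\chi$ and collapses into $\kappa_{\Bool,\pi}(\nu)$, so that the $\chi$-sum reduces to multiplication by $|\mathcal{X}(\pi,\mathcal{T})|$. One minor additional point is to confirm that effectively only colorings into $[N] \cap \mathcal{T}$ contribute, but this is automatic since $\chi(V) \in [N]\cap\mathcal{T}$ whenever $V$ is an outer block of a $\pi \in \mathcal{NC}(\chi,\mathcal{T})$, so all other colors drop out.
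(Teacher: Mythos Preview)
Your proposal is correct and follows essentially the same approach as the paper: compute the Boolean cumulants of the right-hand side via Corollary~\ref{cor:combinatorics2}, count $|\mathcal{X}(\pi,\mathcal{T})| = n\,d^{|\pi|-1}$ for irreducible $\pi$ using the homomorphism count from the proof of Proposition~\ref{prop:regulartree}, and then identify the result with the Boolean-to-$\mathcal{T}$ conversion formula of Lemma~\ref{lem:TBcumulantconversion}. The only cosmetic difference is in the concluding step: the paper substitutes $\kappa_{\Bool,\pi}(\mu) = \kappa_{\mathcal{T},\pi}(\mathbb{BP}_{\mathcal{T},\Bool}(\mu))$ and matches Boolean cumulants directly, whereas you apply Lemma~\ref{lem:TBcumulantconversion} to $\rho$ and peel off $\kappa_{\mathcal{T},\ell}(\rho) = \kappa_{\Bool,\ell}(\mu)$ by induction on $\ell$---both are equivalent ways of finishing.
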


\begin{proof}
Fix $\mu$ and let $\nu$ be the law on the right-hand side.  We will prove that $\nu = \mathbb{BP}_{\mathcal{T},\Bool}(\mu)$ by showing that their Boolean cumulants are equal using Corollary \ref{cor:combinatorics2} and Lemma \ref{lem:TBcumulantconversion}.

Let $X$ be an operator on $(\mathcal{H},\xi)$ which realizes the law $\mu^{\uplus \frac{1}{n-1}}$.  Let $(\mathcal{K},\zeta) = \assemb_{\mathcal{T}}[(\mathcal{H},\xi),\dots,(\mathcal{H},\xi)]$, let $\lambda_1, \dots, \lambda_N: \mathcal{L}(\mathcal{H}) \to \mathcal{L}(\mathcal{K})$ be the two inclusions, and let $Y = \sum_{j=1}^N \lambda_j(X)$, so that $Y \sim \boxplus_{\mathcal{T}}(\mu^{\uplus \frac{1}{n-1}}, \dots, \mu^{\uplus \frac{1}{n-1}})$.  By Corollary \ref{cor:combinatorics2}, we have
\begin{align*}
K_{\Bool,\ell} & [Yb_1,\dots,Yb_{\ell-1},Y] \\
&= \sum_{\chi \in [N]^{[\ell]}} K_{\Bool,\ell}[\lambda_{\chi(1)}(X)b_1, \dots, \lambda_{\chi(\ell-1)}(X) b_{\ell-1}, \lambda_{\chi(\ell)}(X)] \\
&= \sum_{\chi \in [N]^{[\ell]}} \sum_{\pi \in \mathcal{NC}^\circ(\chi,\mathcal{T})} \kappa_{\Bool,\pi}(\mu^{\uplus \frac{1}{n-1}})[b_1, \dots, b_{\ell-1}].
\end{align*}
Here on the right hand side, we mean the $\pi$-composition of the Boolean cumulants of $\mu^{\uplus \frac{1}{n-1}}$ for arbitrary partitions, not only interval partitions.  This expression $\kappa_{\Bool,\pi}(\mu^{\uplus \frac{1}{n-1}})$ is what we get from evaluating the term
\[
\Lambda_{\chi,\pi}(Xb_1,\dots,Xb_{\ell-1},X)
\]
from Corollary \ref{cor:combinatorics2}.

Recalling that $\nu$ is the $(n-1)/n$ Boolean convolution power of the law of $Y$, we get
\begin{align*}
\kappa_{\Bool,\ell}(\nu)[b_1,\dots,b_{\ell-1}] &= \frac{n-1}{n} \sum_{\chi \in [N]^{[\ell]}} \sum_{\pi \in \mathcal{NC}^\circ(\chi,\mathcal{T})} \kappa_{\Bool,\pi}(\mu^{\uplus \frac{1}{n-1}})[b_1, \dots, b_{\ell-1}] \\
&= \frac{n-1}{n} \sum_{\pi \in \mathcal{NC}^\circ(\ell)} \sum_{\chi \in \mathcal{X}(\pi,\mathcal{T})} \frac{1}{(n-1)^{|\pi|}} \kappa_{\Bool,\pi}(\mu)[b_1, \dots, b_{\ell-1}],
\end{align*}
where we have exchanged the order of summation and substituted that $\kappa_{\Bool,\ell}(\mu^{\uplus \frac{1}{n-1}}) = \frac{1}{n-1} \kappa_{\Bool,\ell}(\mu)$.

Next, recall that the colorings in $\mathcal{X}(\pi,\mathcal{T})$ are equivalent to rooted tree homomorphisms from $\graph(\pi)$ to $\mathcal{T}$.  As we saw in \S \ref{subsec:treecoefficients}, the number of such homomorphisms is $n$ to the number of outer blocks of $\pi$ times $d$ to the number of inner blocks of $\pi$.  Since $\pi$ is irreducible, it has only one outer block.  So we get
\begin{align*}
\kappa_{\Bool,\ell}(\nu)[b_1,\dots,b_{\ell-1}] &= \frac{n-1}{n} \sum_{\pi \in \mathcal{NC}^\circ(\ell)} \frac{n d^{|\pi| - 1}}{(n-1)^{|\pi|}} \kappa_{\Bool,\pi}(\mu)[b_1, \dots, b_{\ell-1}] \\
&= \sum_{\pi \in \mathcal{NC}^\circ(\ell)} \left( \frac{d}{n-1}\right)^{|\pi|-1} \kappa_{\Bool,\pi}(\mu)[b_1, \dots, b_{\ell-1}].
\end{align*}
Recall that $(d / (n-1))^{|\pi| - 1} = \alpha_{\mathcal{T},\pi}$ by Proposition \ref{prop:regulartree}.  Meanwhile, the Bercovici-Pata bijection $\mathbb{BP}_{\mathcal{T},\Bool}(\mu)$ is defined so that the $\mathcal{T}$-free cumulants equal to the Boolean cumulants of $\mu$, which means that
\begin{align*}
\kappa_{\Bool,\ell}(\nu)[b_1,\dots,b_{\ell-1}] &= \sum_{\pi \in \mathcal{NC}^\circ(\ell)} \alpha_{\mathcal{T},\pi} \kappa_{\mathcal{T},\pi}(\mathbb{BP}_{\mathcal{T},\Bool}(\mu))[b_1, \dots, b_{\ell-1}] \\
&= \kappa_{\Bool,\ell}(\mathbb{BP}_{\mathcal{T},\Bool}(\mu)),
\end{align*}
where the last line is Lemma \ref{lem:TBcumulantconversion}.  Thus, $\nu$ and $\mathbb{BP}_{\mathcal{T},\Bool}(\mu)$ have the same Boolean cumulants, so they are equal.
\end{proof}

Actually, for an $(n,d)$-regular tree, we can express the $\mathcal{T}$-convolution powers and the Bercovici-Pata bijection purely in terms of free and Boolean convolution powers.  First, by substituting $\mu^{\uplus \frac{1}{n-1}}$ for $\mu$ in Example \ref{ex:regulartree}, we have for $d > 0$ that
\begin{equation} \label{eq:Tfreeconvolutionconversion}
\boxplus_{\mathcal{T}}(\mu,\dots,\mu) = ((\mu^{\uplus \frac{d}{n-1}})^{\boxplus n})^{\uplus \frac{n-1}{d}}.
\end{equation}

Next, we claim that $\mathbb{BP}_{\mathcal{T},\Bool}$ can be expressed in terms of the \emph{Belinschi-Nica semigroup}.  This semigroup was defined in \cite{BN2008b} by the formula
\[
\mathbb{BN}_t(\mu) = (\mu^{\boxplus (1 + t)})^{\uplus \frac{1}{1+t}}.
\]
The definition relies on the fact from free probability that the $(1 + t)$-free convolution power of a non-commutative law is defined for any $t \geq 0$, or in other words, there exists a law $\mu^{\boxplus (1 + t)}$ whose free cumulants are $1 + t$ times the free cumulants of $\mu$.  This can be proved using either the $R$-transform or certain operator models; see \cite[Cor.\ 1.14]{NS1996}, \cite[Thm.\ 1]{Shlyakhtenko1997}, \cite[Thm.\ 8.4]{ABFN2013}, \cite[Thm.\ 2.3]{Shlyakhtenko2013}.

Furthermore, \cite{BN2008b,ABFN2013,Liu2018} showed that $(\mathbb{BN}_t)_{t \geq 0}$ forms a semigroup, that is,
\[
\mathbb{BN}_s \circ \mathbb{BN}_t = \mathbb{BN}_{s+t}.
\]
This can be verified using the identity
\begin{equation} \label{eq:freebooleanconversion}
(\mu^{\boxplus p})^{\uplus q} = (\mu^{\uplus q'})^{\boxplus p'} \text{ whenever } pq = p'q' \text{ and } p - 1 = (p' - 1)q'
\end{equation}
for real $p, p' \geq 1$ and $q, q' \geq 0$, which was proved in \cite{BN2008b,Liu2018}, as we alluded to in Example \ref{ex:freebooleaniteration}.  If we take the formula from Proposition \ref{prop:regulartreebijection} and substitute \eqref{eq:Tfreeconvolutionconversion} and then use \eqref{eq:freebooleanconversion}, we obtain the following corollary.

\begin{corollary}
If $\mathcal{T}$ is an $(n,d)$-regular tree, then
\[
\mathbb{BP}_{\mathcal{T},\Bool}(\mu) = \mathbb{BN}_{d/(n-1)}(\mu).
\]
\end{corollary}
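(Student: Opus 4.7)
The plan is to chain together the three formulas that the corollary's statement comes with: Proposition \ref{prop:regulartreebijection}, the identity \eqref{eq:Tfreeconvolutionconversion}, and the Belinschi--Nica rearrangement identity \eqref{eq:freebooleanconversion}. Assuming $d > 0$ (the case $d = 0$ is trivial because $\mathcal{T}$ reduces to the Boolean tree and $\mathbb{BN}_0 = \id$), I would start from
\[
\mathbb{BP}_{\mathcal{T},\Bool}(\mu) = \boxplus_{\mathcal{T}}(\mu^{\uplus \frac{1}{n-1}}, \dots, \mu^{\uplus \frac{1}{n-1}})^{\uplus \frac{n-1}{n}}
\]
given by Proposition \ref{prop:regulartreebijection}, then substitute $\mu^{\uplus 1/(n-1)}$ into \eqref{eq:Tfreeconvolutionconversion} in place of $\mu$. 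Since Boolean convolution powers compose multiplicatively (i.e., $(\mu^{\uplus a})^{\uplus b} = \mu^{\uplus ab}$, a consequence of Observation \ref{obs:lawcumulantextensivity} applied to the Boolean case), this yields the explicit expression
\[
\mathbb{BP}_{\mathcal{T},\Bool}(\mu) = \bigl((\mu^{\uplus \frac{d}{(n-1)^2}})^{\boxplus n}\bigr)^{\uplus \frac{(n-1)^2}{nd}}
\]
after multiplying the three Boolean exponents $\tfrac{d}{n-1}\cdot\tfrac{1}{n-1}$, $\tfrac{n-1}{d}$, and $\tfrac{n-1}{n}$.

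Next, I would invoke the identity \eqref{eq:freebooleanconversion}, namely $(\mu^{\boxplus p})^{\uplus q} = (\mu^{\uplus q'})^{\boxplus p'}$ whenever $pq = p'q'$ and $p-1 = (p'-1)q'$, to convert the inner ``Boolean-then-free'' expression into a ``free-then-Boolean'' one. Taking $q' = d/(n-1)^2$ and $p' = n$, the matching conditions force $p - 1 = (n-1)\cdot d/(n-1)^2 = d/(n-1)$, so $p = (n-1+d)/(n-1)$, and then $q = p'q'/p = nd/((n-1)(n-1+d))$. Substituting this into the display above and again multiplying the two outer Boolean exponents $q$ and $(n-1)^2/(nd)$ gives
\[
\mathbb{BP}_{\mathcal{T},\Bool}(\mu) = \bigl(\mu^{\boxplus \frac{n-1+d}{n-1}}\bigr)^{\uplus \frac{n-1}{n-1+d}}.
\]

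Finally, I would recognize the right-hand side as $\mathbb{BN}_t(\mu) = (\mu^{\boxplus(1+t)})^{\uplus 1/(1+t)}$ with $1+t = (n-1+d)/(n-1)$, i.e.\ $t = d/(n-1)$. The main obstacle is essentially bookkeeping: one must verify that the hypotheses $p \geq 1$ and $q, q' \geq 0$ for the applicability of \eqref{eq:freebooleanconversion} are met (here $p = (n-1+d)/(n-1) \geq 1$ since $d \geq 0$, and the remaining exponents are manifestly nonnegative), and check that $d > 0$ is needed to make sense of the intermediate fractions appearing in \eqref{eq:Tfreeconvolutionconversion}, handling the boundary case $d = 0$ by the direct observation that $(n,0)$-regular trees yield $\mathbb{BP}_{\mathcal{T},\Bool} = \id = \mathbb{BN}_0$. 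No genuinely new combinatorial content is required beyond the three identities quoted.
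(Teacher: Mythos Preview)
Your proposal is correct and follows exactly the approach the paper itself indicates just before stating the corollary: start from Proposition~\ref{prop:regulartreebijection}, substitute \eqref{eq:Tfreeconvolutionconversion}, and then apply \eqref{eq:freebooleanconversion}. Your bookkeeping (including the separate treatment of $d=0$ and the check that the exponents in \eqref{eq:freebooleanconversion} lie in the required ranges) is accurate; the only minor quibble is that the multiplicativity of Boolean convolution powers follows directly from Definition~\ref{def:convolutionpowers} rather than from Observation~\ref{obs:lawcumulantextensivity}.
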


Thus, not only does $\mathbb{BN}_1$ give the Boolean-to-free Bercovici-Pata bijection, but actually $\mathbb{BN}_t$ at any rational time $t$ gives the Boolean-to-$\mathcal{T}$-free Bercovici-Pata bijection for some tree $\mathcal{T}$.  More precisely, if we write the rational number $t$ as $d / (n-1)$, then we may construct an $(n,d)$ regular tree $\mathcal{T}$ in $\Tree(N)$, where $N = \max(n,d+1)$, and then $\mathbb{BN}_{d/(n-1)}$ is the Bercovici-Pata bijection for this tree.

This also gives an explicit formula for the central limit distribution for this tree for the scalar-valued setting $\mathcal{B} = \C$.  Indeed, the central limit law is $\mathbb{BN}_{d/(n-1)}$ applied to the Bernoulli distribution.  So we first compute the $(1+d/(n-1))$-free convolution power of the Bernoulli distribution using the $R$-transform, and then take the $1 / (1 + d/(n-1))$-Boolean convolution power.  We leave the details of the computation as an exercise for those familiar with the analytic transforms.

\begin{corollary}
Consider the case $\mathcal{B} = \C$.  The central limit distribution for an $(n,d)$-regular tree $\mathcal{T}$ is the probability measure $\mu$ on $\R$ given as follows.  Let $t = d / (n - 1)$.  If $t < 1/2$, then
\[
d\mu(x) = \frac{\sqrt{4t - x^2}}{2 \pi [(t - 1)x^2 + 1]} \chi_{(-2\sqrt{t}, 2\sqrt{t})}(x)\,dx + \frac{1-2t}{2(1-t)}\left(d\delta_{-\frac{1}{\sqrt{1-t}}}(x) + d\delta_{\frac{1}{\sqrt{1-t}}}(x) \right), 
\]
and if $t \geq 1/2$, then
\[
d\mu(x) = \frac{\sqrt{4t - x^2}}{2 \pi [(t - 1)x^2 + 1]} \chi_{(-2\sqrt{t}, 2\sqrt{t})}(x)\,dx.
\]
\end{corollary}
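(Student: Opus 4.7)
The plan is to reduce the computation to an explicit manipulation of analytic transforms. By definition of the $\mathcal{T}$-free cumulants, the scalar central limit law $\nu_{\mathcal{T},1}$ (with unit variance) has $\kappa_{\mathcal{T},2}=1$ and all other $\mathcal{T}$-free cumulants vanishing. These coincide with the Boolean cumulants of the symmetric Bernoulli law $\mu_{\mathrm{Bern}} = \tfrac{1}{2}(\delta_{-1}+\delta_1)$, so by the characterization of infinitely divisible laws via cumulants (Theorem \ref{thm:infinitelydivisible}) together with the preceding Corollary, we have
\[
\nu_{\mathcal{T},1} \;=\; \mathbb{BP}_{\mathcal{T},\mathrm{Bool}}(\mu_{\mathrm{Bern}}) \;=\; \mathbb{BN}_{t}(\mu_{\mathrm{Bern}}),
\]
where $t = d/(n-1)$. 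Writing $s := 1+t$, the remainder of the proof computes the law $\nu := (\mu_{\mathrm{Bern}}^{\boxplus s})^{\uplus 1/s}$ explicitly.

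First I would compute $\Phi_{\mu_{\mathrm{Bern}}}(z) = F_{\mu_{\mathrm{Bern}}}^{-1}(z)-z = \tfrac{1}{2}(\sqrt{z^2+4}-z)$ starting from $G_{\mu_{\mathrm{Bern}}}(z) = z/(z^2-1)$. Taking the free convolution power gives $F_{\mu_{\mathrm{Bern}}^{\boxplus s}}^{-1}(z) = (1-s/2)z + (s/2)\sqrt{z^2+4}$, which inverts to
\[
F_{\mu_{\mathrm{Bern}}^{\boxplus s}}(z) \;=\; \frac{(2-s)z - s\sqrt{z^2-4(s-1)}}{2(1-s)},
\]
the branch being fixed by the condition $F(z)\sim z$ at infinity. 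Applying the Boolean power then yields $F_\nu(z) = (1-1/s)z + (1/s)F_{\mu_{\mathrm{Bern}}^{\boxplus s}}(z)$, and after algebraic simplification and substitution of $t=s-1$ one arrives at the remarkably clean formula
\[
F_\nu(z) \;=\; \frac{(2t-1)z + \sqrt{z^2-4t}}{2t}.
\]
Inverting and rationalizing produces
\[
G_\nu(z) \;=\; \frac{(2t-1)z - \sqrt{z^2-4t}}{2[(t-1)z^2+1]}.
\]

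From this formula I would extract the absolutely continuous part by Stieltjes inversion: taking the branch $\sqrt{(x\pm i0)^2-4t} = \pm i\sqrt{4t-x^2}$ for $|x|<2\sqrt{t}$ gives precisely the asserted density $\frac{\sqrt{4t-x^2}}{2\pi[(t-1)x^2+1]}\chi_{(-2\sqrt t,2\sqrt t)}(x)$. The atoms correspond to the real zeros of $F_\nu$ outside the cut; solving $(2t-1)z+\sqrt{z^2-4t}=0$ yields the candidates $z = \pm 1/\sqrt{1-t}$ (when $t<1$), and a careful branch check shows that these are genuine zeros precisely when $t<1/2$. A standard residue computation then gives each atom the weight $(1-2t)/[2(1-t)]$.

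The main obstacle will be the branch-tracking in the last step: the formulas involve nested square roots that must be evaluated consistently on both sides of the branch cut, and in particular the verification that the candidate poles $\pm 1/\sqrt{1-t}$ of $G_\nu$ are genuine (rather than removable through cancellation with the numerator) depends on whether $t<1/2$ or $t\geq 1/2$. Everything else reduces to patient algebraic manipulation, made feasible by the fact that after simplification the intermediate expressions are rational functions of $z$ and a single square root $\sqrt{z^2-4t}$.
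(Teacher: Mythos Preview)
Your approach is correct and is exactly what the paper outlines: the paragraph immediately preceding this corollary explains that the central limit law equals $\mathbb{BN}_{d/(n-1)}$ applied to the Bernoulli distribution, and then explicitly says ``we first compute the $(1+d/(n-1))$-free convolution power of the Bernoulli distribution using the $R$-transform, and then take the $1/(1+d/(n-1))$-Boolean convolution power. We leave the details of the computation as an exercise for those familiar with the analytic transforms.'' Your proposal carries out precisely this exercise (using the equivalent Voiculescu $\Phi$-transform rather than the $R$-transform), and the intermediate formulas you obtain for $F_\nu$ and $G_\nu$ as well as the branch analysis for the atoms are all correct.
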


\begin{remark}
These probability distributions also occur as the first law in the pair $(\mu,\nu)$ giving the c-free central limit distribution with $\Var(\mu) = 1$ and $\Var(\nu) = t$.  See \cite[Thm.\ 4.3]{BLS1996}.  The relationship between the c-free central limit laws and free/Boolean convolution powers can be seen by studying the analytic transforms \cite[\S 5]{BLS1996} and \cite[\S 4]{BPV2012}.
\end{remark}

\section{Concluding Remarks} \label{sec:conclusion}

We mention a few results and directions for future research which we did not fully develop in this paper.

\subsection{Laws with Finite Moments}

Many of the results of this paper would generalize to laws with finite moments, that is, unital, completely positive $\mathcal{B}$-$\mathcal{B}$-bimodule maps $\mathcal{B}\ip{X} \to \mathcal{B}$ that are not necessarily exponentially bounded, provided that we assume each moment $\mu(b_0 X b_1 \dots X b_\ell)$ is a bounded multilinear map $\mathcal{B}^{\ell+1} \to \mathcal{B}$.  The construction of product spaces and Fock spaces would proceed in the same way except that we would use $\mathcal{B}$-valued semi-inner product modules rather than Hilbert $\mathcal{B}$-modules and use the algebraic direct sums and tensor products without taking separation-completions.  The convolutions would be well-defined and satisfy the same moment formulas and operad properties, and the central limit theorem and Bercovici-Pata bijections would work in the same way.

Furthermore, one could drop the assumption that $\mu$ is completely positive and only work with unital $\mathcal{B}$-$\mathcal{B}$-bimodule maps $\mathcal{B}\ip{X} \to \mathcal{B}$ such that each moment is bounded.  In this case, one would also drop the positivity condition from the semi-inner products.  In this setting, essentially purely algebraic, the central limit theorem would still hold.  However, the characterization of infinitely divisible laws would be trivial since every law would be infinitely divisible in the algebraic sense.  Hence, the Bercovici-Pata bijections would be globally defined.  Also, arbitrary convolution powers by a bounded linear map $\eta: \mathcal{B} \to \mathcal{B}$ would be defined.

We assumed complete positivity and boundedness throughout because we believe that the results about positivity and operator-norm estimates have inherent interest.  Besides, including several variants of every result would have added more length than content, but the reader can easily adapt our proofs to the more algebraic spaces of laws for all the results that still apply.

\subsection{Analytic Viewpoint and Sharp Estimates}

As mentioned in the introduction, we did not fully develop the complex-analytic viewpoint on $\mathcal{T}$-free independences.  Moreover, at least in the scalar-valued setting, the complex-analytic viewpoint should allow the study of $\mathcal{T}$-free convolution of arbitrary probability measures, discovery of the optimal rate of convergence in the central limit theorem (see Remark \ref{rem:sharpCLT}), and the classification of infinitely divisible and stable distributions with unbounded support (as in \cite{BP1999}, \cite{Belinschi2006}, \cite{AW2014}, \cite{HSW2018}).  We would also like to know under what conditions the estimate
\[
\max_j (\rad(\mu_j)) \leq \rad(\boxplus_{\mathcal{T}}(\mu_1,\dots,\mu_N))
\]
or the estimate
\[
\rad(\mu) \leq \rad(\boxplus_{\mathcal{T}}(\mu,\dots,\mu))
\]
holds (see Remark \ref{rem:IDradiusbound}).

\subsection{Multiplicative Convolutions}

Parallel to the theory of additive convolutions, there is a theory of multiplicative convolutions based on multiplying rather than adding independent variables; see e.g.\ \cite{Voiculescu1995,Bercovici2005b,Franz2006,Franz2008,BW2008,Franz2009,AA2017}.  Of course, the product of self-adjoint operators is not necessarily self-adjoint, but there are several natural settings for multiplicative convolution | for instance, multiplication of unitaries and symmetrized multiplication of positive operators.


In general, for a (non-self-adjoint) operator $a$ in a $\mathcal{B}$-valued non-commutative probability space $(\mathcal{A},E)$, the \emph{$*$-distribution of $a$} is the map $\mathcal{B}\ip{Z,Z^*} \to \mathcal{B}$ given by $p(Z,Z^*) \mapsto E[p(a,a^*)]$.  Here $\mathcal{B}\ip{Z,Z^*}$ is the $*$-algebra of non-commutative polynomials in $Z$ and $Z^*$ with the obvious involution that maps $Z$ to $Z^*$ and vice versa.  We let $\Upsilon(\mathcal{B})$ denote the set of $*$-distributions that can be realized by a unitary operator $a$ in $(\mathcal{A},E)$.

The free convolution of two laws in $\Upsilon(\mathcal{B})$ is easy to define.  Indeed, suppose that for $j = 1$, $2$, we have a unitary operator $U_j$ on a $\mathcal{B}$-$\mathcal{B}$-correspondence with $\mathcal{B}$-central unit vector $(\mathcal{H}_j,\xi_j)$ which realizes the law $\mu_j$.  Let $(\mathcal{H},\xi)$ be the free product of these two correspondences. Then $\lambda_{\free,j}(U_j)$ is unitary because $\lambda_{\free,j}$ is a \emph{unital} $*$-homomorphism.  So we can define the free multiplicative convolution $\mu_1 \boxtimes \mu_2$ to be the law of $\lambda_{\free,1}(U_1) \lambda_{\free,2}(U_2)$.

However, for general trees $\mathcal{T}$ (and even in the Boolean and monotone cases) the map $\lambda_{\mathcal{T},j}$ might not be unital, and hence will not send unitaries to unitaries.  One solution to this problem (as in \cite{Bercovici2005b,Franz2006,Franz2008}) is to consider $\lambda_{\mathcal{T},j}(U_j - 1) + 1$ rather than $\lambda_{\mathcal{T},j}(U_j)$.  This element can be equivalently expressed as $\lambda_{\mathcal{T},j}(U_j) + (1 - \lambda_{\mathcal{T},j}(1))$, which is unitary because $\lambda_{\mathcal{T},j}(U_j)$ is a partial isometry with left and right support given by the projection $\lambda_{\mathcal{T},j}(1)$.

\begin{definition}
Let $\mathcal{T} \in \Tree(N)$ and $\mu_1$, \dots, $\mu_N \in \Upsilon(\mathcal{B})$.  Let $U_j$ be a unitary operator on $(\mathcal{H}_j, \xi_j)$ realizing the $*$-distribution $\mu_j$.  Then we define the \emph{$\mathcal{T}$-free unitary multiplicative convolution} $\boxtimes_{\mathcal{T}}(\mu_1,\dots,\mu_N)$ to be the $*$-distribution of
\[
[\lambda_{\mathcal{T},1}(U_1 - 1) + 1] \dots [\lambda_{\mathcal{T},N}(U_N - 1) + 1].
\]
\end{definition}

The analogue of Corollary \ref{cor:operadmorphism} does hold for unitary multiplicative convolution.

\begin{proposition} \label{prop:multiplicativeoperadmorphism}
$\mathcal{T} \mapsto \boxtimes_{\mathcal{T}}$ is an operad morphism.  In other words, given tree $\mathcal{T} \in \Tree(k)$ and $\mathcal{T}_j \in \Tree(n_j)$ for $j = 1, \dots, k$, we have
\[
\boxtimes_{\mathcal{T}}(\boxtimes_{\mathcal{T}_1}, \dots, \boxtimes_{\mathcal{T}_k}) = \boxtimes_{\mathcal{T}(\mathcal{T}_1,\dots,\mathcal{T}_k)}.
\]
\end{proposition}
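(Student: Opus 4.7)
The plan is to piggyback on Theorem \ref{thm:composition} in exactly the same way that Corollary \ref{cor:operadmorphism} did for the additive case, but with a small algebraic identity to deal with the fact that $\lambda_{\mathcal{T},j}$ is not unital. Fix $\mu_{j,i}$ for $j \in [k]$, $i \in [n_j]$, and realize each $\mu_{j,i}$ as the $*$-distribution of a unitary $U_{j,i}$ on some $(\mathcal{H}_{j,i},\xi_{j,i})$. Let $(\mathcal{H}_j,\xi_j) = \assemb_{\mathcal{T}_j}[(\mathcal{H}_{j,i},\xi_{j,i})_{i=1}^{n_j}]$, $(\mathcal{H},\xi) = \assemb_{\mathcal{T}}[(\mathcal{H}_j,\xi_j)_{j=1}^k]$, and $(\mathcal{K},\zeta) = \assemb_{\mathcal{T}(\mathcal{T}_1,\dots,\mathcal{T}_k)}[(\mathcal{H}_{j,i},\xi_{j,i})_{(j,i)}]$, and let $\Phi: (\mathcal{H},\xi)\to(\mathcal{K},\zeta)$ be the unitary isomorphism from Theorem \ref{thm:composition}, so that
\[
\lambda_{\mathcal{T},j}\circ\lambda_{\mathcal{T}_j,i}(x)=\Phi^*\,\lambda_{\mathcal{T}(\mathcal{T}_1,\dots,\mathcal{T}_k),\iota_j(i)}(x)\,\Phi
\]
for every $x\in\mathcal{L}(\mathcal{H}_{j,i})$, and $\Phi\xi=\zeta$.

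Write $V_{j,i}=\lambda_{\mathcal{T}_j,i}(U_{j,i}-1)+1\in\mathcal{L}(\mathcal{H}_j)$, $V_j=\prod_i V_{j,i}$, $\tilde V_j=\lambda_{\mathcal{T},j}(V_j-1)+1\in\mathcal{L}(\mathcal{H})$, and $U'_{j,i}=\lambda_{\mathcal{T}(\mathcal{T}_1,\dots,\mathcal{T}_k),\iota_j(i)}(U_{j,i}-1)+1\in\mathcal{L}(\mathcal{K})$. By definition of $\boxtimes$, the left-hand and right-hand sides of the proposition are the $*$-distributions of $W:=\prod_j\tilde V_j$ at $\xi$ and $W':=\prod_j\prod_i U'_{j,i}$ at $\zeta$ respectively. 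Since $\Phi\xi=\zeta$, it suffices to prove $\Phi W\Phi^*=W'$, and for this it suffices to prove $\Phi\tilde V_j\Phi^*=\prod_i U'_{j,i}$ for each $j$.

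The key algebraic step is to observe that the nonunitality of $\lambda_{\mathcal{T},j}$ exactly cancels itself out in the identity $\tilde V_j-1=\lambda_{\mathcal{T},j}(V_j-1)$. Setting $b_i=\lambda_{\mathcal{T}_j,i}(U_{j,i}-1)=V_{j,i}-1$ and expanding
\[
V_j-1=\prod_{i=1}^{n_j}(1+b_i)-1=\sum_{\varnothing\neq S\subseteq [n_j]}\prod_{i\in S}b_i,
\]
I then apply the $*$-homomorphism $\lambda_{\mathcal{T},j}$ termwise (which respects products even when not unital) and conjugate by $\Phi$. The composition identity from Theorem \ref{thm:composition} gives $\Phi\lambda_{\mathcal{T},j}(b_i)\Phi^*=\lambda_{\mathcal{T}(\mathcal{T}_1,\dots,\mathcal{T}_k),\iota_j(i)}(U_{j,i}-1)=U'_{j,i}-1$, and the $\Phi\Phi^*$ factors in the middle of each product telescope, so
\[
\Phi\,\tilde V_j\,\Phi^*=1+\sum_{\varnothing\neq S}\prod_{i\in S}(U'_{j,i}-1)=\prod_{i=1}^{n_j}U'_{j,i},
\]
as desired. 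Taking the product over $j$ yields $\Phi W\Phi^*=W'$ and the proposition follows.

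The only real subtlety — and the one step to double-check carefully — is the bookkeeping around the non-unital $\lambda_{\mathcal{T},j}$; everything else is a formal consequence of Theorem \ref{thm:composition}. In particular, one should verify that $\lambda_{\mathcal{T},j}(1)=P_j$ absorbs $\lambda_{\mathcal{T},j}(V_j)$ on both sides, so that the decomposition $\tilde V_j=\lambda_{\mathcal{T},j}(V_j)+(1-P_j)$ is a genuine orthogonal sum of a unitary on $P_j\mathcal{H}$ and the identity on $P_j^\perp\mathcal{H}$. This is automatic from $V_j$ being unitary in $\mathcal{L}(\mathcal{H}_j)$ and $\lambda_{\mathcal{T},j}$ being a $*$-homomorphism, so the argument is entirely formal once Theorem \ref{thm:composition} is in hand.
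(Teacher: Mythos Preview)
Your proof is correct and follows essentially the same approach as the paper: both rely on Theorem~\ref{thm:composition} for the Hilbert-module isomorphism and on the expansion identity $\prod_i(1+b_i)-1=\sum_{\varnothing\neq S}\prod_{i\in S}b_i$ (the paper's equation~\eqref{eq:weirdproductexpansion}) to handle the nonunital $\lambda$'s. The only organizational difference is that the paper expands both the outer and inner products simultaneously into a double sum and then recombines into a single sum over $S'\subseteq[N]$, whereas you work factor by factor over $j$, which is slightly cleaner and avoids that recombination step.
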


To prove this, we use the following combinatorial observation:
\begin{equation} \label{eq:weirdproductexpansion}
[\lambda_{\mathcal{T},1}(U_1 - 1) + 1] \dots [\lambda_{\mathcal{T},N}(U_N - 1) + 1] - 1 = \sum_{\substack{S \subseteq [N] \\ S \neq \varnothing}} \prod_{j \in S} \lambda_{\mathcal{T},j}(U_j - 1).
\end{equation}
Here the product $\prod_{j \in S}$ has to be interpreted carefully because multiplication is not necessarily commutative.  Our convention is that the terms in the product will be multiplied in order from left to right according to the standard order on the natural numbers, so that for instance if $S = \{2,3,5\}$, then we write $\lambda_{\mathcal{T},2}(U_2 - 1) \lambda_{\mathcal{T},3}(U_3 - 1) \lambda_{\mathcal{T},5}(U_5 - 1)$.

\begin{proof}[Proof of Proposition \ref{prop:multiplicativeoperadmorphism}]
Fix laws $\mu_{j,i} \in \Upsilon(\mathcal{B})$ for $j = 1$, \dots, $k$ and $i = 1$, \dots, $n_j$.  Let $U_{j,i}$ be an operator $(\mathcal{H}_{j,i}, \xi_{j,i})$ which realizes the law $\mu_{j,i}$.  Use all the same notation as in Theorem \ref{thm:composition}, and in particular, $(\mathcal{H},\xi)$ will be the product of the $(\mathcal{H}_{j,i},\xi_{j,i})$'s according to $\assemb_{\mathcal{T}}(\assemb_{\mathcal{T}_1},\dots,\assemb_{\mathcal{T}_k})$, and $(\mathcal{K},\zeta)$ will be the product according to $\assemb_{\mathcal{T}(\mathcal{T}_1,\dots,\mathcal{T}_k)}$.  We also denote $\mathcal{T}' = \mathcal{T}(\mathcal{T}_1,\dots,\mathcal{T}_k)$ and $N = n_1 + \dots + n_k$.

If we first convolve $(\mu_{j,i})_i$ according to $\mathcal{T}_j$ and then convolve these laws according to $\mathcal{T}$, we get the law of the operator
\[
\prod_{j=1}^k \left[ \lambda_{\mathcal{T},j} \left( \prod_{i=1}^{n_j} [\lambda_{\mathcal{T}_j,i}(U_{j,i}-1) + 1] - 1 \right) + 1 \right]
\]
on $(\mathcal{H}, \xi)$.  Applying \eqref{eq:weirdproductexpansion} both to the inner products and the outer product, we obtain
\[
1 + \sum_{\substack{S \subseteq [k] \\ S \neq \varnothing}} \prod_{j \in S} \lambda_{\mathcal{T},j} \left( \sum_{\substack{S_j \subseteq [n_j] \\ S_j \neq \varnothing}} \lambda_{\mathcal{T}_j,i}(U_{j,i}-1) \right).
\]
By Theorem \ref{thm:combinatorics}, this operator on $(\mathcal{H},\xi)$ corresponds to the operator on $(\mathcal{K}, \zeta)$ given by
\[
1 + \sum_{\substack{S \subseteq [k] \\ S \neq \varnothing}} \prod_{j \in S} \sum_{\substack{S_j \subseteq [n_j] \\ S_j \neq \varnothing}} \lambda_{\mathcal{T}',\iota_j(i)}(U_{j,i}-1),
\]
where $\iota_j: [n_j] \to [N]$ is given by $\iota_j(i) = n_1 + \dots + n_{j-1} + i$.  By elementary combinatorics, this is equal to
\[
1 + \sum_{\substack{S' \subseteq [N] \\ S' \neq \varnothing}} \prod_{\alpha \in S'} \lambda_{\mathcal{T}',\alpha}(U_{j(\alpha),i(\alpha)}),
\]
where $i(\alpha)$ and $j(\alpha)$ are the indices such that $\iota_{j(\alpha)}(i(\alpha)) = \alpha$.  By \eqref{eq:weirdproductexpansion} again, this is equal to
\[
\prod_{\alpha \in [N]} [\lambda_{\mathcal{T}',\alpha}(U_{j(\alpha),i(\alpha)} - 1 ) + 1],
\]
and the law of this operator is $\boxplus_{\mathcal{T}'}(\mu_{1,1}, \dots, \mu_{1,n_1}, \dots \dots, \mu_{k,1}, \dots, \mu_{k,n_k})$.  Therefore, the two laws agree as desired.
\end{proof}

However, $\mathcal{T} \mapsto \boxtimes_{\mathcal{T}}$ is not a morphism of \emph{symmetric} operads.  The problem is that unlike addition, multiplication is not commutative and the definition of convolution involves fixing a certain order in which to multiply the independent operators.  If we were to permute the arguments in $\boxtimes_{\mathcal{T}}$, this would not only permute the letters used in the $\mathcal{T}$ but it would also permute the order of multiplication.

Thus, one cannot expect the analogue of Corollary \ref{cor:convolutionidentity} to hold for general surjective maps $\psi: [N'] \to [N]$.  However, the argument does go through if $\psi$ is an increasing function.

\begin{proposition} \label{prop:multiplicativeconvolutionidentity}
Let $\mathcal{T} \in \Tree(N)$ and $\mathcal{T}' \in \Tree(N')$.  Suppose that $\psi: [N'] \to [N]$ is increasing and surjective and $\psi_*$ defines a bijection from $\mathcal{T}'$ to $\mathcal{T}$.  Then for $\mu_1$, \dots, $\mu_N \in \Upsilon(\mathcal{B})$, we have
\[
\boxtimes_{\mathcal{T}'}(\mu_{\psi(1)}, \dots, \mu_{\psi(N)}) = \boxtimes_{\mathcal{T}}(\mu_1, \dots, \mu_N).
\]
\end{proposition}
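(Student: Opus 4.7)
The plan is to parallel the strategy used for the additive convolution identity (Corollary \ref{cor:convolutionidentity}), using Theorem \ref{thm:permutation} to transport the problem from $(\mathcal{K},\zeta) := \assemb_{\mathcal{T}'}[(\mathcal{H}_{\psi(1)},\xi_{\psi(1)}),\dots,(\mathcal{H}_{\psi(N')},\xi_{\psi(N')})]$ back to $(\mathcal{H},\xi) := \assemb_{\mathcal{T}}[(\mathcal{H}_1,\xi_1),\dots,(\mathcal{H}_N,\xi_N)]$. Let $U_j$ be a unitary on $(\mathcal{H}_j,\xi_j)$ realizing $\mu_j$. Surjectivity of $\psi$ gives $\mathcal{T} \cap \mathcal{T}_{\Ran\psi} = \mathcal{T}$, so the hypotheses of Theorem \ref{thm:permutation} are met; I obtain an isometric embedding $\Psi\colon \mathcal{K} \to \mathcal{H}$ with $\Psi\zeta = \xi$, with $\Ad_{\Psi^*}[\lambda_{\mathcal{T},j}(x)] = \sum_{i \in \psi^{-1}(j)} \lambda_{\mathcal{T}',i}(x)$ for every $j \in [N]$, and with $\Ad_{\Psi^*}$ multiplicative on the unital $*$-algebra $\mathcal{A}_0$ generated by all $\lambda_{\mathcal{T},j}(\mathcal{L}(\mathcal{H}_j))$.

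Set $A = \prod_{j=1}^N [\lambda_{\mathcal{T},j}(U_j-1)+1] \in \mathcal{A}_0$ and $B = \prod_{i=1}^{N'}[\lambda_{\mathcal{T}',i}(U_{\psi(i)}-1)+1]$; the laws of $A$ with respect to $\xi$ and of $B$ with respect to $\zeta$ are $\boxtimes_{\mathcal{T}}(\mu_1,\dots,\mu_N)$ and $\boxtimes_{\mathcal{T}'}(\mu_{\psi(1)},\dots,\mu_{\psi(N')})$ respectively. Multiplicativity of $\Ad_{\Psi^*}$ on $\mathcal{A}_0$ gives $\Ad_{\Psi^*}(A) = \prod_{j=1}^N \bigl[\sum_{i \in \psi^{-1}(j)} \lambda_{\mathcal{T}',i}(U_j-1) + 1\bigr]$. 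Expanding both this and $B$ via \eqref{eq:weirdproductexpansion} and distributing the inner sums across the outer product, one identifies
\[
\Ad_{\Psi^*}(A) = 1 + \sum_{\substack{\varnothing \neq T \subseteq [N'] \\ \psi|_T\text{ injective}}} \prod_{i \in T}\lambda_{\mathcal{T}',i}(U_{\psi(i)}-1),
\]
where the assumption that $\psi$ is non-decreasing is crucial: whenever $\psi|_T$ is injective, the natural order of $T$ matches the natural order of $\psi(T)$, so the rearranged products agree with the ordering convention used in $B$. Consequently $B - \Ad_{\Psi^*}(A)$ is the sum of $\prod_{i \in T}\lambda_{\mathcal{T}',i}(U_{\psi(i)}-1)$ over those $T$ for which $\psi|_T$ is \emph{not} injective.

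The key step, and main obstacle, is to show that these remaining terms all vanish. Since $\psi$ is non-decreasing, any $T$ with $\psi|_T$ non-injective contains two adjacent (in the order on $T$) indices $i_1 < i_2$ with $\psi(i_1) = \psi(i_2)$, so it suffices to prove
\[
\lambda_{\mathcal{T}',i_1}(x_1)\,\lambda_{\mathcal{T}',i_2}(x_2) = 0 \qquad \text{whenever } \psi(i_1) = \psi(i_2),\ i_1 \neq i_2,\ x_k \in \mathcal{L}(\mathcal{H}_{\psi(i_k)}).
\]
I would verify this by a direct case analysis of the action on each direct summand $\mathcal{K}_s^\circ \cong \mathcal{H}_{\psi_*(s)}^\circ$, using the explicit block structure of the operators $\lambda_{\mathcal{T}',i}$. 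Any nonzero contribution would require either $i_1 i_2 s \in \mathcal{T}'$ for some $s$, which is excluded because $\psi_*(i_1 i_2 s)$ has two equal consecutive letters and hence lies outside $\mathcal{T}_{N,\free} \supseteq \mathcal{T}$, or it would produce two distinct strings $i_1 s'$ and $i_2 s'$ in $\mathcal{T}'$ sharing image $\psi_*(i_1 s') = \psi(i_1)\psi_*(s') = \psi(i_2)\psi_*(s') = \psi_*(i_2 s')$, contradicting injectivity of $\psi_*$ on $\mathcal{T}'$. Once the vanishing is in hand we have $B = \Psi^* A \Psi$, and multiplicativity of $\Ad_{\Psi^*}$ on $\mathcal{A}_0$ yields $p(B,B^*) = \Psi^* p(A,A^*)\Psi$ for every non-commutative $*$-polynomial $p$; pairing against $\zeta$ and using $\Psi\zeta = \xi$ gives $\ip{\zeta,p(B,B^*)\zeta} = \ip{\xi,p(A,A^*)\xi}$, so the two $*$-distributions coincide.
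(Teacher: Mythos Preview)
Your proof is correct and follows the same route as the paper: invoke Theorem~\ref{thm:permutation}, use monotonicity of $\psi$ to preserve the order of multiplication, and exploit the orthogonality $\lambda_{\mathcal{T}',i_1}(x_1)\,\lambda_{\mathcal{T}',i_2}(x_2)=0$ for distinct $i_1,i_2$ in the same fiber of $\psi$. The only cosmetic differences are that the paper collapses $\prod_{i\in\psi^{-1}(j)}[\lambda_{\mathcal{T}',i}(U_j-1)+1]$ directly to $\sum_{i\in\psi^{-1}(j)}\lambda_{\mathcal{T}',i}(U_j-1)+1$ instead of expanding both sides over all subsets $T\subseteq[N']$, and it obtains the orthogonality from the observation that the projections $(\lambda_{\mathcal{T}',i}(1))_{i\in\psi^{-1}(j)}$ sum to the projection $\tilde\lambda_j(1)$, rather than by your direct string analysis.
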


\begin{proof}
For $j = 1$, \dots, $N$, fix an operator $U_j$ on $(\mathcal{H}_j,\xi_j)$ with the law $\mu_j$.  Denote
\begin{align*}
(\mathcal{H},\xi) &= \assemb_{\mathcal{T}}[(\mathcal{H}_1,\xi_1),\dots,(\mathcal{H}_N,\xi_N)] \\
(\mathcal{K},\zeta) &= \assemb_{\mathcal{T}'}[(\mathcal{H}_{\psi(1)}, \xi_{\psi(1)}), \dots, (\mathcal{H}_{\psi(N')}, \xi_{\psi(N')}].
\end{align*}
Note that $\boxtimes_{\mathcal{T}'}(\mu_{\psi(1)}, \dots, \mu_{\psi(N)})$ is the law of the operator $V$ on $(\mathcal{K},\zeta)$ given by
\[
V = \prod_{i=1}^{N'} [\lambda_{\mathcal{T}',i}(U_{\psi(i)} - 1) + 1] = \prod_{j=1}^N \prod_{i \in \psi^{-1}(j)} [\lambda_{\mathcal{T}',i}(U_j - 1) + 1],
\]
where we continue to use the conventions established above for the ordering of non-commutative products and here we rely on the fact that $\psi$ is increasing.

Now if we fix $j$ and $a \in \mathcal{L}(\mathcal{H}_j)$, then Theorem \ref{thm:permutation} tells us that the operator $\lambda_{\mathcal{T},j}(a)$ on $(\mathcal{H},\xi)$ corresponds under the isomorphism $(\mathcal{H},\xi) \to (\mathcal{K},\zeta)$ to the operator
\[
\tilde{\lambda}_j(a) = \sum_{i \in \psi^{-1}(j)} \lambda_{\mathcal{T}',i}(a).
\]
Moreover, $\lambda_{\mathcal{T}',i}(a) \lambda_{\mathcal{T}',i'}(a') = 0$ for two distinct indices $i$ and $i'$ in $\psi^{-1}(j)$ and for $a, a' \in \mathcal{L}(\mathcal{H}_j)$; this follows for instance because $(\lambda_{\mathcal{T}',i})_{i \in \psi^{-1}(j)}$ are a family of projections which add up to the projection $\tilde{\lambda}_j(1)$, and hence $(\lambda_{\mathcal{T}',i})_{i \in \psi^{-1}(j)}$ are mutually orthogonal.  Because of this orthogonality property,
\[
\prod_{i \in \psi^{-1}(j)} [\lambda_{\mathcal{T}',i}(U_j - 1) + 1] = \sum_{i \in \psi^{-1}(j)} \lambda_{\mathcal{T}',i}(U_j - 1) + 1 = \tilde{\lambda}_j(U_j - 1) + 1.
\]
Therefore,
\[
V = \prod_{j \in [N]} [\tilde{\lambda}_j(U_j - 1) + 1],
\]
and so it corresponds by the isomorphism $(\mathcal{H},\xi) \cong (\mathcal{K},\zeta)$ to the operator $U$ on $(\mathcal{H},\xi)$ given by
\[
U = \prod_{j = 1}^N [\lambda_{\mathcal{T},j}(U_j - 1) + 1],
\]
which by definition has the law $\boxtimes_{\mathcal{T}}(\mu_1,\dots,\mu_N)$, which completes the proof. 
\end{proof}

This means that any additive convolution identities that we proved using only operad composition and Corollary \ref{cor:convolutionidentity} for increasing surjective $\psi$ will still hold for unitary multiplicative convolution by the same argument.  Thus, for instance, we obtain the multiplicative analogue of the formulas $\mu \rhd \nu = (\mu \vdash \nu) \uplus \nu$ and $\mu \boxplus \nu = (\mu \boxright \nu) \lhd \nu$, which are
\begin{align*}
\boxtimes_{\mathcal{T}_{2,\mono}}(\mu,\nu) &= \boxtimes_{\mathcal{T}_{2,\Bool}}(\boxtimes_{\mathcal{T}_{\orth}}(\mu,\nu), \nu) \\
\boxtimes_{\mathcal{T}_{2,\free}}(\mu,\nu) &= \boxtimes_{\mathcal{T}_{2,\mono \dagger}}(\boxtimes_{\mathcal{T}_{\sub}}(\mu,\nu),\nu);
\end{align*}
see also \cite{Lenczewski2008,BSTV2014,AA2017}.

There are many other questions related to multiplicative convolution that warrant further study, which we do not have time to address here.
\begin{enumerate}
    \item For various types of independence, one can study the multiplicative convolution operation where we subtract off the mean of an operator rather than subtracting off $1$ (see \cite{Bercovici2005b}).
    \item There is also a symmetric multiplicative convolution for positive elements, given by taking independent positive operators $X_1$, \dots, $X_N$ and studying the law of
    \[
    X_1^{1/2} \dots X_N^{1/2} X_N^{1/2} \dots X_1^{1/2}.
    \]
    However, we do not expect that in general this multiplicative convolution would satisfy the analogues of Propositions \ref{prop:multiplicativeoperadmorphism} and \ref{prop:multiplicativeconvolutionidentity}.
    \item Furthermore, for any type of multiplicative convolution considered, one can ask what the analogue of the central limit theorem and the L{\'e}vy-Hin{\v c}in formula, and how to realize the infinitely divisible laws on a Fock space.
\end{enumerate}

\subsection{Functoriality}

One can show that direct sums and tensor products are functors on the category of $\mathcal{B}$-$\mathcal{B}$-correspondences, where the morphisms are given by adjointable right $\mathcal{B}$-module maps that are also left $\mathcal{B}$-modular.  This implies that the $\mathcal{T}$-free product of $\mathcal{B}$-$\mathcal{B}$-correspondences is functorial on the category of pairs $(\mathcal{H},\xi)$ where the morphisms are given by $\mathcal{B}$-$\mathcal{B}$-bimodule maps that are adjointable, contractive, and unit-vector-preserving (see \cite[\S 5.3]{Voiculescu1985} for a similar statement in the free case).  Also, the maps $\Phi$ and $\Psi$ constructed in Theorems \ref{thm:composition} and \ref{thm:permutation} respectively are natural transformations.

However, we do not know what the best framework is to study functoriality of the $\mathcal{T}$-free product on probability spaces $(\mathcal{A},E)$.  For instance, is the $\mathcal{T}$-free product functorial for unital, completely positive, expectation-preserving maps?

We should also mention that the Fock space construction $\mathcal{K} \mapsto \mathcal{F}(\mathcal{K})$ is functorial on $\mathcal{B}$-$\mathcal{B}$-correspondences, where for the input variable $\mathcal{K}$, the morphisms are contractive, adjointable $\mathcal{B}$-$\mathcal{B}$-bimodule maps, and for the output variable, the morphisms are contractive, adjointable, unit-vector-preserving $\mathcal{B}$-$\mathcal{B}$-bimodule maps.  This property is well-known for standard examples of Fock spaces, and it was proved in \cite{GM2002} for a general class of Fock spaces different than the ones studied here.


\subsection{Other Notions of Independence}

Although the independences introduced in this paper are new and quite general, there are surely further generalizations.  Here are a few vague suggestions for further investigation.

First, as mentioned before, we can study trees on an infinite alphabet, and hence join infinitely many algebras together in a $\mathcal{T}$-free manner (and in particular $\mathcal{T}$ might be the set of walks on an infinite digraph).  Theorem \ref{thm:combinatorics} would generalize without difficulty.  However, the convolution operations for infinite trees would require more care to study.  Indeed, convolving infinitely many laws requires adding up infinitely many independent variables, and thus we need additional conditions to make this sum converge.  For instance, using the arguments of Proposition \ref{prop:operatornormbound}, we could consider $\sum_{j=1}^\infty \lambda_{\mathcal{T},j}(a_j)$ where $\norm{a_j}$ is bounded and the mean and variances of $a_j$ are summable.  To adapt the results of \S \ref{sec:operad}, we would need to generalize operads to include infinitely many arguments.  However, \S \ref{sec:cumulants} - \ref{sec:infinitelydivisible} could not generalize at all; indeed, the definition of the cumulants no longer makes sense in the infinitary setting because there could be infinitely many colorings of a partition $\pi$.

Second, one could add weights to the edges of the tree $\mathcal{T}$ and to multiply $\lambda_{\mathcal{T},j}(x)|_{\mathcal{H}_s^\circ \oplus \mathcal{H}_{js}^\circ}$ by the weight of the edge $(s,js)$.  We conjecture that the results of this paper could be adapted if the weights are positive and uniformly bounded from above.

Third, it would be interesting to see whether there is a common framework that includes our $\mathcal{T}$-free independence together with the bi-free independence of \cite{Voiculescu2014} or the free-Boolean independence of \cite{Liu2017}, which are independence relations for pairs of algebras acting on the same Hilbert space.  Similarly, one could study operations on pairs of laws such as c-free independence (see Example \ref{ex:cfree}), which arise from pairs of algebras acting on pairs of Hilbert spaces.

Fourth, in the scalar-valued setting, one could hope for a generalization of $\mathcal{T}$-free independence that also includes classical independence, if we allow the operators in $\mathcal{L}(\mathcal{H}_j)$ to act on the free product Hilbert space in other ways, e.g.\ by acting not only on the left-most tensorands, but also on the middle and right tensorands of each subspace $\mathcal{H}_s^\circ$.  Such a construction ought to include the mixtures of free and classical independence studied in \cite{Mlotkowski2004,SW2016} and perhaps connect to the Fock spaces studied in \cite{GM2002}.

However, we caution that in this generality there will not be so close a resemblance to the free case as in this paper.  For instance, the operator norm bounds in Proposition \ref{prop:operatornormbound} fail drastically in the classical setting and the central limit distribution has unbounded support.  Moreover, we should not expect a Boolean-orthogonal decomposition to hold for a larger class of independences; indeed, iterating Boolean and orthogonal convolution as in \S \ref{subsec:BOdecomp} will only ever produce operations within the operad $\Tree$.


\begin{thebibliography}{99}
%
\bibitem{AA2017}
Michael Anshelevich and Octavio Arizmendi. ``The exponential map in
non-commutative probability''. In: \emph{Internat. Math. Res. Not.
IMRN} 2017.17 (Sept.\ 2017), pp.\ 5302-5342.
%
\bibitem{ABT2019}
Octavio Arizmendi, Miguel Ballesteros, and Francisco Torres-Ayala.
``Conditionally free reduced products of Hilbert spaces''. To appear in
Studia Mathematica. 2019. \textsc{url}:
https://www.arXiv.org/abs/1902.02848.
%
\bibitem{AGZ2009}
Greg W. Anderson, Alice Guionnet, and Ofer Zeitouni. \emph{An
Introduction to Random Matrices}. Cambridge Studies in Advanced
Mathematics. Cambridge University Press, 2009. \textsc{doi}:
10.1017/CBO9780511801334.
%
\bibitem{ALS2007}
Luigi Accardi, Romuald Lenczewski, and Rafal Sa{\l}apata.
``Decompositions of the Free Product of Graphs''. In: \emph{Infinite
Dimensional Analysis, Quantum Probability and Related Topics} 10.03
(2007), pp.\ 303-334. \textsc{doi}: 10.1142/S0219025707002750.
%
\bibitem{ANP2016}
A.B. Aleksandrov, F.L. Nazarov, and V.V. Peller. ``Functions of
noncommuting self-adjoint operators under perturbation and estimates
of triple operator integrals''. In: \emph{Adv. Math.} 295
(2016), pp.\ 1-52. \textsc{doi}: 10.1016/j.aim.2016.02.030.
%
\bibitem{ABFN2013}
Michael Anshelevich et al. ``Convolution powers in the operator-valued
Framework. In: \emph{Trans. Am. Math. Soc.} 365 (4 2013),
pp.\ 2063-2097. \textsc{doi}: 10.1090/S0002-9947-2012-05736-9.
%
\bibitem{AP2010b}
A. B. Aleksandrov and V. V. Peller. ``Functions of perturbed unbounded
self-adjoint operators. Operator Bernstein type inequalities''. In:
\emph{Indiana Univ. Math. J.} 59 (4 2010), pp.\ 1451-1490.
\textsc{doi}: 10.1512/iumj.2010.59.4345.
%
\bibitem{AP2010a}
A.B. Aleksandrov and V. V. Peller. ``Operator H{\"o}lder-Zygmund
functions''. In: \emph{Adv. Math.} 224.3 (2010), pp.\ 910-966. \textsc{doi}: 10.1016/j.aim.2009.12.018.
%
\bibitem{AlPe2017}
A. B. Aleksandrov and V. V. Peller. ``Multiple operator integrals,
Haagerup and Haagerup-like tensor products, and operator ideals''. In:
\emph{Bulletin of the London Mathematical Society} 49.3 (2017),
pp.\ 463-479. \textsc{doi}: 10.1112/blms.12034.
%
\bibitem{AHLV2015}
Octavio Arizmendi et al. ``Relations between cumulants in
noncommutative probability''. In: \emph{Adv. Math.} 282
(2015), pp.\ 56-92. \textsc{doi}: 10.1016/j.aim.2015.03.029.
%
\bibitem{AS2018}
Octavio Arizmendi and Mauricio Salazar. ``A {B}erry-{E}sseen type
limit theorem for {B}oolean convolution''. In: \emph{Archiv der
Mathematik} 111.1 (July 2018), pp.\ 101-111. \textsc{doi}:
10.1007/s00013-018-1171-3.
%
\bibitem{ASW2018}
Octavio Arizmendi, Mauricio Salazar, and Jiun-Chao Wang.
``{B}erry-{E}sseen Type Estimate and Return Sequence for Parabolic
Iteration in the Upper Half-Plane''. In: \emph{arXiv e-prints} (2018).
%
\bibitem{AW2014}
Michael D. Anshelevich and John D. Williams. ``Limit Theorems for
Monotonic Convolution and the Chernoff Product Formula''. In:
\emph{Internat. Math. Res. Not. IMRN} 2014.11 (2014),
pp.\ 2990-3021. \textsc{doi}: 10.1093/imrn/rnt018.
%
\bibitem{AW2016}
Michael D. Anshelevich and John D. Williams. ``Operator-Valued
Monotone Convolution Semigroups and an Extension of the
{B}ercovici-{P}ata Bijection''. In: \emph{Documenta Mathematica} 21
(2016), pp.\ 841-871.
%
\bibitem{Belinschi2006}
Serban T. Belinschi. ``Complex Analysis Methods in Non-commutative
Probability''. Ph.D. thesis at University of Indiana. 2006. \textsc{url}:
https://arxiv.org/pdf/math/0602343.
%
\bibitem{BSTV2014}
Serban T. Belinschi et al. ``Operator-Valued Free Multiplicative
Convolution: Analytic Subordination Theory and Applications to
Random Matrix Theory''. In: \emph{Internat. Math. Res. Not. IMRN}
2015.14 (July 2014), pp.\ 5933-5958. \textsc{doi}:
10.1093/imrn/rnu114. eprint:
https://academic.oup.com/imrn/article-
pdf/2015/14/5933/2352040/rnu114.pdf.
%
\bibitem{Bercovici2005}
Hari Bercovici. ``A remark on monotonic convolution''. In:
\emph{Infinite Dimensional Analysis, Quantum Probability and Related
Topics} 8.01 (2005), pp.\ 117-120. \textsc{doi}:
10.1142/S0219025705001871.
%
\bibitem{Bercovici2005b}
Hari Bercovici. ``Multiplicative monotone convolution''. In: \emph{Illinois Journal of Mathematics} 49.3 (2005), pp.\ 929-951.
%
\bibitem{Bercovici2006}
Hari Bercovici. ``On {B}oolean convolutions''. In: \emph{Operator
Theory} 420 (2006). Volume 6 of ‘‘Theta Ser. Adv. Math'', pp.\ 7-13.
%
\bibitem{Biane1998}
Philippe Biane. ``Processes with free increments''. In:
\emph{Mathematische Zeitschrift} 227.1 (Jan. 1998), pp.\ 143-174.
\textsc{doi}: 10.1007/PL00004363.
%
\bibitem{Blackadar2006}
Bruce Blackadar. \emph{Operator Algebras: Theory of
$\mathrm{C}^*$-algebras and von {N}eumann algebras}. Vol. 122.
Encyclopaedia of Mathematical Sciences. Berlin, Heidelberg:
Springer-Verlag, 2006. \textsc{doi}: 10.1007/3-540-28517-2.
\textsc{url}: https://wolfweb.unr.edu/homepage/bruceb/Cycr.pdf.
%
\bibitem{BLS1996}
Marek Bo{\. z}ejko, Michael Leinert, and Roland Speicher. ``Convolution and
limit theorems for conditionally free random variables''. In:
\emph{Pacific J. Math.} 125.2 (1996), pp.\ 357-388.
%
\bibitem{BMS2013}
Serban T. Belinschi, Tobias Mai, and Roland Speicher. ``Analytic
subordination theory of operator-valued free additive convolution and
the solution of a general random matrix problem''. In: \emph{Journal
f{\"u}r die reine und angewandte Mathematik (Crelles Journal)} (Mar.\ 2013). \textsc{doi}: 10.1515/crelle-2014-0138.
%
\bibitem{BN2008b}
Serban T. Belinschi and Alexandru Nica. ``$\eta $-series and a Boolean
{B}ercovici-{P}ata bijection for bounded $k$-tuples''. In: \emph{Adv.
Math.} 217.1 (2008), pp.\ 1-41. \textsc{doi}:
10.1016/j.aim.2007.06.015.
%
\bibitem{BN2008a}
Serban T. Belinschi and Alexandru Nica. ``On a remarkable semigroup of
homomorphisms with respect to free multiplicative convolution''. In:
\emph{Indiana Univ. Math. J.} 57.4 (2008), pp.\ 1679-1713.
%
\bibitem{BN2009}
Serban T. Belinschi and Alexandru Nica. ``Free Brownian motion and
evolution towards $\boxplus$-infinite divisibility for $k$-tuples''. In:
\emph{Internat. J. Math.} 20.3 (2009), 309-338.
%
\bibitem{BP1999}
Hari Bercovici and Vittorino Pata. ``Stable laws and domains of
attraction in free probability theory''. In: \emph{Ann. Math.} 149
(1999). With an appendix by Philippe Biane, pp.\ 1023-1060.
\textsc{doi}: 10.2307/121080.
%
\bibitem{BPV2012}
Serban T. Belinschi, Mihai Popa, and Victor Vinnikov. ``Infinite
divisibility and a non-commutative {B}oolean-to-free {B}ercovici-{P}ata
bijection''. In: \emph{Journal of Functional Analysis} 262.1 (2012),
pp.\ 94-123. \textsc{doi}:
https://doi.org/10.1016/j.jfa.2011.09.006.
%
\bibitem{BPV2013}
Serban T. Belinschi, Mihai Popa, and Victor Vinnikov. ``On the
operator-valued analogues of the semicircle, arcsine and {B}ernoulli
laws''. In: \emph{Journal of Operator Theory} 70.1 (2013), pp.\ 239-258.
\textsc{doi}: 10.7900/jot.2011jun24.1963.
%
\bibitem{BV1992}
Hari Bercovici and Dan-Virgil Voiculescu. ``L{\`e}vy-{H}incin type
theorems for multiplicative and additive free convolution''. In: 153
(1992), pp.\ 217-248. \textsc{doi}: 10.2140/pjm.1992.153.217.
%
\bibitem{BW2008}
Hari Bercovici and Jiun-Chau Wang. ``Limit theorems for free
multiplicative convolutions''. In: \emph{Transactions of the American
Mathematical Society} 360.11 (2008), pp.\ 6089-6102.
%
\bibitem{CG2008}
G. P. Christyakov and F. G{\"o}tze. ``Limit theorems in free probability
theory. I''. In: \emph{Ann. Probab.} 36.1 (Jan. 2008), pp.\ 54-90.
\textsc{doi}: 10.1214/009117907000000051.
%
\bibitem{Franz2006}
Uwe Franz. ``Multiplicative monotone convolutions''. In: \emph{Banach
Center Publications} 73 (2006), pp.\ 153-166.
%
\bibitem{Franz2008}
Uwe Franz. ``Boolean convolution of probability measures on the unit
circle''. In: \emph{Analyse et probabilit{\'e}s, Semin. Congr.} 16
(2008), pp.\ 83-94.
%
\bibitem{Franz2009}
Uwe Franz. ``Monotone and boolean convolutions for non-compactly
supported probability measures''. In: \emph{Indiana Univ. Math. J.} 58
(3 2009), pp.\ 1151-1186. \textsc{doi}: 10.1512/iumj.2009.58.3578.
%
\bibitem{GM2002}
M\u{a}d\u{a}lin Gu\c{t}a and Hans Maassen. ``Symmetric Hilbert spaces arising
from species of structures''. In: \emph{Mathematische Zeitschrift} 239.3
(Mar. 2002), pp.\ 477-513. \textsc{doi}: 10.1007/s002090100316.
%
\bibitem{GSS1992}
Peter Glockner, Michael Sch\"{u}rmann, and Roland Speicher. ``Realization
of free white noises''. In: \emph{Archiv der Mathematik} 58.4 (Apr.
1992), pp.\ 407-416. \textsc{doi}: 10.1007/BF01189934.
%
\bibitem{Hasebe2010a}
Takahiro Hasebe. ``Monotone convolution and monotone infinite
divisibility from complex analytic viewpoints''. In: \emph{Infin. Dimens.
Anal. Quantum Probab. Relat. Top.} 13.1 (2010), pp.\ 111-131.
\textsc{doi}: 10.1142/S0219025710003973.
%
\bibitem{Hasebe2010b}
Takahiro Hasebe. ``Monotone Convolution Semigroups''. In:
\emph{Studia Math.} 200 (2010), pp.\ 175-199. \textsc{doi}:
10.4064/sm200-2-5.
%
\bibitem{HMS2018}
J. William Helton, Tobias Mai, and Roland Speicher. ``Applications of
realizations (aka linearizations) to free probability''. In: \emph{Journal
of Functional Analysis} 274.1 (2018), pp.\ 1-79. \textsc{doi}:
10.1016/j.jfa.2017.10.003.
%
\bibitem{HS2011b}
Takahiro Hasebe and Hayato Saigo. ``Joint Cumulants for Natural
Independence''. In: \emph{Elect. Commun. Probab.} 16 (2011),
pp.\ 491-506. \textsc{doi}: 10.1214/ECP.v16-1647.
%
\bibitem{HS2011a}
Takahiro Hasebe and Hayato Saigo. ``The monotone cumulants''. In:
\emph{Ann. Inst. Henri Poincar{\'e} Probab. Stat.} 47.4 (2011),
pp.\ 1160-1170. \textsc{doi}: 10.1214/10-AIHP379.
%
\bibitem{HS2014}
Takahiro Hasebe and Hayato Saigo. ``On operator-valued monotone
independence''. In: \emph{Nagoya Math. J.} 215 (2014), pp.\ 151-167.
\textsc{doi}: 10.1215/00277630-2741151.
%
\bibitem{HSW2018}
Takahiro Hasebe, Thomas Simon, and Min Wang. ``Some properties of
the free stable distributions''. In: \emph{arXiv e-prints} (May 2018).
%
\bibitem{Jekel2020}
David Jekel. ``Operator-valued chordal Loewner chains and
non-commutative probability''. In: \emph{J. Func. Anal.} 278.10 (2020),
p. 108452. \textsc{issn}: 0022-1236. \textsc{doi}:
10.1016/j.jfa.2019.108452. \textsc{url}: http://www.
sciencedirect.com/science/article/pii/S0022123619304458.
%
\bibitem{KVV2014}
Dmitry S. Kaliuzhnyi-Verbovetskyi and Victor Vinnikov.
\emph{Foundations of Free Non-Commutative Function Theory}.
Vol. 199. Mathematical Surveys and Monographs. American
Mathematical Society, 2014. \textsc{isbn}: 978-1-4704-1697-3.
\textsc{doi}: 10.1090/surv/199.
%
\bibitem{KW2013}
Anna Kula and Janusz Wysocza{\'n}ski. ``An example of a {B}oolean-free
type central limit theorem''. In: \emph{Probab. Math. Statist.} 33
(2013), pp.\ 341-352.
%
\bibitem{Lance1995}
E. Christopher Lance. \emph{Hilbert $\mathrm{C}^*$-Modules: A Toolkit for
Operator Algebraists}. London Mathematical Society Lecture Note
Series. Cambridge: Cambridge University Press, 1995. \textsc{doi}:
10.1017/CBO9780511526206.
%
\bibitem{Lehner2002}
Franz Lehner. ``Free cumulants and enumeration of connected
partitions''. In: \emph{European J. Combin.} 23.8 (2002),
pp.\ 1025-1031. \textsc{doi}: 10.1006/eujc.2002.0619.
%
\bibitem{Leinster2004}
Tom Leinster. \emph{Higher Operads, Higher Categories}. Vol. 298.
London Mathemical Society Lectures Notes Series. Cambridge University
Press, 2004.
%
\bibitem{Lenczewski2007}
Romuald Lenczewski. ``Decompositions of the free additive convolution''.
In: \emph{Journal of Functional Analysis} 246.2 (2007), pp.\ 330-365.
\textsc{doi}: https://doi.org/10.1016/j.jfa.2007.01.010.
%
\bibitem{Lenczewski2008}
Romuald Lenczewski. ``Operators related to subordination for free
multiplicative convolutions''. In: \emph{Indiana Univ. Math. J.} 57
(2008), pp.\ 1055-1103.
%
\bibitem{Liu2017}
Weihua Liu. ``Free-Boolean independence for pairs of algebras''. In:
\emph{arXiv e-prints} (Oct. 2017). \textsc{url}:
https://arxiv.org/abs/1710.01374.
%
\bibitem{Liu2018}
Weihua Liu. ``Relations between convolutions and transforms in
operator-valued free probability''. In: \emph{arXiv e-prints} (Sept.
2018). \textsc{url}: https://arxiv.org/abs/1809.05789.
%
\bibitem{Lu1997}
Y.G. Lu. ``An Interacting Free Fock Space and the Arcsine Law''. In:
\emph{Probability and Math. Stat.} 17.1 (1997), pp.\ 149-166.
ntextscfurlg: https:
//www.math.uni.wroc.pl/~pms/files/17.1/Article/17.1.10.pdf.
%
\bibitem{Male}
Camille Male. ``Traffic distributions and independence: permutation
invariant random matrices and the three notions of independence''. To
appear in Mem. Amer. Math. Soc.
%
\bibitem{Mlotkowski2004}
Wojciech M{\l}otkowski. ``$\Lambda$-free Probability''. In:
\emph{Infinite-dimensional Analysis, Quantum Probability, and Related
Topics} 7 (2004), pp.\ 27-41.
%
\bibitem{MS2013}
Tobias Mai and Roland Speicher. ``Operator-Valued and Multivariate
Free Berry-Esseen Theorems''. In: \emph{Limit Theorems in Probability,
Statistics and Number Theory: In Honor of Friedrich G{\"o}tze}. Ed. by
Peter Eichelsbacher et al. Berlin, Heidelberg: Springer Berlin Heidelberg,
2013, pp.\ 113-140. \textsc{doi}: 10.1007/978-3-642-36068-8\_7.
%
\bibitem{Muraki2000}
Naofumi Muraki. ``Monotonic Convolution and Monotone
{L}{\'e}vy-{H}in{\v c}in Formula''. preprint. 2000. ntextscfurlg:
https://www.math.sci.hokudai.ac.jp/~thasebe/Muraki2000.pdf.
%
\bibitem{Muraki2001}
Naofumi Muraki. ``Monotonic Independence, Monotonic Central Limit
Theorem, and Monotonic Law of Small Numbers''. In: \emph{Infinite
Dimensional Analysis, Quantum Probability, and Related Topics} 04 (39
2001). \textsc{doi}: 10.1142/S0219025701000334.
%
\bibitem{Muraki2003}
Naofumi Muraki. ``The Five Independences as Natural Products''. In:
\emph{Infinite Dimensional Analysis, Quantum Probability and Related
Topics} 6.3 (2003), pp.\ 337-371. \textsc{doi}:
10.1142/S0219025703001365.
%
\bibitem{Muraki2013}
Naofumi Muraki. ``A simple proof of the classification theorem for
positive natural products''. In: \emph{Probab. Math. Statist.} 33.2
(2013), pp.\ 315-326.
%
\bibitem{Muraki1997}
Naofumi Muraki. ``Noncommutative Brownian Motion in Monotone Fock
Space''. In: \emph{Commun. Math. Phys.} 183 (1997), pp.\ 557-570.
\textsc{doi}: 10.1007/s002200050043.
%
\bibitem{Nica2009}
Alexandru Nica. ``Multi-variable subordination distributions for free
additive convolution''. In: \emph{Journal of Functional Analysis} 257.2
(2009), pp.\ 428-463. \textsc{doi}: 10.1016/j.jfa.2008.12.022.
%
\bibitem{NS1996}
Alexandru Nica and Roland Speicher. ``On the multiplication of free
$N$-tuples of noncommutative random variables''. In: \emph{Amer. J.
Math.} 118.4 (1996), pp.\ 799-837.
%
\bibitem{Paschke1973}
William L. Paschke. ``Inner product modules over ${B}^*$-algebras''. In:
\emph{Trans. Amer. Math. Soc.} 182 (1973), pp.\ 443-468. \textsc{doi}:
10.2307/1996542.
%
\bibitem{Popa2008a}
Mihai Popa. ``A Combinatorial Approach to Monotonic Independence
over a $\mathrm{C}^*$-Algebra''. In: \emph{Pacific Journal of Mathematics}
237 (2 2008), pp.\ 299-325. \textsc{doi}: 10.2140/pjm.2008.237.299.
%
\bibitem{Popa2009}
Mihai Popa. ``A new proof for the multiplicative property of the boolean
cumulants with applications to the operator-valued case''. In:
\emph{Colloquium Mathematicum} 117.1 (2009), pp.\ 81-93.
\textsc{doi}: 10.4064/cm117-1-5.
%
\bibitem{PV2013}
Mihai Popa and Victor Vinnikov. ``Non-commutative functions and the
non-commutative {L}{\' e}vy-{H}in{\v c}in formula''. In: \emph{Adv.
Math.} 236 (2013), pp.\ 131-157. \textsc{doi}:
10.1016/j.aim.2012.12.013.
%
\bibitem{Shlyakhtenko2013}
Dimitri Shlyakhtenko. ``On operator-valued free convolution powers''. In:
\emph{Indiana Univ. Math. J.} 62 (2013), pp.\ 91-97.
%
\bibitem{Shlyakhtenko1997}
Dimitri Shlyakhtenko. ``$R$-transforms of Certain Joint Distributions''.
In: \emph{Free Probability}. Ed. by Dan-Virgil Voiculescu. Vol. 12.
Fields Institute Communications. Amer. Math. Soc., 1997, pp.\ 253-256.
%
\bibitem{Skeide2004}
M. Skeide. ``Independence and product systems''. In: \emph{Recent
Developments in Stochastic Analysis and Related Topics}. Hackensack,
N.J.: World Scientific, 2004, pp.\ 420-438.
%
\bibitem{Speicher1994}
Roland Speicher. ``Multiplicative functions on the lattice of non-crossing
partitions and free convolution''. In: \emph{Math. Ann.} 298 (1994),
pp.\ 611-628. \textsc{doi}: 10.1007/BF01459754.
%
\bibitem{Speicher1997}
Roland Speicher. ``On universal products''. In: \emph{Free Probability
Theory}. Ed. by Dan Voiculescu. Vol. 12. Fields Inst. Commun. Amer.
Math. Soc., 1997, pp.\ 257-266. \textsc{doi}: 10.1090/fic/012.
%
\bibitem{Speicher1998}
Roland Speicher. ``Combinatorial theory of the free product with
amalgamation and operator-valued free probability theory''. In:
\emph{Mem. Amer. Math. Soc.} 132.627 (1998). \textsc{doi}:
10.1090/memo/0627.
%
\bibitem{SW2016}
Roland Speicher and Janusz Wysocza{\' n}ski. ``Mixtures of classical and free
independence''. In: \emph{Archiv der Mathematik} 107.4 (Oct. 2016),
pp.\ 445-453. \textsc{doi}: 10.1007/s00013-016-0955-6.
%
\bibitem{SW1997}
Roland Speicher and Reza Woroudi. ``Boolean convolution''. In:
\emph{Free Probability Theory}. Ed. by Dan Voiculescu. Vol. 12. Fields
Inst. Commun. Amer. Math. Soc., 1997, pp.\ 267-279. \textsc{doi}:
10.1090/fic/012. \textsc{url}:
www.mast.queensu.ca/~speicher/papers/boolean.ps.
%
\bibitem{Voiculescu2000}
Dan-Virgil Voiculescu. ``The coalgebra of the difference quotient and free
probability''. In: \emph{Internat. Math. Res. Notices} 2 (2000),
pp.\ 79-106. \textsc{doi}: 10.1155/S1073792800000064.
%
\bibitem{Voiculescu2002b}
Dan-Virgil Voiculescu. ``Analytic subordination consequences of free
{M}arkovianity''. In: \emph{Indiana Univ. Math. J.} 51 (2002),
pp.\ 1161-1166. \textsc{doi}: 10.1512/iumj.2002.51.2252.
%
\bibitem{Voiculescu2004}
Dan-Virgil Voiculescu. ``Free analysis questions {I}: duality transform for
the coalgebra of $\partial_{X:B}$''. In: \emph{Internat. Math. Res.
Notices IMRN} 2004.16 (2004), pp.\ 793-822. \textsc{doi}:
10.1155/S1073792804132443.
%
\bibitem{Voiculescu2014}
Dan-Virgil Voiculescu. ``Free Probability for Pairs of Faces I''. In:
\emph{Communications in Mathematical Physics} 332.3 (Dec. 2014),
pp.\ 955-980. \textsc{doi}: 10.1007/s00220-014-2060-7.
%
\bibitem{Voiculescu1985}
Dan-Virgil Voiculescu. ``Symmetries of some reduced free product
${C}^*$-algebras''. In: \emph{Operator Algebras and their Connections
with Topology and Ergodic Theory}. Ed. by Huzihiro Araki et al. Berlin,
Heidelberg: Springer Berlin Heidelberg, 1985, pp.\ 556-588.
\textsc{isbn}: 978-3-540-39514-0. \textsc{doi}: 10.1007/BFb0074909.
%
\bibitem{Voiculescu1986}
Dan-Virgil Voiculescu. ``Addition of certain non-commuting random
variables''. In: \emph{Journal of Functional Analysis} 66.3 (1986),
pp.\ 323-346. \textsc{issn}: 0022-1236. \textsc{doi}:
10.1016/0022-1236(86)90062-5.
%
\bibitem{VoiculescuFE1}
Dan-Virgil Voiculescu. ``The analogues of entropy and {F}isher's
information in free probability, {I}''. In: \emph{Comm. Math. Phys.}
155.1 (1993), pp.\ 71-92. \textsc{doi}: 10.1007/BF02100050.
%
\bibitem{Voiculescu1995}
Dan-Virgil Voiculescu. ``Operations on certain non-commutative
operator-valued random variables''. In: \emph{Recent advances in
operator algebras} (Orl{\'e}ans, 1992). Ast{\'e}risque 232. Soci{\'e}t{\'e}
Math{\'e}matique de France, 1995, pp.\ 243-275.
%
\bibitem{Williams2017}
John D. Williams. ``Analytic function theory for operator-valued free
probability''. In: \emph{Journal f{\"u}r die reine und angewandte
Mathematik (Crelles Journal)} 2017 (729 2017), pp.\ 119-149.
\textsc{doi}: 10.1515/crelle-2014-0106.
%
\bibitem{Wysoczanski2010}
Janusz Wysocza{\'n}ski. ``bm-independence and bm-central limit theorems
associated with symmetric cones''. In: \emph{Infinite Dimensional
Analysis, Quantum Probability and Related Topics} 13.03 (2010),
pp.\ 461-488. \textsc{doi}: 10.1142/S0219025710004115.
\end{thebibliography}
\end{document}